\documentclass[11pt,reqno]{amsart}
\usepackage{amssymb,mathrsfs,color,mathtools,empheq, verbatim, epstopdf}
\usepackage{pinlabel}
\mathtoolsset{showonlyrefs}
\usepackage{hyperref} 

\usepackage{enumerate, bbm}
\usepackage{tensor}
\usepackage{graphicx}
\usepackage{xcolor} 
\usepackage{tensor}
\usepackage{slashed}
\usepackage{cite}

\usepackage[shortlabels]{enumitem}

\usepackage{geometry}

\numberwithin{equation}{section}

\newtheorem{mainthm}{Theorem}
\newtheorem{thm}{Theorem}[section]
\newtheorem{cor}[thm]{Corollary}
\newtheorem{lem}[thm]{Lemma}
\newtheorem{prop}[thm]{Proposition}

\newtheorem{con}{Conjecture}

\theoremstyle{definition} 
\newtheorem{rem}[thm]{Remark}
\newtheorem{defn}[thm]{Definition}

\theoremstyle{remark}

\renewcommand{\div}{\operatorname{div}}

\definecolor{deepgreen}{cmyk}{1,0,1,0.5}

\newcommand{\R}{\mathbb{R}}

\newcommand{\ga}{\gamma}

\newcommand{\Lam}{\Lambda} 
\newcommand{\Sig}{\Sigma}

\makeatletter

\newcommand{\Rmnum}[1]{\expandafter\@slowromancap\romannumeral #1@}
\makeatother

\newcommand{\lec}{\lesssim}
\newcommand{\gec}{\gtrsim}

\newcommand{\ti}{\widetilde}

\newcommand{\EQ}[1]{\begin{equation}\begin{split} #1 \end{split}\end{equation}}

\newcommand{\Del}[1]{}

\definecolor{green}{rgb}{0,0.8,0} 

\newcommand{\alp}{\alpha}

\newcommand{\veps}{\varepsilon}

\newcommand{\bfn}{{\bf n}}

\newcommand{\bfC}{{\bf C}}
\newcommand{\bfD}{{\bf D}}

\newcommand{\bbN}{\mathbb N}

\newcommand{\bbR}{\mathbb R}
\newcommand{\bbS}{\mathbb S}

\newcommand{\calA}{\mathcal A}
\newcommand{\calB}{\mathcal B}
\newcommand{\calC}{\mathcal C}

\newcommand{\calE}{\mathcal E}
\newcommand{\calF}{\mathcal F}
\newcommand{\calG}{\mathcal G}

\newcommand{\calK}{\mathcal K}
\newcommand{\calL}{\mathcal L}
\newcommand{\calM}{\mathcal M}
\newcommand{\calN}{\mathcal N}

\newcommand{\calQ}{\mathcal Q}

\newcommand{\calS}{\mathcal S}
\newcommand{\calT}{\mathcal T}
\newcommand{\calU}{\mathcal U}

\title{On the Calabi--Yau Conjectures for Minimal Hypersurfaces in Higher Dimensions}
\author{Shrey Aryan and Alexander D. McWeeney}
\begin{document}
\maketitle
\begin{abstract}
In this paper, we study the Calabi--Yau conjectures for complete minimal hypersurfaces $\Sigma^{n}\subset \mathbb{R}^{n+1}$ in dimensions $n\ge 3$. These conjectures ask whether a complete minimal hypersurface must be unbounded, and more strongly whether it must be proper. For the unboundedness question, we prove a chord--arc estimate for an embedded minimal disk with bounded curvature, showing that intrinsic distance is controlled by a polynomial of the extrinsic distance. On the other hand, using gluing techniques, we construct a complete, improperly embedded minimal hypersurface in $\mathbb{R}^{n+1}$ for every $n\ge 3$. This example shows that the properness conjecture suggested by the deep work of Colding--Minicozzi~\cite{CM10} in the case $n=2$ fails in higher dimensions.
\end{abstract}
\section{Introduction}

In 1965, E. Calabi \cite{Ca} proposed the following conjecture:
\begin{con}\label{con:1}
Let $n\geq 2$. A complete minimal hypersurface $\Sigma^{n}\subset \mathbb{R}^{n+1}$ must be unbounded.
\end{con}
When $n=2$, Nadirashvili \cite{Na1} constructed a complete minimal immersion of a disk into the unit ball, thus disproving the conjecture for minimal immersions in $\mathbb{R}^3$. Conjecture \ref{con:1} also appeared in \cite[pg.~212]{chern1966geometry} and was later highlighted in Yau’s survey \cite[pg.~360]{Ya2}, where, referring to Nadirashvili's construction, Yau asked what geometric properties such examples must satisfy. In particular, Yau asked whether any such bounded minimal surface can be embedded. This was addressed in \cite{CM10}, where Colding--Minicozzi proved the Calabi--Yau conjectures by showing that any complete embedded minimal surface in $\mathbb{R}^3$ with finite topology is proper and therefore unbounded. It is therefore natural to ask what remains true in higher dimensions, that is, for $n\ge 3$.

Our first main result proves Conjecture \ref{con:1} under a uniform curvature bound, via a chord--arc estimate for embedded minimal disks:
\begin{mainthm}\label{thm:chordarc}
    Let $3\leq n\leq 5$, \(\Sigma^n \subset \mathbb{R}^{n+1}\) be a two-sided, embedded minimal disk with \(\sup_{\Sigma}|A| < \infty\).
    Then there exists a constant \(c = c\bigl(n,\sup_{\Sigma}|A|\bigr) > 0\) with the following property: for every \(x\in \Sigma\) and every \(R>1\) such that the intrinsic ball \(\mathcal{B}_{R}(x)\subset \Sigma\), the connected component \(\Sigma_{x, R}\) of the Euclidean ball \(B_{R}(x)\cap \Sigma\) containing \(x\) satisfies
    \[
        \Sigma_{x, R}\subset \mathcal{B}_{cR^n}(x).
    \]
    In particular, intrinsic distances are quantitatively controlled by extrinsic distances on \(\Sigma_{x, R}\), with constants depending only on the dimension and the curvature bound.
\end{mainthm}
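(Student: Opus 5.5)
We aim to exploit the bounded curvature through a covering argument, combined with the topological assumption (disk, hence simply connected and two-sided) to rule out long intrinsic paths inside a Euclidean ball. Set $\Lambda=\sup_\Sigma|A|$ and $r_0=c_0/\Lambda$ with $c_0=c_0(n)$ small. The standard graphical lemma says that for every $y\in\Sigma$, the component of $\Sigma\cap B_{r_0}(y)$ through $y$ is a graph over $T_y\Sigma$ with gradient at most $1/10$. Such a component has intrinsic diameter at most $2r_0$, and its volume is comparable to $r_0^n$.

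\textbf{Step 1 (sheet decomposition).} Cover $B_R(x)$ by at most $C(n)(R/r_0)^{n+1}$ Euclidean balls of radius $r_0/4$. Inside each ball, $\Sigma_{x,R}$ splits into disjoint graphical sheets. Any two points in the same sheet are intrinsically at distance at most $r_0$.

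\textbf{Step 2 (counting sheets).} We want to show that the number of sheets of $\Sigma_{x,R}$ meeting a single small ball is at most $C(n,\Lambda)R^{n-1}$, or more crudely that the total number of sheets satisfies $N\le C(n,\Lambda)R^{n}$ up to a constant factor. We plan to obtain this from a volume bound $|\Sigma_{x,R}|\le C(n,\Lambda)R^n$. Here the dimension restriction $3\le n\le 5$ enters. A two-sided minimal hypersurface with bounded $|A|$ that is a disk, so that every closed curve bounds, should be stable on the relevant pieces. Stacked parallel sheets at distance $\lesssim r_0$ yield positive Jacobi fields, as in Colding--Minicozzi's one-sided curvature estimate setting. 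For $n\le5$, the curvature estimates of Schoen--Simon--Yau type, together with the recent stable Bernstein results, then show that too many nearby sheets force them to be almost flat and parallel. Heuristically, this step says that embeddedness plus a disk forbids infinitely many sheets accumulating at bounded extrinsic scale, the higher-dimensional analogue of the $n=2$ properness mechanism. Concretely, we would try to bound the number of sheets through a small ball by a monotonicity/area comparison in which each sheet contributes $\sim r_0^n$ area in $B_{2R}$. We then bound the total area $|\Sigma\cap B_{2R}(x)|$ component-wise using the stability-based volume estimate, valid for $n\le 5$: $|\Sigma_{x,R}|\le C R^n$.

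\textbf{Step 3 (chaining).} Given $y\in\Sigma_{x,R}$, connectedness of $\Sigma_{x,R}$ gives a path inside it from $x$ to $y$. Replace this path by a chain of sheets in which consecutive sheets overlap. A shortest such chain visits each sheet at most once, so $d_\Sigma(x,y)\le 2r_0 N\le C(n,\Lambda)R^n$, which is the claimed inclusion $\Sigma_{x,R}\subset\mathcal B_{cR^n}(x)$. The hypothesis $\mathcal B_R(x)\subset\Sigma$ and $R>1$ are used only to absorb constants and to guarantee that the sheets lie in the interior of $\Sigma$.

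\textbf{Main obstacle.} The hard part is Step 2, the area or sheet-count bound for $\Sigma_{x,R}$. A disk with bounded curvature can in principle spiral, as helicoid-like multisheets do, with unbounded area in a ball. The estimate must therefore show that the area grows at most polynomially in $R$. Our first attempt would be to show that in each ball of radius $r_0$, the sheets of the component are ordered by height and pairwise separated by $\gtrsim r_0 e^{-CR}$, which gives a crude bound. We would then improve this to a polynomial bound via the stability of the region between adjacent sheets, using Schoen--Simon curvature estimates that require $n\le5$. We expect this improvement to be the genuinely delicate point of the proof.
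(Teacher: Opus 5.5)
\textbf{The gap is in Step 2.} The whole argument rests on it, and as formulated it cannot be carried out, because the bound it aims for is false.

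Take $\Sigma=H\times\mathbb{R}^{n-2}\subset\mathbb{R}^{n+1}$, where $H$ is the helicoid $(s,t)\mapsto(s\cos t,s\sin t,t)$. This is a complete, two-sided, embedded minimal disk with $|A|\le\sqrt{2}$.
\begin{itemize}
\item $\Sigma\cap B_R(0)$ is the image of the parameter ball $\{s^2+t^2+|q|^2<R^2\}$, so it is connected and equals $\Sigma_{0,R}$.
\item Its area is $\sim R^{n+1}$.
\item Its sheets lie a fixed distance $\pi$ apart, so they meet a fixed fraction of your $C(R/r_0)^{n+1}$ balls, giving $N\sim R^{n+1}$.
\end{itemize}
So neither $|\Sigma_{x,R}|\le CR^n$ nor $N\le CR^n$ holds. (The latter is stronger, not cruder, than a per-ball bound $CR^{n-1}$, which summed over the balls gives $R^{2n}$.) Consequently the estimate $d_\Sigma\le 2r_0N$ cannot produce the exponent $n$.

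The tools you cite do not apply either. Being a disk gives no stability. By the stable Bernstein theorem, stability of $\mathcal{B}_L(y)$ forces $|A|(y)\le C/L$, so every nonflat disk is unstable on large pieces. Hence there is no stability-based volume bound for $\Sigma_{x,R}$, and monotonicity only bounds area from below. Finally, the separation $\gtrsim r_0e^{-CR}$ is asserted with no mechanism behind it.

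\textbf{The missing idea.} This is the core of the paper's proof: stability is available only between two \emph{extremely} close sheets, and it comes from their separation function.
\begin{itemize}
\item If two sheets of $\Sigma_{x,R}$ come within $\varepsilon$, one is a normal graph $u>0$ over the other with $|\nabla u|+|u||A|$ small.
\item $u$ solves the elliptic equation of Lemma~\ref{PDE used to get the Harnack inequality}. Iterating the Harnack inequality (embeddedness keeps the sheets from merging) keeps $u$ small on an intrinsic ball $\mathcal{B}_L$, with $L\to\infty$ as $\varepsilon\to 0$.
\item The $\log u$ computation of Lemma~\ref{lem:1/2-stable} makes $\mathcal{B}_L$ $\delta$-stable.
\item Corollary~\ref{cor:curvature-estimate}, which is where $3\le n\le 5$ is used, then gives $|A|\le 2C/L$ on $\mathcal{B}_{L/2}$.
\end{itemize}
For $L\gg R$, the component through that point of $\Sigma\cap B_{3R}$ is a single almost-flat graph. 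That is incompatible with the connected set $\Sigma_{x,R}$ containing two $\varepsilon$-close sheets.

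The paper runs this argument against accumulating points $z_i\in\Sigma_{x,R_0}\cap\partial\mathcal{B}_{2iR_\Sigma}(x)$ to get a qualitative chord--arc bound. It then obtains the rate by covering with unit balls and never uses an area bound. Any such covering must still control how many sheets of $\Sigma_{x,R}$ a unit ball meets, which is exactly the issue your Step 3 isolates.

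Quantified, this mechanism does justify your separation $\gtrsim e^{-CR}$. Fed into your Step 3, however, it only gives a bound exponential in $R$. Your proposed polynomial improvement via stability between adjacent sheets is not an argument, and by the example above it cannot come from a count of sheets.
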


The second part of the paper addresses the stronger version of the Calabi--Yau conjecture in higher dimensions, namely properness. While properness holds for complete embedded minimal surfaces in \(\mathbb{R}^3\) by \cite{CM10}, we show that the natural higher-dimensional analogue fails in general:
\begin{mainthm}\label{thm:nonproper}
For every \(n\ge 3\), there exists a complete, embedded, minimal hypersurface \(\Sigma_\infty^{n}\subset \mathbb{R}^{n+1}\) that is not proper.
\end{mainthm}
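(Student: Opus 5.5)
We build $\Sigma_\infty$ as a limit of finite gluings: take complete embedded minimal hypersurfaces whose ends are asymptotically flat, connect them by catenoidal necks, and add infinitely many pieces accumulating on a compact set. The key higher-dimensional input is that for $n\ge 3$ the catenoid has ends that converge to parallel planes at rate $r^{2-n}$, and the catenoid has \emph{finite height}. Hence a catenoid of small neck size $\epsilon$ lies between two parallel hyperplanes at distance $\sim c_n\epsilon$. This bounded-height phenomenon has no analogue when $n=2$, and it is what makes non-properness possible.

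\textbf{Step 1: the building block.} We glue a sequence of hyperplanes $P_k=\{x_{n+1}=h_k\}$ with $h_k\downarrow 0$, say $h_k = \sum_{j\ge k}\delta_j$ with $\delta_j$ summable. Consecutive planes $P_k,P_{k+1}$ are joined by one small catenoidal neck of size $\epsilon_k$, centered at a point $p_k$ with $|p_k|\to\infty$. The neck size is chosen so that the asymptotic height difference of the catenoid, $\sim\epsilon_k$ plus logarithmic-free corrections, matches $\delta_k$. Each finite configuration $M_N$ (planes $P_1,\dots,P_N$ with $N-1$ necks) is an approximate solution. It has to be perturbed to an exact minimal hypersurface by a Kapouleas/Traizet-type gluing: write the perturbation as a normal graph of $u$ and solve $H(u)=0$ in weighted Hölder spaces. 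For $n\ge3$ the Jacobi operator on the catenoid and on punctured planes with decay weights $r^{-\nu}$, $0<\nu<n-2$, is invertible modulo a finite-dimensional cokernel. That cokernel comes from translations and dilations, and it is absorbed by adjusting the neck heights, positions and sizes (balancing parameters). Since $n\ge3$, plane perturbations decay like $r^{2-n}$, so the planes stay asymptotically flat with no logarithmic drift. This is the main analytic advantage over $n=2$.

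\textbf{Step 2: uniformity and passage to the limit.} This is the main obstacle. We need estimates independent of $N$. To get them, place the necks far apart: $|p_{k+1}-p_k|\ge L_k$ with $L_k$ rapidly growing, and shrink $\epsilon_k$ fast. Then the interaction between necks is bounded by $\sum_k \epsilon_k^{n-2}L_k^{2-n}$, which can be made summable and small. We would first try a contraction mapping in a weighted space whose norm localizes to each neck and each plane, with weights centered at the $p_k$. The graph-norm smallness of the solution $u_N$ should then be uniform in $N$. By uniform curvature bounds (each piece has $|A|\lesssim \epsilon_k^{-1}$ locally, so bounds are local rather than global) and a diagonal argument, $M_N$ converges smoothly on compact subsets of $\mathbb{R}^{n+1}\setminus\{x_{n+1}=0\}$, and also near $\{x_{n+1}=0\}$ away from neck centers. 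The limit $\Sigma_\infty$ is a connected minimal hypersurface.

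\textbf{Step 3: properties.} \emph{Embeddedness:} each finite stage is a small normal graph over disjoint sheets outside necks, and the necks are disjoint. The sheets remain disjoint in the limit because their separations $\delta_k$ dominate the perturbation sizes, which we arrange to be $o(\delta_k)$ uniformly. \emph{Completeness:} any divergent path either leaves every compact intrinsic set within finitely many sheets, which are complete planes-with-holes, or passes through infinitely many necks. The latter costs intrinsic length at least $|p_{k+1}-p_k|\to\infty$, so it has infinite length. \emph{Non-properness:} $\Sigma_\infty$ contains points $q_k\in$ sheet $k$ with $q_k\to 0$ and heights $h_k\to 0$, lying over a fixed compact region of $\{x_{n+1}=0\}$ away from the necks. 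The set $\Sigma_\infty\cap \overline{B_1(0)}$ therefore contains infinitely many sheets accumulating on the limit plane $\{x_{n+1}=0\}$, which is not contained in $\Sigma_\infty$. So $\Sigma_\infty\cap \overline{B_1}$ is not compact, and the immersion is not proper.
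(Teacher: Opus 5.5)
Your geometric picture matches the paper's. Hyperplanes are stacked with summable gaps, and consecutive ones are joined by a single catenoidal neck; this works because for \(n\ge 3\) the catenoid has finite height. The necks are pushed to infinity, the sheets accumulate on a limit plane, and non-properness is read off from that accumulation. The analytic route is different, though. You propose a simultaneous gluing of \(N-1\) necks with estimates uniform in \(N\). The paper argues inductively:
\(\Sigma_{k+1}\) is obtained from the already constructed, nondegenerate, finite-total-curvature \(\Sigma_k\) by a single Fakhi--Pacard gluing of one half-catenoid.
Nondegeneracy is shown to persist (Lemma~\ref{lem: nondegeneracy}), so the induction can continue.
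The scale \(\veps_k\) and the gluing point are chosen only after \(\Sigma_k\) is known, far enough out that the earlier necks are barely moved.
The limit then comes from locally uniform curvature bounds (Lemma~\ref{lem: limit stuff}) and compactness. The inductive scheme never needs an estimate uniform in the number of necks.

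That uniformity is exactly where your proposal has a gap. Step~2 carries the whole difficulty and is stated only as a plan. Nothing shows that the linearized operator on \(M_N\), modulo the \(O(N)\) parameters, has a right inverse bounded independently of \(N\) when \(\epsilon_k\to0\) and the sheets accumulate.

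Your smallness condition is also measured in the wrong units. What matters at a new neck is not summability of \(\sum_k(\epsilon_k/L_k)^{n-2}\). It is the slope, of order \(\epsilon_k^{n-1}L_k^{1-n}\), that the tail of neck \(k\) induces on sheet \(k+1\) at \(p_{k+1}\). This slope must be small relative to a power of the new, much smaller scale \(\epsilon_{k+1}\); otherwise the new end comes out tilted and crosses the other sheets. This is exactly the paper's assumption (A.2), \(|\nabla u(0)|\le r_\veps\), which is its main modification of Fakhi--Pacard, and it forces \(L_k\) to be chosen after \(\epsilon_{k+1}\).

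Likewise, with rapidly shrinking necks you cannot ask for corrections that are \(o(\delta_k)\) on all of sheet \(k\). At distance \(r\) from the larger neck at \(p_{k-1}\), sheet \(k\) is displaced by about \(\epsilon_{k-1}^{n-1}r^{2-n}\gg\delta_k\) over a large range of \(r\). The gluing correction there is governed by \(\epsilon_{k-1}\), not \(\epsilon_k\). Embeddedness in that region has to come from the sign of the Green's-function bump: sheet \(k\) bends toward \(P_{k-1}\), away from \(P_{k+1}\). The corrections must then be controlled relative to the local separation, not relative to \(\delta_k\).

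On the positive side, your completeness argument is more explicit than the paper's one-line claim that completeness passes to the limit: a divergent path through infinitely many necks has infinite length. Some such argument is needed in either approach, since \(\Sigma_\infty\) accumulates on a plane.
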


The restrictions on the dimensions in Theorem \ref{thm:chordarc} arise from curvature estimates coming from the recent resolution of the Stable Bernstein conjecture in dimensions $3\leq n \leq 5$. Since Theorem \ref{thm:chordarc} implies unboundedness for complete embedded minimal hypersurfaces with bounded curvature in dimensions $3\leq n\leq 5$, we also include, for completeness, a proof of unboundedness in all dimensions $n\geq 3$. While this fact seems well known to experts (cf.\ \cite{Carlotto_2016} and the proof of Theorem 2 in \cite{rosenberg2001intersection}), we have not found a detailed argument in the literature that applies uniformly to all dimensions.
\begin{mainthm}\label{thm:unbounded}
Let \(\Sigma^{n} \subset \mathbb{R}^{n+1}\) be a complete, embedded minimal hypersurface and assume that \(\sup_{\Sigma}|A|^2 < \infty\). Then \(\Sigma\) is unbounded.
\end{mainthm}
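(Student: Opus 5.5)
We argue by contradiction and suppose $\Sigma\subset B_{R_0}(0)$ for some $R_0<\infty$. The plan is to show that the closure $\overline{\Sigma}$ of $\Sigma$ is a lamination of a bounded region by minimal hypersurfaces with bounded second fundamental form. We then locate a point of this lamination that is farthest from the origin, and apply the maximum principle there against a sphere, which yields a contradiction.

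\emph{Step 1: uniform graphical charts.} Set $\Lambda:=\sup_\Sigma|A|$. Completeness and the curvature bound give a radius $r_0=r_0(\Lambda)>0$ with the following property: for every $p\in\Sigma$, the intrinsic ball $\mathcal{B}_{r_0}(p)$ is a graph over a ball of radius $r_0/2$ in $T_p\Sigma$. The graph function $u$ satisfies $|\nabla u|\le 1$ and $|\nabla^2u|\le C\Lambda$, and, by Schauder estimates for the minimal surface equation, higher derivatives of $u$ are also bounded in terms of $\Lambda$. This is the standard fact that bounded curvature gives uniform graphical size, and it uses only completeness.

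\emph{Step 2: passing to limits.} Since $\Sigma$ is bounded, the closure $\overline\Sigma$ is compact. Let $q\in\overline\Sigma$ with $|q|=\max_{\overline\Sigma}|x|=:R_1$, and choose $p_k\in\Sigma$ with $p_k\to q$. By Arzel\`a--Ascoli we may pass to a subsequence so that the tangent planes $T_{p_k}\Sigma$ converge and the graphs $\mathcal{B}_{r_0}(p_k)$ converge smoothly to a minimal graph $D$ through $q$. The limit $D$ lies in $\overline\Sigma\subset\overline{B_{R_1}}$, and it touches $\partial B_{R_1}$ at its interior point $q$.

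\emph{Step 3: the contradiction.} The sphere $\partial B_{R_1}$ has all principal curvatures equal to $1/R_1$ with respect to the inner normal, so it is strictly mean-convex. The minimal hypersurface $D$ lies inside the ball and is tangent to the sphere at $q$. This contradicts the comparison (maximum) principle. In coordinates, write $D$ and the sphere near $q$ as graphs over the common tangent plane. The function $|x|^2$ restricted to $D$ satisfies $\Delta_D|x|^2=2n>0$, yet it attains an interior maximum at $q$, which is impossible. Note that this subharmonicity argument only needs $D$ to be a smooth minimal graph; embeddedness of $\Sigma$ is used only implicitly, through the limit being a single smooth sheet. For a merely immersed $\Sigma$, the local sheets are still graphs and the argument goes through sheet by sheet.

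The main obstacle is Step 1: we need the graphical radius $r_0$ to be uniform, so that the limit $D$ has definite size and $q$ is a genuine interior point of $D$. This follows from the curvature bound, via a Gauss-map Lipschitz estimate combined with the fact that the exponential map is defined on all of $T_p\Sigma$ by completeness. This is exactly where both completeness and the hypothesis $\sup_\Sigma|A|<\infty$ enter. Without a curvature bound, Nadirashvili-type examples show that the argument must fail.
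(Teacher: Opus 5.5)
Your proposal is correct and follows essentially the same route as the paper's proof. You assume boundedness, take a point $q$ of the compact closure farthest from the origin, use the uniform graphical radius coming from $\sup_\Sigma|A|<\infty$ and completeness to extract a smooth subsequential limit of local minimal graphs through $q$, and contradict the maximum principle. The only difference is the last step: the paper first arranges that the normals converge to the sphere's normal and compares the limit graph with the tangent hyperplane, whereas you apply the maximum principle directly to $|x|^2$ on the limit graph using $\Delta_D|x|^2=2n$, which is slightly cleaner and, as you note, does not use embeddedness.
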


Extending the above result to show properness for complete embedded minimal hypersurfaces with bounded curvature seems nontrivial, in part due to the failure of the half-space theorem. We expect that with suitable topological assumptions one could get around these issues. However, we can show that for a complete, improperly embedded minimal surface \(\Sigma\), the closure \(\bar \Sigma\) is a minimal lamination, as defined in \cite{CM6} Appendix B with planar limit leaves in ambient dimensions $3\leq n \leq 5.$

At a conceptual level, the proofs of Theorem \ref{thm:chordarc} and Theorem \ref{thm:unbounded} follow the strategy of \cite{CM10} in the bounded-curvature regime: when two embedded minimal sheets are very close but disjoint, their separation function satisfies a uniformly elliptic equation whose coefficients are controlled by \(|A|\); a Harnack inequality then yields quantitative control on how fast the sheets can separate.

Theorem \ref{thm:nonproper} is inspired by the work of Fakhi and Pacard \cite{Fakhi-Pacard} and Kapouleas \cite{MR1601434}. Fahki and Pacard construct complete, immersed minimal surfaces in \(\mathbb{R}^{n+1}\), \(n \ge 3\), with finite total curvature and arbitrarily many ends, and Kapouleas produces complete, embedded minimal surfaces in \(\mathbb{R}^3\) with finite total curvature and arbitrarily many ends. These results have also been generalized by Coutant in \cite{coutant2012deformation} to higher dimensions where, amongst other things, the author constructs examples of complete, embedded minimal hypersurfaces in \(\mathbb{R}^{n+1}\) with finite total curvature and arbitrarily many ends. In our work, we modify Fahki and Pacard's construction to produce embedded examples:
\begin{mainthm}\label{thm:k-ends}
For \(n\ge 3\) and \(k \ge 1\), there exists a complete, embedded, minimal hypersurface \(\Sigma_k^{n}\subset \mathbb{R}^{n+1}\) that has finite total curvature and \(k\) ends.
\end{mainthm}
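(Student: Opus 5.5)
The construction glues $k$ parallel catenoid-like ends, following Fakhi--Pacard but arranging the data so that the resulting hypersurface is embedded. Here $\Sigma_k$ should be a $k$-ended hypersurface that outside a compact set is a union of $k$ graphs over parallel horizontal hyperplanes. For $n\ge 3$ each such graph is asymptotic to $x_{n+1} = a_j + b_j|x'|^{2-n} + O(|x'|^{1-n})$. Because $|x'|^{2-n}\to 0$, the ends are asymptotically flat and bounded in height. They are therefore disjoint near infinity as long as the heights $a_j$ are distinct.

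\textbf{Building blocks.} I take the $n$-dimensional catenoid $C_n$ in $\mathbb{R}^{n+1}$. Unlike the case $n=2$, it lies in a slab and has two planar-type ends at heights $\pm h_n$. I also take $k-1$ horizontal hyperplanes, or equivalently a stack of scaled catenoids, for $k\ge 3$ ends. A concrete configuration is the following:
\begin{enumerate}[(i)]
\item take $k$ parallel hyperplanes $P_j = \{x_{n+1} = t_j\}$;
\item connect consecutive planes by very small catenoidal necks of scale $\veps$ placed at points $p_j$ whose horizontal projections are well separated;
\item truncate each small catenoid $\veps C_n + p_j$ at radius $r_\veps$ with $\veps \ll r_\veps \ll 1$, and each plane outside the balls $B_{r_\veps}(p_j)$.
\end{enumerate}
The number of ends equals the number of planes, so I use $k$ planes with necks between $P_j$ and $P_{j+1}$. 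The case $k=1$ is a hyperplane, and $k=2$ is the catenoid itself, so the construction is only needed for $k\ge 3$. To see embeddedness of the approximate solution, note that the neck pieces live in disjoint vertical cylinders over distinct points. The perforated plane pieces are graphs of small functions over disjoint heights, with separation of order $\veps^{\,n-2}/(n-2)$ relative to plane spacing. They are therefore disjoint once $\veps$ is small.

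\textbf{Perturbation.} I solve $H(\text{graph of } u \text{ over } \Sigma_\veps)=0$ with $u$ small in weighted Hölder spaces $C^{2,\alpha}_\delta$. The approach is the one in Fakhi--Pacard and Coutant, where the linearized operators are available.
\begin{enumerate}[(a)]
\item On the perforated planes, the Laplacian has a right inverse with bounds in weighted spaces that decay at the punctures and at infinity. I take weights $\delta\in(2-n,0)$ at infinity and suitable weights at the punctures.
\item On the catenoidal necks, the Jacobi operator has a bounded right inverse modulo its bounded kernel. The kernel consists of translations, and vertical translation is the only one not killed by imposing symmetry.
\item The mismatch is handled with parameters: heights $t_j$ and neck sizes $\veps_j$, which give a finite-dimensional family of boundary data. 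Cauchy-data matching across $\partial B_{r_\veps}(p_j)$ then reduces everything to a finite-dimensional balancing system.
\end{enumerate}
I impose symmetry to kill the horizontal translation kernels. I choose the $p_j$ and the total configuration invariant under a finite reflection group, for example $O(n)$ symmetric about the vertical axis by placing necks in rotationally arranged rings. Using a single neck on the axis per level pair fails, since necks on the same axis would overlap between levels only if levels are adjacent. Alternating $p_j$ between the axis and a symmetric ring works fine. The balancing condition is the vanishing of the total flux. On each plane, the flux from necks above must equal that from necks below, which forces relations among $\veps_j^{n-2}$ and the number of necks. This is solvable with positive sizes, for instance by using one neck on the axis between $P_1$ and $P_2$ and a ring of necks between $P_2$ and $P_3$, scaled consistently.

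\textbf{Main obstacle.} I expect the hardest step to be the nonlinear fixed point with uniform estimates independent of $\veps$, together with showing that the solution stays embedded. Embeddedness follows because $\|u\|$ is $o(\veps)$ on necks and $o(\text{plane spacing})$ on planes, so the normal graph of $u$ over an embedded $\Sigma_\veps$ with a tubular neighborhood of width $\gtrsim$ (local scale) remains embedded. Finite total curvature follows from the asymptotics $|A| = O(|x|^{-n})$ on the ends, which gives $\int|A|^n<\infty$. Completeness is inherited from $\Sigma_\veps$.
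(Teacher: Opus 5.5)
Your simultaneous desingularization of a stack of planes is a different route from the paper's, and as written it misidentifies the finite-dimensional obstruction.

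For $n\ge 3$ there is no per-plane flux condition. The Green's function $G(x)=|x|^{2-n}$ decays, so a plane carrying necks simply acquires an end $t_j+b_j|x|^{2-n}$ whose coefficient $b_j$ records its net flux. The total flux then vanishes automatically; each end of the catenoid itself has nonzero flux. Taken literally, your condition could not even hold on $P_1$ or $P_k$, which carry necks on one side only.

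The genuine obstruction sits in the first spherical harmonics. There the catenoid's Jacobi fields are spanned by horizontal translations, which decay like $r^{1-n}$ on both sheets, and rotations, which tilt both sheets by the same amount. So no Jacobi field tilts one sheet relative to the other. Hence at every neck $p$ joining $P_j$ to $P_{j+1}$, the gradients at $p$ of the regular parts of the two sheet functions must agree to leading order. These gradients are, at leading order, sums of terms $\pm\veps_q^{n-1}\nabla G(p-q)$ over the other necks $q$, with $\veps_q$ the neck radii. This is a Traizet-type force-balancing system in the positions and sizes.

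Symmetry disposes of this system only for necks on the axis. At a neck on a ring, the stabilizer fixes the radial direction, so the radial translation and rotation modes survive. This contradicts your claim that only vertical translation remains. (Dilation is also a bounded Jacobi field for $n\ge 3$, though your size parameters account for it.) In your three-plane example the surviving equation balances the axis neck's contribution against that of the other ring necks. It has the form $\veps_1^{n-1}=c_{n,m}\,\veps_2^{n-1}$ with an explicit $c_{n,m}>0$, and it comes from tilt matching, not from flux.

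A complete argument along your lines would need to:
\begin{enumerate}
\item derive this balancing system;
\item exhibit a solution that is non-degenerate modulo symmetries and scaling, so that the reduced map can be inverted;
\item only then run the fixed point with $\veps$-uniform estimates.
\end{enumerate}

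Your setup also tacitly keeps the planes a definite distance apart. Since the $n$-catenoid has finite height, a neck of radius $\veps$ joins sheets at vertical distance about $2h_n\veps$. The spacing of the $P_j$ must therefore be $O(\veps)$, and it is dictated by the neck sizes through the matching in the $j=0$ mode. Your embeddedness estimate must be redone with separation of order $\veps$.

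By contrast, the paper never faces a balancing problem because it argues inductively. Starting from the non-degenerate catenoid, it glues one truncated half-catenoid at a time onto a \emph{non-degenerate} $k$-ended surface. The gluing point is far out on a planar end, where the end is nearly horizontal ($|\nabla u|<r_\veps$). The modes lost to the neck's Jacobi fields are recovered from the rigid motions $\calA$ of the intermediate piece and from the deficiency space $\calK_1$ supplied by non-degeneracy. Non-degeneracy is then shown to persist, and embeddedness follows from a maximum-principle argument for the height function.
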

Notably, both Kapouleas \cite{MR1601434} and Fahki-Pacard \cite{Fakhi-Pacard} produce their examples by gluing catenoidal necks. We proceed similarly by iteratively gluing catenoids to form a sequence of surfaces \(\Sigma_k^n\) with \(k\)-ends. To prove Theorem \ref{thm:nonproper}, we will construct such a sequence \(\Sigma_k^n\) that converges locally smoothly to a hypersurface \(\Sigma_\infty^n\) that is complete, embedded, and improper. To our knowledge, this gives the first example of an improperly embedded minimal hypersurface in $\R^{n+1}$. The constructed hypersurface has unbounded curvature, infinite genus, and infinite ends. In particular, it shows that the properness conjecture fails for embedded minimal hypersurfaces of infinite topology in higher dimensions. By contrast, in the surface case $n=2$, the results of \cite{CM10, meeks2021embedded} indicate that properness for embedded minimal surfaces should hold under the additional assumption of finite genus and therefore it would be interesting to find an analogue of Theorem \ref{thm:nonproper} when $n=2.$

Finally, we direct the reader to the survey \cite{MR4415896} by Breiner, Kapouleas, and Kleene for an overview on the state of gluing constructions for constant mean curvature surfaces.

\subsection{Acknowledgments}
The authors would like to thank their advisors Tobias Colding and William Minicozzi for many helpful discussions and encouragement. We are also grateful for stimulating discussions with Otis Chodosh, David Jersion and Peter McGrath. The first author acknowledges support from the Simons Dissertation Fellowship and the second author acknowledges support from the National Science Foundation.

\section{Chord--Arc Estimates for Bounded Curvature}\label{sec:chord-arc}
The main goal of this section is to prove Theorem \ref{thm:chordarc} and Theorem \ref{thm:unbounded}.

Let \(\Sigma \subset \bbR^n\) be an embedded minimal hypersurface with bounded curvature, i.e. \(\sup_\Sigma |A| < \infty\), and let \(x \in \Sigma\). For \(R > 0\), we let \(B_R(x)\) denote an extrinsic ball of radius \(R\) in \(\bbR^{n+1}\), and we let \(\calB_R(x)\) denote an intrinsic ball of radius \(R\) in \(\Sigma\). We denote by \(\Sigma_{x, R}\) the component of \(\Sigma \cap B_R(x)\) containing \(x\).
Furthermore, since we assume $\Sigma$ to have bounded curvature, we can find a graphical radius as follows:
\begin{lem}\label{graphical radius and chord arc for graphical radius}
    Let \(\Sigma^{n} \subset \mathbb{R}^{n+1}\) be an embedded minimal surface with bounded curvature \(\sup_{\Sigma}|A| \le C_A\). Then there is a number \(0 < R_{\Sigma} < 1/C_A\) such that for all \(x \in \Sigma\) and \(R \le R_{\Sigma}\), the following properties hold:
    \begin{enumerate}
        \item The intrinsic ball \(\mathcal{B}_{R}(x)\) is graphical.
        \item \(\calB_R(x)\) can be written as a graph of a function \(u: \Omega \subset T_x\Sigma \rightarrow \mathbb{R}^{n + 1}\) such that \(|\nabla u| < 1\) and \(|\nabla^2 u| < c(C_A)\).
        \item There exists a \(\delta_c > 0\) depending only on dimension and the curvature bound \(C_A\) such that
        \begin{equation}\label{eq: graphical weak chord arc}
            \Sigma_{x, \delta_cR} \subset \mathcal{B}_{R/2}(x).
        \end{equation}
    \end{enumerate}
    Importantly, the number \(R_\Sigma\) depends only on the curvature bound; in particular, this means that \(\calB_{R_\Sigma}(x)\) may contain points in \(\partial \Sigma\).
    
\end{lem}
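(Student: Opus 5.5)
The plan is to use the standard fact that a curvature bound forces a definite graphical scale, and then to read both the gradient/Hessian control and the weak chord--arc property off the graph representation. Fix $x\in\Sigma$ and rotate so that $T_x\Sigma=\{x_{n+1}=0\}$ with unit normal $\mathbf{n}(x)=e_{n+1}$. Since the Gauss map satisfies $|\nabla \mathbf{n}|=|A|\le C_A$, integrating along unit-speed intrinsic geodesics from $x$ gives $|\mathbf{n}(y)-\mathbf{n}(x)|\le C_A\,d_\Sigma(x,y)$. Choosing $R_\Sigma=\varepsilon/C_A$ with a small absolute $\varepsilon$ (say $\varepsilon=1/10$) ensures $\langle \mathbf{n}(y),e_{n+1}\rangle\ge 1-\varepsilon^2/2>0$ on $\mathcal{B}_R(x)$ for $R\le R_\Sigma$.

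The first step is to show that the orthogonal projection $\pi:\mathcal{B}_R(x)\to T_x\Sigma$ is injective, so that (1) and (2) hold. Locally $\pi$ is a diffeomorphism, since the normal stays away from horizontal. For global injectivity on $\mathcal{B}_R(x)$, take $y,z$ with $\pi(y)=\pi(z)$ and join them by a minimizing intrinsic curve $\gamma$ of length $<2R$, which lies in $\mathcal{B}_{3R}(x)$; we run the normal estimate on $\mathcal{B}_{3R}$ and shrink $\varepsilon$ accordingly. Along $\gamma$ the tangent vector $\gamma'$ is nearly horizontal: its vertical component is at most $|\langle\gamma',\mathbf{n}(x)\rangle|\le|\mathbf{n}(\gamma)-\mathbf{n}(x)|\le 3\varepsilon$. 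Hence $|\pi(y)-\pi(z)|\ge (1-O(\varepsilon))L(\gamma)$, because the horizontal projection of $\gamma$ is a curve whose direction varies little. I would make this last claim rigorous by noting that the projected curve has curvature bounded by $O(C_A)$ and length $O(\varepsilon/C_A)$, so its total turning is $O(\varepsilon)$ and it cannot close up. It follows that $y=z$.

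This gives the graph $u$ over $\Omega=\pi(\mathcal{B}_R(x))$. The bound $|\nabla u|<1$ comes from $|\nabla u|=\tan\angle(\mathbf{n},e_{n+1})\le C\varepsilon$. The Hessian bound comes from writing $A$ in graph coordinates, $\nabla^2u=\sqrt{1+|\nabla u|^2}\,g^{1/2}A\,g^{1/2}$ with $g$ uniformly comparable to the identity, which yields $|\nabla^2u|\le c(C_A)$. Note that $R_\Sigma$ depends only on $C_A$, and that we never needed $\mathcal{B}_R(x)$ to avoid $\partial\Sigma$ for the local estimates, matching the remark in the statement.

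The main obstacle is (3), because $\Sigma_{x,\delta_cR}$ is defined extrinsically. A priori it is a connected component of $\Sigma\cap B_{\delta_cR}(x)$ and could leave the graphical piece, for example through other sheets. I would argue by connectedness using the graph. Inside the graph over $\Omega$, with $|\nabla u|\le C\varepsilon$, intrinsic distance is comparable to extrinsic distance: $d_\Sigma(x,y)\le\sqrt{1+C^2\varepsilon^2}\,|\pi(y)-\pi(x)|\le 2|y-x|$. Set $\delta_c=1/8$ and let $U=\Sigma_{x,\delta_cR}\cap\mathcal{B}_{R/2}(x)$. This set is nonempty and relatively open. It is also relatively closed in $\Sigma_{x,\delta_cR}$: any point of $\overline{U}$ satisfies $d_\Sigma\le 2\delta_cR=R/4<R/2$, so it lies in the interior of $\mathcal{B}_{R/2}(x)$. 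By connectedness $U=\Sigma_{x,\delta_cR}$, which gives \eqref{eq: graphical weak chord arc}. Note also that $\delta_cR<R/2\le$ the intrinsic radius, and intrinsic distance from $x$ is at least extrinsic distance. One should use the graph over the larger ball $\mathcal{B}_R$ to ensure that points near $\partial\mathcal{B}_{R/2}$ still have a graphical neighborhood; this gives $\delta_c$ depending only on $n$ and $C_A$.
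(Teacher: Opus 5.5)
Your argument follows the same route as the paper, which only sketches it by reference to Colding--Minicozzi. You bound the oscillation of the Gauss map by $C_A\,d_\Sigma$, deduce that projection to $T_x\Sigma$ is an injective local diffeomorphism, read off $|\nabla u|$ and $|\nabla^2u|$ from the graph, and get $\delta_c$ by comparing intrinsic and extrinsic distance on the graph. You supply details the paper omits: injectivity via the turning of a geodesic, and the open--closed argument for (3). For complete $\Sigma$, or for $x$ with $\mathcal{B}_{3R}(x)\cap\partial\Sigma=\emptyset$, these details are correct.

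What is wrong is your closing claim that you ``never needed $\mathcal{B}_R(x)$ to avoid $\partial\Sigma$.'' Two steps use it.
\begin{itemize}
\item Injectivity needs a shortest curve from $y$ to $z$ that is an honest geodesic of $\Sigma$, so that $|\gamma''|=|A(\gamma',\gamma')|\le C_A$. Near $\partial\Sigma$ such a curve may not exist, or may bend along the boundary with uncontrolled curvature.
\item In (3), the inequality $d_\Sigma(x,y)\le\sqrt{1+C^2\varepsilon^2}\,|\pi(y)-\pi(x)|$ needs the segment from $\pi(x)$ to $\pi(y)$ to lie in $\Omega$. You should prove that $\Omega$ contains the disk of radius $R/2$ about $\pi(x)$ by lifting radial segments. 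That lifting again uses the fact that the lift cannot run into $\partial\Sigma$.
\end{itemize}

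The hypothesis also cannot be dropped, because the remark in the statement is false. Given $\varepsilon\in(0,1)$, take the helicoid strip $\{(r\cos\theta,r\sin\theta,\theta): r_1\le r\le r_1+1,\ |\theta|\le 3\pi\}$ with $r_1\gg 1/\varepsilon$. It has $\sup|A|=\sqrt2/(r_1^2+1)=:C_A$. Yet the intrinsic ball of radius $\varepsilon/C_A\approx\varepsilon r_1^2/\sqrt2$ about $x=(r_1,0,0)$ contains the points $(0,r_1+2\pi/r_1,\pi/2)$ and $(0,r_1,5\pi/2)$. Their difference is parallel to $\mathbf{n}(x)\propto(0,-1,r_1)$, so this ball is not a graph over $T_x\Sigma$. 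By scaling, no $R_\Sigma$ depending only on $C_A$ can work.

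So instead of matching that remark, add the hypothesis $\mathcal{B}_{3R}(x)\cap\partial\Sigma=\emptyset$ (or completeness of $\Sigma$). Under that hypothesis your proof goes through.
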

\begin{proof}
    This result and its proof are essentially the same as Lemma 2.4 in \cite{CM1}. Using the Gauss map \(N: \Sigma \rightarrow \bbS^n\) and the uniform bound on \(|A|\), we can find a uniform radius on which the unit normal of \(\Sigma\) lies above the equator of \(\mathbb{S}^{n}\). We also have a bound \(|\nabla N| < C(n)|A|\). Integrating \(\nabla N\) now produces the bound \(|\nabla u| < 1\) for sufficiently small \(R_\Sigma\) (depending on \(|A|\)), and also produces the \(\delta_c\) in equation (\ref{eq: graphical weak chord arc}).
\end{proof}

Our goal is to prove a statement along the lines of \(\Sigma_{x, R_0} \subset \calB_{R_1}(x)\), for any \(R_0\) (not just \(R_0\) smaller than the graphical radius). To do so, we will proceed by contradiction in the following manner:

\paragraph{\textit{Step 1}} First, we assume that \(\Sigma_{x, R_0} \cap \calB_{R}(x) \neq \emptyset\) for all \(R > 0\). It will follow that there exists \(z_i \in \Sigma_{x, R_0} \cap \partial\calB_{i}(x)\) for all \(i \in \bbN\).\\

\paragraph{\textit{Step 2}} The sequence \(z_i\) is contained in \(B_{R_0}(x)\), and so it is Cauchy. Since \(\Sigma\) is embedded and has bounded curvature, this implies that the minimal disks \(\calB_{R_\Sigma}(z_i)\) are nearly parallel to each other for large \(i\). Roughly speaking, \(\Sigma_{x, R_0}\) must fold on itself infinitely many times.\\

\paragraph{\textit{Step 3}} Using that \(\Sigma\) is embedded and that the disks \(\calB_{R_\Sigma}(z_i)\) are graphical, we will show that the separation function \(u\) between \(\calB_{R_\Sigma}(z_i)\) and \(\calB_{R_\Sigma}(z_{i + 1})\) satisfies an elliptic PDE and a Harnack inequality. This will lead to estimates showing that when the disks \(\calB_{R_\Sigma}(z_i)\) are close to each other, they are almost flat and move away from each other at a bounded rate. The closer the disks are, the slower they move away from each other, and the flatter they are.\\

\paragraph{\textit{Step 4}} By iterating the Harnack inequality for the separation function, we will show that for some large \(I\), there is a disk \(\calB_{R_I}(z_I)\) for some large \(R_I \gg R_0\) which is almost flat and which escapes \(B_{R_0}(x)\), a contradiction. In other words, the places where \(\Sigma_{x, R_0}\) folds on itself are too wide and too flat to stay contained in the extrinsic ball \(B_{R_0}\).\\

We now proceed to the detailed proofs. As mentioned above, we will use an elliptic PDE describing nearby minimal sheets to control their rate of separation. We begin with the following Lemma.
\begin{lem}\label{PDE used to get the Harnack inequality}
Let $\Sigma^n\subset \R^{n+1}$, $n\geq 3$ be a minimal hypersurface, and let \(u: \Sigma \rightarrow \bbR\) be a function such that the graph of \(u\) over \(\Sigma\) is also a minimal hypersurface. By graph of \(u\) over \(\Sigma\), we mean the set \(\{x + u(x)\bfn_{\Sigma}(x) | x \in \Sigma\}\), where \(\bfn_\Sigma\) is a choice of normal vector of \(\Sigma\). If $\max \{|u||A|,|\nabla u|\} \leq 1$, then $u$ satisfies the equation
\begin{align}\label{eqn:minimal-graph}
\operatorname{div}[(I+\bar{L}) \nabla u]+(1+Q)|A|^2 u+Q_{i j} A_{i j}=0,    
\end{align}
where $\bar{L} \leq C(|u||A|+|\nabla u|),|Q| \leq C(|u||A|+|\nabla u|)^2$, and $\left|Q_{i j}\right| \leq C(|u||A|+$ $|\nabla u|)^2$.    
\end{lem}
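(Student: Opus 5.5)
The plan is to compute the mean curvature of the normal graph $\Sigma_u=\{x+u(x)\bfn_\Sigma(x)\}$ directly and expand it in powers of $(u,\nabla u)$, as in the classical derivation of the Jacobi operator (compare Lemma 2.26 in Colding--Minicozzi's book). Fix a point and local coordinates $x^i$ on $\Sigma$. Let $F(x)=x+u(x)\bfn(x)$. Using $\partial_i\bfn=-A_{ij}e_j$, we get $\partial_iF=(\delta_{ij}-uA_{ij})e_j+u_i\bfn$. The induced metric is therefore
\[
g^u_{ij}=g_{ij}-2uA_{ij}+u^2A_{ik}A_{kj}+u_iu_j .
\]
The unit normal of $\Sigma_u$ is $\nu_u=\bigl(\bfn-\sum_{j}b_j e_j\bigr)/W$, where $b=(I-uA)^{-1}\nabla u$ and $W=\sqrt{1+|b|^2}$. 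The hypothesis $\max\{|u||A|,|\nabla u|\}\le 1$ is used to guarantee that $I-uA$ is invertible with uniform bounds. If it is not quite enough, I would restrict to $|u||A|+|\nabla u|\le \epsilon_0$ small; this is the regime used later anyway, and the constants can then be adjusted on the complementary compact range.

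The cleanest route avoids computing second derivatives of $F$ and uses the variational structure instead. $\Sigma_u$ is minimal if and only if $u$ is a critical point of the area functional
\[
\mathcal{A}(u)=\int_\Sigma \mathcal{F}(x,u,\nabla u)\,d\mu ,\qquad \mathcal{F}=\sqrt{\det(g^{-1}g^u)}.
\]
The first step is to write $\mathcal{F}=\det(I-uA)\,\sqrt{1+|(I-uA)^{-1}\nabla u|^2}$; this factorization holds because $g^u=(I-uA)^2+\nabla u\otimes\nabla u$. The second step is to expand $\mathcal{F}$ using $\operatorname{tr}A=0$ (minimality of $\Sigma$):
\[
\mathcal{F}=1-\tfrac12|A|^2u^2+\tfrac12|\nabla u|^2+\mathcal{R}(x,u,\nabla u),
\]
where $\mathcal{R}$ is a smooth function of $uA$ and $\nabla u$ that vanishes to third order. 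The third step is the Euler--Lagrange equation $\div(\partial_p\mathcal{F})-\partial_u\mathcal{F}=0$. The term $\partial_p\mathcal{F}=\nabla u+\bar L\nabla u$ with $|\bar L|\le C(|u||A|+|\nabla u|)$, since $\partial_p\mathcal R$ is quadratic in $(uA,\nabla u)$ and is linear in $p$ at leading order. The term $-\partial_u\mathcal{F}=|A|^2u-\partial_u\mathcal R$.

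The main obstacle is organizing $\partial_u\mathcal R$ into the form $Q|A|^2u+Q_{ij}A_{ij}$ with the stated bounds. The variable $u$ enters $\mathcal F$ only through the matrix $uA$. Hence $\partial_u\mathcal R=A_{ij}\,\partial_{(uA)_{ij}}\mathcal R$, and $\partial_{(uA)}\mathcal R$ is quadratic in $(uA,\nabla u)$. I would split it into two parts:
\begin{itemize}
\item[(i)] the part quadratic in $uA$ alone, coming from $\det(I-uA)$: the cubic term $\tfrac13\operatorname{tr}((uA)^3)$ contracted with $A$ gives $u^2A^3$-type terms. These are bounded by $|A|^2|u|\cdot|u||A|$, i.e. $|A|^2u$ times $O(|u||A|)$, which is absorbed into $Q$ (with room to spare).
\item[(ii)] everything else, which is set equal to $Q_{ij}A_{ij}$ with $|Q_{ij}|\le C(|u||A|+|\nabla u|)^2$. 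This is immediate from Taylor's theorem with uniform constants on the region $|uA|+|\nabla u|\le 1$.
\end{itemize}
Any $x$-dependence through $A$ in the divergence term (from differentiating $\mathcal F$'s dependence on $A(x)$) does not appear separately, because the Euler--Lagrange form $\div(\partial_p\mathcal F)$ already encodes it. Finally, multiplying through by $-1$ gives $\div[(I+\bar L)\nabla u]+(1+Q)|A|^2u+Q_{ij}A_{ij}=0$.
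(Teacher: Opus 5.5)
Your approach is essentially the paper's, which follows Colding--Minicozzi. You compute the induced metric of the normal graph, use $\operatorname{tr}A=0$ to kill the first-order term of the area density, and read the equation off the first variation. You package this through the factorization $\sqrt{\det g^u}=\det(I-uA)\sqrt{1+|(I-uA)^{-1}\nabla u|^2}$ and the strong Euler--Lagrange equation, while the paper differentiates $\det g_{ij}$ along $u+sv$ and integrates by parts, but the argument is the same.

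One step, however, fails as written, namely (i). The cubic term of $\det(I-uA)$ contributes $u^2\operatorname{tr}(A^3)$ to the equation. Writing this as $Q|A|^2u$ forces $|Q|=|u|\,|\operatorname{tr}(A^3)|/|A|^2$, which is of order $|u||A|$. That is one order \emph{short} of the required $|Q|\le C(|u||A|+|\nabla u|)^2$, not ``with room to spare''. Nor does the term vanish: for $n\ge 3$ a trace-free $A$ (e.g.\ with eigenvalues proportional to $2,-1,-1$) can have $\operatorname{tr}(A^3)\neq 0$. Only for $n=2$ is $\det(I-uA)=1-\tfrac12 u^2|A|^2$ exactly.

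The repair is already contained in your (ii): put this term into $Q_{ij}A_{ij}$ with $Q_{ij}=u^2(A^2)_{ij}$. That coefficient is quadratic, and this is where the paper's computation puts it. In fact the split is unnecessary. Since $\mathcal F$ depends on $u$ only through the trace-free matrix $X=uA$, all of $\partial_u\mathcal R$ equals $A_{ij}\,\partial_{X_{ij}}\mathcal R|_{X=uA}$. The trace-free part of $\partial_X\mathcal R$ is bounded by $C(|u||A|+|\nabla u|)^2$, so you may simply take $Q\equiv 0$.

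Two smaller remarks. First, your hedge about invertibility can be dropped. A trace-free symmetric $A$ has eigenvalues of modulus at most $\sqrt{(n-1)/n}\,|A|$, so $|u||A|\le 1$ already makes $I-uA$ uniformly invertible. Taylor's theorem then applies with uniform constants on the whole region $\max\{|u||A|,|\nabla u|\}\le 1$.

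Second, for the divergence term, the precise reason $\partial_p\mathcal F$ has the form $(I+\bar L)\nabla u$ is that $\mathcal F$ depends on $p$ only through $p^{T}(I-uA)^{-2}p$. This gives exactly $\partial_p\mathcal F=\det(I-uA)\,W^{-1}(I-uA)^{-2}\nabla u$. That is what makes your ``linear in $p$'' remark rigorous, since quadratic smallness of $\partial_p\mathcal R$ alone would not exclude $p$-independent terms.
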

\begin{proof}
We follow the argument in the proof of Lemma 2.26 in \cite{CM1}. The key point is that the first-order term arising in the computation of the determinant of the metric $g$ for the graph vanishes because $\Sigma$ is minimal. Let $x$ be the position vector, $N$ the unit normal for $\Sigma$, and let $\left\{e_1, e_2,\cdots,e_{n}\right\}$ be an orthonormal frame along $\Sigma$. We will extend $u, e_1, e_2,\cdots,e_{n}$, and $N$ to a small neighborhood of $\Sigma$ by making all of them constant in the normal direction.  Set $A_{i j}=\left\langle A\left(e_i, e_j\right), N\right\rangle$, so that at any point in $\Sigma$ we have
$$
\nabla_{e_i} N=-A_{i j} e_j .
$$
The graph of $u$ over $\Sigma$ is given by
$$
x \rightarrow X(x)=x+u(x) N,
$$
so the tangent is spanned by $X_1,\cdots, X_{n}$ where
$$
X_i=\nabla_{e_i} X=e_i+u_i N-u A_{i k} e_k .
$$
The metric $g_{i j}$ for the graph satisfies
$$
\begin{aligned}
g_{i j} & =\left\langle X_i, X_j\right\rangle=\delta_{i j}+u_i u_j+u^2 A_{i k} A_{j k}-2 u A_{i j}.
\end{aligned}
$$
Since we will consider solutions where $|\nabla u|+|u||A|$ is small, we may write
\begin{align}\label{eqn:metric}
g_{i j}=\delta_{ij}-2 u A_{i j}+Q_{i j},
\end{align}
where $Q_{i j}$ denotes a matrix that is quadratic in $|\nabla u|$ and $|u||A|$ (and will be allowed to vary from line to line). More precisely, there exists $C$ so that if $\max \{|u||A|,|\nabla u|\} \leq 1$, then
$$
\left|Q_{i j}(x)\right| \leq C(|u||A|+|\nabla u|)^2 .
$$
It follows that the inverse metric is given by
$$
g^{i j}=\delta_{i j}+2 u A_{i j}+Q_{i j} .
$$
Using \eqref{eqn:metric} we get
$$
\operatorname{det} g_{i j}=\Pi_{i=1}^{n}\left(1-2 u A_{ii}\right)+Q=1+Q
$$
where the second equality used minimality (i.e., $\sum_{i=1}^{n}A_{ii}=0$ ) and we use the same $Q$ denote terms that are quadratic in $|u||A|$ and $|\nabla u|$. Whenever $J(s)$ is a differentiable path of matrices, the derivative at 0 of the determinant is given by
$$
\left.\frac{d}{d s}\right|_{s=0} \operatorname{det} J(s)=\operatorname{det} J(0) \operatorname{Trace}\left(J^{-1}(0) J^{\prime}(0)\right) .
$$
Applying this with $J(s)=g_{i j}(s)$ where $g_{i j}(s)$ is computed with $(u+s v)$ in place of $u$, we get
$$
\begin{aligned}
\left.\frac{d}{d s}\right|_{s=0}  \operatorname{det} g_{i j} 
& =(1+Q)\left(\delta_{i j}+2 u A_{i j}+Q_{i j}\right)\left(u_i v_j + u_j v_i +uv A_{i k} A_{j k}-2v A_{i j}\right) \\
& =2\langle\nabla u, \nabla v\rangle -2(1+Q)|A|^2 uv  +Q_{i j} A_{i j} v+\langle\bar{L} \nabla u, \nabla v\rangle,
\end{aligned}
$$
where in the second equality we note that 
\begin{align}
\left.J'(0) = \frac{d}{ds}\right|_{s=0} g_{ij}(s) = u_i v_j + u_j v_i +uv A_{i k} A_{j k}-2v A_{i j}.
\end{align}
and the last term $\bar{L}$ is a matrix that is linearly bounded in $|\nabla u|+|A||u|$ (again assuming that $\max \{|u||A|,|\nabla u|\} \leq 1$ ) and we also used $(1+Q) Q_{i j}=$ $Q_{i j}$. Hence, if the graph of $u$ is minimal, then we get for every $v$ with compact support that
$$
0=\int_{\Sigma}\langle(I+\bar{L}) \nabla u, \nabla v\rangle-(1+Q)|A|^2 u v+Q_{i j} A_{i j} v .
$$
Integrating by parts, implies that $u$ satisfies \eqref{eqn:minimal-graph}.
\end{proof}
We now give a condition where the bound \(\max\{|u||A|, |\nabla u|\} \le 1\) holds.
\begin{lem}\label{lem: lemma  2.11 in CM}
    Let \(\Sigma_1, \Sigma_2\) be disjoint minimal disks with bounded curvature \(\sup_{\Sigma_i}|A| < C_A\). Let \(R_\Sigma = \min\{R_{\Sigma_1}, R_{\Sigma_2}\}\) be the smaller of the graphical radii of \(\Sigma_1\) and \(\Sigma_2\). Suppose \(x_1 \in \Sigma_1\), \(x_2 \in \Sigma_2\), and \(\calB_{R_\Sigma/2}(x_i) \cap \partial \Sigma_i = \emptyset\). Then there exists \(\veps_0 = \veps_0(n, C_A) > 0\) and \(r_0 = r_0(n, C_A) > 0\) such that if \(|x_1 - x_2| < \veps_0\), then \(\Sigma_1 \cap B_{r_0}(x_1)\) and \(\Sigma_2 \cap B_{r_0}(x_2)\) are both graphs over the same plane and there exists a function \(u: \calB_{r_0}(x_1) \subset \Sigma_1 \rightarrow \bbR\) such that \(\{x + u(x)\bfn_{\Sigma_1}(x), x \in \calB_{r_0}(x_1)\} \subset \Sigma_2\) and \(|\nabla u| + |u||A| < 1\).
\end{lem}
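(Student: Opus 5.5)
We argue in three steps, modelled on Lemma 2.11 of \cite{CM1}. The strategy is to use Lemma \ref{graphical radius and chord arc for graphical radius} to write each sheet as a graph over its own tangent plane, compare the two tangent planes using embeddedness, write both sheets over one common plane, and finally convert the vertical separation into a normal graph over \(\Sigma_1\).

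\emph{Step 1: both sheets are graphs with controlled slope.} By Lemma \ref{graphical radius and chord arc for graphical radius}, since \(\calB_{R_\Sigma/2}(x_i)\cap\partial\Sigma_i=\emptyset\), each \(\calB_{R_\Sigma/2}(x_i)\) is a graph over \(T_{x_i}\Sigma_i\). The graphing function \(w_i\) satisfies \(w_i(0)=0\), \(\nabla w_i(0)=0\), \(|\nabla w_i|<1\) and \(|\nabla^2 w_i|\le c(C_A)\). By part (3) of that lemma, \(\Sigma_i\cap B_{\delta_c R_\Sigma/2}(x_i)\) lies inside this graph, so extrinsic and intrinsic balls are comparable at this scale. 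Integrating the Hessian bound gives \(|\nabla w_i|\le c\,r\) on the disk of radius \(r\), so the sheets are nearly flat on small disks.

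\emph{Step 2: the tangent planes are close.} We show that \(|\bfn_{\Sigma_1}(x_1)\mp\bfn_{\Sigma_2}(x_2)|\le\eta\) for a suitable sign, where \(\eta\to0\) as \(\veps_0\to0\). I expect this to be the main obstacle. We argue by contradiction. Suppose the angle between the normals is at least \(\eta\) while \(|x_1-x_2|<\veps_0\). Take \(r\) small depending on \(\eta\) and \(C_A\). Then both sheets are \(c\,r^2\)-close to their tangent disks of radius \(r\), and these disks pass within \(\veps_0\ll \eta r\) of each other while meeting at angle at least \(\eta\). The graph of \(w_2\) over the disk of radius \(r\) then has points on both sides of the graph of \(w_1\) inside \(B_{r}(x_1)\). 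By connectedness, and since \(\calB_{r}(x_2)\) stays inside the region where \(\Sigma_1\) is a graph, the two sheets must intersect. This contradicts \(\Sigma_1\cap\Sigma_2=\emptyset\). Making the two-sidedness precise means checking that the two points lie on opposite sides of the graph over a common domain. This only requires \(\veps_0+c r^2\ll \eta r\), which fixes \(\veps_0\) in terms of \(\eta\), \(r\) and \(C_A\).

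\emph{Step 3: common plane and normal graph.} Once the normals are \(\eta\)-close with \(\eta\) small, both sheets are graphs over the plane \(P=T_{x_1}\Sigma_1\) on a disk of radius \(r_0\), for a slightly smaller \(r_0=r_0(n,C_A)\). Their slopes are at most \(c(\eta+r_0)\), which is small, and their heights differ at the center by at most \(\veps_0\). Disjointness means the height difference \(h=w_2-w_1\) has constant sign. Next we convert vertical graphs into a normal graph over \(\Sigma_1\). For \(x\in\calB_{r_0}(x_1)\) we solve \(x+u(x)\bfn_{\Sigma_1}(x)\in\Sigma_2\). Because \(\bfn_{\Sigma_1}\) is nearly vertical and \(\Sigma_2\) is a graph with small slope, the implicit function theorem gives a unique smooth solution \(u\). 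This \(u\) satisfies \(|u|\le C(|h|+r_0 \sup|\nabla h|)\) and \(|\nabla u|\le C(\sup|\nabla w_1|+\sup|\nabla w_2|)\), both of which are small.

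To bound \(|h|\) we use the Hessian bounds: \(|h(y)|\le \veps_0+2\eta r_0 + c r_0^2\). Shrinking \(r_0\) and \(\veps_0\) then gives \(|u||A|\le C_A|u|<1/2\) and \(|\nabla u|<1/2\). Hence \(|\nabla u|+|u||A|<1\), with all constants depending only on \(n\) and \(C_A\).
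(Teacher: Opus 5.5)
Your proof is correct and has the same skeleton as the paper's. Both write the sheets as graphs over their tangent planes using Lemma \ref{graphical radius and chord arc for graphical radius}, use disjointness to force nearly parallel normals (otherwise the sheets would cross), pass to a common plane, and then form a normal graph. The real difference is in how $|\nabla u|$ is made small.

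The paper only asserts the near-parallelism, deferring to Lemma 2.11 of \cite{CM4}. It then gets the gradient bound by citing Proposition 3.2.1 of \cite{zhou2024geometry}: once one sheet lies in a $2\veps$-tubular neighborhood of the other, $|\nabla u|\le\max\{C_1\sqrt{\veps},C_2\veps\}$, with $r_0=\veps/4$.

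You instead make the crossing argument quantitative. An angle of at least $\eta$ together with $\veps_0+cr^2\ll\eta r$ forces an intersection point, by an intermediate value argument in the cylinder over the domain of $w_1$. You then read off $|\nabla u|\lesssim\eta+r_0$ from the Hessian bounds alone, choosing $r_0$ from $C_A$ first and $\veps_0$ afterwards. You also bound $|u|$ through $|h|$ rather than through $u(0)$.

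Your route is self-contained: it uses nothing beyond Lemma \ref{graphical radius and chord arc for graphical radius} and disjointness. The paper's route is quantitative in the separation, giving $|\nabla u|\lesssim\sqrt{\veps}$ on the whole ball. Your bound $\eta+r_0$ becomes small only after shrinking $r_0$, which the statement permits. Running your Step 2 at every base point of the ball, not just at $x_1$, would recover a square-root-of-separation rate as well.

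Two minor corrections.
\begin{itemize}
\item Part (3) of Lemma \ref{graphical radius and chord arc for graphical radius} controls only the component of $\Sigma_i\cap B_{\delta_c R}(x_i)$ containing $x_i$, not the whole intersection, since other sheets of the same disk may enter the ball. So graphicality should be asserted componentwise, which is how the statement has to be read anyway.
\item Differentiating the defining relation $x+u(x)\bfn_{\Sigma_1}(x)\in\Sigma_2$ produces an extra term of size $C_A|u|$ from $\nabla\bfn_{\Sigma_1}$ in your bound for $|\nabla u|$. This is harmless, since $|u|$ is already small.
\end{itemize}
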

This lemma roughly says that if \(\Sigma_1\) and \(\Sigma_2\) are nearby, disjoint minimal disks, then one can be written as a normal graph over the other.
\begin{proof}
    First, let \(R_\Sigma\) be a graphical radius valid for both \(\Sigma_1\) and \(\Sigma_2\) (depending only on \(C_A\)), and choose \(R\) small enough that \(\calB_{R}(x_i) \subset \Sigma_i\). Now Lemma \ref{graphical radius and chord arc for graphical radius} tell us that \(\calB_R(x_i)\) are graphs with bounded gradient and Hessian over \(T_{x_i}\Sigma_i\), with bounds depending only on \(C_A\). If \(|x_1 - x_2|\) is sufficiently small, then \(\bfn_{\Sigma_i}(x_i)\) must be nearly parallel, because if not then bounded curvature would force the disks \(\calB_{R}(x_i)\) to intersect transversely. If \(\bfn_{\Sigma_i}(x_i)\) are sufficiently close, then \(\calB_{R/2}(x_i)\) are graphs over the same plane. The details of the proof follow Lemma 2.11 in \cite{CM4} up to constants depending on the dimension.

    To show the bound on \(|\nabla u| + |u||A|\), we first note that by the gradient bound on \(\calB_{R/2}(x_i)\), \(|u||A| < 1/2\) when \(|u(0)| = |x_1 - x_2|\) is sufficiently small. We will use Proposition 3.2.1 of \cite{zhou2024geometry} to show \(|\nabla u| < 1/2\) in a sufficiently small neighborhood \(\calB_{r_0}(x_1)\), with \(r_0\) depending only on curvature. If \(|x_1 - x_2| < \veps\) is small enough to ensure that \(\calB_{R}(x_i)\) are both graphs, then the gradient bound shows for \(x_i' \in B_{\veps/4}(x_i) \cap \calB_{R}(x_i)\), \(|x_1' - x_2'| < 2\veps\). Thus, \(\calB_{\veps/4}(x_2)\) is in a \(2\veps\)-tubular neighborhood of \(\calB_{\veps/4}(x_1)\) for a sufficiently small \(\veps\) depending only on \(C_A\). Proposition 3.2.1 of \cite{zhou2024geometry} shows that \(|\nabla u|\) is bounded on \(\calB_{\veps/4}(x_1)\) by \(\max\{C_1\sqrt{\veps}, C_2\veps\}\), for constants \(C_1, C_2\) depending only on \(C_A\). Thus, taking \(\veps\) small enough guarantees that \(|\nabla u| < 1\) on \(\calB_{r_0}(x_1)\) where \(r_0 = \veps/4\) and \(\veps\) depends only on \(C_A\).
\end{proof}

The previous two Lemmas will be useful in showing that nearby, but disjoint, minimal hypersurfaces with bounded curvature are $\delta$-stable in the follow sense:
\begin{defn}[$\delta$-stability]
Given $\delta \geq 0$, set
$$
L_{\delta}=\Delta+\left(1-\delta\right)|A|^2 + \operatorname{Ric}_{M}(\nu, \nu)
$$
so $L_0$ is the usual Jacobi operator on $\Sigma$. A domain $\Omega \subset \Sigma$ is $\delta$-stable if
$$
\int \phi L_{\delta} \phi \leq 0
$$
for any compactly supported Lipschitz function $\phi$.
\end{defn}
We will then use curvature estimates for \(\delta\)-stable surfaces to show that nearby sheets of \(\Sigma_{x, R_0}\) must be almost flat. Using the Lemma \ref{PDE used to get the Harnack inequality} and the PDE for the function $\log u$ we deduce that
\begin{lem}\label{lem:1/2-stable}
There exists $\eta>0$ so that if $\Sigma$ is minimal and $u>0$ is a solution of the minimal graph equation over $\Omega \subset \Sigma$ with
$$
|\nabla u|+|u||A| \leq \eta,
$$
then $\Omega$ is $\delta$-stable for \(\delta < 1/2\). 
\end{lem}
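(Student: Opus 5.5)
The plan is the standard positive-supersolution argument, following Lemma 2.34 of Colding--Minicozzi's book. Since $u>0$ solves the minimal graph equation of Lemma \ref{PDE used to get the Harnack inequality}, we first show that $w=\log u$ satisfies a differential inequality whose zeroth order coefficient is close to $|A|^2$. Then we test this inequality against $\phi^2$ to obtain the $\delta$-stability inequality $\int |\nabla\phi|^2 \ge (1-\delta)\int |A|^2\phi^2$. (In $\R^{n+1}$ the Ricci term vanishes.)

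\textbf{Step 1: the equation for $w=\log u$.} Write $\nabla u = u\nabla w$. Substituting into \eqref{eqn:minimal-graph} and dividing by $u$ gives
\begin{equation}
\operatorname{div}[(I+\bar L)\nabla w] + \langle (I+\bar L)\nabla w,\nabla w\rangle + (1+Q)|A|^2 + \frac{Q_{ij}A_{ij}}{u} = 0 .
\end{equation}
The error term $Q_{ij}A_{ij}/u$ is the delicate one. It is quadratic in $|u||A|+|\nabla u|$, but it is divided by $u$. Following the structure of the computation in Lemma \ref{PDE used to get the Harnack inequality}, the $Q_{ij}$ arising there are built from $u^2A_{ik}A_{jk}$, $u_iu_j$ and $u\,u_i$-type products. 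I therefore expect to bound
\begin{equation}
\Bigl|\frac{Q_{ij}A_{ij}}{u}\Bigr| \le C\eta\bigl(|A|^2+|\nabla w|^2\bigr),
\end{equation}
using $|u||A|\le\eta$ and $|\nabla u|^2/u = u|\nabla w|^2 \le \eta|\nabla w|^2/|A|$ up to rearrangement. If necessary, I would instead write the minimal graph equation directly in divergence form with the coefficients tracked more precisely, so that every term carrying $Q_{ij}$ contains a factor $u$ or $\nabla u$. This bookkeeping is the main obstacle. The crude bound $|Q_{ij}|\le C(|u||A|+|\nabla u|)^2$ alone is not enough after division by $u$, so I would check the structure of $Q_{ij}$ explicitly.

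\textbf{Step 2: testing.} Multiply by $\phi^2$ with $\phi$ compactly supported Lipschitz in $\Omega$ and integrate by parts:
\begin{equation}
\int (1-C\eta)|A|^2\phi^2 + \int (1-C\eta)|\nabla w|^2\phi^2 \le \int 2\phi\langle (I+\bar L)\nabla w,\nabla\phi\rangle .
\end{equation}
Since $|\bar L|\le C\eta$, Cauchy--Schwarz bounds the right side by
\begin{equation}
(1+C\eta)\Bigl(\int |\nabla w|^2\phi^2 (1-C\eta) + \frac{1}{1-C\eta}\int|\nabla\phi|^2\Bigr).
\end{equation}
Absorbing the $|\nabla w|^2$ term, which costs only further factors of $(1\pm C\eta)$, yields
\begin{equation}
(1-C'\eta)\int|A|^2\phi^2 \le \int|\nabla\phi|^2 .
\end{equation}

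\textbf{Step 3: choice of $\eta$.} Choose $\eta$ with $C'\eta\le\delta$, for instance $C'\eta<1/2$, so that $-\int\phi L_\delta\phi \ge 0$ for $\delta<1/2$ (indeed for any prescribed $\delta>0$ after shrinking $\eta$). This is exactly $\delta$-stability of $\Omega$.
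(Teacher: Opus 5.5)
Your proposal is correct and is essentially the paper's argument, namely Colding--Minicozzi's $\log u$ trick: the bound $|Q_{ij}A_{ij}/u|\le C\eta(|A|^2+|\nabla w|^2)$ that you single out is exactly what the paper builds, without comment, into the coefficients $b,c$ of its equation for $w=\log u$, and both proofs then test against $\phi^2$ and absorb using Young's inequality. Two small fixes are needed: the Young parameter should be $(1-C\eta)/(1+C\eta)$ rather than $1-C\eta$, since otherwise the $|\nabla w|^2$ coefficient on the right is $1-C^2\eta^2$, which exceeds the $1-C\eta$ on the left and cannot be absorbed; and a fixed $\eta$ yields $\delta$-stability only for $\delta\ge C'\eta$, not for every $\delta<1/2$, so $\eta$ must be chosen depending on the target $\delta$, as your parenthetical says.
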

\begin{proof}
We follow the argument in the proof of Lemma 2.6 in \cite{CM4}. Set $w=\log u$ and choose a cutoff function $\phi \in C_c^1(\Omega)$. Then Lemma \ref{PDE used to get the Harnack inequality} implies that $w$ solves
\EQ{\label{eqn:w-pde}
\Delta w=-|\nabla w|^2+\operatorname{div}(a \nabla w)+\langle\nabla w, a \nabla w\rangle+\langle b, \nabla w\rangle+(c-1)|A|^2,
}
for functions $a_{i j}, b_j, c$ on $\Sigma$ with $|a|,|c| \leq 3|A||u|+|\nabla u|$ and $|b| \leq 2|A||\nabla u|$. Using Stoke's theorem on the map $\operatorname{div}\left(\phi^2 \nabla w-\phi^2 a \nabla w\right)$, and \eqref{eqn:w-pde} with the bounds $|a|,|c| \leq 3 \eta,|b| \leq 2 \eta|\nabla w|$ yields
\begin{align}
(1-3 \eta) \int \phi^2|A|^2 \leq & -\int \phi^2|\nabla w|^2+\int \phi^2\langle\nabla w, b+a \nabla w\rangle +2 \int \phi(\nabla \phi, \nabla w-a \nabla w\rangle \\
\leq & (5 \eta-1) \int \phi^2|\nabla w|^2+2(1+3 \eta) \int|\phi \nabla w||\nabla \phi|.   
\end{align}
The Lemma then follows from Young's inequality.
\end{proof}
\begin{rem}
Observe that for minimal surfaces $\Sigma^2\subset \R^3$ one can prove a stronger statement; let $\Sigma$ be a minimal surface. If there is a sequence $u_j$ so the graphs of $u_j$ over $\Sigma$ are minimal, the $u_j\neq 0$, and $\left|u_j\right|+\left|\nabla u_j\right| \rightarrow 0$, then $\Sigma$ is stable.
\newline 
Unfortunately, this does not naturally generalize to higher dimensions since elliptic PDEs in two dimensions with bounded coefficients yield holder bounds on the gradients, see Lemma 12.4 in \cite{Gilbarg-Trudinger}. This in general is not true in higher dimensions, however, for our applications the notion of $\delta$-stability is strong enough.    
\end{rem}

The final ingredient needed is based on the recent work on Stable Bernstein Conjecture by \cite{chodosh2023stable,CL, catino2024two,chodosh2024stable,mazet2024stable} and its extension to the $\delta$-stable case by \cite{hong2024delta,cheng2025complete}.
\begin{thm}\label{thm:delta-stable-flat}
For $3 \leq n \leq 5$ and $\delta\in (0,1-\delta_1(n))$, an $n$-dimensional complete two-sided $\delta$--stable minimal hypersurface $\Sigma^n \subset  \mathbb{R}^{n+1}$ is a hyperplane, where $\delta_1(3)=3 / 8$, $\delta_1(4)=2 / 3$ and $\delta_1(5)=21 / 22$.
\end{thm}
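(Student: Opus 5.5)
This result is cited from \cite{hong2024delta,cheng2025complete}, building on the stable Bernstein theorems of \cite{chodosh2023stable,CL,catino2024two,chodosh2024stable,mazet2024stable}. The natural route is to adapt the $\mu$-bubble proof of the stable Bernstein theorem to the weakened inequality. For $\phi\in C_c^\infty(\Sigma)$, $\delta$-stability reads
\[
(1-\delta)\int_\Sigma |A|^2\phi^2 \le \int_\Sigma |\nabla \phi|^2 .
\]
We must show that this forces $A\equiv 0$ whenever $1-\delta>\delta_1(n)$.

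\textbf{Step 1: reduce to volume growth.} The first step shows that polynomial volume growth $|\mathcal{B}_R(x)|\le CR^n$ already implies flatness. Plugging $\phi=|A|^{1+q}\eta$ into the weakened stability inequality and using Simons' inequality with the improved Kato constant, $|A|\Delta|A|+|A|^4\ge \frac{2}{n}|\nabla|A||^2$, gives a Schoen--Simon--Yau type $L^p$ estimate for $|A|$. The coefficient $1-\delta$ replaces $1$ throughout, so the admissible range of $q$ shrinks but remains nonempty for $1-\delta$ close enough to $1$. Combining this $L^p$ bound with Euclidean volume growth, and using cutoffs at scale $R\to\infty$, yields $\int|A|^{p}=0$.

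\textbf{Step 2: prove Euclidean volume growth.} This is the main obstacle, and I would follow the conformal-metric method of Chodosh--Li and Chodosh--Li--Minter--Stryker. Pass to the universal cover and set $\tilde g = r^{-2}g$, where $r$ is the extrinsic distance to a point, possibly modified by a power of the first eigenfunction of $-\Delta - (1-\delta)|A|^2$. The positive solution $u$ of $-\Delta u=(1-\delta)|A|^2u$, whose existence is equivalent to $\delta$-stability, feeds into a weighted spectral Ricci lower bound $\lambda_1(-\gamma\Delta+\mathrm{Ric}_{\tilde g})\ge c>0$. Here we use the Gauss equation $\mathrm{Ric}\ge -\frac{n-1}{n}|A|^2$. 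Since we only have $(1-\delta)|A|^2$ in place of $|A|^2$, positivity of the resulting spectral Ricci (or bi-Ricci) bound is exactly what determines the thresholds $\delta_1(n)$.

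\textbf{Step 3: conclude.} Warped $\mu$-bubbles then give a diameter bound for the relevant components, hence one-endedness-type control and a volume bound $|B_R|\le CR^n$ via a Bishop--Gromov-type comparison for spectral Ricci bounds (Antonelli--Xu). The constants $3/8$, $2/3$, $21/22$ come from optimizing the parameters $\gamma$ and the Kato constant in this algebra. I expect the bookkeeping in Step 2, namely keeping the spectral curvature positive with the weakened coefficient, to be the crux. Steps 1 and 3 are routine adaptations.
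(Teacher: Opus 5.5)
The paper gives no argument for this statement. It imports it directly from \cite{hong2024delta,cheng2025complete}, which build on the stable Bernstein papers. Treating it as a cited result is therefore exactly what the paper does. The heuristic sketch you attach, however, misplaces where the constants come from, and as literally stated your Step 1 does not reach the claimed range.

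Write $\beta=1-\delta$ and $u=|A|$. Test $\beta\int u^2\phi^2\le\int|\nabla\phi|^2$ with $\phi=u^{1+q}\eta$, and combine it with Simons' inequality (improved Kato constant $2/n$) multiplied by $u^{2q}\eta^2$. The term $\int u^{2q}|\nabla u|^2\eta^2$ can be absorbed only if
\[
\beta\Bigl(1+2q+\frac{2}{n}\Bigr)>(1+q)^2 .
\]
The resulting bound $\int u^{4+2q}\eta^{4+2q}\le C\int|\nabla\eta|^{4+2q}$ forces $A\equiv 0$ under Euclidean volume growth only if $4+2q>n$. So it is not enough that the admissible range of $q$ be nonempty for $\beta$ near $1$: it must contain some $q>\frac{n-4}{2}$. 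The difference of the two sides is a concave quadratic in $q$ with vertex at $q=\beta-1$. Hence this happens exactly when $\beta>\frac{n(n-2)}{4(n-1)}$, that is, $\beta>3/8,\ 2/3,\ 15/16$ for $n=3,4,5$.

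The first two values are precisely $\delta_1(3)$ and $\delta_1(4)$. So for $n=3,4$ the sharp constraint at the stated thresholds comes from the Schoen--Simon--Yau step, which you call routine, not from the spectral-curvature bookkeeping of your Step 2. For $n=5$ the threshold $15/16$ lies strictly below $21/22$, so in your scheme the restriction $1-\delta>21/22$ must come from the volume-growth part.

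Your Step 2 is asserted rather than checked. You do not show that the weighted spectral (or bi-Ricci) lower bound for $r^{-2}g$ stays positive when $|A|^2$ is replaced by $(1-\delta)|A|^2$, all the way down to $1-\delta=\delta_1(n)$. Nor do you show that the $\mu$-bubble diameter bound and the volume comparison survive in that range. That verification is exactly the content of \cite{hong2024delta,cheng2025complete}.

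Your proposal is therefore acceptable as a citation, which is all the paper offers. As an independent proof sketch, though, it neither establishes the stated range nor correctly identifies which step determines each threshold.
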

By a standard blow up argument, Theorem \ref{thm:delta-stable-flat} implies the following curvature estimate,
\begin{cor}\label{cor:curvature-estimate}
Let $3 \leq n \leq 5$ and \(\delta \in (0, 1-\delta_1(n))\) as in Theorem \ref{thm:delta-stable-flat}. There exists \(C(\delta, n) < \infty\) such that if \(\Sigma^n \subset \mathbb{R}^{n+1}\) is a two-sided, \(\delta\)-stable minimal immersion, then
\[|A_M|d(p, \partial M) \le C(\delta, n).\]
\end{cor}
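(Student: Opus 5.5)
We argue by contradiction with a blow-up, the usual route from a Liouville-type theorem to a local curvature estimate. Here $M$ is the $\delta$-stable immersed hypersurface and $|A_M|d(p,\partial M)$ the scale-invariant quantity. Suppose there are two-sided $\delta$-stable minimal immersions $M_j$ and points $p_j\in M_j$ with $|A_{M_j}|(p_j)\,d(p_j,\partial M_j)\to\infty$. By translating and rescaling we may assume $d(p_j,\partial M_j)=1$ and $|A_{M_j}|(p_j)\to\infty$. The proof has three steps: a point-picking argument, compactness, and a check that the limit is $\delta$-stable, so that Theorem \ref{thm:delta-stable-flat} applies.

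\emph{Step 1 (point picking).} On the intrinsic ball $\mathcal B_1(p_j)$ consider $F_j(q)=|A_{M_j}|(q)\,d(q,\partial\mathcal B_1(p_j))$. If $M_j$ is not compact, we first replace it by a compact exhaustion, so that $F_j$ is continuous, vanishes on the boundary, and attains a maximum at some $q_j$. Then $F_j(q_j)\ge F_j(p_j)=|A_{M_j}|(p_j)\to\infty$. Set $\lambda_j=|A_{M_j}|(q_j)$ and $r_j=\tfrac12 d(q_j,\partial\mathcal B_1(p_j))$, so that $\lambda_j r_j\to\infty$. Maximality gives $|A_{M_j}|\le 2\lambda_j$ on $\mathcal B_{r_j}(q_j)$. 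Rescale by $\lambda_j$ about $q_j$ to get $\tilde M_j=\lambda_j(M_j-q_j)$. Then $|A_{\tilde M_j}|(0)=1$, $|A_{\tilde M_j}|\le 2$ on $\mathcal B_{\lambda_j r_j}(0)$, and $\lambda_j r_j\to\infty$. The condition $\int\phi L_\delta\phi\le 0$ is invariant under scaling (in $\mathbb R^{n+1}$ we have $\mathrm{Ric}=0$), so $\tilde M_j$ is still two-sided and $\delta$-stable.

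\emph{Step 2 (compactness).} The curvature bound is uniform. Using Lemma \ref{graphical radius and chord arc for graphical radius}, each point of $\tilde M_j$ then has a uniform graphical radius, with $C^{1,1}$ bounds. Elliptic estimates for the minimal surface equation upgrade these to $C^{k}$ bounds. Since we only have immersions, we work with the pointed immersed hypersurfaces, pulling back to local graphical charts; equivalently we use the Cheeger--Gromov-type compactness for immersions with bounded second fundamental form. A subsequence converges smoothly on compact sets, as pointed immersions, to a complete immersed minimal hypersurface $M_\infty\subset\mathbb R^{n+1}$. Completeness holds because $\lambda_j r_j\to\infty$, and smooth convergence gives $|A_{M_\infty}|(0)=1$. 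Two-sidedness passes to the limit because the unit normals converge on the exhausting balls.

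\emph{Step 3 (stability passes to the limit).} Take a Lipschitz $\phi$ with compact support on $M_\infty$. Transplant it to $\tilde M_j$ through the graphical charts. By smooth convergence,
\[
\int|\nabla\phi|^2-(1-\delta)|A|^2\phi^2 \ge 0
\]
on $\tilde M_j$ passes to the same inequality on $M_\infty$. Hence $M_\infty$ is a complete two-sided $\delta$-stable minimal immersion with $\delta\in(0,1-\delta_1(n))$. Theorem \ref{thm:delta-stable-flat} then says $M_\infty$ is a hyperplane, which contradicts $|A_{M_\infty}|(0)=1$. The constant $C$ depends only on $n$ and $\delta$, because the contradiction argument uses nothing else.

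The main obstacle is Step 2 in the immersed setting. The rescaled $\tilde M_j$ need not be embedded, so their limit cannot be taken as sets in $\mathbb R^{n+1}$. I would first try the abstract pointed-manifold approach, which handles this cleanly: intrinsic balls with bounded $|A|$ are uniform graphs, and this gives injectivity radius and metric control. Step 1 needs the usual care if $M_j$ is noncompact; working on $\mathcal B_1(p_j)$, whose closure we may assume compact after exhaustion, handles this.
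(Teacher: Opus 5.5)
Your proposal is correct and is exactly the ``standard blow-up argument'' the paper invokes without details: point-picking to normalize $|A|$, smooth pointed compactness of the rescaled immersions, scale-invariance and passage to the limit of $\delta$-stability, then Theorem \ref{thm:delta-stable-flat} to contradict $|A_{M_\infty}|(0)=1$. The only thing to tighten is Step 1: run the point-picking on a ball with compact closure, e.g.\ $\mathcal{B}_{1/2}(p_j)$, rather than appealing to an exhaustion, so that $F_j$ is guaranteed to attain its maximum.
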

We now ready to give a proof of Theorem \ref{thm:chordarc}.
\begin{proof}[Proof of Theorem \ref{thm:chordarc}]
For clarity, we will rename \(R\) in the theorem statement to \(R_0\) in this proof. Because \(\Sigma\) is an embedded disk, \(\Sigma_{x, R_0}\) is also an embedded disk by the maximum principle and the monotonicity of topology (cf. \cite{CM1}). Suppose that \(\Sigma_{x, R_0}\) were not contained in an intrinsic ball \(\mathcal{B}_{2NR_{\Sigma}}(x)\) for any integer \(N\), where $R_\Sigma>0$ is defined in Lemma \ref{graphical radius and chord arc for graphical radius}. Then we can choose points \(z_i \in \partial \mathcal{B}_{2R_{\Sigma}i} \cap \Sigma_{x, R_0}\) for every \(i\), as well as a curve \(\sigma \subset \Sigma_{x, R_0}\) which passes through each \(z_i\). This sequence is contained in the ball \(B_{R_0}\), and thus is Cauchy. Choose \(z_{i_1}, z_{i_2} \in \{z_i\}\) with
\[|z_{i_1} - z_{i_2}| < \veps < \veps_0\]
where \(\veps_0\) is produced from Lemma \ref{lem: lemma  2.11 in CM}. For \(r_{\Sigma} < R_\Sigma/2\) the intrinsic balls \(\mathcal{B}_{r_{\Sigma}}(z_{i_j})\) are graphical and disjoint. Reducing \(r_\Sigma\) until it is less than the radius \(r_0\) from Lemma \ref{lem: lemma  2.11 in CM}, the balls \(\mathcal{B}_{r_{\Sigma}}(z_{i_1})\) and \(\mathcal{B}_{r_{\Sigma}}(z_{i_2})\) are both graphs over \(T_{z_{i_1}}\Sigma\), and it is possible to define \(u: \mathcal{B}_{r_{\Sigma}}(z_{i_1}) \rightarrow \mathbb{R}\) such that \(x + u(x)\mathbf{n}_{\Sigma}(x) \in \calB_{r_\Sigma}(z_{i_2})\) for \(x \in \mathcal{B}_{r_{\Sigma}}(z_{i_1})\) with \(|\nabla u| + |u||A| < 1\).

Lemma \ref{PDE used to get the Harnack inequality} now shows that \(u\) satisfies a uniform Harnack inequality on any compact set. We note that because \(\Sigma\) has bounded curvature and the closed balls \(\overline{\mathcal{B}_{r_{\Sigma}}}(x)\) are graphical, the same Harnack constant applies for all such balls.

We now iterate the Harnack inequality. Let \(x_1 = z_{i_1}\) and \(y_1 = z_{i_2}\). We initially had a map
\[x + u(x)\mathbf{n}_{\Sigma}(x): \mathcal{B}_{r_{\Sigma}}(x_1) \rightarrow u(\mathcal{B}_{r_{\Sigma}}(x_1)).\]
Consider now \(x_2 \in \mathcal{B}_{r_{\Sigma}}(x_1)\setminus  \mathcal{B}_{r_{\Sigma}/2}(x_1)\).
By the Harnack inequality,
\[u(x_2) < C|x_1 - y_1| < C\veps.\]
Let \(y_2 = x_2 + u(x_2)\mathbf{n}_{\Sigma}(x_2)\). If \(\mathcal{B}_{R_{\Sigma}}(x_2) \cap \mathcal{B}_{R_{\Sigma}}(y_2) = \emptyset\), then by taking \(\veps\) small enough we can ensure that
\[C\veps < \veps_0.\]
We want to show that by taking \(\veps\) small enough, we can iterate this process arbitrarily many times. Suppose we have iterated the process \(M\) times to produce a finite sequence of points \(x_n \in \Sigma\) and associated points \(y_n = x_n + u(x_n)\mathbf{n}_{\Sigma}(x_n)\) with the properties
\[d_{\Sigma}(x_i, x_j) > r_{\Sigma}/2, \quad |x_i - y_i| < \veps_0.\]
for all \(i \neq j\), with the function \(u\) defined on \(\Sigma_1 = \bigcup_n\mathcal{B}_{r_{\Sigma}}(x_n)\). Suppose furthermore that \(\veps\) is small enough that for \(z \in \Sigma_1\),
\[u(z) < \veps_0.\]
We can do this because
\[0 < u(z) < C^M|x_1 - y_1| < C^M\veps\]
for \(z \in \Sigma_1\) by the Harnack inequality. We claim that \(\mathcal{B}_{R_{\Sigma}}(x_i) \cap \mathcal{B}_{R_{\Sigma}}(y_i) = \emptyset\) for each \(i\). If this is true, then the conditions are met to extend \(u\) to a neighborhood of any point \(x_{n + 1} \in \mathcal{B}_{r_{\Sigma}}(x_i) \setminus \mathcal{B}_{r_{\Sigma}/2}(x_i)\).

Suppose for the sake of contradiction that \(\mathcal{B}_{R_{\Sigma}}(x_i) \cap \mathcal{B}_{R_{\Sigma}}(y_i) \neq \emptyset\). Because \(\Sigma\) is embedded, there is a smooth length minimizing geodesic in \(\mathcal{B}_{R_{\Sigma}}(x_i) \cup \mathcal{B}_{R_{\Sigma}}(y_i)\) connecting \(x_i\) to \(y_i\) (if the geodesic left this set, then it could not be length minimizing). Because \(y_i - x_i\) is nearly parallel to \(\mathbf{n}_{\Sigma}(x_i)\) and \(\mathbf{n}_{\Sigma}(y_i)\), it follows that \(\gamma'\) is nearly parallel to \(\mathbf{n}_{\Sigma}(x_i)\) and \(\mathbf{n}_{\Sigma}(y_i)\) at some point in \(\mathcal{B}_{R_{\Sigma}}(x_i) \cup \mathcal{B}_{R_{\Sigma}}(y_i)\), but this contradicts the fact that the balls \(\mathcal{B}_{R_{\Sigma}}(x_i), \mathcal{B}_{R_{\Sigma}}(y_i)\) are each graphical.

Now given \(R > 0\), we claim that by choosing \(\veps\) small enough we can choose points \(x_i\) to define \(u\) on a large set \(\Sigma_1 \supset \mathcal{B}_{R}(x_1)\) (note that \(x_1\) will depend on \(\veps\), but the critical point is the radius \(R\)). Because the set \(\mathcal{B}_{R}(x_1)\) is compact, it can be covered by finitely many graphical intrinsic balls \(\mathcal{B}_{r_{\Sigma}}(p_i)\) with \(d_{\Sigma}(p_i, p_j) > r_{\Sigma}/2\). This number of course does not depend on \(x_1\), and so we need only choose \(\veps\) small enough to repeat the Harnack inequality enough times.

As discussed above, by choosing \(\veps\) and \(r_\Sigma\) sufficiently small, we have ensured that the conditions of Lemma \ref{lem:1/2-stable} hold. It follows that \(\mathcal{B}_{R}(x_1)\) is \(\delta\)-stable for some \(\delta \in (0, 1-\delta_1(n))\). Thus Corollary \ref{cor:curvature-estimate} on \(\mathcal{B}_{R}(x_1)\) implies
\[\sup_{\mathcal{B}_{R/2}(x_1)}|A| \le \frac{2C(\delta,n)}{R}.\]
Thus by taking \(R > 10C(\delta)R_0\), we can ensure that \(\Sigma_{x_1, 10R_0}\) is graphical. By potentially taking \(R > 10C(\delta)MR_0\) for a sufficiently large \(M \ge 1\), we can ensure that
\[B_{5R_0}(x_1) \cap \partial \mathcal{B}_{10R_0}(x_1) = \emptyset.\]
However, we initially had \(x_1 = z_I\) for some \(I\), and \(z_{I + J} \in B_{R_0}(0) \cap \partial \mathcal{B}_{2(I + J)R_{\Sigma}}(0)\) by construction. Choosing \(J\) so that \(2JR_{\Sigma} > 10R_0\), it follows that the segment of the curve \(\sigma\) connecting \(z_I\) and \(z_{I + J}\) would have to leave \(B_{R_0}(0)\). This is because \(B_{R_0}(0) \subset B_{5R_0}(x_1)\), and the intrinsic distance between \(z_I \in \partial\mathcal{B}_{2IR_{\Sigma}}\) and \(z_{I + J} \in \partial\mathcal{B}_{2(I + J)R_{\Sigma}}(0)\) is at least \(2JR_{\Sigma} > 10R_0\), meaning that \(\sigma\) would have to cross \(\partial \mathcal{B}_{10R_0}(x_1)\). This contradiction completes the proof.

We have shown that given \(R_0>0\), there exists an \(R_1\) such that
\begin{equation}
    \Sigma_{x, R_0} \subset \calB_{R_1}(x).
\end{equation}
Now we want to show that \(R_1\) is a polynomial of \(R_0\). Fix \(R_1>0\) such that
\begin{equation}
    \Sigma_{x, 1} \subset \calB_{R_1}(x).
\end{equation}
Let \(B_{1}(x_i)\) be a cover of \(B_{R}(x)\) consisting of \(2R^n\) balls (we emphasize that these are Euclidean balls in \(\mathbb{R}^n\), which is why we know we can take such a cover). Then, by the chord-arc property and the triangle inequality,
\begin{equation}
\Sigma_{x, R}  \subset \Sigma_{x, R} \cap \bigcup_{i = 1}^{2R^n}B_1(x_i) \subset \Sigma_{x_1', 1} \cup \cdots \cup \Sigma_{x_{2R^n}', 1}  \subset \calB_{R_1}(x_1') \cup \cdots \cup \calB_{R_1}(x_{2R^n}')  \subset \calB_{4R_1R^n}(x)
\end{equation}
where $x'_i\in B_{1}(x_i)\cap \Sigma_{x,R}.$
\end{proof}

As a consequence of the chord-arc estimate, we can show that a complete, embedded minimal hypersurfaces with bounded curvature $\Sigma^n \subset \R^{n+1}$ for $3\leq n \leq 5$ are unbounded. However, this fact can be proved in any dimension $n\geq 2$ by a more direct argument, which might be known to experts but since we could not locate it in the literature we write it here for completeness. 
\begin{lem}\label{lem:unbounded}
Let $n\geq 2$ and \(\Sigma^{n} \subset \mathbb{R}^{n+1}\) be a complete, embedded minimal {hypersurface} and assume that \(\sup_{\Sigma}|A| \le C_A\) for some \(C > 0\). Then \(\Sigma\) is unbounded.
\end{lem}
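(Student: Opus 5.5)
We argue by contradiction and suppose $\Sigma\subset B_{R_0}(0)$ for some $R_0<\infty$. The plan is to produce a complete, nonflat minimal hypersurface that is also bounded, which is impossible. Concretely, we will show that the closure of $\Sigma$ contains a limit leaf, that this leaf has bounded curvature and lies in $\overline{B_{R_0}(0)}$, and that it is either compact or can be compared with spheres or hyperplanes.

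\textbf{Step 1 (a limit leaf).} Since $\Sigma$ is complete and has infinite intrinsic diameter, while all its points lie in the compact set $\overline{B_{R_0}(0)}$, we can pick points $p_i\in\Sigma$ with $d_\Sigma(p_i,p_j)\ge 2R_\Sigma$ for $i\neq j$. After passing to a subsequence, $p_i\to p_\infty$. By Lemma \ref{graphical radius and chord arc for graphical radius}, each ball $\calB_{R_\Sigma}(p_i)$ is a graph with gradient less than $1$ and Hessian bounded by $c(C_A)$ over $T_{p_i}\Sigma$. These balls are pairwise disjoint by the choice of the $p_i$ together with embeddedness. Arzel\`a--Ascoli then gives a subsequence converging smoothly to a minimal graphical disk $D_\infty$ through $p_\infty$.

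By embeddedness and the maximum principle, the limit disk is either disjoint from, or locally coincides with, each nearby sheet. Repeating this at every point of $D_\infty$ and using the uniform graphical radius, the limits glue together. We obtain that $\overline{\Sigma}$ is a minimal lamination of $\overline{B_{R_0}(0)}$ with leaves of curvature at most $C_A$. Let $L$ be the leaf through $p_\infty$. Because the graphical radius is uniform, the leaf $L$ extends by at least $R_\Sigma$ intrinsically from each of its points, so $L$ is complete.

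\textbf{Step 2 (contradiction).} Let $L$ be any complete leaf with bounded curvature lying in $\overline{B_{R_0}}$. We consider $f=|x|^2$ restricted to $L$; minimality gives $\Delta_L f=2n$.

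\emph{Case 1: $L$ is compact.} Then $f$ attains an interior maximum on $L$, which contradicts $\Delta_L f=2n>0$.

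\emph{Case 2: $L$ is noncompact.} The bounded curvature means $|A|$ is bounded, so the Gauss equation bounds the Ricci curvature of $L$ below by $-C_A^2$. The Omori--Yau maximum principle therefore applies: there are points $q_k\in L$ with
\[
f(q_k)\to\sup_L f,\qquad |\nabla f|(q_k)\to 0,\qquad \Delta_L f(q_k)\le \tfrac1k .
\]
This again contradicts $\Delta_L f=2n$.

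The same argument applies directly to $\Sigma$ itself: $\Sigma$ is complete with $\mathrm{Ric}\ge -(n-1)C_A^2$, and $f$ is bounded on it. So Omori--Yau yields the contradiction immediately, and Step 1 is not actually needed. I would present this direct argument as the proof and keep the lamination discussion only as a remark.

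\textbf{Main obstacle.} The key point is to justify Omori--Yau correctly. It requires completeness together with a lower Ricci bound, and the Ricci bound comes from the Gauss equation:
\[
\mathrm{Ric}(v,v)=-|A(v)|^2\ge -C_A^2|v|^2 .
\]
The remaining check is that $f$ is bounded above on $\Sigma$, which holds since $\Sigma\subset B_{R_0}(0)$. Embeddedness is not used in this argument.
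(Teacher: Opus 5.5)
Your direct argument is correct, and it takes a genuinely different route from the paper's.

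The paper works by a limit-sheet construction:
\begin{enumerate}
\item It takes the smallest ball $\overline{B_{R_0}(0)}$ containing $\Sigma$ and a point $z\in\overline{\Sigma}\cap\partial B_{R_0}(0)$.
\item It approximates $z$ by points $z_i\in\Sigma$. By completeness and the interior maximum principle for $|x|^2$, these points cannot converge inside $\Sigma$.
\item It uses the uniform graphical radius of Lemma~\ref{graphical radius and chord arc for graphical radius} and interior estimates for the minimal surface equation to extract a limit minimal graph through $z$ lying in $\overline{B_{R_0}(0)}$.
\item This graph lies on one side of the tangent plane to the sphere at $z$ and touches it there, so the strong maximum principle gives a contradiction.
\end{enumerate}

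Your Omori--Yau argument replaces this whole compactness step with a maximum principle at infinity. The Gauss equation $\mathrm{Ric}(v,v)=-|A(v)|^2\ge -C_A^2|v|^2$ supplies the Ricci lower bound. The identity $\Delta_\Sigma|x|^2=2n$ is then incompatible with an Omori--Yau sequence for the bounded function $|x|^2$.

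This is essentially Omori's classical theorem. It is shorter and covers the compact case automatically. As you note, it never uses embeddedness, so it works verbatim for immersions and in any codimension. The price is that it invokes Omori--Yau as a black box. The paper's proof is phrased for embedded $\Sigma$ and is more hands-on, and it fits the sheet-convergence philosophy used throughout Section~\ref{sec:chord-arc}.

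You are right to discard Step~1. As written, it assumes $\Sigma$ has infinite intrinsic diameter, which holds only when $\Sigma$ is noncompact. The lamination structure is also not needed for the conclusion.
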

\begin{proof}
Let \(\Sigma \subset \mathbb{R}^n\) be a complete embedded minimal surface with bounded curvature, say \(\sup_{\Sigma}|A| \le C_A\). Suppose that \(\Sigma\) is bounded. Then the closure \(\overline{\Sigma}\) is compact and thus there is a smallest radius \(R_0\) such that \(\Sigma \subset \overline{\Sigma} \subset B_{R_0}(0)\). Let \(z \in \overline{\Sigma} \cap \partial B_{R_0}(0)\).

There is a sequence of points \(z_i \in \Sigma\) approaching \(z\). We claim that by passing to a subsequence, we can assume that the sequence is unbounded and in particular that \(d_{\Sigma}(z_i, z_j) > R_{\Sigma}/2\). If not, then the sequence \(\{z_i\}\) would be bounded, and because \(\Sigma\) is complete it would converge to a point \(z \in \Sigma\). But then \(z\) would be a maximum of \(|\cdot|^2\) on \(\Sigma\), which violates the maximum principle.

Because curvature is bounded and \(\Sigma\) is embedded, we can pass to another subsequence so that the normals \(\mathbf{n}_{\Sigma}(z_i)\) to \(\Sigma\) at the \(z_i\) converge to the normal \(\mathbf{N}\) of \(\partial B_{R_0}(0)\) at \(z\). We may assume after a rotation of \(\mathbb{R}^n\) that \(z\) is at the origin and that \(\mathbf{N}\) is parallel to the \(x_n\) axis.

Fix \(r_{\Sigma} < \delta_c R_{\Sigma}\). Let \(D_{r_{\Sigma}} = B_{r_{\Sigma}}(0) \times \{x_n = 0\}\) and consider the cylinder \(D_{r_{\Sigma}}(0) \times \mathbb{R}\). Let \(\Sigma_{i}\) be the component of \(\Sigma \cap D_{r_{\Sigma}}(0) \times \mathbb{R}\) containing \(z_i\). By the uniform graph radius, we know that the \(\Sigma_i\) are disjoint and can be written as the graphs of functions \(u_i: D_{r_{\Sigma}}(0) \rightarrow \mathbb{R}\) with \(u_i(0) \rightarrow 0\). By the uniform curvature bound, the functions \(u_i\) are uniformly bounded, and because \(\Sigma\) is minimal we know that the \(u_i\) solve the minimal surface equation. Thus \(u_i \in C^2(D_{r_{\Sigma}/2})\), and corollary 16.7 in \cite{Gilbarg-Trudinger} shows that the derivatives of the \(u_i\) of each order have a uniform bound (depending on the order). It follows that \(u_i \rightarrow u\) in \(C^2(D_{r_{\Sigma}/4}(0))\). The limit function \(u\) satisfies the minimal surface equation by continuity, and so its graph is a minimal surface.

By our original construction, the surfaces \(\Sigma_i\) lie above the plane \(\{x_n = 0\}\), because before the rotation of \(\mathbb{R}^n\) they were in the interior of \(B_{R_0}(0)\). By the maximum principle, the graph of \(u\) must lie entirely in the plane \(\{x_n = 0\}\) or go below the plane \(\{x_n = 0\}\), since the graph is a minimal surface and \(u(0) = 0\). But then it follows that some \(\Sigma_i\) leaves the ball \(B_{R_0}\), a contradiction. This completes the proof.
\end{proof}
Finally, while we cannot prove that a complete embedded minimal hypersurface $\Sigma^n\subset \R^{n+1}$ with bounded curvature is proper, we observe that $\bar{\Sigma}$ is a minimal lamination of $\R^{n+1}$. Recall that  a minimal lamination of a smooth manifold $M^{n+1}$ is defined as follows (cf. Appendix B of \cite{CM6})
\begin{defn}[Minimal lamination]\label{def:minimal-lamination}
Let $M^{n+1}$ be an $(n+1)$-dimensional smooth manifold.
A \emph{codimension-one lamination} of $M$ is a collection $\mathcal L$ of smooth, disjoint $n$-dimensional submanifolds of $M$ (called \emph{leaves})
such that:
\begin{enumerate}
  \item The union $\bigcup_{\Lambda\in\mathcal L}\Lambda$ is a closed subset of $M$.
  \item For every $x\in M$ there exists an open neighborhood $U\ni x$ and a coordinate chart
  $\Phi:U\to \Phi(U)\subset \mathbb R^{n+1}$ with the property that, in these coordinates,
  the leaves of $\mathcal L$ pass through $\Phi(U)$ in slices of the form $(\mathbb R^n\times\{0\})\cap \Phi(U)$.
\end{enumerate}
A \emph{minimal lamination} is a lamination whose leaves are minimal. A \emph{limit leaf} is a leaf $\Lambda$ of $\calL$ that is contained in the closure of $\calL\setminus \Lam$ (cf. Definition 4.3.2 in \cite{zhou2024geometry}).
\end{defn}
Furthermore, by the recent resolution of the Stable Bernstein Conjecture in $\R^{n+1}$ for $3\leq n \leq 5$ we can now show that the limit leaves of $\bar{\Sigma}$ are flat, which yields the following trichotomy:
\begin{lem}
Let $3\leq n\leq 5$. Suppose $\Sigma^n\subset \R^{n+1}$ is a complete connected embedded minimal surface in $\mathbb{R}^{n+1}$ with $|A_\Sigma|\leq C_A <\infty$. Then \(\overline{\Sigma}\) is a minimal lamination, and one of the following holds:
\begin{enumerate}
    \item $\Sigma$ is properly embedded in $\mathbb{R}^{n+1}$,
    \item $\Sigma$ is properly embedded in an open halfspace of $\mathbb{R}^{n+1}$ with limit set the boundary plane of this halfspace, or
    \item $\Sigma$ is properly embedded in an open slab of $\mathbb{R}^{n+1}$ with limit set consisting of the boundary planes.
\end{enumerate}
\end{lem}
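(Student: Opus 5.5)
We prove the lamination structure first, then show that limit leaves are hyperplanes, and finally sort into the three cases according to how many limit planes occur.

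\textbf{Lamination structure.} Take $p\in\overline{\Sigma}$ and a sequence $z_i\in\Sigma$ with $z_i\to p$. By Lemma \ref{graphical radius and chord arc for graphical radius}, each intrinsic ball $\mathcal{B}_{R_\Sigma}(z_i)$ is a graph with uniformly bounded gradient and Hessian. By completeness these balls contain no boundary. By Lemma \ref{lem: lemma  2.11 in CM} and embeddedness, any two of them that come within $\veps_0$ of each other are disjoint graphs over a common plane. Passing to a subsequence and arguing as in the proof of Lemma \ref{lem:unbounded} (interior estimates for the minimal surface equation, then Arzel\`a--Ascoli), the disks converge in $C^2$ to a minimal graph through $p$. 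By the maximum principle, distinct such limit disks, and limits and sheets of $\Sigma$, are either disjoint or coincide. Near $p$ the union of all sheets and limit graphs is therefore a closed family of disjoint, ordered, uniformly $C^{2}$ graphs over a fixed disk. Their heights over the centre form a closed set. Interpolating linearly in the height parameter between consecutive graphs, or using the standard construction in Appendix B of \cite{CM6}, gives the local lamination chart. Continuing the limit graphs analytically produces leaves that are complete, since the curvature is uniformly bounded, and $\overline{\Sigma}$ is closed by definition. Hence $\overline{\Sigma}$ is a minimal lamination. Its leaves are $\Sigma$ itself together with the leaves of $\overline{\Sigma}\setminus\Sigma$, because $\Sigma$ is a leaf.

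\textbf{Limit leaves are flat.} Let $\Lambda$ be a limit leaf, and first assume $\Lambda$ is two-sided. Near each compact piece of $\Lambda$, the sheets accumulating on it can be written, via Lemma \ref{lem: lemma  2.11 in CM}, as normal graphs $u_j>0$ over $\Lambda$ with $|\nabla u_j|+|u_j||A|\to0$ on compact sets. This follows from the Harnack iteration used in the proof of Theorem \ref{thm:chordarc}. Lemma \ref{lem:1/2-stable} then shows that every compact subdomain of $\Lambda$ is $\delta$-stable for a $\delta$ as small as we like, in particular $\delta\in(0,1-\delta_1(n))$. Hence $\Lambda$ is complete and $\delta$-stable, and Theorem \ref{thm:delta-stable-flat} says it is a hyperplane. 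If $\Lambda$ is one-sided, we pass to the two-sided double cover, which is still $\delta$-stable; but a hyperplane cannot be covered one-sidedly, so this case does not occur.

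The main obstacle is this step: making the accumulation from only one side rigorous on all of $\Lambda$, not merely near one point. To handle it, I would fix a compact exhaustion of $\Lambda$. On each compact piece I would choose sheets of $\Sigma$ close enough that the finitely many Harnack steps still apply, exactly as in the proof of Theorem \ref{thm:chordarc}. If the sheets accumulate from both sides, it suffices to use one side.

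\textbf{Trichotomy.} If $\Sigma$ is not proper, then $\overline{\Sigma}\neq\Sigma$ near some point, so there is a limit leaf. By the previous step every limit leaf is a hyperplane. Any two limit planes are disjoint, so they are parallel. $\Sigma$ is connected and disjoint from these planes, so it lies in a single complementary component. That component is either an open halfspace bounded by one plane or an open slab bounded by two planes. There cannot be a third limit plane inside the component, since it would separate $\Sigma$; and planes on the far side do not lie in $\overline{\Sigma}$. The whole limit set must consist of limit planes. Indeed, every leaf of $\overline{\Sigma}\setminus\Sigma$ is a limit of sheets of $\Sigma$, hence a limit leaf, hence a plane. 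Any limit point of $\Sigma$ off the boundary planes would give a further interior limit plane, which we have just excluded. Therefore $\Sigma$ is properly embedded in that halfspace or slab, and its limit set is exactly the boundary plane or planes, which gives cases (2) and (3). Otherwise we are in case (1).
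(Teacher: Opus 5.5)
Your overall architecture matches the paper's: lamination from the curvature bound, flatness of limit leaves, then sorting by the number of limit planes. Replacing full stability plus the stable Bernstein theorem by Lemma \ref{lem:1/2-stable} with small $\eta$ plus Theorem \ref{thm:delta-stable-flat} is a fine variant. The gap is in the flatness step, exactly where you located the obstacle, and the compact exhaustion does not close it.

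You assume that over a compact piece $K\subset\Lambda$ the nearby sheets are normal graphs of single-valued functions $u_j>0$ on $K$. This fails if $K$ contains a loop $\gamma$ along which the lamination has nontrivial holonomy. Continuing a nearby plaque around $\gamma$ (plaque by plaque, or by your Harnack steps) brings it back over the base point as a different plaque, at height $h_\gamma(t)\neq t$. With contracting holonomy, for instance, every nearby sheet spirals towards $\Lambda$, and none is a graph over a neighbourhood of $\gamma$. Taking sheets closer to $\Lambda$ does not help, because the obstruction is monodromy rather than size. Without a single-valued positive $u$ on the support of the test function, $w=\log u$ in the proof of Lemma \ref{lem:1/2-stable} is undefined, so you do not obtain $\delta$-stability of $\Lambda$. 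A posteriori $\Lambda$ is a plane and has no holonomy, but that cannot be used here. The same problem affects your double-cover treatment of the one-sided case.

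The missing idea, which is the key step in the paper's proof (following Meeks--Rosenberg), is to work on the universal cover $\hat\Lambda$, immersed in $\R^{n+1}$ through $\Lambda$. Since $\hat\Lambda$ is simply connected, continuing a nearby sheet along paths from a base point is path-independent. A homotopy between paths in a compact set stays in a slightly larger compact set, over which your Harnack iteration keeps the sheets close. Hence every compact domain $\hat D\subset\hat\Lambda$ is a limit of disjoint minimal normal graphs $\hat u_j>0$ with $|\nabla \hat u_j|+|\hat u_j||A|\to 0$. The conclusion then follows in three steps:
\begin{enumerate}
\item Lemma \ref{lem:1/2-stable} makes $\hat\Lambda$ $\delta$-stable with $\delta<1-\delta_1(n)$.
\item Theorem \ref{thm:delta-stable-flat}, applied to this complete two-sided immersion, makes $\hat\Lambda$ a hyperplane.
\item Hence $\Lambda$ is a plane. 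Two-sidedness is automatic on $\hat\Lambda$, so no separate one-sided case is needed.
\end{enumerate}

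There is also a smaller slip in the trichotomy. Non-properness does not give $\overline\Sigma\neq\Sigma$, since a priori $\Sigma$ could accumulate only on itself. In that case the same universal-cover argument applied to $\Sigma$ shows it is a plane, hence proper, so your case analysis goes through once this is noted.
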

\begin{proof}
The proof follows from the same argument as in the proof of Lemma 1.1 in \cite{meeks2005uniqueness} where the authors, using the bound on the curvature, first show that $\bar{\Sigma}$ is a minimal lamination of $\R^{n+1}$. Next, by considering the universal cover $\hat{\Lam}$ of any limit leaf $\Lam$ one can show that $\hat{\Lam}$ is stable since any compact domain $\hat{D}$ of $\hat{\Lam}$ is a limit of disjoint minimal domains, which in turn implies that $\hat{D}$ is stable by a contradiction argument. Therefore, $\Lam$ is stable, which by the works of \cite{chodosh2023stable,CL, catino2024two,chodosh2024stable,mazet2024stable} implies that is $\Lam$ is a plane, thus completing the proof of the Lemma.
\end{proof}

\section{Improperly Embedded Complete Minimal Hypersurface in \(\mathbb{R}^{n +1}\)}\label{sec:improper}

The main goal of this section is to prove Theorem \ref{thm:k-ends}, which will be used to prove Theorem \ref{thm:nonproper}. To this end, we will construct complete, embedded, minimal hypersurfaces with $k$ planar ends by gluing \(k\) catenoids together. Heuristically, our surfaces will look like a stack of \(k\) planes, with each pair of planes connected by a catenoidal neck. The width of the neck connecting the \(k\)th and \((k + 1)\)th planes will be decreasing in \(k\), and the necks will be moving away from the origin as $k\to \infty$. The planes will converge to a limit plane, and so the surfaces we produce will have a limit surface which is complete, embedded, and minimal, but improper in $\R^{n+1}$ for all $n\geq 3.$ 

The construction of $\Sigma_k$ is inspired by the work of Fakhi and Pacard \cite{Fakhi-Pacard}, who constructed examples of immersed minimal hypersurfaces in \(\mathbb{R}^{n+1}\) with $k$-planar ends in the same manner. In that work, the authors show that a half catenoid can be glued to a nondegenerate minimal surface (cf. Lemma \ref{lem:invertability-of-L_M}) of finite total curvature. The key idea is to use the scaling of the Green's function in \(\mathbb{R}^n\). When \(n \ge 3\), the Green's function vanishes at infinity, and grows similarly to a catenoid near the origin. Let the Green's function with pole at the origin be denoted \(\gamma_0\) and consider a minimal surface which is the graph of a function \(u\). The graph of \(u + \gamma_0\) will look like a catenoidal neck near the origin and look like the original graph of \(u\) far from the origin. Since \(\gamma_0\) is harmonic, it will be possible to perturb \(u + \gamma_0\) to be a minimal surface; in particular, we can perturb it to match with the graph of a half-catenoid near the origin, gluing the two surfaces.

\subsection{Overview of the Construction}
Since the construction of these embedded minimal hypersurfaces with $k$-planar ends is quite involved, we give an outline. Our contribution is to modify Fakhi and Pacard's \cite{Fakhi-Pacard} gluing process to produce embedded surfaces, and to produce an improperly embedded surface. However, the technical details of the gluing process are still very similar to Fakhi and Pacard's. Much of the details of this section are included to make the paper readable, and to trace our modifications to \cite{Fakhi-Pacard} to ensure they are sound. We emphasize our key modification in the outline.

Let \(\Sigma\) be a given minimal hypersurface and let \(C\) be a half catenoid, which we intend to glue to \(\Sigma\). We achieve the gluing in three steps.\\

\paragraph{\textit{Step 1}} Let \(p_0 \in \Sigma\) be the point at which the gluing will occur. Now we define our initial, unperturbed surfaces. \\

\paragraph{\textit{Step 1(a)}} For \(r_0>0\) chosen sufficiently small, the set \(\Sigma^c_{r_0}=B_{r_0}(p_0)\cap \Sigma\) is a compact graphical portion of \(\Sigma\). By translation and rotation, we may suppose that \(p_0\) is the origin, and that \(\Sigma_{r_0}^c\) is a graph over \(\{x_{n + 1} = 0\}\).

Diverging from Fakhi-Pacard, we do not assume that \(\{x_{n + 1} = 0\}\) is the tangent plane of \(\Sigma\) at \(p_0\). Instead, \(\{x_{n + 1} = 0\}\) can be any plane over which \(\Sigma_{r_0}^c\) is a graph.\\

\paragraph{\textit{Step 1(b)}}We denote by \(\Sigma_{r_0}\) the noncompact piece of \(\Sigma\) that remains after removing \(\Sigma^c_{r_0}\). I.e., \(\Sigma_{r_0} = \Sigma \backslash \Sigma_{r_0}^c\)\\

\paragraph{\textit{Step 1(c)}} We will glue a half catenoid \(C\) to \(\Sigma^c_{r_0}\). We choose \(C\) to be the catenoid oriented with the vertical axis, whose neck has radius 1, and whose boundary \(\partial C\) is a graph of a function over the sphere \(\bbS^{n - 1} \subset \{x_{n+1} = 0\}\).

A priori, the width of the neck of \(C\) may be much larger than \(r_0\). To account for this, we fix \(\veps>0\) sufficiently small, scale \(C\) by the factor \(\veps^{1/(n-1)}\), and then truncate the scaled catenoid so that the neck radius is proportional to \(\veps^{3/(3n-2)}\), thereby obtaining a truncated half catenoid \(C_\veps\). The choice of scaling simplifies later computations.\\
    
\paragraph{\textit{Step 2}} In order to glue \(\Sigma_{r_0}\), \(\Sigma^c_{r_0}\), and \(C_\veps\), we must perturb all 3 surfaces.\\

\paragraph{\textit{Step 2(a)}}We first perturb \(C_\veps\) to produce a surface \(C_\veps(h_\mathrm{II})\) so that \(\partial C_\veps(h_\mathrm{II})\) is the graph of \(h_{\mathrm{II}}\) over the sphere \(r\bbS^{n-1} \subset \{x_{n+1} = 0\}\) (where \(r\) is the radius of the neck). We will not have complete freedom in choosing \(h_\mathrm{II}\) due to the Jacobi fields of \(C_\veps\). Our perturbation will be a graph over \(\{x_{n+1} = 0\}\) near the origin, and away from the origin it will be a normal graph over \(C_\veps\). Importantly, the asymptote plane of \(C_\veps(h_\mathrm{II})\) will be the same as the asymptote plane of \(C_\veps\). This step is identical to Fakhi-Pacard, but we include some proofs for readability.\\

\paragraph{\textit{Step 2(b)}}Next we perturb \(\Sigma^c_{r_0}\). We must glue \(\Sigma^c_{r_0}\) to both \(\Sigma_{r_0}\) and \(C_\veps\). Suppose \(\Sigma^c_{r_0}\) is a graph \((x, u(x))\) over \(B_{r_0} \subset \{x_{n+1} = 0\}\). We let \(\Sigma^c_{r_0, \veps}\) be a graph over \(B_{r_0} \backslash B_{r_\veps/2}\) of
\begin{equation}
    \left(x, u(x) + \frac{\veps}{n - 2}\gamma_0(x)\right)
\end{equation}
where \(\gamma_0\) is a Green's function of \(\Sigma^c_{r_0}\) at the origin and \(r_\veps\) is proportional to the radius of the neck of \(C_\veps\). Now \(\Sigma^c_{r_0, \veps}\) has two boundaries, one which we will glue to \(C_\veps\) and one which we glue to \(\Sigma_{r_0}\). On its inner boundary, \(\Sigma^c_{r_0, \veps}\) looks like a Green's function. The catenoid piece \(C_\veps\) also looks like a Green's function at its boundary, so we will glue the inner boundary of \(\Sigma^c_{r_0, \veps}\) to \(C_\veps\). Since the Green's function on manifolds of dimension \(\ge 3\) goes to \(0\), the outer boundary of \(\Sigma^c_{r_0, \veps}\) looks like the outer boundary of \(\Sigma^c_{r_0}\) (and thus the boundary of \(\Sigma_{r_0}\)). So, we will glue the outer boundary of \(\Sigma^c_{r_0, \veps}\) to \(\Sigma_{r_0}\). In order to make up for degrees of freedom lost due to Jacobi fields, we make some initial geometric transformations to \(\Sigma^c_{r_0, \veps}\) (such as rotations and translations), which we represent by a term \(\calA\). Thus our goal is to produce a perturbation \(\Sigma^c_{r_0, \veps}(h_\mathrm{I}, \calA, h_\mathrm{II})\), where \(h_\mathrm{I}\) is the data on the boundary in common with \(\Sigma_{r_0}\), \(h_\mathrm{II}\) is the data on the boundary in common with \(C_\veps\), and \(\calA\) represents the geometric transformations. This step is where we diverge from Fakhi-Pacard the most. Fakhi-Pacard work with \(\Sigma_{r_0}^c\) as a graph over its tangent plane at the origin; we, on the other hand, allow the plane we are working with to be slightly tilted.\\
            
\paragraph{\textit{Step 2(c)}} Finally, we perturb \(\Sigma_{r_0}\). This is, in general, more difficult than either of our previous perturbations, because \(\Sigma\) is noncompact and does not even have an implicit description. We proceed by assuming that \(\Sigma\) has finite total curvature and planar ends, at which point prior work on perturbing noncompact surfaces \cite{cmc-surface, yamabe-metric} enables us to produce \(\Sigma_{r_0}(h_\mathrm{I})\). This step is almost identical to Fakhi-Pacard, with very minor changes to account for how we handled Step 2(b).\\

\paragraph{\textit{Step 3}} Gluing \(\Sigma_{r_0}(h_\mathrm{I})\), \(\Sigma^c_{r_0, \veps}(h_\mathrm{I}, \calA, h_\mathrm{II})\), and \(C_\veps(h_\mathrm{II})\).\\

\paragraph{\textit{Step 3(a)}} We first define the Cauchy data maps. These maps represent the Cauchy data for each of the boundary value problems we solve, the boundary values being \(h_\mathrm{I}, \calA, h_\mathrm{II}\). Denote \(\calS_\veps(h_\mathrm{II})\) as the Cauchy data for \(C_\veps(h_\mathrm{II})\),  \(\calT_\veps(\calA, h_\mathrm{II})\) as the Cauchy data for the inner boundary problem of \(\Sigma^c_{r_0, \veps}(h_\mathrm{I}, \calA, h_\mathrm{II})\), \(\calU_\veps(h_\mathrm{I}, \calA, h_\mathrm{II})\) as the difference of the Cauchy data for \(\Sigma_{r_0}(h_\mathrm{I})\) and the inner boundary data for \(\Sigma^c_{r_0, \veps}(h_\mathrm{I}, \calA, h_\mathrm{II})\).  We collect these Cauchy data in a map
\begin{equation}
    \bfC_\veps: C^{2, \alpha}(\partial B_{r_0}) \times \mathbb{R}^{2n + 2} \times \pi_\mathrm{II}(C^{2, \alpha}(\bbS^{n-1})) \rightarrow C^{1, \alpha}(\partial B_{r_0}) \times C^{2, \alpha}(\bbS^{n-1}) \times C^{1, \alpha}(\bbS^{n-1})
\end{equation}
(the spaces will be defined over the course of the paper), given by
\begin{equation}
    \bfC_\veps(h_\mathrm{I}, \calA, h_\mathrm{II}) = (\calU_\veps(h_\mathrm{I}, \calA, h_\mathrm{II}), \calT_\veps(\calA, h_\mathrm{II}) - \calS_\veps(h_\mathrm{II})).
\end{equation}
A zero of \(\bfC_\veps\) will indicate that
\begin{equation}
    \Sigma = \Sigma_{r_0}(h_\mathrm{I}) \cup \Sigma^c_{r_0, \veps}(h_\mathrm{I}, \calA, h_\mathrm{II}) \cup C_\veps(h_\mathrm{II})
\end{equation}
is indeed a smooth minimal surface. It is worth noting here that the perturbation via Green's function is what makes it possible for \(\calT_\veps(\calA, h_\mathrm{II}) - \calS_\veps(h_\mathrm{II})\) and \(\calU_\veps(h_\mathrm{I}, \calA, h_\mathrm{II})\) to be small at all.\\
\paragraph{\textit{Step 3(b)}}
Next, we construct a simplified Cauchy data map. In order to find a zero of \(\bfC_\veps\), we construct a simpler map \(\bfC_0\) defined between the same spaces as \(\bfC_\veps\), but which  will be an isomorphism between its domain and range. The simple maps will be Cauchy data for simpler boundary value problems. Denote \(\calS_0(h_\mathrm{II})\) as the simple map for \(C_\veps(h_\mathrm{II})\). It will represent the data for a problem involving the Laplacian instead of the linearized mean curvature operator of \(C_\veps\). We will find
\begin{equation}
    \left\|\left(\mathcal{S}_{\varepsilon}-\mathcal{S}_{0}\right)\left(h_{\mathrm{II}}\right)\right\|_{\mathcal{C}^{2, \alpha} \times \mathcal{C}^{1, \alpha}} \leq c r_{\varepsilon}^{2}.
\end{equation}
Denote \(\calT_0(\calA, h_\mathrm{II})\) as the next simple map, and we again will find
\begin{equation}
    \left\|\left(\calT_\veps-\calT_0\right)\left(h_{\mathrm{II}}\right)\right\|_{\mathcal{C}^{2, \alpha} \times \mathcal{C}^{1, \alpha}} \leq c r_{\varepsilon}^{2}.
\end{equation}
Finally, we have the simple map \(\calU_0(h_\mathrm{I})\) for which
\begin{equation}
    \left\|\mathcal{U}_{\varepsilon}\left(h_\mathrm{I}, \mathcal{A}, h_{\mathrm{II}}\right)-\mathcal{U}_{0}(h_\mathrm{I})\right\|_{\mathcal{C}^{1, \alpha}} \leq c\left(\|h_\mathrm{I}\|_{\mathcal{C}^{2, \alpha}}^{2}+r_{\varepsilon}^{n-2 / 3}\right).
\end{equation}
We collect the simple Cauchy data maps into
\begin{equation}
    \bfC_0 = (\calU_0(h_\mathrm{I}), \calT_0(\calA, h_\mathrm{II}) - \calS_0(h_\mathrm{II})).
\end{equation}
It will happen that \(\bfC_0\) is an isomorphism between its domain and range.\\
\paragraph{\textit{Step 3(c)}}
Our final aim is to find a zero of \(\bfC_\veps\). We reduce to finding a fixed point of
\begin{equation}
    \bfC_0^{-1}(\bfC_0 - \bfC_\veps).
\end{equation}
Noting that \(\bfC_0 - \bfC_\veps\) is small by the bounds we calculated in Step 3(b), we will find a fixed point using the Schauder fixed-point theorem. The operator \(\bfC_0^{-1}(\bfC_0 - \bfC_\veps).\) is however not compact, so we use a sequence of compactly supported smoothing operators and take a limit.\\

The critical difference between our construction and Fakhi-Pacard's is that the asymptote plane of the half-catenoid we glue does not need to be parallel to the tangent plane of \(\Sigma\) at the gluing point. Instead, it can be slightly tilted (in a way we make quantitative later). We now proceed with giving more details to justify each step.
\subsection{Perturbing the catenoid to produce \(C_\veps(h_\mathrm{II})\)}\label{sec:4}
We will now describe how to perturb a half-catenoid to get \(C_\veps(h_\mathrm{II})\). Let \(C\) be the catenoid centered at the origin, oriented vertically along the \(x_{n+1}\) axis, whose neck has radius 1. We denote by \(\rho_0: \mathbb{R} \rightarrow \mathbb{R}\) the function such that \(C\) is the surface of revolution generated by \(\rho_0\). We use a conformal coordinate system
\begin{equation}
    X_0 : \mathbb{R} \times S^{n-1} \rightarrow \mathbb{R}^{n +1}, \quad X_0(s, \theta) = (\phi(s)\theta, \psi(s)).
\end{equation}
where \(\phi,\psi: \mathbb{R} \rightarrow \mathbb{R}\). satisfy the following relations:
\begin{equation}
    \phi = \rho_0 \circ \psi,
\end{equation}
\begin{equation}
    \psi' = \phi^{2 - n}, \quad \psi(0) = 0,
\end{equation}
\begin{equation}
    (\phi')^2 + \phi^{4 - 2n}, \quad \phi(0) = 0.
\end{equation}
The asymptotics of \(\phi\) will be relevant, so we quote
\begin{lem}
    There exists a constant \(A > 0\) so that
    \begin{equation}
        e^{-|s|}\phi(s) = A(1 + O(e^{(2 - 2n)|s|})) \quad \text{as \(s \rightarrow \infty\),}
    \end{equation}
    up to a positive multiplicative constant. In other words, \(\phi(s)\) grows like \(Ae^{|s|}\) for large \(s\).
\end{lem}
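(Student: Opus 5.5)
We will read the defining relation for \(\phi\) as the conformality condition for \(X_0\). The induced metric is \(((\phi')^2+(\psi')^2)\,ds^2+\phi^2\,d\theta^2\), so conformality means \((\phi')^2+(\psi')^2=\phi^2\). Combined with \(\psi'=\phi^{2-n}\) this gives
\begin{equation}
    (\phi')^2+\phi^{4-2n}=\phi^2, \qquad \phi(0)=1,
\end{equation}
where \(\phi(0)=1\) encodes the unit neck radius; we believe the displayed \(\phi(0)=0\) is a typo. Since the ODE is invariant under \(s\mapsto -s\) and \(\phi'(0)=0\) at the neck, uniqueness for the second-order equation obtained by differentiating gives \(\phi(-s)=\phi(s)\). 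So it suffices to treat \(s\to+\infty\).

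On \((0,\infty)\) we have \(\phi>1\) and \(\phi'>0\), so the equation becomes
\begin{equation}
    \frac{d}{ds}\log\phi=\sqrt{1-\phi^{2-2n}}.
\end{equation}
We first need an a priori exponential lower bound. Since \(\phi\) is increasing, for \(s\ge 1\) we have \(\phi(s)\ge\phi(1)=:\phi_1>1\), hence \((\log\phi)'\ge c:=\sqrt{1-\phi_1^{2-2n}}>0\). This gives \(\phi(s)\ge \phi_1e^{c(s-1)}\), so \(\phi^{2-2n}\) is integrable on \([0,\infty)\).

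Next we use \(0\le 1-\sqrt{1-t}\le t\) for \(t\in[0,1]\) and define
\begin{equation}
    \log A:=\int_0^\infty\Bigl(\sqrt{1-\phi^{2-2n}}-1\Bigr)\,ds,
\end{equation}
which is finite. Then
\begin{equation}
    \log\phi(s)-s-\log A=\int_s^\infty\Bigl(1-\sqrt{1-\phi^{2-2n}}\Bigr)\,d\sigma\in\Bigl[0,\int_s^\infty\phi^{2-2n}\,d\sigma\Bigr].
\end{equation}
It follows that \(e^{-s}\phi(s)\ge A\), i.e.\ \(\phi(s)\ge Ae^{s}\). This is a sharp lower bound, improving on the crude rate \(c\).

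Feeding \(\phi\ge Ae^{s}\) back into the tail integral gives
\begin{equation}
    \int_s^\infty\phi^{2-2n}\,d\sigma\le \frac{A^{2-2n}}{2n-2}\,e^{(2-2n)s}.
\end{equation}
Hence \(\log(e^{-s}\phi(s)/A)=O(e^{(2-2n)s})\), and exponentiating yields \(e^{-s}\phi(s)=A(1+O(e^{(2-2n)s}))\). By evenness the same holds with \(|s|\).

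The only delicate point is the bootstrap. The crude rate \(c<1\) from the first step alone would only give an error \(O(e^{(2-2n)cs})\). The improvement comes from the monotonicity observation \(\log\phi-s\ge\log A\), which has to be extracted before the final error estimate. Everything else is elementary ODE analysis.
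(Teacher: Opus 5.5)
Your proof is correct. The paper gives no proof of this lemma to compare against: it is simply quoted from Fakhi--Pacard. The displayed ODE in the text is also garbled, and you read it correctly as $(\phi')^2+\phi^{4-2n}=\phi^2$ with $\phi(0)=1$, i.e.\ conformality combined with $\psi'=\phi^{2-n}$. So you are supplying the missing derivation rather than taking an alternative route.

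Your key step writes $\log\phi(s)-s-\log A$ as the nonnegative tail $\int_s^\infty\bigl(1-\sqrt{1-\phi^{2-2n}}\bigr)\,d\sigma$. This yields the sharp error $O(e^{(2-2n)s})$, and it gives more than the lemma asks:
\begin{itemize}
\item an explicit constant $A=\exp\int_0^\infty\bigl(\sqrt{1-\phi^{2-2n}}-1\bigr)\,ds\in(0,1]$;
\item since also $(\log\phi)'\le 1$, a global two-sided bound $Ae^{|s|}\le\phi(s)\le e^{|s|}$.
\end{itemize}
The second bound gives, for instance, $r_\varepsilon\simeq\varepsilon^{3/(3n-2)}$ with explicit constants.

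Two small points deserve a line each.
\begin{itemize}
\item The first-order equation alone does not determine $\phi$. The constant $\phi\equiv 1$ solves it, and solutions can branch off the value $1$ because $\sqrt{\phi^2-\phi^{4-2n}}$ is not Lipschitz there. You should define $\phi$ as the solution of $\phi''=\phi+(n-2)\phi^{3-2n}$, $\phi(0)=1$, $\phi'(0)=0$. Then recover the first-order relation from the conserved quantity $(\phi')^2-\phi^2+\phi^{4-2n}$. This is also what makes your evenness argument rigorous.
\item You should justify $\phi>1$ and $\phi'>0$ on $(0,\infty)$, and global existence. These follow because $\phi''(0)=n-1>0$, because $\phi'$ can vanish only where $\phi=1$, and because $(\log\phi)'\le 1$ rules out finite-time blow-up.
\end{itemize}
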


We will first perturb the catenoid along its normal vector. We let
\begin{equation}
    N_0(s, \theta) = \frac{1}{\phi(s)}(\psi'(s)\theta, -\phi'(s))
\end{equation}
be a unit normal vector field on \(C\). Consider the surface parametrized by 
\begin{equation}
    X = X_0 + wN_0.
\end{equation}
We have
\begin{prop}\label{prop:3.1}
The hypersurface parameterized by \(X\) is minimal if and only if \(w\) solves the following equation
\begin{equation}\label{eqn3.7}
    \begin{split}
        \mathcal{L}_{0} w & = Q_{2}\left(s, \frac{w}{\phi}, \nabla\left(\frac{w}{\phi}\right), \nabla^{2}\left(\frac{w}{\phi}\right)\right) \\ & +\phi^{n-1} Q_{3}\left(s, \frac{w}{\phi}, \nabla\left(\frac{w}{\phi}\right), \nabla^{2}\left(\frac{w}{\phi}\right)\right)
    \end{split}
\end{equation}
where
\begin{equation}
    \mathcal{L}_{0}=\partial_{s}\left(\phi^{n-2} \partial_{s}\right)+\phi^{n-2} \Delta_{S^{n-1}}+n(n-1) \phi^{-n},
\end{equation}
is the linearized mean curvature operator and
\begin{equation}
    \left(q_{1}, q_{2}, q_{3}\right) \longrightarrow Q_{2}\left(s, q_{1}, q_{2}, q_{3}\right),
\end{equation}
is homogeneous of degree 2 and
\begin{equation}
    \left(q_{1}, q_{2}, q_{3}\right) \longrightarrow Q_{3}\left(s, q_{1}, q_{2}, q_{3}\right),
\end{equation}
consists of higher-order nonlinear terms. In particular, we have
\begin{equation}
    Q_{3}(s, 0,0,0)=0, \quad \nabla_{q_{i}} Q_{3}(s, 0,0,0)=0 \quad \text { and } \quad \nabla_{q_{i} q_{j}}^{2} Q_{3}(s, 0,0,0)=0 .
\end{equation}

Furthermore, the coefficients $Q_{2}$ on the one hand, and the partial derivatives at any order of $Q_{3}$, with respect to the $q_{i}$ 's, computed at any point of some neighborhood $\mathcal{V}$ of \((0,0,0)\) on the other hand, are bounded functions of \(s\) and so are the derivatives of any order of these functions, uniformly in \(\mathcal{V}\).
\end{prop}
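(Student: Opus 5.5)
The plan is to compute the mean curvature of the normal graph $X = X_0 + wN_0$ directly in the conformal coordinates $(s,\theta)$, and then expand it in powers of $w/\phi$ and its derivatives. The main idea is to exploit scaling. For fixed $s$, the catenoid near the circle $\{s\}\times S^{n-1}$ looks, after dilation by $\phi(s)^{-1}$, like a piece of hypersurface with curvature of order one, since $|A|\sim \phi^{-n}\cdot\phi^{n-1}\lesssim \phi^{-1}$ in Euclidean terms. So the natural small quantity is $w/\phi$.

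\textbf{Step 1: compute the pulled-back metric and second fundamental form of $X$.} Write $\partial_s X = \partial_s X_0 + w_s N_0 + w\,\partial_s N_0$ and $\partial_{\theta_j} X = \partial_{\theta_j} X_0 + w_{\theta_j} N_0 + w\,\partial_{\theta_j}N_0$. Use the relations $\psi'=\phi^{2-n}$ and $(\phi')^2+\phi^{4-2n}=\phi^2$ (the intended form of the last relation). These show that $X_0$ induces the metric $\phi^2(ds^2+g_{S^{n-1}})$ and that the principal curvatures of $C$ are $\pm$ multiples of $\phi^{-n}$: the curvature is $-(n-1)\phi^{-n}$ in the $s$ direction and $\phi^{-n}$ in each sphere direction, which gives trace zero and $|A|^2=n(n-1)\phi^{-2n}$. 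The induced metric of $X$ is then
\[
\phi^2\bigl(g_0 + \text{terms linear and quadratic in } w/\phi,\ \nabla w/\phi\bigr),
\]
and the unit normal and second fundamental form are obtained as rational expressions in these quantities.

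\textbf{Step 2: expand the mean curvature.} Write $H(w)=H_{\text{lin}}(w)+\text{(quadratic)}+\text{(higher)}$. The linear part is the Jacobi operator $\Delta_C+|A|^2$ with respect to the metric $\phi^2 g_0$. Multiply by the conformal weight $\phi^{n}$; equivalently, write the mean curvature as $\operatorname{div}$ of a flux. One checks that
\[
\phi^{n}(\Delta_C+|A|^2)w=\partial_s(\phi^{n-2}\partial_s w)+\phi^{n-2}\Delta_{S^{n-1}}w+n(n-1)\phi^{-n}w=\mathcal L_0 w,
\]
using $\Delta_{\phi^2 g_0}=\phi^{-n}\partial_s(\phi^{n-2}\partial_s)+\phi^{-2}\Delta_{S^{n-1}}$. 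Setting $v=w/\phi$, every geometric quantity of $X$, after rescaling by $\phi$, is a smooth function of $(s,v,\nabla v,\nabla^2 v)$ whose $s$-dependence enters only through $\phi'/\phi$, $\psi'/\phi=\phi^{1-n}$ and their derivatives. These are bounded with all derivatives, since $\phi'/\phi\to\pm1$ and $\phi^{1-n}\le 1$. This gives the boundedness claims for the coefficients of $Q_2$ and the derivatives of $Q_3$, uniformly on a neighborhood $\mathcal V$ where the metric stays nondegenerate.

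\textbf{Step 3: track the powers of $\phi$.} The rescaled mean curvature $\phi H$ equals a linear term in $v$ plus $\widetilde Q_2(v)+\widetilde Q_3(v)$, the quadratic and higher-order parts of a smooth function vanishing to the appropriate order. Multiplying by $\phi^{n-1}$ turns the linear term into $\mathcal L_0 w$ and the remainder into $\phi^{n-1}(\widetilde Q_2+\widetilde Q_3)$. The expected obstacle lies here. The statement puts $Q_2$ without the weight $\phi^{n-1}$, so one must check that the quadratic part actually carries a compensating factor, i.e.\ that the quadratic terms involve $\phi^{1-n}$, for instance through $\psi'$ and the curvature $\phi^{-n}$. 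Otherwise the $\phi^{n-1}$ is simply absorbed as in Fakhi--Pacard's normalization. I would first redo the expansion on the sphere-slice model to identify exactly which power of $\phi$ multiplies each quadratic monomial, and follow Fakhi--Pacard's bookkeeping to match their form of \eqref{eqn3.7}. The vanishing of $Q_3$ to second order is automatic from Taylor's theorem, since $Q_3$ is defined as the Taylor remainder beyond order two.
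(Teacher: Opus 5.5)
The paper states this proposition without proof (it is imported from Fakhi--Pacard), so your argument has to stand on its own. Steps 1 and 2 are sound. The corrected relation $(\phi')^2+\phi^{4-2n}=\phi^2$, the principal curvatures of size $\phi^{-n}$ with $|A|^2=n(n-1)\phi^{-2n}$, and the identity $\phi^{n}(\Delta_C+|A|^2)=\mathcal L_0$ are all right. Rescaling by $\phi$ does show that $\phi H$ is a smooth function of $(v,\partial v,\partial^2 v)$, $v=w/\phi$, with coefficients built from $\phi'/\phi$ and $\phi^{1-n}$.

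However, as your Step 3 shows, this only gives $\phi^{n}H=\mathcal L_0 w+\phi^{n-1}\bigl(\widetilde Q_2(v)+\widetilde Q_3(v)\bigr)$. The proposition asserts that the quadratic part enters with \emph{no} factor $\phi^{n-1}$. You identify this as the crux but do not resolve it. Your fallback, that the $\phi^{n-1}$ ``is simply absorbed'' into $Q_2$, is not available: the statement requires the coefficients of $Q_2$ to be bounded functions of $s$, while $\phi^{n-1}\sim e^{(n-1)|s|}$ is unbounded. This matters downstream, since the contraction argument for $\mathcal N_\varepsilon$ in Proposition~\ref{prop:5.1} relies on the quadratic term carrying only the weight $\phi^{\frac{2-n}{2}}$.

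The missing idea is that quadratic terms always carry a curvature factor of the base. Over a hyperplane, the mean curvature of a graph, $\operatorname{div}\bigl(\nabla w/\sqrt{1+|\nabla w|^2}\bigr)$, is odd in $w$ and has no quadratic part. Hence in the expansion of $H$ about a minimal hypersurface, every quadratic monomial contains $A$ or some $\nabla^k A$; typical terms are $A(\nabla w,\nabla w)$, $w\langle A,\nabla^2 w\rangle$, $w\,\nabla A*\nabla w$ and $w^2\operatorname{tr}A^3$. Only the cubic and higher terms contain the flat nonlinearity $\nabla^2 w*\nabla w*\nabla w$ with no curvature factor.

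On $C$ one has $|\nabla^k A|\lesssim \phi^{-n-k}$, using $\phi\ge 1$ and $|\phi'/\phi|\le 1$. With $w=\phi v$ one gets $|\nabla w|\lesssim |v|+|\partial v|$ and $|\nabla^2 w|\lesssim \phi^{-1}(|v|+|\partial v|+|\partial^2 v|)$. So the quadratic part of $H$ is $\phi^{-n}$ times a bounded-coefficient quadratic form in $(v,\partial v,\partial^2 v)$. The remainder is $\phi^{-1}$ times a bounded function vanishing to third order. Multiplying by $\phi^n$ gives exactly $Q_2+\phi^{n-1}Q_3$.

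In your rescaled language, the same argument reads as follows. The coefficients are smooth functions of $a=\phi'/\phi$ and $b=\phi^{1-n}$, which satisfy $a^2+b^2=1$ and whose derivatives $a'=(n-1)b^2$, $b'=(1-n)ab$ are polynomials in $(a,b)$. At $b=0$ the base is a hyperplane in log-polar coordinates, so the quadratic coefficients vanish there and are therefore $O(b)=O(\phi^{1-n})$. Your parenthetical remark about $\psi'$ and $\phi^{-n}$ points the right way, but this is the step that has to be proved.
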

Our goal is to solve this equation given boundary data. This is, of course, not possible in general because \(\calL\) has a nontrivial kernel (the Jacobi fields). However, it is possible to find partial inverses on a specific space. We will now describe this construction.
\subsubsection{Inverting \(\calL_0\)}
First, we actually consider a conjugate operator \(\calL = \phi^{(2 - n)/2}\calL_0\phi^{(2-n)/2}\). This gives us the simpler-looking operator
\begin{equation}
    \calL = \partial_{ss}+\Delta_{S^{n-1}}-\left(\frac{n-2}{2}\right)^{2}+\frac{n(3 n-2)}{4} \phi^{2-2 n}.
\end{equation}
We also define the even simpler 
\begin{equation}
    \Delta_0 \equiv \partial_{s s}+\Delta_{S^{n-1}}-\left(\frac{n-2}{2}\right)^2.
\end{equation}
We note that solving \(\calL\) is the same as solving \(\calL_0\). Indeed, by the previous proposition, the hypersurface
\begin{equation}
    X_w = X_0 + w\phi^{\frac{2 - n}{2}}N_0
\end{equation}
is minimal when \(w\) solves
\begin{equation}
    \calL w = \phi^{\frac{2-n}{2}} Q_{2}\left(\phi^{-\frac{n}{2}} w\right)+\phi^{\frac{n}{2}} Q_{3}\left(\phi^{-\frac{n}{2}} w\right).
\end{equation}
To solve \(\calL\), we work on the following weighted function space:
\begin{defn}\label{defn:4.1}
    For all \(\delta \in \mathbb{R}\) and \(S \in \mathbb{R}\), we define the norm
    \begin{equation}
        \|w\|_{k, \alpha, \delta} \equiv \sup _{s \geq S}|e^{-\delta s} w|_{k, \alpha, \left([s, s+1] \times S^{n-1}\right)}.
    \end{equation}
    The norm \(\|\cdot\|_{k, \alpha\left([s, s+1] \times S^{n-1}\right)}\) is the usual Hölder norm. Then
    \begin{equation}
        w\in \mathcal{C}_{\delta}^{k, \alpha}\left([S,+\infty) \times S^{n-1}\right)
    \end{equation}
    if and only if \(\|w\|_{k, \alpha, \delta} < \infty\).
\end{defn}
We will invert \(\calL\) on \(\calC_\delta^{k, \alpha}\) for \(\delta \in \left(-\frac{n + 2}{2}, -\frac{n}{2}\right)\). This choice of \(\delta\) will let the inverse be bounded and unique, as the choice of \(\delta\) guarantees the Jacobi fields of \(C\) are not in \(\calC_\delta^{k, \alpha}\).

Before we proceed, we define some final notation.
\begin{defn}
    Let \(e_{j}(\theta): S^{n-1} \rightarrow \mathbb{R}\), \(j \in \mathbb{N}\) denote the eigenfunctions of \(\Delta_{S^{n-1}}\) with eigenvalues \(\lambda_{j}\). More specifically we have \(\Delta_{S^{n-1}} e_{j}=-\lambda_{j} e_{j}\), and we order the eigenvalues such that \(\lambda_{j} \leq \lambda_{j+1}\). For a function \(\varphi \in L^2(S^{n-1})\) there are constants \(a_j\) so that,
    \begin{equation}
        \varphi = \sum_{j \in \mathbb{N}}a_je_j.
    \end{equation}
    We define projections
    \begin{equation}
        \pi_\text{I}(\varphi) = \sum_{j \le n}a_j e_j \quad \text{and} \quad \pi_\mathrm{II}(\varphi) = \sum_{j \ge n + 1}a_je_j.
    \end{equation}
\end{defn}
It is possible to describe the Jacobi fields of \(C\) in terms of \(e_1, \ldots e_n\). Because of this, a maximum principle for \(\calL\) holds only for functions \(w\) which are orthogonal to \(e_1, \ldots, e_n\), and in general we cannot guarantee uniqueness of an inverse based on specified boundary data. However, it turns out we can guarantee uniqueness by specifying boundary data for \(\pi_\mathrm{II} (w)\).

With this in mind, we present propositions for constructing an inverse of \(\calL\). 
\begin{prop}\label{prop:4.3}
    Assume that \(\delta \in\left(-\frac{n+2}{2},-\frac{n}{2}\right)\) and \(\alpha \in(0,1)\) are fixed. For all \(S \in \mathbb{R}\), there is an operator
    \begin{equation}
        \mathcal{G}_{S}: \mathcal{C}_{\delta}^{0, \alpha}\left([S,+\infty) \times S^{n-1}\right) \longrightarrow \mathcal{C}_{\delta}^{2, \alpha}\left([S,+\infty) \times S^{n-1}\right),
    \end{equation}
    such that, for all \(f \in \mathcal{C}_{\delta}^{0, \alpha}\left([S+\infty) \times S^{n-1}\right)\), the function \(w=\mathcal{G}_{S}(f) \in \mathcal{C}_{\delta}^{2, \alpha}\left([S+\infty) \times S^{n-1}\right)\) is the unique solution of
    \begin{equation}
        \begin{cases}\mathcal{L} w=f & \text {in \([S,+\infty) \times S^{n-1}\)} \\ \pi_{\mathrm{II}}(w) = 0 & \text {on \(\{S\} \times S^{n-1}\)}\end{cases} \quad \text{with} \quad w \in \mathcal{C}_{\delta}^{2, \alpha}\left([S+\infty) \times S^{n-1}\right).
    \end{equation}
    Furthermore, there is a constant \(c\) which does not depend on \(\delta\), \(\alpha\), or \(S\), such that
    \begin{equation}
        \|w\|_{2, \alpha, \delta} \leq c\|f\|_{0, \alpha, \delta}.
    \end{equation}
    Finally, if, for each fixed \(s \in[S,+\infty)\), the function \(f(s, \cdot)\) is orthogonal to \(e_{0}, \ldots, e_{n}\) in \(L^2(S^{n-1})\), then so is \(w(s, \cdot)\).
\end{prop}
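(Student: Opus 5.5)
We will build $\mathcal{G}_S$ by separating variables, with $\calL$ treated as a small perturbation of $\Delta_0$ near infinity. Write $f=\sum_j f_j(s)e_j$ and $w=\sum_j w_j(s)e_j$. For the model operator $\Delta_0$, each mode satisfies
\[
w_j''-\mu_j^2 w_j=f_j,\qquad \mu_j^2=\lambda_j+\left(\tfrac{n-2}{2}\right)^2 .
\]
The indicial roots are $\pm\mu_j$. For $j=0$ we have $\mu_0=\tfrac{n-2}{2}$. For $1\le j\le n$, $\lambda_j=n-1$, so $\mu_j=\tfrac n2$. For $j\ge n+1$, $\lambda_j\ge 2n$, so $\mu_j\ge \tfrac{n+2}{2}$.

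The weight $\delta\in(-\tfrac{n+2}{2},-\tfrac n2)$ therefore lies strictly between the indicial roots $-\mu_j$ for $j\le n$ and those for $j\ge n+1$. This dictates the mode-by-mode construction.

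\emph{Low modes, $j\le n$.} The solution $e^{-\mu_j s}$ decays more slowly than $e^{\delta s}$, so it is not in the space. We take the particular solution obtained by integrating from $+\infty$:
\[
w_j(s)=-\frac{1}{2\mu_j}\int_s^\infty \bigl(e^{\mu_j(s-t)}-e^{\mu_j(t-s)}\bigr)f_j(t)\,dt .
\]
Both homogeneous pieces are fixed by the decay requirement, so no boundary condition is imposed. This is consistent with $\pi_{\mathrm{II}}(w)=0$ constraining only the high modes. The bound $|w_j|\le c\,e^{\delta s}\|f\|$ follows because $\delta+\mu_j<0$ and $\delta-\mu_j<0$ keep both integrals convergent, with constants $\sim 1/|\delta\pm\mu_j|$.

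\emph{High modes, $j\ge n+1$.} Here $-\mu_j<\delta$, so the decaying homogeneous solution $e^{-\mu_j s}$ is admissible. It is used to impose $w_j(S)=0$, and the particular solution is the Green's function of the Dirichlet problem on $[S,\infty)$. The uniform estimate for all $j\ge n+1$ should come from the maximum principle rather than from summing modes. On functions orthogonal to $e_0,\dots,e_n$, we use $e^{\delta s}$, or rather a slightly modified barrier, as a supersolution of $-\Delta_0$. Indeed, $\Delta_0 e^{\delta s}\cos$-type combinations give
\[
\bigl(\delta^2-\mu_j^2\bigr)e^{\delta s}<0 \quad\text{for } j\ge n+1,
\]
so a comparison argument yields a sup bound $|w|\le c\,e^{\delta s}\|f\|_{0,\alpha,\delta}$. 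Schauder estimates on $[s,s+1]\times S^{n-1}$ then upgrade this to the weighted $C^{2,\alpha}$ bound.

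\emph{From $\Delta_0$ to $\calL$.} $\calL=\Delta_0+V$ with $V=\tfrac{n(3n-2)}{4}\phi^{2-2n}=O(e^{-(2n-2)s})$. For $S$ large we treat $V$ as a perturbation and solve $w=\mathcal{G}^0_S(f-Vw)$ by a Neumann series. For arbitrary $S$ this fails, and we need injectivity of $\calL$ on $\mathcal{C}^{2,\alpha}_\delta$ under $\pi_{\mathrm{II}}(w)(S)=0$. On the high modes injectivity follows from the maximum principle: the Jacobi fields live in modes $0,\dots,n$, so $\calL$ restricted to $\pi_{\mathrm{II}}$ admits a positive supersolution, e.g. the conjugated vertical-translation Jacobi field structure, giving a maximum principle. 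On the low modes we use ODE theory. Any element of the kernel in the weighted space decays like $e^{-\mu_j s}\cdot$ at the faster root, and corresponds to a Jacobi field of $C$ decaying at $+\infty$. The explicit Jacobi fields (translations, rotations, dilation) do not decay faster than $e^{\delta s}$, so the kernel is trivial. Existence on bounded-below intervals then follows from the ODE variation of parameters for each low mode and a Fredholm/continuity argument.

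The uniformity of $c$ in $S$ comes from translation invariance of $\Delta_0$ together with the uniform smallness of $V$. The orthogonality statement follows because $\calL$ commutes with $\Delta_{S^{n-1}}$, so the modes decouple.

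The main obstacle is obtaining a constant independent of $S$ (and $\delta$) on the high modes uniformly in $j$, together with handling the $V$-perturbation for small or negative $S$. We expect to resolve both with the barrier argument, which works on all of $\pi_{\mathrm{II}}$ at once, combined with the positivity of the catenoid's Jacobi field on the high modes.
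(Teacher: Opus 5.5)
The paper gives no proof of this proposition (it is imported from Fakhi--Pacard), so I am judging your argument on its own. Your low-mode analysis is fine. For $j\le n$ both homogeneous solutions behave like $e^{\pm\mu_j s}$ at $+\infty$ and are excluded by $\delta<-\mu_j$, and variation of parameters with the explicit Jacobi fields gives an $S$-independent bound; Gronwall gives $|K(s,t)|\le Ce^{\mu_j(t-s)}$ for all $s\le t$ because $V\in L^1(\mathbb{R})$.

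The gap is in the high modes, which is where the content lies.
\begin{itemize}
\item[(i)] A pointwise barrier cannot encode orthogonality to $e_0,\dots,e_n$, which is an $L^2(S^{n-1})$ condition. A positive function of $s$ only sees $\mu_0$: $-\Delta_0e^{\delta s}=(\mu_0^2-\delta^2)e^{\delta s}<0$, since $\delta^2>\tfrac{n^2}{4}>\mu_0^2$. The sign of $\delta^2-\mu_j^2$ belongs to $e^{\delta s}e_j$, which changes sign. For $\mathcal{L}$ it is worse: $-\mathcal{L}\phi^{-n/2}=-(n-1)\phi^{-n/2}$ for the radial function $\phi^{-n/2}$. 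Hence $-\mathcal{L}$ has negative Dirichlet spectrum on $[S,\infty)\times S^{n-1}$ once $S$ is very negative, and no positive supersolution exists there.
\item[(ii)] The positive field you name, the conjugated vertical translation $-\phi^{(n-4)/2}\phi'$, is odd in $s$ and vanishes at the neck. It gives no maximum principle on $[S,\infty)$ for $S\le 0$.
\item[(iii)] $S$-uniformity does not come from smallness of $V$. $V(0)=\tfrac{n(3n-2)}{4}$ is of order one, and for $n\ge 4$ it exceeds $\mu_{n+1}^2=\tfrac{(n+2)^2}{4}$. So near the neck the high-mode operator is not a perturbation of $\Delta_0$, and $e^{\delta s}$ is not a supersolution even mode by mode.
\end{itemize}
This is exactly the regime used later, since the proposition is applied with $S=s_\veps\to-\infty$.

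The missing ingredient is the horizontal-translation Jacobi field $\phi^{-n/2}$, the conjugate of $\phi^{1-n}\theta_i$. It is positive on all of $\mathbb{R}$ and satisfies
\[
-\bigl(\phi^{-n/2}\bigr)''+\bigl(\mu_j^2-V\bigr)\phi^{-n/2}=\bigl(\lambda_j-(n-1)\bigr)\phi^{-n/2}\ge (n+1)\,\phi^{-n/2},\qquad j\ge n+1 .
\]
You can use it in either of two ways.
\begin{itemize}
\item Mode by mode: this gives uniqueness and solvability of each Dirichlet problem, for every $S$.
\item On $G(s)=\|w(s,\cdot)\|_{L^2(S^{n-1})}$: for $w$ orthogonal to $e_0,\dots,e_n$, this satisfies $G''\ge(\mu_{n+1}^2-V)G-\|f(s,\cdot)\|_{L^2}$, a one-dimensional inequality to which comparison applies.
\end{itemize}
The $S$-independent weighted estimate then follows by contradiction.
\begin{enumerate}
\item Normalize $\sup e^{-\delta s}|w_k|=1$ with $\|f_k\|_{0,\alpha,\delta}\to0$.
\item Recenter at a near-maximum point and use Schauder estimates to extract a nonzero limit. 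The limit is orthogonal to the low modes, bounded by $Ce^{\delta s}$, and solves $\Delta_0w=0$ or $\mathcal{L}w=0$ on a half-cylinder with zero data or on a full cylinder.
\item Since $|\delta|<\mu_j$ for $j\ge n+1$, each mode must decay at both ends. Comparison with $\phi^{-n/2}$, or the explicit exponentials for $\Delta_0$, then kills it.
\end{enumerate}
Finally, do not aim for a constant independent of $\delta$, as the statement claims. Your own formula gives $c\gtrsim|\delta+\tfrac n2|^{-1}$ from the modes $1\le j\le n$, and mode $n+1$ gives $c\gtrsim|\delta+\tfrac{n+2}{2}|^{-1}$. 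Only independence of $S$ holds.
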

We also have
\begin{prop}\label{prop:4.5}
    There exists \(c>0\) such that, for all \(S \in \mathbb{R}\) and all \(g_{\mathrm{II}} \in \pi_{\mathrm{II}}\left(\mathcal{C}^{2, \alpha}\left(S^{n-1}\right)\right)\), there exists a unique solution \(w_{0} \in \mathcal{C}_{-\frac{n+2}{2}}^{2, \alpha}\left([S,+\infty) \times S^{n-1}\right)\) to
    \begin{equation}
        \begin{cases}\Delta_{0} w_{0}=0 & \text { in  \((S, \infty) \times S^{n-1}\)} \\ w_{0}=g_{\mathrm{II}} & \text {on \(\{S\} \times S^{n-1}\)}\end{cases} \quad \text{with} \quad w \in \mathcal{C}_{-\frac{n+2}{2}}^{2, \alpha}\left([S+\infty) \times S^{n-1}\right).
    \end{equation}
    Furthermore, we have the bound
    \begin{equation}   
        \left\|w_{0}\right\|_{2, \alpha,-\frac{n+2}{2}} \leq c e^{\frac{n+2}{2} S}\left\|g_{\mathrm{II}}\right\|_{2, \alpha}.
    \end{equation}
\end{prop}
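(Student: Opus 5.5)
Since $\Delta_0 = \partial_{ss}+\Delta_{S^{n-1}}-\left(\frac{n-2}{2}\right)^2$ has constant coefficients in $s$ and commutes with $\Delta_{S^{n-1}}$, I will solve the problem explicitly mode by mode. Write $g_{\mathrm{II}}=\sum_{j\ge n+1} a_j e_j$. Separation of variables $w_0=\sum_j w_j(s)e_j(\theta)$ reduces $\Delta_0 w_0=0$ to the ODEs
\[
w_j''=\Bigl(\lambda_j+\bigl(\tfrac{n-2}{2}\bigr)^2\Bigr)w_j ,
\]
with solutions $e^{\pm\mu_j s}$, where $\mu_j=\sqrt{\lambda_j+(n-2)^2/4}$. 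For $j\ge n+1$ we have $\lambda_j\ge 2n$ (the first eigenvalue of $S^{n-1}$ beyond the linear functions), so $\mu_j\ge\sqrt{2n+(n-2)^2/4}=\frac{n+2}{2}$. The candidate solution is therefore
\[
w_0(s,\theta)=\sum_{j\ge n+1}a_j e^{-\mu_j(s-S)}e_j(\theta).
\]
Uniqueness in $\mathcal{C}^{2,\alpha}_{-\frac{n+2}{2}}$ follows by projecting onto each $e_j$: a decaying solution of the mode ODE with $w_j(S)=0$ must vanish. The growing branch $e^{\mu_j s}$ is excluded because $\mu_j>0>-\frac{n+2}{2}$, and every coefficient is fixed by the boundary data. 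The low modes $j\le n$ do not appear, since their data is zero and the same argument kills them.

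For the estimate, let $v(s,\theta)=w_0(s+S,\theta)$. It suffices to show $|v|_{2,\alpha,[t,t+1]\times S^{n-1}}\le c\,e^{-\frac{n+2}{2}t}\|g_{\mathrm{II}}\|_{2,\alpha}$ for $t\ge 0$. Multiplying by $e^{\frac{n+2}{2}(t+S)}$ and taking the supremum then gives exactly the claimed bound $c\,e^{\frac{n+2}{2}S}\|g_{\mathrm{II}}\|_{2,\alpha}$, with $c$ independent of $S$ by translation invariance.

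The decay estimate in $L^2$ is immediate. For each $t\ge0$,
\[
\|v(t,\cdot)\|_{L^2}^2=\sum_j a_j^2e^{-2\mu_j t}\le e^{-(n+2)t}\|g_{\mathrm{II}}\|_{L^2}^2 ,
\]
using $\mu_j\ge\frac{n+2}{2}$.

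The main obstacle is upgrading this $L^2$ decay to weighted $C^{2,\alpha}$ control up to the boundary $s=S$. Summing the modes directly in Hölder norms is awkward, so I will use elliptic estimates instead.

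In the interior region $t\ge 1$, I will apply interior Schauder estimates for $\Delta_0$ on $[t-1,t+2]\times S^{n-1}$. These bound the $C^{2,\alpha}$ norm on $[t,t+1]$ by the $L^2$ (or $L^\infty$) norm on the larger slab, which is at most $c\,e^{-\frac{n+2}{2}t}\|g_{\mathrm{II}}\|_{L^2}$.

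Near the boundary, on $[0,2]$, I will use the boundary Schauder estimate with Dirichlet data $g_{\mathrm{II}}$. The required sup bound comes from the maximum principle. Because $-(n-2)^2/4\le 0$, $|v|$ is controlled on the half-cylinder by its boundary values, giving $|v|\le\|g_{\mathrm{II}}\|_\infty$, since $v\to0$ as $t\to\infty$. This yields $|v|_{2,\alpha,[0,1]}\le c\|g_{\mathrm{II}}\|_{2,\alpha}$.

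Combining the interior and boundary estimates, with $\|g_{\mathrm{II}}\|_{L^2}\le c\|g_{\mathrm{II}}\|_{2,\alpha}$, gives the uniform weighted bound and completes the argument.
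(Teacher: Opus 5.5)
Your proof is correct. The paper states this proposition without proof, since the section follows Fakhi--Pacard. Your argument is the standard one the paper implicitly relies on. The modes $e^{-\mu_j(s-S)}e_j$ with $\mu_j=\sqrt{\lambda_j+(n-2)^2/4}\ge\frac{n+2}{2}$ for $j\ge n+1$ use the same exponents $\gamma_j$ that appear in the paper's later formula $\sum_{j\ge n+1}(r/r_\varepsilon)^{\frac{2-n}{2}-\gamma_j}h_je_j$. From there, $L^2$ decay, translation-invariant interior and boundary Schauder estimates, and the maximum principle (available since $-(n-2)^2/4<0$) give the weighted bound with $c$ independent of $S$.

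Two points are worth making explicit. First, the maximum principle and the boundary Schauder estimate are a priori estimates, so you should check that the series $v$ is $C^{2,\alpha}$ up to $t=0$ with $v(0,\cdot)=g_{\mathrm{II}}$. One way is to take the classical Dirichlet solution on $[0,L]\times S^{n-1}$ with data $g_{\mathrm{II}}$ and $v(L,\cdot)$; its Fourier coefficients solve the same two-point ODE problems, so it coincides with $v$. Second, in the interior step use a local $L^2\to C^{2,\alpha}$ estimate rather than $L^\infty$, since the decay you have established is in $L^2$.
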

Note that in the last proposition, we did not specify \(\pi_\mathrm{II}(w_0) = g_\mathrm{II}\), as we did in the first. We now combine these propositions to get
\begin{prop}\label{prop:4.6}
    Suppose \(\delta \in\left(-\frac{n+2}{2},-\frac{n}{2}\right)\) and \(\alpha \in (0,1)\) is fixed. Then, for all \(g_\mathrm{II} \in \pi_{\mathrm{II}}\left(\mathcal{C}^{2, \alpha}\left(S^{n-1}\right)\right)\), there is a unique solution \(w = \mathcal{P}_{S}\left(g_{\mathrm{II}}\right) \in \mathcal{C}_{\delta}^{2, \alpha}\left([S,+\infty) \times S^{n-1}\right)\) of
    \begin{equation}
        \begin{cases}\mathcal{L} w=0 & \text { in \((S, \infty) \times S^{n-1}\)} \\ \pi_\mathrm{II}(w)=g_{\mathrm{II}} & \text { on \(\{S\} \times S^{n-1}\)}\end{cases} \quad \text{with} \quad w \in \mathcal{C}_{\delta}^{2, \alpha}\left([S+\infty) \times S^{n-1}\right)
    \end{equation}
Furthermore, we have the bound
\begin{equation*}
    \left\|\mathcal{P}_{S}\left(g_{\mathrm{II}}\right)\right\|_{2, \alpha, \delta} \leq c e^{-\delta S}\left\|g_{\mathrm{II}}\right\|_{2, \alpha}
\end{equation*}
for some constant $c>0$ which is independent of $S$.
\end{prop}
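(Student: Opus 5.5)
We build $\mathcal{P}_S(g_{\mathrm{II}})$ as a harmonic-type extension plus a correction: first solve the model problem with $\Delta_0$ using Proposition \ref{prop:4.5}, then absorb the error coming from the potential term $\frac{n(3n-2)}{4}\phi^{2-2n}$ using $\mathcal{G}_S$ from Proposition \ref{prop:4.3}. Given $g_{\mathrm{II}} \in \pi_{\mathrm{II}}(\mathcal{C}^{2,\alpha}(S^{n-1}))$, let $w_0$ be the solution of Proposition \ref{prop:4.5}. Separating variables, $w_0 = \sum_{j\ge n+1} a_j e^{-\mu_j (s-S)} e_j$ with $\mu_j = \sqrt{\lambda_j + ((n-2)/2)^2}$. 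For $j \ge n+1$ we have $\lambda_j \ge 2n$, so $\mu_j \ge \sqrt{2n + (n-2)^2/4} = \frac{n+2}{2}$. Hence $w_0$ decays at least like $e^{-\frac{n+2}{2}(s-S)}$. It lies in $\mathcal{C}^{2,\alpha}_{\delta}$ for every $\delta \ge -\frac{n+2}{2}$, and in particular for our $\delta$, with
\[
\|w_0\|_{2,\alpha,\delta} \le c\, e^{-\delta S}\|g_{\mathrm{II}}\|_{2,\alpha}.
\]
This bound follows directly from the explicit expansion: $e^{-\delta s}e^{-\mu_j(s-S)} \le e^{-\delta S}$ for $s\ge S$, since $\delta+\mu_j>0$. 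Alternatively it follows from the bound of Proposition \ref{prop:4.5} together with $e^{(\delta+\frac{n+2}{2})(s-S)}$ comparisons. Each $w_0(s,\cdot)$ is orthogonal to $e_0,\dots,e_n$.

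Next we set $w = w_0 + v$ with
\[
v = -\mathcal{G}_S\Big(\tfrac{n(3n-2)}{4}\phi^{2-2n} w_0\Big).
\]
Then $\mathcal{L}w = \mathcal{L}w_0 + \mathcal{L}v = \frac{n(3n-2)}{4}\phi^{2-2n}w_0 - \frac{n(3n-2)}{4}\phi^{2-2n}w_0 = 0$, because $\mathcal{L} = \Delta_0 + \frac{n(3n-2)}{4}\phi^{2-2n}$ and $\Delta_0 w_0 = 0$. On the boundary, $\pi_{\mathrm{II}}(v)=0$ on $\{S\}\times S^{n-1}$, so $\pi_{\mathrm{II}}(w) = g_{\mathrm{II}}$ there.

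For the estimate, $\phi^{2-2n}$ is bounded with all derivatives on $[S,\infty)$, uniformly in $S$; it is in fact bounded by $1$ with decay $e^{(2-2n)|s|}$. So $\|\phi^{2-2n}w_0\|_{0,\alpha,\delta} \le c\|w_0\|_{2,\alpha,\delta}$, and Proposition \ref{prop:4.3} gives $\|v\|_{2,\alpha,\delta} \le c\, e^{-\delta S}\|g_{\mathrm{II}}\|_{2,\alpha}$. Since the constants in Proposition \ref{prop:4.3} are independent of $S$, so is the final constant. The orthogonality statement of Proposition \ref{prop:4.3} also shows that $w$ stays orthogonal to $e_0,\dots,e_n$, although this is not needed.

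For uniqueness, suppose $w_1, w_2$ are two solutions. Their difference $z$ solves $\mathcal{L}z=0$, $\pi_{\mathrm{II}}(z)=0$ on $\{S\}\times S^{n-1}$, and $z \in \mathcal{C}^{2,\alpha}_\delta$. This is exactly the problem of Proposition \ref{prop:4.3} with $f=0$, whose unique solution is $\mathcal{G}_S(0)=0$; hence $z=0$.

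The only delicate point is the uniform control of the model solution in the weighted norm with constant $e^{-\delta S}$. This relies on the spectral gap $\mu_j \ge \frac{n+2}{2} > -\delta$ for the modes in $\pi_{\mathrm{II}}$, and on Hölder (rather than merely sup) control. For the latter we would use interior and boundary Schauder estimates for $\Delta_0$ on the slabs $[s,s+1]\times S^{n-1}$, whose constants are translation invariant.
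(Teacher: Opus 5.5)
Your proposal is correct and follows the paper's proof. Both solve the model problem $\Delta_0 w_0=0$ with boundary data $g_{\mathrm{II}}$ via Proposition~\ref{prop:4.5}, then add the correction $\mathcal{G}_S\bigl(-\tfrac{n(3n-2)}{4}\phi^{2-2n}w_0\bigr)$ from Proposition~\ref{prop:4.3}. Your bookkeeping differs slightly and is cleaner: you bound $\|w_0\|_{2,\alpha,\delta}\le c\,e^{-\delta S}\|g_{\mathrm{II}}\|_{2,\alpha}$ directly by the weight comparison $e^{-(\delta+\frac{n+2}{2})(s-S)}\le 1$, and you use only the boundedness of $\phi^{2-2n}$, which gives the stated bound for every $S$; the paper instead passes through the $-\frac{n+2}{2}$-weighted norm and obtains a factor $e^{\frac{n+2}{2}S}$, which is dominated by $e^{-\delta S}$ only when $S\le 0$ (the case $S=s_\varepsilon<0$ it actually uses), and you also make uniqueness explicit via Proposition~\ref{prop:4.3} with $f=0$.
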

\begin{proof}
    First, let \(w_0 = G_S(g_\mathrm{II})\) solve
    \begin{equation}
        \begin{cases}\Delta_{0} w_{0}=0 & \text { in  \((S,\infty) \times S^{n-1}\)} \\ w_{0}=g_{\mathrm{II}} & \text {on \(\{S\} \times S^{n-1}\)}\end{cases}.
    \end{equation}
    Then, let
    \begin{equation}
        f = -\calL w_0 = -\Delta_0 w_0 - \frac{n(3n - 2)}{4}\phi^{2 - 2n}w_0 = -\frac{n(3n - 2)}{4}\phi^{2 - 2n}w_0.
    \end{equation}
    Then since \(-\frac{n + 2}{2} < \delta < -\frac{n + 2}{2} + 1\), \(\|w_0\|_{2, \alpha, -\frac{n+2}{2}}\) is bounded, and \(\phi\) satisfies the asymptotics \(\phi \rightarrow e^{s}\) as \(s \rightarrow \infty\), we have
    \begin{equation}
        \|f\|_{0, \alpha, \delta} \le C\|w_0\|_{2, \alpha, -\frac{n + 2}{2}} \le ce^{\frac{n+2}{2}S}\|g_\mathrm{II}\|_{2, \alpha}.
    \end{equation}
    It follows that \(\|f\|_{0, \alpha, \delta} < \infty\), so that \(f \in C^{0, \alpha}_\delta([S, \infty) \times \bbS^{n-1})\) and we can find \(w_1 = \calG(f)\) which solves
    \begin{equation}
        \begin{cases}\mathcal{L} w_1 = f  & \text {in \([S,+\infty) \times S^{n-1}\)} \\ \pi_{\mathrm{II}}(w_1) = 0 & \text {on \(\{S\} \times S^{n-1}\)}.\end{cases}
    \end{equation}
    The function \(w = w_0 + w_1\) now solves the equation we desired, with the desired bound (using that \(e^{-\delta S} > e^{-\frac{n+2}{2}S}\)).
\end{proof}
We now have a sufficient inverse for \(\calL\) that we can use to find minimal surfaces close to the catenoid with specified initial data \(g_\mathrm{II}\).

\subsubsection{Minimal surfaces close to \(C\) with initial data \(g_\mathrm{II}\)}

Recall that
\begin{equation}
    X_w = X_0 + w\phi^{\frac{2 - n}{2}}N_0
\end{equation}
gives a minimal surface with boundary data \(g_\mathrm{II} = \pi_\mathrm{II}(w)\) (denoted \(C(g_\mathrm{II})\)) when
\begin{equation}
    \calL w = \phi^{\frac{2-n}{2}} Q_{2}\left(\phi^{-\frac{n}{2}} w\right)+\phi^{\frac{n}{2}} Q_{3}\left(\phi^{-\frac{n}{2}} w\right).
\end{equation}
Recall also that we will want to match the boundary data of \(C(g_\mathrm{II})\) with the graphical surface \(\Sigma^c_{r_0, \veps}(h, \calA, h_\mathrm{II})\) that we produce later. We run into two issues regarding this goal. 

Firstly, \(\Sigma^c_{r_0, \veps}(h, \calA, h_\mathrm{II})\) is a modification of a graphical piece \(\Sigma^c_{r_0} = B_{r_0} \cap \Sigma\) of the original surface \(\Sigma\). It could be the case that \(r_0 \ll 1\), and then \(\Sigma^c_{r_0, \veps}(h, \calA, h_\mathrm{II})\) would be too small for the neck of \(C\). So, instead of gluing \(C\) we glue a catenoid scaled by a factor of \(\veps^{\frac{1}{n-1}}\) for some small \(\veps > 0\). We now make two crucial definitions
\begin{defn}\label{defn:r_veps and s_veps}
    For all \(\veps \in (0, 1)\), set
    \begin{equation}
        s_\veps = \frac{1}{(n-1)(3n-2)}\log \veps < 0, \quad r_\veps = \veps^{\frac{1}{n-1}}\phi(s_\veps)
    \end{equation}
    In particular, \(s_\veps\) represents a ``cut-off'' parameter, where we decide we wish to cut the half catenoidal neck. Then, \(r_\veps\) is the radius of the neck at the cut-off point. Using asymptotics of \(\phi\), we also have
    \begin{equation}
        r_\veps \sim \veps^{\frac{3}{3n-2}}.
    \end{equation}
    
\end{defn}
\begin{defn}
    The catenoid \(C_\veps\) will be \(C\) scaled down by \(\veps^\frac{1}{n-1}\), cut off at the parameter \(s_\veps\). That is \(C_\veps\) is the surface parametrized by, 
    \begin{equation}
        (s_\veps, \infty) \times S^{n-1} \ni (s, \theta) \mapsto \veps^\frac{1}{n-1}(\phi(s)\theta, \psi(s)).
    \end{equation}
\end{defn}

The second issue we have with gluing \(C_\veps\) to \(\Sigma^c_{r_0, \veps}(h, \calA, h_\mathrm{II})\) is the issue of parametrization. So far, we have been parametrizing surfaces close to \(C\) as normal graphs. But, \(\Sigma^c_{r_0, \veps}(h, \calA, h_\mathrm{II})\) will be a usual Cartesian coordinate graph of the form \((x, f(x))\). We thus modify slightly how we parametrize surfaces close to \(C_\veps\) to make the gluing easier. Specifically, let \(N_\veps(s)\) be a vector field on \(C_\veps\) which is equal to the vertical vector field \((0, \ldots, 0, 1)\) in a neighborhood of the neck (e.g. when \(s < s_\veps + 1\)), and equal to the normal field \(N_0\) away from this neighborhood of the neck (e.g. when \(s \ge s_\veps + 1\)). Thus, we are looking at hypersurfaces \(C_\veps(g_\mathrm{II})\) parametrized by
\begin{equation}
    X_w = \veps^{\frac{1}{n-1}}X_0 + w\phi^{\frac{2-n}{2}}N_\veps,
\end{equation}
and we want to find conditions on \(w\) which make the surface parameterized by \(X_w\) a minimal surface.

First, recall Proposition \ref{prop:3.1}. Using \(N_\veps\) we find that 
\begin{prop}\label{prop: our version of the modified catenoid min surface problem}
    The hypersurface parametrized by \(X_w = \veps^{\frac{1}{n-1}}X_0 + w\phi^{\frac{2-n}{2}}N_\veps\) with boundary data
    \begin{equation}
        g_\mathrm{II} = \phi^\frac{n-2}{2}(s_\veps)h_\mathrm{II}
    \end{equation}
    for \(h_\mathrm{II} \in \pi_\mathrm{II}(C^{2, \alpha}(\mathbb{S}^{n-1}))\) is minimal if and only if it solves

    \begin{equation}\label{eq: critical system for finding minimal surfaces near a catenoid}
        \begin{cases}
        \calL w=\bar{Q}_{\varepsilon}(w) & \text { in  \((s_\veps,\infty) \times S^{n-1}\)} \\ \pi_\mathrm{II}(w) = g_\mathrm{II} & \text {on \(\{s_\veps\} \times S^{n-1}\)}\end{cases} \quad \text{with} \quad w \in \mathcal{C}^{2, \alpha}\left([s_\veps, \infty) \times S^{n-1}\right).
    \end{equation}
    where
    \begin{equation}
        \begin{split}
        \bar{Q}_\veps(w)= & L_\veps w+\veps^{\frac{1}{n-1}} \phi^{\frac{2-n}{2}} \bar{Q}_{2, \veps}\left(\phi^{-\frac{n}{2}} \veps^{-\frac{1}{n-1}} w\right) \\
        & +\veps^{\frac{1}{n-1}} \phi^{\frac{n}{2}} \bar{Q}_{3, \veps}\left(\phi^{-\frac{n}{2}} \veps^{-\frac{1}{n-1}} w\right).
        \end{split}
    \end{equation}
    \(\bar{Q}_{2, \veps}\) and \(\bar{Q}_{3, \veps}\) are similar to \(Q_{2}\) and \(Q_{3}\) in Proposition \ref{prop:3.1}. In particular, there is \(c>0\) independent of \(\veps \in (0, 1)\) so that for all \(\veps \in (0,1)\) we have
    \begin{equation*}
        \left|\bar{Q}_{2, \varepsilon}(w)\right|_{0, \alpha\left([s, s+1] \times S^{n-1}\right)} \leq c|w|_{2, \alpha\left([s, s+1] \times S^{n-1}\right)}^{2}
    \end{equation*}
    for all \(w \in \mathcal{C}^{2, \alpha}\left([s, s+1] \times S^{n-1}\right)\). There are also \(c_0, c > 0\) independent of \(\veps\) such that
    \begin{equation*}
        \left|\bar{Q}_{3, \varepsilon}(w)\right|_{0, \alpha\left([s, s+1] \times S^{n-1}\right)} \leq c|w|_{2, \alpha\left([s, s+1] \times S^{n-1}\right)}^{3}
    \end{equation*}
    whenever \(|w|_{2, \alpha}\left([s, s+1] \times S^{n-1}\right) \leq c_{0}\).
    
    The operator \(L_\veps\) is linear and is the difference between the linearized mean curvature operator for hypersurfaces parameterized by \(\veps^\frac{1}{n-1}X_0 + w\phi^\frac{2 - n}{n}N_0\) and those parameterized by \(\veps^\frac{1}{n-1}X_0 + w\phi^\frac{n-2}{n}N_\veps\). The coefficients of \(L_\veps\) are supported in \(\left[s_\veps, s_\veps+2\right] \times S^{n-1}\) and are bounded by \(e^{(2n-2) s_\veps}\) in $\calC^{0, \alpha}\left(\left[s_\veps, s_\veps+2\right] \times S^{n-1}\right)\).
\end{prop}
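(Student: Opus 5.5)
We prove the statement by reducing to Proposition \ref{prop:3.1} through a scaling and a change of normal-type vector field. First we scale. The surface $\veps^{\frac{1}{n-1}}X_0 + w\phi^{\frac{2-n}{2}}N_0$ equals $\veps^{\frac{1}{n-1}}\bigl(X_0 + \veps^{-\frac{1}{n-1}}w\phi^{\frac{2-n}{2}}N_0\bigr)$. Minimality is scale invariant, so it is minimal if and only if the unscaled normal graph with function $\tilde w = \veps^{-\frac{1}{n-1}}w$ is minimal. Applying Proposition \ref{prop:3.1} to $\tilde w\phi^{\frac{2-n}{2}}$ and conjugating exactly as in the derivation of the equation for $\calL$ gives
\[
\calL w = \veps^{\frac{1}{n-1}}\phi^{\frac{2-n}{2}} Q_2\bigl(\phi^{-\frac n2}\veps^{-\frac{1}{n-1}}w\bigr) + \veps^{\frac{1}{n-1}}\phi^{\frac n2} Q_3\bigl(\phi^{-\frac n2}\veps^{-\frac{1}{n-1}}w\bigr).
\]
Here $\calL$ is linear, so multiplying through by $\veps^{\frac{1}{n-1}}$ produces the prefactors, and the arguments inherit the factor $\veps^{-\frac{1}{n-1}}$. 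The bounds on $\bar Q_{2,\veps},\bar Q_{3,\veps}$ then come from the homogeneity of $Q_2$, the vanishing to third order of $Q_3$, and the uniform boundedness of their coefficients in $s$ stated in Proposition \ref{prop:3.1}, taken on each slab $[s,s+1]\times S^{n-1}$.

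Second, we replace $N_0$ by $N_\veps$. On $s\ge s_\veps+2$ (or wherever $N_\veps=N_0$) nothing changes. On $[s_\veps,s_\veps+2]$ we write $N_\veps = aN_0 + T$ with $T$ tangent to $C_\veps$. A graph along a transverse field $N_\veps$ can be reparametrized, using the tangential part $T$ as a diffeomorphism of the parameter domain, into a normal graph with function $\hat w = \langle N_\veps,N_0\rangle w + (\text{quadratic in } w)$. We then define $L_\veps w$ as the difference between the linearized operators in the two parametrizations. The quadratic and higher remainders go into $\bar Q_{2,\veps}$ and $\bar Q_{3,\veps}$, which keep the same structural bounds because the reparametrization is smooth with bounded derivatives uniformly in $\veps$.

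We expect the coefficient estimate for $L_\veps$ to be the main obstacle. The plan is as follows. Near the neck region $s\in[s_\veps,s_\veps+2]$, where $s_\veps\to-\infty$, the catenoid is asymptotically flat: $\phi' \sim \pm\phi$ is small relative to $\phi^{2-n}$, and $\psi'=\phi^{2-n}$ dominates. So $N_0 = \phi^{-1}(\psi'\theta,-\phi')$ differs from the vertical vector by $O(\phi'/\psi')$. By the asymptotics of $\phi$ and the relation $(\phi')^2+\phi^{4-2n}=\phi^2$ (the intended ODE), this difference is $O(\phi^{n-1}\cdot\phi)\sim O(e^{(n-1)|s|}\cdot\ldots)$. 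Carefully done, $1-\langle N_\veps,N_0\rangle = O(e^{(2n-2)s_\veps})$ after squaring, and the same holds for the derivatives of the cutoff interpolation. Hence $L_\veps$, being linear in this deviation and its derivatives, has coefficients bounded by $c\,e^{(2n-2)s_\veps}$ in $\calC^{0,\alpha}$ and supported in $[s_\veps,s_\veps+2]$; we would verify this by tracking which terms are genuinely first order in the tilt.

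Finally, the boundary condition $\pi_{\mathrm{II}}(w)=g_{\mathrm{II}}=\phi^{\frac{n-2}{2}}(s_\veps)h_{\mathrm{II}}$ is just the conjugation $w\phi^{\frac{2-n}{2}} = h_{\mathrm{II}}$ on $\{s_\veps\}\times S^{n-1}$, since $N_\veps$ is vertical there. This matches the boundary-as-vertical-graph convention, and the equivalence follows.
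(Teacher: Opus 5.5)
Your route is the paper's. The paper's proof only says the result follows from Proposition~\ref{prop:3.1} together with the estimate $|\nabla^k(N_\veps\cdot N_0-1)|\le c_k e^{(2n-2)s_\veps}$, and defers the details to Fakhi--Pacard. Your scaling computation, which produces the $\veps^{\frac{1}{n-1}}$ prefactors and arguments, is correct. So is your observation that, since $H\equiv 0$ on $C$, the tangential part of $N_\veps$ does not enter the linearization. Hence $L_\veps=-\calL\bigl((\langle N_\veps,N_0\rangle-1)\,\cdot\,\bigr)$ has coefficients controlled by $\langle N_\veps,N_0\rangle-1$ and two of its derivatives. However, the step you single out as the main obstacle is argued backwards, and one structural claim needs more than you give.

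\textbf{The tilt estimate.} On $[s_\veps,s_\veps+2]$ with $s_\veps\to-\infty$ we have $\phi\sim e^{-s}\to\infty$. So $|\phi'|=\phi\sqrt{1-\phi^{2-2n}}\sim\phi$ dominates $\psi'=\phi^{2-n}\to 0$, not the reverse; if $\psi'$ dominated, $C$ would be nearly vertical there and $N_0$ nearly horizontal. The correct computation is as follows. The horizontal part of $N_0$ has size $\psi'/\phi=\phi^{1-n}$, and $\langle N_0,e_{n+1}\rangle=-\phi'/\phi=\sqrt{1-\phi^{2-2n}}$. Hence $|N_0-e_{n+1}|\sim e^{(n-1)s_\veps}$ and $1-\langle N_0,e_{n+1}\rangle\approx\tfrac12\phi^{2-2n}\sim e^{(2n-2)s_\veps}$. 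Since $N_\veps$ interpolates between $e_{n+1}$ and $N_0$, this gives $1-\langle N_\veps,N_0\rangle=O(|e_{n+1}-N_0|^2)$, with the same bound on derivatives. As you wrote it, the quantities $O(\phi'/\psi')$ and $O(e^{(n-1)|s|})$ are large, and squaring them gives $e^{-(2n-2)s_\veps}$. So your chain does not yield the estimate you then assert.

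\textbf{Uniformity of $\bar Q_{2,\veps}$.} The $\veps$-independent bound on $\bar Q_{2,\veps}$ does not follow from the reparametrization being smooth with bounded derivatives. In Proposition~\ref{prop:3.1} the quadratic term carries no factor $\phi^{n-1}$, unlike $Q_3$. On the support of $L_\veps$, however, $\phi^{n-1}\sim e^{-(n-1)s_\veps}\to\infty$. The tangential part $T$ of $N_\veps$ produces new quadratic contributions to $\bar Q_{2,\veps}$ with coefficients of size $\phi^{n-1}|T|$. These stay bounded uniformly in $\veps$ only because $|T|\le|N_\veps-N_0|\lesssim e^{(n-1)s_\veps}\sim\phi^{1-n}$ there. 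So the same tilt estimate has to be invoked at this point as well.

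It is also cleaner to Taylor-expand the mean curvature of $X_0+fN_\veps$ directly, as a second-order expression in $f$, and use the reparametrization only to identify the linear term. Otherwise, substituting $\hat w=\langle N_\veps,N_0\rangle w+\cdots$ evaluated at the same point into the normal-graph equation brings third derivatives of $w$ into the remainder.
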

\begin{proof}
    We have not provided the precise definition of \(N_\veps\), but as this and the rest of the important details are provided in \cite{Fakhi-Pacard}, we instead simply remark that the result follows from Proposition \ref{prop:3.1} and estimates such as
    \begin{equation}
        |\nabla^{k}\left(N_\veps \cdot N_{0}-1\right)| \leq c_{k} e^{(2 n-2) s_\veps}
    \end{equation}
    for \(k \ge 0\). In other words, the system we seek to solve here is a sufficiently controlled perturbation of the one in Proposition \ref{prop:3.1} that a solution to this system solves the original problem as well.
\end{proof}

Now our goal is to solve the system \eqref{eq: critical system for finding minimal surfaces near a catenoid}. First, fix \(\delta \in\left(-\frac{2+n}{2},-\frac{n}{2}\right)\) and \(\alpha \in(0,1)\). Using Proposition \ref{prop:4.6}, we may define 
\begin{equation*}
\tilde{w} = \mathcal{P}_{s_{\varepsilon}}\left(g_{\mathrm{II}}\right),
\end{equation*}
which solves
\begin{equation}
    \begin{cases}\mathcal{L} \tilde w=0 & \text { in \((s_\veps, \infty) \times \mathbb{S}^{n-1}\)} \\ \pi_\mathrm{II}(w)=g_{\mathrm{II}} & \text { on \(\{s_\veps\} \times \mathbb{S}^{n-1}\)}\end{cases}
\end{equation}
with the bound
\begin{equation}
\|\tilde{w}\|_{2, \alpha, \delta} \leq c e^{-\delta s_\veps}\left\|g_{\mathrm{II}}\right\|_{2, \alpha} .
\end{equation}
We now seek to find a solution of the form \(w=\tilde{w}+v\), where \(v \in \mathcal{C}_{\delta}^{2, \alpha}\left(\left[s_{\varepsilon},+\infty\right) \times S^{n-1}\right)\) is a function which solves
\begin{equation}
    \begin{cases}\mathcal{L} v=\bar{Q}_{\varepsilon}(\tilde{w}+v) & \text {in \(\left(s_{\varepsilon},\infty\right) \times \mathbb{S}^{n-1}\)} \\ \pi_{\mathrm{II}}(v)=0 & \text {on \(\left\{s_{\varepsilon}\right\} \times \mathbb{S}^{n-1}\)}.\end{cases}
\end{equation}
Now using Proposition \ref{prop:4.3}, let
\begin{equation}
    \calN_{\veps}(v) = \mathcal{G}_{s_\veps}\left(\bar{Q}_\veps(\tilde{w}+v)\right).
\end{equation}
If we can find a fixed point \(v_0\) of \(\calN_\veps\), then we will have solved \eqref{eq: critical system for finding minimal surfaces near a catenoid}.
\begin{prop}\label{prop:5.1}
     Fix \(\delta \in\left(-\frac{n+2}{2},-\frac{n}{2}\right)\) and \(\alpha \in(0,1)\). For all \(\kappa>0\) there are constants \(c_{\kappa}>0\) and \(\veps_0>0\) such that for all \(\veps \in\left(0, \varepsilon_{0}\right]\) and \(h_{\mathrm{II}} \in \pi_{\mathrm{II}}\left(\mathcal{C}^{2, \alpha}\left(\mathbb{S}^{n-1}\right)\right)\) with
    \begin{equation*}
        \left\|h_{\mathrm{II}}\right\|_{2, \alpha} \leq \kappa r_\veps^{2} 
    \end{equation*}
    the map \(\calN_\veps\) is a contraction mapping in the ball
    \begin{equation}
        B = \left\{v:\|v\|_{2, \alpha, \delta} \leq c_{\kappa} e^{\left(\frac{3 n-2}{2}-\delta\right) s_{\varepsilon}} r_{\varepsilon}^{2}\right\}.
    \end{equation}
    Thus, \(\calN_\veps\) has a unique fixed point in this ball.
\end{prop}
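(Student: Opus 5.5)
The plan is the standard Banach fixed point argument. By Proposition \ref{prop:4.3}, $\|\calN_\veps(v)\|_{2,\alpha,\delta}\le c\|\bar Q_\veps(\tilde w+v)\|_{0,\alpha,\delta}$. So it suffices to prove two estimates:
\begin{enumerate}
\item $\|\bar Q_\veps(\tilde w+v)\|_{0,\alpha,\delta}\le \tfrac{1}{c}\,c_\kappa e^{(\frac{3n-2}{2}-\delta)s_\veps}r_\veps^2$ for $v\in B$;
\item $\|\bar Q_\veps(\tilde w+v_1)-\bar Q_\veps(\tilde w+v_2)\|_{0,\alpha,\delta}\le \tfrac{1}{2c}\|v_1-v_2\|_{2,\alpha,\delta}$ for $v_1,v_2\in B$,
\end{enumerate}
once $\veps\le\veps_0$. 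The constant $c_\kappa$ will be fixed by the linear source term, and $\veps_0$ then absorbs everything else.

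\textbf{Size of $\tilde w$.} We have $g_{\mathrm{II}}=\phi^{\frac{n-2}{2}}(s_\veps)h_{\mathrm{II}}$, and $\phi(s_\veps)\sim e^{|s_\veps|}$. Proposition \ref{prop:4.6} then gives
\[
\|\tilde w\|_{2,\alpha,\delta}\le c\,e^{-\delta s_\veps}e^{-\frac{n-2}{2}s_\veps}\kappa r_\veps^2 = c\kappa\, e^{(\frac{2-n}{2}-\delta)s_\veps}r_\veps^2 .
\]
Because $s_\veps<0$, this is already no larger than the radius of $B$ up to the factor $e^{(n-1)s_\veps}\le 1$. So on $B$ we have $\|\tilde w+v\|_{2,\alpha,\delta}\lesssim (c\kappa+c_\kappa)e^{(\frac{3n-2}{2}-\delta)s_\veps}r_\veps^2$, which is small.

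\textbf{Linear term $L_\veps w$.} Its coefficients are supported in $[s_\veps,s_\veps+2]$ and have size $e^{(2n-2)s_\veps}$. Converting to the weighted norm only costs a bounded factor on that strip. This gives
\[
\|L_\veps(\tilde w+v)\|_{0,\alpha,\delta}\le c\,e^{(2n-2)s_\veps}\|\tilde w+v\|_{2,\alpha,\delta}.
\]
Applied to $\tilde w$, this is $c\kappa e^{(2n-2)s_\veps}e^{(\frac{2-n}{2}-\delta)s_\veps}r_\veps^2=c\kappa e^{(\frac{3n-2}{2}-\delta)s_\veps}r_\veps^2$, which is exactly the radius of $B$. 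This source term determines $c_\kappa$: take $c_\kappa=2c^2\kappa$. The same computation gives a Lipschitz constant $c\,e^{(2n-2)s_\veps}\to 0$.

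\textbf{Quadratic and cubic terms.} Write $W=\phi^{-n/2}\veps^{-\frac{1}{n-1}}(\tilde w+v)$. On $[s,s+1]$ we have $|W|_{2,\alpha}\lesssim \veps^{-\frac{1}{n-1}}e^{(\delta-\frac n2)s}\|\tilde w+v\|_{2,\alpha,\delta}$. Since $\delta<-n/2$, this is largest at $s=s_\veps$. Using $e^{s_\veps}=\veps^{\frac{1}{(n-1)(3n-2)}}$ and $r_\veps\sim\veps^{\frac{3}{3n-2}}$, one checks $|W|\le c_0$ for $\veps$ small, so the bounds of Proposition \ref{prop: our version of the modified catenoid min surface problem} apply. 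Then
\[
e^{-\delta s}\,\veps^{\frac{1}{n-1}}\phi^{\frac{2-n}{2}}|W|^2 \lesssim \veps^{-\frac{1}{n-1}}e^{(\delta-\frac{3n-2}{2})s}\|\tilde w+v\|^2_{2,\alpha,\delta},
\]
which is again maximal at $s_\veps$. Its ratio to the radius of $B$ is a positive power of $\veps$ times constants. Verifying this exponent bookkeeping is the main obstacle: the choices of $s_\veps$ and the power $\frac{3}{3n-2}$ must be used exactly.

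The cubic term carries an extra factor $\phi^{n-1}|W|$. Because $|W|\phi^{n-1}\lesssim \veps^{-\frac{1}{n-1}}e^{(\delta+\frac n2-1)s}\|\cdot\|$ and $\delta+\frac n2-1<0$, this is again worst at $s_\veps$ and is small.

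The Lipschitz estimates follow in the same way from bilinearity of $Q_2$ and the derivative bounds on $Q_3$, each bounded by a small multiple of $\|v_1-v_2\|$. Hence $\calN_\veps$ maps $B$ into itself and is a contraction, and the Banach fixed point theorem gives the unique fixed point in $B$.
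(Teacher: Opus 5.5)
Your architecture matches the paper's: reduce via Proposition \ref{prop:4.3}, bound $\tilde w$ via Proposition \ref{prop:4.6}, let $L_\varepsilon\tilde w$ fix $c_\kappa$, and show the nonlinear terms and the Lipschitz constants are negligible. Your bounds for $\tilde w$ and for $L_\varepsilon$ are also correct. The gap is in the nonlinear bookkeeping. That is the step you defer, and the two intermediate claims you do state there are wrong.

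First, $\|\tilde w\|_{2,\alpha,\delta}\le c\kappa e^{(\frac{2-n}{2}-\delta)s_\varepsilon}r_\varepsilon^2$ is \emph{larger} than the radius of $B$, by the factor $e^{(2-2n)s_\varepsilon}\to\infty$, since $s_\varepsilon<0$. Your own linear computation shows this: $\|L_\varepsilon\tilde w\|\lesssim e^{(2n-2)s_\varepsilon}\|\tilde w\|$ already has the size of the radius. So the claim $\|\tilde w+v\|_{2,\alpha,\delta}\lesssim (c\kappa+c_\kappa)e^{(\frac{3n-2}{2}-\delta)s_\varepsilon}r_\varepsilon^2$ is false; $\tilde w$ dominates.

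Second, in your formulas for $|W|$, for the quadratic term, and for $\phi^{n-1}|W|$, you use $\phi(s)\sim e^{s}$ on all of $[s_\varepsilon,\infty)$. On $[s_\varepsilon,0]$ one actually has $\phi(s)\sim e^{-s}$, which you used yourself for $g_{\mathrm{II}}$.

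These two errors do not cancel harmlessly. Put the true size of $\tilde w$ into your bound $\varepsilon^{-\frac{1}{n-1}}e^{(\delta-\frac{3n-2}{2})s_\varepsilon}\|\tilde w\|^2$. You get $\kappa^2 e^{(\frac{n-2}{2}-\delta)s_\varepsilon}r_\varepsilon^2$, which is the radius of $B$ times $(\kappa^2/c_\kappa)e^{-ns_\varepsilon}\to\infty$. So, as written, the self-map step fails.

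The fix is the paper's bookkeeping; write $\|\cdot\|$ for $\|\cdot\|_{2,\alpha,\delta}$.
\begin{itemize}
\item \textbf{Quadratic term.} The weighted quadratic term is bounded by $\varepsilon^{-\frac{1}{n-1}}\phi^{\frac{2-3n}{2}}(s)e^{\delta s}\|\tilde w+v\|^2$. The weight $\phi^{\frac{2-3n}{2}}e^{\delta s}$ is $\lesssim 1$ on all of $[s_\varepsilon,\infty)$, because $\frac{3n-2}{2}+\delta>n-2>0$. Its supremum sits near the neck $s\approx 0$, not at $s_\varepsilon$.
\item \textbf{Size of the quadratic term.} Use $\varepsilon^{-\frac{1}{n-1}}=e^{-(3n-2)s_\varepsilon}$ and $r_\varepsilon\sim e^{3(n-1)s_\varepsilon}$. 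The quadratic term is then $\lesssim\kappa^2e^{(2n-2-2\delta)s_\varepsilon}r_\varepsilon^2$, which is the radius times $e^{(\frac{n-2}{2}-\delta)s_\varepsilon}\to 0$.
\item \textbf{Cubic term.} The weight $\varepsilon^{-\frac{2}{n-1}}\phi^{-n}e^{2\delta s}$ is maximal at $s_\varepsilon$, since $n+2\delta<0$. This gives $\kappa^3 e^{(\frac{11n-10}{2}-\delta)s_\varepsilon}r_\varepsilon^2$.
\item \textbf{Smallness hypothesis.} One has $|W|\lesssim\kappa r_\varepsilon$, so $|W|\le c_0$ holds for small $\varepsilon$.
\item \textbf{Lipschitz bounds.} The quadratic Lipschitz constant is $\lesssim\varepsilon^{-\frac{1}{n-1}}\|\tilde w\|\sim\kappa e^{(\frac{5n}{2}-3-\delta)s_\varepsilon}\to 0$.
\end{itemize}
With these corrected exponents, your contraction argument goes through.
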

\begin{proof}
    It suffices to find a constant \(c_\kappa\) so that
    \begin{equation}
        \left\|\calN_\veps(0)\right\|_{2, \alpha, \delta} \leq \frac{c_{\kappa}}{2} e^{\left(\frac{3 n-2}{2}-\delta\right) s_\veps} r_\veps^{2} \quad \text{and} \quad \left\|\calN_\veps\left(v_{2}\right)-\calN_\veps\left(v_{1}\right)\right\|_{2, \alpha, \delta} \leq \frac{1}{2}\left\|v_{2}-v_{1}\right\|_{2, \alpha, \delta},
    \end{equation}
    for all \(v_{1} v_{2} \in B\). For the first inequality, we use Proposition \ref{prop:4.3} to find
    \begin{equation}
        \left\|\calN_\veps(0)\right\|_{2, \alpha, \delta} \le c\|\bar Q_\veps(\tilde w)\|_{0, \alpha, \delta}.
    \end{equation}
    Now using that \(\phi(s) \sim e^{-s}(1 + O(e^{(2n - 2)s})\) as \(s \rightarrow -\infty\) and recalling the definitions of \(s_\veps\) and \(r_\veps\), we get the bound
    \begin{equation}
        \|\tilde{w}\|_{2, \alpha, \delta} \leq c e^{-\delta s_\veps}\left\|g_{\mathrm{II}}\right\|_{2, \alpha} \le c\kappa e^{\left(\frac{2 - n}{2} - \delta\right)s_\veps}r_\veps^2,
    \end{equation}
    from which it follows
    \begin{equation}
        \left\|L_{\varepsilon} \tilde{w}\right\|_{0, \alpha, \delta} \le c \kappa e^{\left(\frac{3 n-2}{2}-\delta\right) s_{\varepsilon}} r_{\varepsilon}^{2},
    \end{equation}
    \begin{equation}
        \left\|\varepsilon^{\frac{1}{n-1}} \phi^{\frac{2-n}{2}} \bar{Q}_{2, \varepsilon}\left(\phi^{-\frac{n}{2}} \varepsilon^{-\frac{1}{n-1}} \tilde{w}\right)\right\|_{0, \alpha, \delta} \le c \kappa^{2} e^{(2 n-2-2 \delta) s_{\varepsilon}} r_{\varepsilon}^{2},
    \end{equation}
    and
    \begin{equation}
        \left\|\varepsilon^{\frac{1}{n-1}} \phi^{\frac{n}{2}} \bar{Q}_{3, \varepsilon}\left(\phi^{-\frac{n}{2}} \varepsilon^{-\frac{1}{n-1}} \tilde{w}\right)\right\|_{0, \alpha, \delta} \le c \kappa^{3} e^{\left(\frac{11 n-10}{2}-\delta\right) s_{\varepsilon}} r_{\varepsilon}^{2}\quad \text{when}\quad \left\|\varepsilon^{-\frac{1}{n-1}} \phi^{-\frac{n}{2}} \tilde{w}\right\|_{2, \alpha, 0} \leq c_{0}.
    \end{equation}
    The condition on the right can be achieved by choosing a sufficiently small \(\veps_0\) and taking \(\veps < \veps_0\). Putting these estimates together, we get
    \begin{equation}
        \begin{split}
            \left\|\calN_\veps(0)\right\|_{2, \alpha, \delta} & \le c\|\bar Q_\veps(\tilde w)\|_{0, \alpha, \delta}
            \\ & \le \left\|L_{\varepsilon} \tilde{w}\right\|_{0, \alpha, \delta} + \left\|\varepsilon^{\frac{1}{n-1}} \phi^{\frac{2-n}{2}} \bar{Q}_{2, \varepsilon}\left(\phi^{-\frac{n}{2}} \varepsilon^{-\frac{1}{n-1}} \tilde{w}\right)\right\| 
            \\ & \quad + \left\|\varepsilon^{\frac{1}{n-1}} \phi^{\frac{n}{2}} \bar{Q}_{3, \varepsilon}\left(\phi^{-\frac{n}{2}} \varepsilon^{-\frac{1}{n-1}} \tilde{w}\right)\right\|_{0, \alpha, \delta} 
            \\ & \le \tilde{c} \kappa e^{\left(\frac{3 n-2}{2}-\delta\right) s_{\varepsilon}} r_{\varepsilon}^{2},
        \end{split}
    \end{equation}
    and so choosing \(c_\kappa \ge 2\tilde c \kappa\) gives us the desired estimate.

    To prove \(\left\|\calN_\veps\left(v_{2}\right)-\calN_\veps\left(v_{1}\right)\right\|_{2, \alpha, \delta} \leq \frac{1}{2}\left\|v_{2}-v_{1}\right\|_{2, \alpha, \delta}\), we use a very similar method, recalling the well known facts
    \begin{equation}
        |v_1 - v_2|^2 = |v_1 - v_2||v_1 + v_2|, \quad |v_1 - v_2|^3 = |v_1 - v_2||v_1^2 + v_2v_2 + v_2^2|
    \end{equation}
    in order to get bounds in terms of \(\|v_1 - v_2\|_{2, \alpha, \delta}\).
\end{proof}

\subsubsection{Analysis of Cauchy Data}
It now follows that given a function \(h_\mathrm{II} \in \pi_\mathrm{II}(C^{2, \alpha}(\bbS^{n-1}))\), we can find a minimal surface
\begin{equation}
    X_w = \veps^\frac{1}{n-1}X_0 + w\phi^\frac{2-n}{2}N_\veps
\end{equation}
with boundary data \(\pi_\mathrm{II}(w) = \phi^\frac{n-2}{2}(s_\veps)h_\mathrm{II}\). We denote this minimal surface by \(C_\veps(h_\mathrm{II})\), and we note that by the definitions of \(g_\mathrm{II}\) and \(N_\veps\), the boundary of the perturbed catenoid \(\partial C_\veps(h_\mathrm{II})\) is the graph of \(h_\mathrm{II}\) over a sphere \(r_\veps\bbS^{n-1}\).

Importantly, we note that because \(w \in \calC_\delta^{2,\alpha}\) for some \(\delta\), the end of \(C_\veps(h_\mathrm{II})\) is uniformly asymptotic to the end of \(C_\veps\).

Our ultimate goal is to glue \(C_\veps(h_\mathrm{II})\) to \(\Sigma^c_{r_0, \veps}(h_\mathrm{I}, \calA, h_\mathrm{II})\). To go about this, we analyze the Cauchy data of the surface \(C_\veps(h_\mathrm{II})\), with the intent to match it to the data of \(\Sigma^c_{r_0, \veps}(h_\mathrm{I}, \calA, h_\mathrm{II})\) later.

First, we note that \(C_\veps(h_\mathrm{II})\) will be a graph over \(x_{n+1} = 0\) in an annulus around the origin. Specifically, we may say that \(C_\veps(h_\mathrm{II})\) is the graph of a function \(U_{\veps, h_\mathrm{II}}(x)\) over an annulus \(B_{r_\veps}\backslash B_{r_\veps/2}\). We have
\begin{defn}
    The \textit{catenoid Cauchy data} of \(C_\veps(h_\mathrm{II})\) is a map \(\calS_\veps : \pi_\mathrm{II}(C^{2, \alpha}(\bbS^{n-1})) \rightarrow C^{2, \alpha}(\bbS^{n-1}) \times C^{1, \alpha}(\bbS^{n-1}))\), given by
    \begin{equation}
        \calS_\veps(h_\mathrm{II})(\theta) = (U_{\veps, h_\mathrm{II}}(r_\veps, \theta), r_\veps\partial_rU_{\veps, h_\mathrm{II}}(r_\veps, \theta)).
    \end{equation}
    In other words, \(\calS_\veps(h_\mathrm{II})\) is just the Neumann boundary data for the PDE we solved to produce \(C_\veps(h_\mathrm{II})\). In terms of the parametrizing functions \(\phi\) and \(\psi\), we can further write
    \begin{equation}
        \calS_\veps(h_{\mathrm{II}})=\left(\phi^{\frac{2-n}{2}}(s_\veps) w(s_{\varepsilon}, \cdot), \frac{\phi(s_\veps)}{\phi'(s_\veps)}\left(\varepsilon^{\frac{1}{n-1}} \psi'(s_\veps)+\partial_{s}\left(\phi^{\frac{2-n}{2}} w\right)\left(s_\veps, \cdot\right)\right)\right).
    \end{equation}
\end{defn}
In order to compare the cauchy data of \(C_\veps(h_\mathrm{II})\) and \(\Sigma^c_{r_0, \veps}(h_\mathrm{I}, \calA, h_\mathrm{II})\), we will take an intermediate step of comparing the Cauchy data of each surface to the Cauchy data of a similar, but simpler problem. In particular,
\begin{defn}
    The \textit{simple catenoid Cauchy data} of \(C_\veps(h_\mathrm{II})\) is a map \(\calS_0 : \pi_\mathrm{II}(C^{2, \alpha}(\bbS^{n-1})) \rightarrow C^{2, \alpha}(\bbS^{n-1}) \times C^{1, \alpha}(\bbS^{n-1}))\) given by
    \begin{equation}
        \calS_0(h_\mathrm{II})(\theta) = \left(h_{\mathrm{II}},-\veps r_\veps^{2-n}-\frac{n-2}{2} h_{\mathrm{II}}+D_{\theta} h_{\mathrm{II}}\right)
    \end{equation}
    This is essentially the Cauchy data of the problem \(\Delta_0 w = 0\) in \((s_\veps, \infty) \times \bbS^{n-1}\) and \(w = h_\mathrm{II}\) on \(\{s_\veps\} \times \bbS^{n-1}\). Since \(\Delta_0\) is just a Laplacian plus a constant term, this Cauchy data map is much simpler and indeed gives an isomorphism between certain spaces (which will be discussed later).
\end{defn}

We now conclude our discussion of perturbing the catenoid by comparing the catenoid Cauchy data and the simple catenoid Cauchy data.

\begin{prop}\label{prop:5.2}
    The maps \(\calS_\veps\) and \(\calS_0\) are continuous. There also exists a constant \(c > 0\) so that the following holds: given \(\kappa>0\), there is a \(\veps_0 > 0\) so that for all \(\veps \in (0, \veps_0]\) and \(\|h_{\mathrm{II}}\|_{2, \alpha} \le \kappa r_\veps^{2}\), we have

\begin{equation}
    \left\|\left(\mathcal{S}_{\varepsilon}-\mathcal{S}_{0}\right)\left(h_{\mathrm{II}}\right)\right\|_{\mathcal{C}^{2, \alpha} \times \mathcal{C}^{1, \alpha}} \leq c r_{\varepsilon}^{2}.
\end{equation}
\end{prop}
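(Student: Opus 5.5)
Write \(w = \tilde w + v\), where \(\tilde w = \mathcal P_{s_\veps}(g_\mathrm{II})\) with \(g_\mathrm{II} = \phi^{\frac{n-2}{2}}(s_\veps)h_\mathrm{II}\), and \(v\) is the fixed point of \(\calN_\veps\) from Proposition \ref{prop:5.1}. We then compare the two components of \(\calS_\veps(h_\mathrm{II})\), given by the explicit formula in terms of \(\phi,\psi,w\), with those of \(\calS_0(h_\mathrm{II})\) term by term. Continuity of \(\calS_0\) is immediate, since it is affine and bounded in \(h_\mathrm{II}\). Continuity of \(\calS_\veps\) follows from continuous dependence on \(h_\mathrm{II}\) of \(\tilde w\) (linear and bounded by Proposition \ref{prop:4.6}) and of the fixed point of the contraction \(\calN_\veps\). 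Indeed, \(\calN_\veps\) depends continuously on the parameter \(\tilde w\), and a uniform contraction constant gives a continuous fixed point. The Hölder norms at \(s = s_\veps\) are controlled by the weighted norms via \(\|f(s_\veps,\cdot)\|_{k,\alpha} \le e^{\delta s_\veps}\|f\|_{k,\alpha,\delta}\).

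Next decompose \(\tilde w = w_0 + w_1\) as in the proof of Proposition \ref{prop:4.6}. Here \(w_0\) solves \(\Delta_0 w_0 = 0\) with \(w_0 = g_\mathrm{II}\) at \(s_\veps\); expanding in spherical harmonics gives \(w_0 = \sum_{j\ge n+1} a_j e^{-\mu_j (s-s_\veps)} e_j\) with \(\mu_j = \sqrt{\lambda_j + (n-2)^2/4}\). Hence \(\partial_s w_0(s_\veps) = -D_\theta g_\mathrm{II}\), where \(D_\theta\) is the Dirichlet-to-Neumann type operator on \(\pi_\mathrm{II}\). Substitute into the formula for \(\calS_\veps\). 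The first component becomes \(\phi^{\frac{2-n}{2}}(s_\veps)w(s_\veps) = h_\mathrm{II} + \phi^{\frac{2-n}{2}}(s_\veps)(w_1 + v)(s_\veps)\); the \(\pi_\mathrm{I}\) part of \(w_1+v\) at \(s_\veps\) is the only error, since \(\pi_\mathrm{II}(w_1+v)=0\) there. For the second component, expand \(\partial_s(\phi^{\frac{2-n}{2}}w) = \phi^{\frac{2-n}{2}}(\partial_s w - \frac{n-2}{2}\frac{\phi'}{\phi}w)\). Also use that for \(s_\veps \to -\infty\) we have \(\phi'/\phi = -1 + O(e^{(2n-2)s_\veps})\), so \(\phi/\phi' \approx -1\). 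The main parts then reproduce \(-\frac{n-2}{2}h_\mathrm{II} + D_\theta h_\mathrm{II}\) up to the sign conventions of the paper. The term \(-\frac{\phi}{\phi'}\veps^{\frac{1}{n-1}}\psi'(s_\veps) = \veps^{\frac{1}{n-1}}\phi^{2-n}(s_\veps)(1+O(e^{(2n-2)s_\veps}))\) gives the leading constant. By the definition of \(r_\veps\), this equals \(\veps r_\veps^{2-n}\) up to relative error \(O(e^{(2n-2)s_\veps})\), with sign fixed by orientation.

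It remains to bound the errors by \(c r_\veps^2\). There are three. The first is the \(\phi\)-asymptotic corrections, of size \(e^{(2n-2)s_\veps}\) times \(\|h_\mathrm{II}\| + \veps r_\veps^{2-n}\). The second is \(w_1\), bounded via \(\|w_1\|_{2,\alpha,\delta} \le c\|\phi^{2-2n}w_0\|_{0,\alpha,\delta}\), which gains a factor \(e^{(2n-2)s_\veps}\). The third is \(v\), bounded by Proposition \ref{prop:5.1}: \(\|v\|_{2,\alpha,\delta} \le c_\kappa e^{(\frac{3n-2}{2}-\delta)s_\veps} r_\veps^2\). Evaluated at \(s_\veps\) and multiplied by \(\phi^{\frac{2-n}{2}}(s_\veps)\sim e^{\frac{n-2}{2}s_\veps}\), this gives \(\lesssim e^{2n s_\veps} r_\veps^2 \le r_\veps^2\).

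The main obstacle is the constant term \(\veps r_\veps^{2-n}\) times its relative error \(e^{(2n-2)s_\veps}\). By Definition \ref{defn:r_veps and s_veps}, \(e^{s_\veps} = \veps^{1/((n-1)(3n-2))}\) and \(r_\veps \sim \veps^{3/(3n-2)}\). So I would check the exponent identity \(\veps r_\veps^{2-n} e^{(2n-2)s_\veps} \sim \veps^{1 + \frac{3(2-n)}{3n-2} + \frac{2}{3n-2}} = \veps^{\frac{6}{3n-2}} \sim r_\veps^2\). This is precisely why \(s_\veps\) was chosen as it was, and I would expect it to fix the constant \(c\) independent of \(\kappa\) once \(\veps_0\) is small. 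Some care is needed for the \(\kappa\)-dependent terms: they carry extra positive powers of \(e^{s_\veps}\), so they can be absorbed for \(\veps\le\veps_0(\kappa)\).
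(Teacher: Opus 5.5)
Your proposal is correct and follows the same route as the paper's (very brief) proof. You write \(w=\tilde w+v\), read off the main terms of \(\calS_0\) from the \(\Delta_0\)-solution \(w_0\), and control the rest by the asymptotics of \(\phi,\psi\) and the weighted bounds of Propositions~\ref{prop:4.6} and~\ref{prop:5.1}, with the \(\kappa\)-independent constant coming from \(\veps r_\veps^{2-n}e^{(2n-2)s_\veps}\sim r_\veps^2\).

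There are two harmless slips, both still \(o(r_\veps^2)\) once \(\veps\le\veps_0(\kappa)\):
\begin{enumerate}
\item The weighted estimate only gives the \(w_1\)-term a gain of \(e^{(\delta+\frac{n+2}{2})s_\veps}\), not \(e^{(2n-2)s_\veps}\). The weighted norm of \(\phi^{2-2n}w_0\) is attained near the neck \(s\approx 0\), not at \(s_\veps\). Note also that \(w_1(s_\veps,\cdot)\equiv 0\) by the orthogonality clause of Proposition~\ref{prop:4.3}, so \(w_1\) enters only through \(\partial_s w_1\).
\item The \(v\)-term is \(O(c_\kappa e^{(2n-2)s_\veps}r_\veps^2)\) rather than \(O(e^{2ns_\veps}r_\veps^2)\).
\end{enumerate}
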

\begin{proof}
    This follows directly from the asymptotics on \(\phi\) and \(\psi\), and the bound on 
    \begin{equation}
        \|w\|_{2, \alpha, \delta} \le \|\tilde w\|_{2, \alpha,\delta} + \|v_0\|_{2, \alpha, \delta}
    \end{equation}
    (for some \(\delta\)) that follows from the bound on \(\|h_\mathrm{II}\|_{2, \alpha}\) and the arguments in Propositions \ref{prop: our version of the modified catenoid min surface problem} and \ref{prop:5.1}.
\end{proof}
\subsection{Perturbing the intermediate compact piece to produce \(\Sigma^c_{r_0, \veps}(h_\mathrm{I}, \calA, h_\mathrm{II})\)}

We now describe how to produce the surface  \(\Sigma^c_{r_0, \veps}(h_\mathrm{I}, \calA, h_\mathrm{II})\) that we have been mentioning. Recalling the basic setup: we start with a surface \(\Sigma\), and cut out a graphical piece \(\Sigma^c_{r_0} = B_{r_0} \cap \Sigma\), with the remaining noncompact piece denoted \(\Sigma_{r_0}\). We then wish to perturb \(\Sigma^c_{r_0}\) to glue a catenoid \(C_\veps\).

To do this, we first produce a surface \(\Sigma^c_{r_0, \veps}\) as follows; if \(\Sigma^c_{r_0}\) is a graph \((x, u(x))\) over \(B_{r_0} \cap \{x_{n+1} = 0\}\), then \(\Sigma^c_{r_0, \veps}\) is a graph
\begin{equation}
    \left(x, u(x) + \frac{\veps}{n - 2}\gamma_0(x)\right), \quad x \in B_{r_0} \backslash B_{r_\veps/2},
\end{equation}
where \(\gamma_0\) is the Green's function of \(\Sigma^c_{r_0}\) at the origin.

The surface \(\Sigma^c_{r_0, \veps}\) now has two boundaries, and we will solve perturbation problems on both boundaries in order to match \(\Sigma^c_{r_0, \veps}\) with \(C_\veps\) and \(\Sigma_{r_0}\). When perturbing \(C_\veps\), we encountered issues with Jacobi fields that limit the degrees of freedom we had in specifying boundary data. A similar issue will arise with perturbing \(\Sigma_{r_0}\). In order to regain these degrees of freedom, we make some basic geometric transformations to \(\Sigma^c_{r_0, \veps}\) (translations, rotations, and edits to the coefficient \(\veps/(n-2)\) on the Green's function), which will be represented by a term called \(\calA\). The resultant surface perturbed on both boundaries and modified by \(\calA\) will be called \(\Sigma^c_{r_0, \veps}(h_\mathrm{I}, \calA, h_\mathrm{II})\). We will later seek to match the boundary data with a perturbed noncompact piece \(\Sigma_{r_0}(h_\mathrm{I})\) and the perturbed catenoid \(C_\veps(h_\mathrm{II})\).

\subsubsection{Definitions}

We start by perturbing the boundary of \(\Sigma^c_{r_0, \veps}\) common to the catenoid \(C_\veps\). We proceed analogously to our perturbation of the Catenoid: 
\begin{enumerate}
    \item we define a relevant function space
    \item we define the linearized mean curvature operator and the full mean curvature operator
    \item we discuss how to solve the linearized mean curvature operator for certain key cases
    \item we use a contraction mapping/fixed point argument to solve the full mean curvature operator
\end{enumerate}
\begin{defn}\label{defn:6.1}
For all regular open subsets $\Omega \subset \mathbb{R}^{n}$ with $0 \in \Omega$, for all $k \in \mathbb{N}$, $\alpha \in(0,1)$ and $v \in \mathbb{R}$, the space $\mathcal{C}_{v}^{k, \alpha}(\bar{\Omega} \backslash\{0\})$ is defined to be the space of functions $w \in \mathcal{C}_{\text {loc }}^{k, \alpha}(\bar{\Omega} \backslash\{0\})$ for which the following norm is finite
$$
\|w\|_{k, \alpha, v} \equiv|w|_{k, \alpha, \bar{\Omega} \backslash B_{r_{0}}}+\sup _{0<2 r \leq r_{0}} r^{-\nu}[w]_{k, \alpha,[2 r, r]},
$$
where, by definition
$$
[w]_{k, \alpha,[2 r, r]} \equiv \sum_{j=0}^{k} r^{j} \sup _{r \leq|x| \leq 2 r}\left|\nabla^{j} w\right|+r^{k+\alpha} \sup _{r \leq\left|x_{i}\right| \leq 2 r, x_{i} \neq x_{j}} \frac{\left|\nabla^{k} w\left(x_{1}\right)-\nabla^{k} w\left(x_{2}\right)\right|}{\left|x_{1}-x_{2}\right|^{\alpha}}
$$
and where $r_{0}>0$ is fixed in such a way that $B_{r_{0}} \subset \bar{\Omega}$.
\end{defn}
\begin{defn}\label{defn:6.2}
For all $\bar{r}<r_{0}$, the space $\mathcal{C}_{v}^{k, \alpha}\left(\bar{\Omega} \backslash B_{\bar{r}}\right)$ is defined to be the space of restrictions to $\bar{\Omega} \backslash B_{\bar{r}}$ of functions $w \in \mathcal{C}_{\nu}^{k, \alpha}(\bar{\Omega} \backslash\{0\})$, endowed with the induced norm. 
\end{defn}
\subsubsection{Notation}
Let $\Sigma_{0}$ be a hypersurface given as a graph
\begin{equation}
\bar{\Omega} \ni x \longrightarrow(x, u(x)) \in \Sigma_{0} \subset \mathbb{R}^{n+1} .
\end{equation}
Then linearized mean curvature operator about $\Sigma_{0}$ is given explicitly by
\begin{equation}\label{eqn:6.3}
\Lambda_{u} w \equiv \operatorname{div}\left(\frac{\nabla w}{\left(1+|\nabla u|^{2}\right)^{1 / 2}}-\frac{\nabla u \cdot \nabla w}{\left(1+|\nabla u|^{2}\right)^{3 / 2}} \nabla u\right).
\end{equation}
for any  $w\in \calC^1$ perturbation of $\Sigma_0$ given by $x\to (x,u(x)+w(x))\in \R^{n+1}.$
\begin{defn}\label{defn:sigma_eps}
For all $\varepsilon>0$, we can define $\Sigma_{\varepsilon}$ to be the hypersurface parameterized by
$$
\Omega \backslash\{0\} \ni x \longrightarrow\left(x, u(x)+\frac{\varepsilon}{n-2} \gamma_{0}(x)\right) \in \mathbb{R}^{n+1} \quad \text { if } \quad n \geq 4
$$
and by
$$
\Omega \backslash\{0\} \ni x \longrightarrow\left(x, u(x)+\frac{\varepsilon}{n-2}\left(\gamma_{0}(x)-a_{0}\right)\right) \in \mathbb{R}^{4} \quad \text { if } \quad n=3 .
$$
\end{defn}

\subsubsection{Assumptions}
We will make the following technical assumptions, which will ensure that all the results will hold uniformly in $\alpha, u$ and $\Omega$ and will only depend on the constants $r_{0}, \eta_{0}$ and $\eta_{\nu}$ which are defined below. The importance of these assumptions will become clear within the subsequent sections.
\begin{itemize}
    \item[(A.1)] $B_{r_{0} / 2} \subset \Omega \subset B_{2 r_{0}}$.
    \item[(A.2)] $u(0)=0$ and $|\nabla u(0)|\leq r_\veps$. Stated differently, 0 belongs to $\Sigma_{0}$ and the tangent space at $0$ is always the hyperplane that is close to the plane $x_{n+1}=0$ by an $r_{\veps}>0$ amount. This is a key assumption that differentiates our construction from the one in Fakhi-Pacard and introduces technical difficulties.
    \item[(A.3)]  $\|u\|_{\mathcal{C}^{2, \alpha}\left(\overline{B_{2 r_{0}}}\right)} \leq \eta_{0}$ and $\|u\|_{\mathcal{C}^{3, \alpha}\left(\overline{B_{r_{0}}}\right)} \leq \eta_{0}$.
    \item[(A.4)] The operator $\Lambda_{u}$ defined from $\left[\mathcal{C}^{2, \alpha}(\bar{\Omega})\right]_{\mathcal{D}}$ into $\mathcal{C}^{0, \alpha}(\bar{\Omega})$ is an isomorphism where by definition
    \begin{equation}
    \left[\mathcal{C}^{2, \alpha}(\bar{\Omega})\right]_{\mathcal{D}} \equiv\left\{w \in \mathcal{C}^{2, \alpha}(\bar{\Omega}): w=0 \quad \text { on } \quad \partial \Omega\right\} .
    \end{equation}
Moreover $\left\|\Lambda_{u}^{-1}\right\|_{\left(\mathcal{C}^{0, \alpha}, \mathcal{C}^{2, \alpha}\right)} \leq \eta_{0}$ where $\eta_{0}$.
    \item[(A.5)] Assume that $v \in(-n, 1-n)$ is fixed. For all $r<r_{0}$, there exists an operator $\Gamma_{u, r}$ defined from $\mathcal{C}_{v-2}^{0, \alpha}\left(\bar{\Omega} \backslash B_{r}\right)$ into $\left[\mathcal{C}_{v}^{2, \alpha}\left(\bar{\Omega} \backslash B_{r}\right)\right]_{\mathcal{D}, n}$, such that $\Lambda_{u} \circ \Gamma_{u, r}=I d$. Here by definition
    \EQ{
    \begin{aligned}
{\left[\mathcal{C}_{v}^{2, \alpha}\left(\bar{\Omega} \backslash B_{r}\right)\right]_{\mathcal{D}, n} \equiv } & \left\{w \in \mathcal{C}_{v}^{2, \alpha}\left(\bar{\Omega} \backslash B_{r}\right): \quad w=0 \quad \text { on } \quad \partial \Omega,\right. \\
& \text { and } \left.\quad \pi_{\mathrm{II}}(w)=0 \quad \text { on } \quad \partial B_{r}\right\} .
\end{aligned}
    }
Moreover $\left\|\Gamma_{u, r}\right\|_{\left(\mathcal{C}_{\nu-2}^{0, \alpha}, \mathcal{C}_{\nu}^{2, \alpha}\right)} \leq \eta_{\nu}$, where $\eta_{\nu}$ does not depend on $r<r_{0}$.
\end{itemize}
Though this will never be explicit in the statements of the results, all the bounds we will obtain in the following sections will not depend on $u$ or $\Omega$ satisfying the assumptions above but will only depend on $r_{0}, r_{\veps}, \eta_{0}$ and $\eta_{\nu}$.

\subsubsection{The full mean curvature operator}
Let \((x, u(x))\) be a minimal graph. We have
\begin{equation}
    H_u = \div\left(\frac{\nabla u}{(1 + |\nabla u|^2)^{1/2}}\right). 
\end{equation}
We would like to perturb \(\Sigma\) vertically. I.e. we want to consider graphs \((x, u(x) + w(x))\). We have
\begin{lem}\label{lem:6.1}
 Assume (A.1), (A.2), and (A.3) hold. The linearized mean curvature operator $\Lambda_{u}$ can be expanded as
\begin{equation}\label{eqn:6.4}
\Lambda_{u}=\operatorname{div}\left(\nabla+\Lambda_{u}^{\prime}\right), \tag{6.4}
\end{equation}
and where $\Lambda_{u}^{\prime}$ is a first order partial differential operator without any zero order terms and all of whose coefficients are bounded functions in $\mathcal{C}_{2}^{1, \alpha}(\bar{\Omega} \backslash\{0\}) \cap \mathcal{C}_{2}^{2, \alpha}\left(\overline{B_{r_{0} / 2}} \backslash\{0\}\right)$.    
\end{lem}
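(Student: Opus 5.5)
Reading \eqref{eqn:6.3}, $\Lambda_u$ is already in divergence form. I will write $\Lambda_u w = \operatorname{div}(M(\nabla u)\nabla w)$, where $M(p)$ is the symmetric matrix
\[
M(p) = (1+|p|^2)^{-1/2} I - (1+|p|^2)^{-3/2}\, p\otimes p .
\]
The natural choice is then $\Lambda'_u w \equiv (M(\nabla u) - I)\nabla w$. With this choice \eqref{eqn:6.4} holds identically, and $\Lambda'_u$ is a first order operator with no zero order term, because it acts only through $\nabla w$. What remains is to show that the coefficient matrix $B(x) := M(\nabla u(x)) - I$ lies in $\mathcal{C}^{1,\alpha}_2(\bar\Omega\setminus\{0\})\cap \mathcal{C}^{2,\alpha}_2(\overline{B_{r_0/2}}\setminus\{0\})$.

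For the bound I use that $p\mapsto M(p)-I$ is smooth (real analytic) near $p=0$ and vanishes to second order there:
\[
M(p)-I = -\tfrac12|p|^2 I - p\otimes p + O(|p|^4).
\]
So $B = F(\nabla u)$ with $F$ smooth, $F(0)=0$, $DF(0)=0$. On $\bar\Omega\setminus B_{r_0}$ only an unweighted $\mathcal{C}^{1,\alpha}$ bound is needed, and it follows from (A.3), since $\nabla u\in \mathcal{C}^{1,\alpha}(\overline{B_{2r_0}})$ with norm at most $\eta_0$ and composition with the smooth $F$ preserves $\mathcal{C}^{1,\alpha}$. On the dyadic annuli $r\le|x|\le 2r$ with $2r\le r_0$, the norm in Definition \ref{defn:6.1} asks for $\sup|\nabla^j B|\lesssim r^{2-j}$ for $j\le 2$, together with the corresponding Hölder seminorm of $\nabla^2 B$ scaled by $r^{2+\alpha}$. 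To get this I Taylor expand $\nabla u$ about the origin using (A.2):
\[
|\nabla u(x)| \le |\nabla u(0)| + \eta_0 |x| \le r_\veps + \eta_0|x| .
\]
Then $|B|\lesssim |\nabla u|^2 \lesssim (r_\veps+|x|)^2$. For the derivatives, $\nabla B = DF(\nabla u)\nabla^2 u$ with $|DF(\nabla u)|\lesssim|\nabla u|$, which gives $|\nabla B|\lesssim r_\veps+|x|$. Further, $\nabla^2 B$ involves $D^2F\,\nabla^2u\,\nabla^2u + DF\,\nabla^3 u$, and $\nabla^3u$ is controlled by the $\mathcal{C}^{3,\alpha}(\overline{B_{r_0}})$ part of (A.3). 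This is why the $\mathcal{C}^{2,\alpha}_2$ statement is only claimed on $B_{r_0/2}$.

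This is where I expect the main obstacle. Because of our modification (A.2), $\nabla u(0)$ need not vanish, so $B(0)\ne 0$ and in general $|B|\sim r_\veps^2$ near the origin. That is not $O(|x|^2)$ as $|x|\to0$ at fixed $\veps$, so strictly the weight $r^{-2}$ blows up. I would handle this in one of two ways. The first option is to note that the relevant range is $|x|\ge r_\veps/2$, because $\Sigma_\veps$ and everything downstream lives on $\Omega\setminus B_{r_\veps/2}$. There $r_\veps\le 2|x|$, so $(r_\veps+|x|)^2\lesssim |x|^2$, and similarly for the derivatives. This gives the weighted bounds with constants depending only on $\eta_0$ and $r_0$, uniformly in $\veps$, in the spaces of Definition \ref{defn:6.2}. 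The second option is to rotate coordinates by an angle $O(r_\veps)$ so that the tangent plane at $0$ is horizontal, reducing to the Fakhi--Pacard case with $\nabla u(0)=0$, and then track the $O(r_\veps)$ error. I would commit to the first option and state the weighted estimate for $|x|\ge r_\veps/2$ explicitly, uniformly in $\veps$. For the Hölder seminorms I use the same scaling: rescale the annulus to unit size via $x=ry$, apply the standard composition estimate for $\mathcal{C}^{k,\alpha}$ functions on the fixed annulus, and scale back. The factor $r^{j}$ in $[\cdot]_{k,\alpha,[2r,r]}$ exactly compensates for the derivatives.
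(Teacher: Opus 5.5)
Your argument is the paper's proof made explicit: the paper only says the lemma ``follows directly from \eqref{eqn:6.3}'', i.e.\ one takes $\Lambda_u'w=(M(\nabla u)-I)\nabla w$, and your Taylor, composition and rescaling estimates for the coefficient matrix are correct. Your caveat is also correct, and the paper's one-line proof overlooks it: under the modified (A.2) the symmetric matrix $M(\nabla u(0))-I$ is nonzero whenever $\nabla u(0)\neq 0$, and the only freedom in choosing $\Lambda_u'$ is to add an antisymmetric matrix with divergence-free columns, so no decomposition makes the coefficients $O(|x|^2)$ and the literal $\mathcal{C}^{1,\alpha}_2(\bar\Omega\setminus\{0\})$ claim fails, whereas your $\veps$-uniform version on $\{|x|\gtrsim r_\veps\}$ is precisely the form used afterwards (the Green's function estimates are only asserted on $B_{r_0}\setminus B_{r_\veps/4}$, and membership in the restriction spaces of Definition~\ref{defn:6.2} follows by multiplying the coefficients by a cutoff at scale $r_\veps$).
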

\begin{proof}
This follows directly from (6.3).    
\end{proof}

\begin{lem}\label{lem:6.2}
For a surface parametrized by \((x, u(x) + w(x))\), the mean curvature of the surface is given by
    \begin{equation}\label{eqn:6.5}
        H_{u + w} = H_u + \Lambda_uw - \div(r_\veps Q_u'(\nabla w) + Q_u''(\nabla w)),
    \end{equation}
    where \(Q_u'\) is homogeneous of degree 2 with coefficients bounded uniformly and independently of \(r_\veps\) in a neighborhood of \(0\). The function \(Q_u''\) consists of nonlinear terms of order 3 and higher. Furthermore, all partial derivatives of \(Q_u''\) are bounded uniformly in a neighborhood of 0.
\end{lem}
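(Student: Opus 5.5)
The plan is a Taylor expansion of the mean curvature operator in divergence form. Write $H_v = \div(F(\nabla v))$ with $F(p) = p(1+|p|^2)^{-1/2}$, a smooth map from $\mathbb{R}^n$ to $\mathbb{R}^n$. Taylor's formula with integral remainder, applied at $p=\nabla u$ in the direction $q=\nabla w$, gives
\[
F(\nabla u+\nabla w) = F(\nabla u) + DF(\nabla u)\nabla w + \tfrac12 D^2F(\nabla u)(\nabla w,\nabla w) + R_u(\nabla w).
\]
Here $R_u(q) = \int_0^1 \tfrac{(1-t)^2}{2} D^3F(\nabla u+tq)(q,q,q)\,dt$.

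The first term gives $H_u$. The linear term is
\[
DF(p)q = \frac{q}{(1+|p|^2)^{1/2}} - \frac{(p\cdot q)\,p}{(1+|p|^2)^{3/2}},
\]
which by \eqref{eqn:6.3} is exactly the flux of $\Lambda_u w$. We set $Q_u''(\nabla w) := -R_u(\nabla w)$. This is cubic and higher in $\nabla w$. Since $F$ is smooth with all derivatives globally bounded, and (A.3) bounds $\nabla u$, all partial derivatives of $Q_u''$ are bounded uniformly for $\nabla w$ near $0$. The constants depend only on $\eta_0$.

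The quadratic term carries the factor $r_\veps$, and this is the main step. Compute
\[
D^2F(p)(q,q) = -\frac{2(p\cdot q)q + |q|^2 p}{(1+|p|^2)^{3/2}} + \frac{3(p\cdot q)^2 p}{(1+|p|^2)^{5/2}}.
\]
Every term is odd in $p$, so $D^2F(0)=0$, and by smoothness $|D^2F(p)| \le C|p|$. By (A.2), $|\nabla u(0)| \le r_\veps$, so at the origin the quadratic coefficient is $O(r_\veps)$. This is the source of the factor $r_\veps$. We define $Q_u'(q) := -\tfrac{1}{2r_\veps}D^2F(\nabla u)(q,q)$. It is homogeneous of degree $2$, and its coefficient is $r_\veps^{-1}\times O(|\nabla u|)$.

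Away from the origin, $|\nabla u|$ is only bounded by (A.3), of order $|\nabla u(0)| + \eta_0|x|$. So the claim that the coefficients are bounded independently of $r_\veps$ is only literally true on the region where $|x|\lesssim r_\veps$, or else in the weighted sense, where the extra $|x|$ is absorbed by the weights $\mathcal{C}^{k,\alpha}_\nu$ near the neck. If a uniform statement is needed on all of $\Omega$, I would first try a cleaner split. Write $D^2F(\nabla u) = D^2F(\nabla u(0)) + \big(D^2F(\nabla u) - D^2F(\nabla u(0))\big)$. The first piece is $O(r_\veps)$ and goes into $r_\veps Q_u'$. The second piece is $O(\eta_0|x|)$ by (A.3). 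It is quadratic in $\nabla w$ with coefficients vanishing at $0$, and in the same spirit as the Fakhi--Pacard expansion it can be grouped with the nonlinear remainder $Q_u''$ (or tracked separately).

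The remaining task is bookkeeping: check that the coefficient bounds hold in $\mathcal{C}^{1,\alpha}$, using the $\mathcal{C}^{3,\alpha}$ bound on $u$ from (A.3). Taking the divergence of the expansion then yields \eqref{eqn:6.5}.
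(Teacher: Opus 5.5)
Your argument is the paper's argument. The paper Taylor-expands \(s\mapsto H_{u+sw}\) to second order, identifies \(f'(0)=\Lambda_u w\), writes the second-order term as \(-\div(r_\veps Q_u'(\nabla w))\), and puts the rest into \(O(|\nabla w|^3)\). Your version, using \(F(p)=p(1+|p|^2)^{-1/2}\) and an integral remainder, is the same argument done more carefully. Your formula for \(D^2F(p)(q,q)\) is correct, whereas the paper's displayed \(f'(s)\), \(f'(0)\) and \(f''(0)\) have algebraic slips. Only the fact that every second-order term carries a factor \(\nabla u\) matters, and both of you have that.

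Your caution about the factor \(r_\veps\) is justified, and it exposes a weak point in the paper's proof. The paper claims the coefficients of \(Q_u'\) are independent of \(r_\veps\) ``because \(|\nabla u|/r_\veps<1\)''. But (A.2) gives this only at \(x=0\); with (A.3) you only have \(|\nabla u(x)|\le r_\veps+\eta_0|x|\). The quadratic part is also uniquely determined: a field \(a(x)(\nabla w,\nabla w)\) whose divergence vanishes for every \(w\) must be zero. So \(r_\veps Q_u'(q)=-\tfrac12 D^2F(\nabla u(x))(q,q)\) is forced, and no bookkeeping makes these coefficients \(O(r_\veps)\) on a fixed neighborhood.

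The step to drop is the repair you try first, moving \(D^2F(\nabla u)-D^2F(\nabla u(0))\) into \(Q_u''\). That piece is quadratic in \(\nabla w\). Coefficients vanishing at \(x=0\) do not make it cubic, so it violates the requirement that \(Q_u''\) have order at least \(3\), and you would be proving a different lemma.

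The right fix is your ``tracked separately'' option. Keep a genuinely quadratic term whose coefficients are \(O(r_\veps+|x|)\). This is Fakhi--Pacard's form; their factor is \(|x|\) because they assume \(\nabla u(0)=0\). On the region \(|x|\ge r_\veps/2\), where the lemma is actually used, this is \(O(|x|)\). That is exactly how the paper uses it in the proof of Lemma \ref{prop:6.1}, where \(r_\veps\) is replaced by \(r\).
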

\begin{proof}
    To understand the perturbed operator \(H_{u + w}\), we will Taylor expand \(f(s) = H_{u + sw}\) in \(s\):
    \begin{equation}
        f(s) = \div\left(\frac{\nabla u + s\nabla w}{(1 + |\nabla u + s\nabla w|^2)^{1/2}}\right), \quad f(0) = H_u
    \end{equation}
    Then we have
    \begin{equation}
        \begin{split}
            f'(s) & = \div\left(\frac{\nabla w}{(1 + |\nabla u + s\nabla w|^2)^{1/2}} - \frac{(\nabla u + s\nabla w)(\nabla u \cdot \nabla w + s|\nabla w|^2)}{(1 + |\nabla u + s\nabla w|^2)^{-1/2}}\right),
            \\
            \\ f'(0) & = \div\left(\frac{\nabla w}{(1 + |\nabla u|^2)^{1/2}} - \frac{\nabla u \cdot \nabla w}{(1 + |\nabla u|^2)^{-1/2}}\nabla u\right) = \Lambda_uw.
        \end{split}
    \end{equation}
    Finally we compute
    \begin{equation}
        f''(0) = \div\left(\frac{|\nabla w|^2\nabla u + (\nabla u \cdot \nabla w)\nabla w - (\nabla u \cdot \nabla w)\nabla w}{(1 + |\nabla u|^2)^{3/2}} + \frac{3(\nabla u \cdot \nabla w)^2\nabla u}{(1 + |\nabla u|^2)^{5/2}}\right).
    \end{equation}
    We remark that at each step, \(f^{(n)}(0)\) will be an expression of order at least \(n\) in \(\nabla w\). We will need to use bounds \(|\nabla w| \le r^{\alpha}\) in the future in order to bound \(|H_{u + w}|\). In doing this, we will need to use that \(|\nabla u| \le r_{\veps}\) to get the right power of control on \(|H_{u + w}|\). For simplicity and clarity, we will thus write
    \begin{equation}
        f''(0) = \div (-r_{\veps}Q_u'(\nabla w)),
    \end{equation}
    where \(Q_u'\) is homogeneous and quadratic in its input, with coefficients bounded uniformly in \(C^{2, \alpha}(\Omega) \cup C^{1, \alpha}(\bar \Omega)\), and independently of \(r_{\veps}\) (which we know because \(|\nabla u|/r_\veps < 1\). All in all, we have
    \begin{equation}
        H_{u + sw} = f(0) + sf'(0) + \frac{s^2f''(0)}{2} + O(|\nabla w|^3).
    \end{equation}
    Taking \(s = 1\) finishes the proof.
\end{proof}

\subsection{Special perturbations of \(\Sigma\)}

We will be making two kinds of modifications to our minimal graphs to facilitate gluing on a half catenoid. First we will send \(u(x)\) to \(u(x) +  \frac{\veps}{n-2}\gamma_0\) where \(\gamma_0\) is a Green's function on \(\Sigma\) centered around 0. We will name the resultant surface \(\Sigma_\veps\). In order to generate more dimensions of freedom to execute a fixed point argument later, we will make some small rigid motion modifications to \(\Sigma_{\veps}\). We will allow ourselves to make small translations, small rotations, and also small changes to the factor \(\veps\). We collect the modifications in a quantity \(\mathcal{A}\) and we will show that the resultant surface \(\Sigma_{\veps, \mathcal{A}}\) can be parametrized as \(u(x) + w_{\veps, \mathcal{A}}(x)\), where \(|\nabla^k w_{\veps, \mathcal{A}}| \le cr^{-k}(r_\veps r + \veps r^{2 - n})\).

Denote $\gamma_0$ as the Green's function (which exists by (A.4)) for the linearized operator $\Lambda_u$, defined in \eqref{eqn:6.4}. Then by definition,
\begin{align}\label{eqn:6.6}
\Lambda_{u} \gamma_{0}=-(n-2)\left|\bbS^{n-1}\right| \delta_{0}, \quad \text { in } \quad \Omega, 
\end{align}
with $\gamma_{0}=0$ on $\partial \Omega$, where $\left|S^{n-1}\right|$ is the volume of the unit sphere. 
\begin{lem}\label{lem:6.2}
Assume that {(A.1)}--{(A.4)} hold and that $\gamma_{0}$ is the solution of \eqref{eqn:6.6}. Then, there exists $c>0$ such that, for all $k \leq 3$,
\begin{align}
\left|\nabla^{k}\left(\gamma_{0}-r^{2-n}\right)\right| & \leq c r^{4-n-k}, & & \text { if } n \geq 5, \\
\left|\nabla^{k}\left(\gamma_{0}-r^{-2}\right)\right| & \leq c r^{-k} \log 1 / r, & & \text { if } n=4, \\
\left|\nabla^{k}\left(\gamma_{0}-r^{-1}-a_{0}\right)\right| & \leq c r^{1-k} \log 1 / r, & & \text { if } n=3,   
\end{align}
in $B_{r_{0}}\setminus B_{r_{\veps/4}}$, for some constant $a_{0} \in \mathbb{R}$.    
\end{lem}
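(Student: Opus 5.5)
We write the Green's function as the Euclidean fundamental solution plus a correction, $\gamma_0 = r^{2-n} + \zeta$, and estimate $\zeta$ using the divergence structure of Lemma \ref{lem:6.1}. Here $r^{2-n}$ is normalized so that $\Delta r^{2-n} = -(n-2)|\bbS^{n-1}|\delta_0$. Since $\Lambda_u = \div(\nabla + \Lambda_u')$, the equation \eqref{eqn:6.6} becomes
\begin{equation}
\Lambda_u \zeta = -\div\big(\Lambda_u'(\nabla r^{2-n})\big) =: f \quad \text{in } \Omega\setminus\{0\}, \qquad \zeta = -r^{2-n} \ \text{on } \partial\Omega .
\end{equation}
The delta mass cancels because $\div(\Lambda_u' \nabla r^{2-n})$ has no point mass at $0$ when the coefficients of $\Lambda_u'$ vanish to order two. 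The first task is to check this vanishing order carefully. By \eqref{eqn:6.3}, the coefficients of $\Lambda_u'$ are of the form $-\tfrac12|\nabla u|^2\delta_{ij} - \partial_i u\,\partial_j u + O(|\nabla u|^4)$. By (A.2)--(A.3) we have $|\nabla u(x)| \le r_\veps + \eta_0 r$. Since we work only on $B_{r_0}\setminus B_{r_\veps/4}$, where $r_\veps \le 4r$, this gives $|\nabla u| \le c r$, so the coefficients are $O(r^2)$ and their derivatives are $O(r^{2-k})$. This is precisely where the tilt hypothesis (A.2) enters, in place of $\nabla u(0)=0$ in Fakhi--Pacard, and it explains why the estimate is restricted to $r \ge r_\veps/4$ rather than holding all the way to $0$. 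As a consequence, $f \in \calC^{0,\alpha}_{-n}$ in the sense of Definition \ref{defn:6.1}, namely $|\nabla^k f| \le c r^{-n-k}$.

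Next we invert $\Lambda_u$ on $f$ in weighted spaces. The natural weight for $\zeta$ is $\nu = 4-n$, since $f = O(r^{2-n-2})$ when $n\ge 5$. For $n \ge 5$, the weight $4-n$ lies in the interval $(2-n, 1)$, which contains no indicial roots of the Laplacian. A weighted Schauder theory for $\Lambda_u = \Delta + \div(\Lambda_u'\nabla\cdot)$, viewed as a perturbation of $\Delta$ with $O(r^2)$ coefficient perturbation, then gives a unique solution with $\|\zeta\|_{3,\alpha,4-n} \le c$. Uniqueness with the boundary data comes from (A.4), and the homogeneous Dirichlet correction is handled by (A.4) as well. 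Concretely, I would construct $\zeta$ by solving $\Delta \zeta_1 = f$ explicitly via the Newtonian potential, which gives $|\nabla^k\zeta_1| \le c r^{4-n-k}$. I would then iterate, noting that the error $\div(\Lambda_u'\nabla\zeta_1)$ gains a factor of $r^2$ at each step, and finally absorb a smooth remainder with $\Lambda_u^{-1}$ from (A.4). Estimates up to third derivatives ($k \le 3$) follow from scaled interior Schauder estimates on dyadic annuli $\{r \le |x| \le 2r\}$, using the $\calC^{3,\alpha}$ control of $u$ in (A.3).

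For $n=4$ and $n=3$, the weight $4-n$ hits the indicial roots $0$ and $1$ of the Laplacian; these correspond to constants and linear functions. The Newtonian potential of $f \sim r^{-n}$ therefore produces logarithmic terms. For $n=4$, the potential of an $r^{-4}$ source is $O(\log 1/r)$, which gives the stated bound $c r^{-k}\log(1/r)$. For $n=3$, we first subtract the constant $a_0 = \zeta(0)$-type limit, i.e.\ the harmonic constant part of the solution near $0$. The remaining piece is then bounded by $c r\log(1/r)$, with derivatives losing one power of $r$ each, because the linear indicial root produces the log.

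I expect the main obstacle to be the low-dimensional resonant cases $n=3,4$. In these cases one must extract $a_0$ and track the log losses through the iteration. It is also essential to make sure that the nonzero tilt $\nabla u(0)$ contributes only $O(r_\veps^2 r^{-n})$-type error terms in $f$. These terms are dominated by the main bounds only because $r \gtrsim r_\veps$.
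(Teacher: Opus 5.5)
\textbf{There is a genuine gap at the pole.} Your route is the same reduction the paper makes. Both your cancellation of the point mass and your global weighted inversion need the coefficients of $\Lambda_u'$ to vanish to second order \emph{at $0$}, and under (A.2) they do not. Indeed $\Lambda_u'(0)\approx -\tfrac12|\nabla u(0)|^2 I-\nabla u(0)\otimes\nabla u(0)$ has size up to $r_\veps^2$, and your bound $|\nabla u|\le cr$ holds only where $r\gtrsim r_\veps$. Restricting to the annulus does not rescue the argument. The correction $\zeta=\gamma_0-r^{2-n}$ is fixed by a global equation on $\Omega$, so its size on the annulus depends on the source inside $B_{r_\veps/4}$. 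Distributionally, $\partial_{ij}r^{2-n}$ equals its pointwise (principal-value) expression minus $\frac{(n-2)|\bbS^{n-1}|}{n}\delta_{ij}\delta_0$. Hence $f=-\div(\Lambda_u'\nabla r^{2-n})$ carries the point mass $\frac{(n-2)|\bbS^{n-1}|}{n}\operatorname{tr}\Lambda_u'(0)\,\delta_0\neq 0$, together with a non-integrable principal-value term of size $r_\veps^2 r^{-n}$. So $\zeta$ genuinely behaves like $r_\veps^2 r^{2-n}$ near $0$, with consequences:
\begin{itemize}
\item the Newtonian potential of $f$ is not defined;
\item $\|\zeta\|_{3,\alpha,4-n}=\infty$ in the norm of Definition \ref{defn:6.1};
\item for $n=3$ there is no value $\zeta(0)$ to serve as $a_0$.
\end{itemize}
Your last paragraph points at exactly these $r_\veps^2r^{-n}$ terms. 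But saying they are dominated when $r\gtrsim r_\veps$ does not help, since they live where $r<r_\veps$ and still feed into $\zeta$ on the annulus. The paper's proof is this same reduction in one line (the coefficients are small for $r\gtrsim r_\veps$, so Lemma 6.3 of Fakhi--Pacard applies), and it does not address this point either.

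\textbf{The missing step} is to take as leading term the fundamental solution of the operator frozen at $0$. Let $M=I+\Lambda_u'(0)$ and $G(x)=(\det M)^{-1/2}(x^{T}M^{-1}x)^{(2-n)/2}$. Then $\div(M\nabla G)=-(n-2)|\bbS^{n-1}|\delta_0$ and $|\nabla^k(G-r^{2-n})|\le c\,r_\veps^2 r^{2-n-k}$. The difference satisfies $\Lambda_u(\gamma_0-G)=-\div\big((\Lambda_u'-\Lambda_u'(0))\nabla G\big)$, and now $|\Lambda_u'(x)-\Lambda_u'(0)|\le c(r_\veps r+r^2)$ holds all the way down to $x=0$. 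The source is therefore $O(r_\veps r^{1-n}+r^{2-n})$: locally integrable and free of point masses. Your potential/Schauder iteration then runs on all of $\Omega\setminus\{0\}$, using (A.4) for the boundary correction. It gives $|\nabla^k(\gamma_0-G)|\le c\,r^{-k}(r_\veps r^{3-n}+r^{4-n})$, with the logarithmic losses when $n=3,4$ and with $a_0$ extracted from $\gamma_0-G$. Only at this last step does $r\ge r_\veps/4$ enter, converting $r_\veps r^{3-n}$ and $r_\veps^2r^{2-n}$ into $O(r^{4-n})$.

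\textbf{Exponent bookkeeping also needs fixing.}
\begin{itemize}
\item With $O(r^2)$ coefficients the source is $O(r^{2-n})$, i.e.\ it lies in $\calC^{0,\alpha}_{2-n}$, not $\calC^{0,\alpha}_{-n}$. An $r^{-n}$ source would only give $\zeta=O(r^{2-n})$.
\item Likewise, for $n=4$ the source is $r^{-2}$, not $r^{-4}$.
\item The interval free of indicial roots is $(2-n,0)$, not $(2-n,1)$.
\end{itemize}
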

\begin{proof}
Since for any $x\in B_{r_{0}}\setminus B_{r_{\veps/4}}$, we have that $r=|x|\gec r_{\veps}$, the operator $|\Lam_u'|\lec r$ and therefore we obtain the same estimates as Lemma 6.3 in \cite{Fakhi-Pacard}.
\end{proof}
We next compare the mean curvature of the hypersurface $\Sigma_{\varepsilon}$ with the mean curvature of the initial hypersurface $\Sigma_{0}$.
\begin{lem}\label{prop:6.1}
Assume that {(A.1)}--{(A.4)} hold. The derivatives of $H_{\varepsilon}$, the mean curvature of $\Sigma_{\varepsilon}$, can be estimated by
\EQ{
\left|\nabla^{k}\left(H_{\varepsilon}-H_{0}\right)\right| \leq c\left(r^{-k}\left(\varepsilon^{2} r^{2-2 n}+\varepsilon^{3} r^{2-3 n}\right)\right), \quad \text { for all } \quad \varepsilon^{\frac{3}{3 n-2}} \leq r,
}
where $H_{0}$ is the mean curvature of $\Sigma_{0}$ and where $k=0,1$ and where $c>0$ does not depend on $\varepsilon \in\left(0, \varepsilon_{0}\right]$.    
\end{lem}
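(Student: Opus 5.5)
We apply the expansion of Lemma \ref{lem:6.2} (the mean curvature expansion \eqref{eqn:6.5}) with $w = \frac{\varepsilon}{n-2}\gamma_0$. For $n=3$ we take $w=\frac{\varepsilon}{n-2}(\gamma_0-a_0)$; subtracting the constant changes neither $\nabla w$ nor $\Lambda_u w$. This gives
\begin{equation}
H_\varepsilon - H_0 = \Lambda_u w - \operatorname{div}\bigl(r_\varepsilon Q_u'(\nabla w) + Q_u''(\nabla w)\bigr).
\end{equation}
The linear term vanishes away from the origin. By \eqref{eqn:6.6}, $\Lambda_u\gamma_0 = -(n-2)|\mathbb{S}^{n-1}|\delta_0$, which is zero on $B_{r_0}\setminus\{0\}$, and we only evaluate at $r\ge \varepsilon^{3/(3n-2)}$. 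So only the quadratic and higher-order terms remain, and the task reduces to bounding them pointwise together with their first derivatives.

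Next we bound $w$ using Lemma \ref{lem:6.2}, which gives $|\nabla^k(\gamma_0 - r^{2-n})|$ error bounds for $k\le 3$. In all three dimensional cases these yield $|\nabla^k \gamma_0| \le c\, r^{2-n-k}$ for $1\le k\le 3$ on $B_{r_0}\setminus B_{r_\varepsilon/4}$. Hence $|\nabla^{k}\nabla w| \le c\,\varepsilon r^{1-n-k}$ for $k\le 2$. We also need $|\nabla w|$ small in the region, so that we stay inside the neighborhood of $0$ where the Taylor expansion of Lemma \ref{lem:6.2} is valid with uniform constants. Indeed, $\varepsilon r^{1-n} \le \varepsilon\cdot\varepsilon^{-3(n-1)/(3n-2)} = \varepsilon^{1/(3n-2)}$, which is small for $\varepsilon\le\varepsilon_0$. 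This explains the threshold $r\ge\varepsilon^{3/(3n-2)}$.

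We then estimate the nonlinear divergence terms. The term $\operatorname{div}(Q_u'(\nabla w))$ is a sum of expressions of the form (coefficient)$\cdot\nabla w\cdot\nabla^2 w$ plus (derivative of coefficient)$\cdot(\nabla w)^2$. The coefficients depend on $\nabla u$ and $\nabla^2 u$ and are bounded in $\mathcal{C}^{1,\alpha}$ by (A.3). This gives a bound $c(\varepsilon^2 r^{1-n}r^{-n} + \varepsilon^2 r^{2-2n}) \le c\,\varepsilon^2 r^{1-2n}$. Taking one more derivative costs one factor of $r^{-1}$, since the $u$-dependent coefficients are $\mathcal{C}^{2,\alpha}$ on $B_{r_0}$ by (A.3) and $\nabla^3 w$ is controlled by Lemma \ref{lem:6.2} with $k\le3$. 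The cubic remainder $Q_u''$ is treated the same way, using its bounded partial derivatives near $0$. It contributes $\varepsilon^3 r^{3(1-n)}r^{-1-k}$.

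Finally we reconcile these powers with the claimed bound $r^{-k}(\varepsilon^2r^{2-2n}+\varepsilon^3r^{2-3n})$. I expect this bookkeeping of powers of $r$ to be the main obstacle. The naive count above gives one extra factor of $r^{-1}$ relative to the claim. To recover it, I would argue as in Lemma 6.4 of \cite{Fakhi-Pacard}. The key observation is that the leading quadratic term of the mean curvature operator at a flat graph is $\operatorname{div}(|\nabla w|^2\nabla w)$-type, i.e. cubic. At $u$, the quadratic part carries a factor of $\nabla u$ or $\nabla^2 u$, as in the explicit formula for $f''(0)$ in the proof of Lemma \ref{lem:6.2}. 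By (A.2), (A.3) and Taylor expansion, $|\nabla u(x)| \le r_\varepsilon + c\eta_0 r$, and this supplies the missing factor $r$; the $r_\varepsilon$ piece is absorbed using $r_\varepsilon\lesssim r$. Similarly, the cubic term at the flat level is $\operatorname{div}(|\nabla w|^2\nabla w)$. Applied to $w\approx\frac{\varepsilon}{n-2} r^{2-n}$, radial harmonicity makes this term of size $\varepsilon^3 r^{3(1-n)-1}$. We then need to check that this is compatible with $\varepsilon^3 r^{2-3n}$, which it is, since $3(1-n)-1=2-3n$. For the quadratic term, the $\nabla u$ factor gives $\varepsilon^2 r^{1-2n}\cdot r = \varepsilon^2r^{2-2n}$. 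Both the $k=0$ and $k=1$ cases follow, with constants depending only on $r_0,\eta_0$.
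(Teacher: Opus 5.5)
Your proof is correct and follows the same route as the paper. You expand via \eqref{eqn:6.5} with $w=\frac{\varepsilon}{n-2}\gamma_0$, note that the linear term vanishes away from the origin, control $\nabla^k\gamma_0$ by the Green's function estimates, and gain the missing factor of $r$ in the quadratic term from its $\nabla u$ factor using $r_\varepsilon\lesssim r$ on $\{r\ge\varepsilon^{3/(3n-2)}\}$; the paper expresses the same step as replacing the prefactor $r_\varepsilon$ in $r_\varepsilon Q_u'$ by $r$. Your bookkeeping is more careful than the paper's, since you use $|\nabla u(x)|\le r_\varepsilon+c\eta_0 r$ rather than $|\nabla u|\le r_\varepsilon$ and you treat the $\nabla^2u\,(\nabla w)^2$ term separately; the appeal to radial harmonicity for the cubic term is unnecessary, because the naive count already gives $\varepsilon^3r^{2-3n-k}$.
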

\begin{proof}
The result follows at once from \eqref{eqn:6.5}, with $w=\varepsilon \gamma_{0}$ and the fact that $r_{\veps}\simeq \veps^{3/(3n-2)}$, which allows us to replace $r_{\veps}$ by $r$ when estimating the mean curvature in \eqref{eqn:6.5}. Therefore, we can directly use the estimates from Lemma \ref{lem:6.2} where the terms involving $r_{\veps}$ are upper bounded by $r$, thus giving the same estimate as Proposition 6.1 in \cite{Fakhi-Pacard}. 
\end{proof}
\subsection{Geometric transformation of $\Sigma_{\veps}$}
We now perform some geometric transformations of the surface $\Sigma_{\varepsilon}$ by applying some rigid motion and also by modifying the parameter $\varepsilon$. In particular, we consider the following transformations:
\begin{itemize}
    \item $\veps\to e+\veps$ for a parameter $e\in (-\veps,\veps).$
    \item $\mathbb{R}^{n+1} \ni\left(x, x_{n+1}\right)\to (x+T,x_{n+1}+d)$, where $(T,d)\in \R^{n}\times \R$ represent translation in $\R^{n+1}$. 
    \item Given $R \in \mathbb{R}^{n}, R \neq 0$, we define the rigid motion corresponding to a rotation of angle $|R|$ in the plane spanned by the vectors $(0,1)\in \R^{n}\times \R$) and $(R/|R|, 0)$. For $R \neq 0$, this transformation can be described analytically by 
    \EQ{
    \mathbb{R}^{n+1} & \ni\left(x, x_{n+1}\right) \\
    & \rightarrow\left(x^{\perp}, 0\right)+\cos |R|\left(x^{\|}, x_{n+1}\right)-\frac{\sin |R|}{|R|}\left(x_{n+1} R,-R \cdot x^{\|}\right) \in \mathbb{R}^{n+1},
    }
where $x^{\|} \equiv \frac{x \cdot R}{|R|^{2}} R$ and $x^{\perp} \equiv x-x^{\|}$. 
\end{itemize}
Denote $\mathcal{A}=(T, R, d, e) \in \mathbb{R}^{n} \times \mathbb{R}^{n} \times \mathbb{R} \times \mathbb{R}$ as the set of parameters and by $\Sigma_{\varepsilon, \mathcal{A}}$ the resulting hypersurface. We define, the norm of $\mathcal{A}$ is given by
\EQ{\label{defn:calA-norm}
\|\mathcal{A}\| \equiv \varepsilon r_{\varepsilon}^{1-n}|T|_{\mathbb{R}^{n}}+r_{\varepsilon}|R|_{\mathbb{R}^{n}}+|d|+r_{\varepsilon}^{2-n}|e| .
}
We now compare $\Sigma_{\varepsilon, \mathcal{A}}$ with $\Sigma_{0}$.
\begin{prop}\label{prop:6.2}
Assume that {(A.1)}--{(A.4)} hold. Let $\kappa>0$ be given. There exists $c_{\kappa}>0$ and $\varepsilon_{0}>0$ such that, for all $\varepsilon \in\left(0, \varepsilon_{0}\right]$, if
\EQ{
\|\mathcal{A}\| \leq \kappa r_{\varepsilon}^{2}
}
then, the hypersurface $\Sigma_{\varepsilon, \mathcal{A}}$ can be locally parameterized as a vertical graph over the initial hypersurface $\Sigma_{0}$
\EQ{\label{eqn:6.7}
\overline{B_{r_{0} / 2}} \backslash B_{r_{\varepsilon} / 2} \ni x \longrightarrow\left(x, u(x)+w_{\varepsilon, \mathcal{A}}(x)\right) \in \Sigma_{\varepsilon, \mathcal{A}}, 
}
where the function $w_{\varepsilon, \mathcal{A}}$ satisfies $\left|\nabla^{k} w_{\varepsilon, \mathcal{A}}(x)\right| \leq c_{\kappa}\left(r^{-k}\left(r_{\varepsilon} r+\varepsilon r^{2-n}\right)\right)$, for all $k \leq 3$.
\end{prop}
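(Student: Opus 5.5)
We decompose the transformation into its four components and treat each as a small perturbation of the vertical graph of \(u+\frac{\veps}{n-2}\gamma_0\), then invoke the implicit function theorem to rewrite the transformed hypersurface as a vertical graph over the same annulus. First, the change \(\veps\to\veps+e\) is already vertical: it adds \(\frac{e}{n-2}\gamma_0\) (minus \(a_0\) when \(n=3\)). By Lemma \ref{lem:6.2}, \(|\nabla^k\gamma_0|\le c r^{2-n-k}\). The hypothesis \(\|\calA\|\le\kappa r_\veps^2\) gives \(|e|\le \kappa r_\veps^{n}\), which is much smaller than \(\veps\) because \(r_\veps^{n}\sim \veps^{3n/(3n-2)}\ll\veps\). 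So this piece is bounded by \(c_\kappa\veps r^{2-n-k}\). The vertical translation \(d\) contributes a constant with \(|d|\le\kappa r_\veps^2\le\kappa r_\veps r\) on \(r\ge r_\veps/2\), which is admissible.

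Next come the horizontal translation \(T\) and the rotation \(R\). Write the graph of \(F=u+\frac{\veps+e}{n-2}\gamma_0+d\) and apply the motion. For the translation, the new graph function is \(F(x-T)\). Its difference from \(F(x)\) is controlled by \(|T|\sup|\nabla F|\) along the segment. Since \(|T|\le\kappa r_\veps^{n+1}\veps^{-1}\sim \kappa r_\veps^{2}\cdot r_\veps^{n-1}/\veps\), and \(r_\veps^{n-1}/\veps\le 1\) up to constants, we get \(|T|\ll r_\veps/2\le r\). The segment therefore stays in a comparable annulus, and we obtain
\[
|\nabla^k(F(x-T)-F(x))|\le c|T|\bigl(|\nabla^{k+1}u|+\veps r^{1-n-k}\bigr)\le c_\kappa r^{-k}(r_\veps r+\veps r^{2-n}),
\]
using \(|T|\veps r^{1-n}\le c\,\veps r^{2-n}\) and \(|T|\le r_\veps r\). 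For the rotation, \(|R|\le\kappa r_\veps\) is small, so the rotated graph remains a graph. Its graph function is determined by the implicit equation \(y=x'\cos\cdot+\dots\), whose solution differs from \(F\) by \(O(|R|\,|x|)+O(|R|\,|F|)\) plus \(|R|\) times derivatives of \(F\) times horizontal shifts of size \(|R||F|\). These are bounded by \(c\kappa r_\veps r\) and \(c\kappa r_\veps\cdot\veps r^{2-n}\), respectively, with the same form for derivatives after differentiating the implicit relation.

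The main obstacle is the rotation near the inner boundary \(r\sim r_\veps/2\). There \(|\nabla F|\sim\veps r^{1-n}\) is of order \(\veps r_\veps^{1-n}\sim r_\veps^{-1}\cdot\veps^{\frac{3n-2-3(n-1)}{3n-2}}\cdots\), so one must check that \(|R||\nabla F|\) stays small for the implicit function theorem to apply. This requires \(|\nabla F|\lesssim r_\veps^{-1}\cdot o(1)\). From \(r_\veps\sim\veps^{3/(3n-2)}\) we have \(\veps r_\veps^{1-n}=\veps^{(n+1)/(3n-2)}\). Since \(r_\veps\) itself equals \(\veps^{3/(3n-2)}\), the product is \(|R||\nabla F|\lesssim\kappa\veps^{(n+4)/(3n-2)}\cdot\veps^{-3/(3n-2)}\cdot\veps^{3/(3n-2)}\), which is small, so the inversion is legitimate for \(\veps\le\veps_0(\kappa)\). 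The assumption (A.2), \(|\nabla u(0)|\le r_\veps\), and (A.3) are used to bound the \(u\)-contributions by \(c r_\veps r\). Finally, summing the four contributions (the motions compose with errors of quadratic size, which are absorbed) gives \(w_{\veps,\calA}\) with \(|\nabla^k w_{\veps,\calA}|\le c_\kappa r^{-k}(r_\veps r+\veps r^{2-n})\) for \(k\le3\). This uses \(k\le 3\) derivative control of \(\gamma_0\) from Lemma \ref{lem:6.2} and of \(u\) from (A.3).
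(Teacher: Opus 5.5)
Your plan is the right one. You treat $e$, $d$, $T$, $R$ one at a time as perturbations of the graph of $F=u+\frac{\veps}{n-2}\gamma_0$ and re-solve for the new graph function, which is the direct computation the paper imports from Proposition 6.3 of Fakhi--Pacard. You also flag (A.2) as the input needed for the $u$-terms.

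The exponent bookkeeping, however, is wrong, and in the translation step this breaks the argument as written. Since $\veps\sim r_\veps^{n-2/3}$, one has $r_\veps^{n-1}/\veps\sim r_\veps^{-1/3}\to\infty$, not $\lesssim 1$. So $\|\calA\|\le\kappa r_\veps^2$ only gives $|T|\lesssim\kappa r_\veps^{5/3}$. This is still $\ll r_\veps$, so your segment argument and the bound $|T|\,\veps r^{1-n}\lesssim\veps r^{2-n}$ survive.

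But the inequality $|T|\le r_\veps r$, which you use to absorb the $u$-terms $|T|\,|\nabla^{k+1}u|$, is false for $r\lesssim r_\veps^{2/3}$. At $k=0$ the crude bound $|\nabla u|\le\eta_0$ cannot rescue it either. For $n\ge4$ the resulting $\eta_0|T|$, which can be of order $\kappa r_\veps^{5/3}$, is not $O(r_\veps r+\veps r^{2-n})$ on the whole annulus (take $n=4$, $r=r_\veps^{3/4}$).

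The fix is to use (A.2)--(A.3) pointwise, $|\nabla u(x)|\le r_\veps+\eta_0 r$, together with $|T|\le\min\{r,r_\veps\}$. This gives $|T|\,|\nabla u|\le(1+\eta_0)\,r_\veps r$. For $k\ge1$ the crude bound is enough, because $|T|\le r_\veps\le r_\veps r^{1-k}$. At $k=3$ your mean-value step uses $\nabla^4 u$, which (A.3) does not control, so you need extra regularity of $u$, e.g.\ from minimality of $\Sigma_0$.

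The same kind of slip affects your ``main obstacle''. Here $\veps r_\veps^{1-n}\sim r_\veps^{1/3}=\veps^{1/(3n-2)}$, not $\veps^{(n+1)/(3n-2)}$. Hence on $r\ge r_\veps/2$ one has $|\nabla F|\lesssim\eta_0+r_\veps^{1/3}$, which is uniformly small, and $|R|\,|\nabla F|\lesssim\kappa r_\veps$. Graphicality of the rotated piece is therefore automatic, and no difficulty lies there.

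What the rotation really contributes is twofold:
\begin{itemize}
\item the vertical term $\frac{\sin|R|}{|R|}R\cdot x=O(\kappa r_\veps r)$, which is the main source of the $r_\veps r$ term in the statement;
\item horizontal shifts of size $O(|R|^2r+|R|\,|F|)$. Their effect on the $\gamma_0$-part needs no cancellation, since $\veps r^{2-n-k}$ is already allowed by the target bound.
\end{itemize}
With these corrections your argument proves the proposition along the same route as the paper.
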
 
\begin{proof}
This follows from the argument in Proposition 6.3 in \cite{Fakhi-Pacard} since the estimates for the Green's function $\gamma_0$ are the same as in Lemma 6.3 in \cite{Fakhi-Pacard}.
\end{proof}
\begin{prop}\label{prop:6.3}
Assume that {(A.1)}--{(A.4)} hold. There exists $c>0$ and, for all $\kappa>0$ be given, there exists $\varepsilon_{0}>0$ (depending on $\kappa$) such that, for all $\varepsilon \in\left(0, \varepsilon_{0}\right]$ and for all $r \in\left[r_{\varepsilon} / 2,2 r_{\varepsilon}\right]$, the parameterization of $\Sigma_{\varepsilon, \mathcal{A}}$ has the following expansion
\EQ{\label{eqn:6.10}
x \longrightarrow\left(x, e \frac{r^{2-n}}{n-2}+\left(e \frac{r^{2-n}}{n-2}+d+R \cdot x+\varepsilon r^{-n} T \cdot x\right)+\bar{w}_{\varepsilon, \mathcal{A}}(x)\right), 
}
where, for all $0\leq k \leq 3$, the function $\bar{w}_{\varepsilon, \mathcal{A}}$ satisfies $\left|\nabla^{k} \bar{w}_{\varepsilon, \mathcal{A}}(x)\right| \leq c r_{\varepsilon}^{2-k}$ and the constant $c>0$ does not depend on $\kappa$ provided $\varepsilon$ is chosen small enough.    
\end{prop}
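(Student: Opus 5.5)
We would prove Proposition \ref{prop:6.3} by writing out the graph function of $\Sigma_{\varepsilon,\mathcal{A}}$ near the neck, Taylor expanding every term in the small parameters $\mathcal{A}=(T,R,d,e)$, and isolating the leading contributions. Proposition \ref{prop:6.2} already tells us that, on the annulus $r\in[r_\varepsilon/2,2r_\varepsilon]$, $\Sigma_{\varepsilon,\mathcal{A}}$ is a vertical graph of $u+w_{\varepsilon,\mathcal{A}}$. We start from the unmoved surface $\Sigma_\varepsilon$, which is the graph of
\[
u+\tfrac{\varepsilon}{n-2}\gamma_0 ,
\]
and apply the transformations in turn: first $\varepsilon\mapsto\varepsilon+e$, then the rotation by $R$, then the translation by $(T,d)$. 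At each stage we compute the new height function over the point $x$, to first order in the parameter, with an explicit remainder.

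\emph{Green's function term and the $e$ term.} Lemma \ref{lem:6.2} gives $\gamma_0=r^{2-n}+O(r^{4-n})$ for $n\ge5$, with logarithmic and $a_0$ corrections for $n=4,3$ (handled through the definition of $\Sigma_\varepsilon$ for $n=3$). Hence
\[
\tfrac{\varepsilon+e}{n-2}\,\gamma_0=\tfrac{\varepsilon+e}{n-2}\,r^{2-n}+\text{error}.
\]
On $r\sim r_\varepsilon$ the error is bounded by $\varepsilon r_\varepsilon^{4-n}\log(1/r_\varepsilon)$. Since $\varepsilon\sim r_\varepsilon^{(3n-2)/3}$, this is $\ll r_\varepsilon^2$, and the bound persists under $k\le3$ derivatives, each costing $r^{-1}$. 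The principal part $\tfrac{\varepsilon}{n-2}r^{2-n}$ is the catenoidal term that gets matched to $C_\varepsilon$. What remains is the $e\,r^{2-n}/(n-2)$ term recorded in \eqref{eqn:6.10}.

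\emph{Base graph $u$.} By (A.2)–(A.3),
\[
u(x)=\nabla u(0)\cdot x+O(\eta_0 r^2),\qquad |\nabla u(0)|\le r_\varepsilon ,
\]
so on the annulus $u=O(r_\varepsilon^2)$ with $|\nabla^k u|\lesssim r_\varepsilon^{2-k}$. We place $u$ into $\bar w_{\varepsilon,\mathcal{A}}$. This is exactly where the tilted-plane assumption enters, replacing Fakhi–Pacard's $\nabla u(0)=0$. It remains harmless precisely because $|\nabla u(0)|\le r_\varepsilon$.

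\emph{Rigid motions.} Translation by $d$ contributes $d$ exactly. The horizontal translation by $T$ replaces $x$ with $x-T$ in the height function. Expanding $\tfrac{\varepsilon}{n-2}|x-T|^{2-n}$ gives the linear term $\varepsilon r^{-n}T\cdot x$ plus $O(\varepsilon|T|^2r^{-n})$. Since $\|\mathcal{A}\|\le\kappa r_\varepsilon^2$ forces $|T|\le\kappa r_\varepsilon^{n+1}/\varepsilon\ll r_\varepsilon$, this remainder is $O(\kappa^2 r_\varepsilon^{2n+2-n}/\varepsilon)$, which is $o(r_\varepsilon^2)$ for $\varepsilon$ small. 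The rotation by angle $|R|\le\kappa r_\varepsilon$ changes the height by $R\cdot x+O(|R|^2 r)+O(|R|\,|\nabla(\text{height})|\,r)$. The last term arises because the graph must be re-parametrized over the horizontal plane: a point $x$ moves horizontally by $O(|R|\,\text{height})$, and this shift is then fed into the Green's function term. The crucial estimate is
\[
|R|\,\varepsilon r^{1-n}\cdot r \;\sim\; \kappa r_\varepsilon\,\varepsilon r_\varepsilon^{2-n}=\kappa\,\varepsilon r_\varepsilon^{3-n},
\]
and we must show this is $\ll r_\varepsilon^2$. Using $\varepsilon=r_\varepsilon^{(3n-2)/3}$ reduces this to the exponent comparison $n-\tfrac23+3-n>2$, which holds. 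Derivative bounds follow the same way, each derivative costing $r_\varepsilon^{-1}$. Cross terms, such as $e$ multiplied by the $T$- or $R$-corrections, are quadratic in $\mathcal{A}$ and therefore smaller still.

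\emph{Main obstacle.} The main difficulty is the implicit re-parametrization after the rotation, together with showing that the remainder constant is independent of $\kappa$, as the statement requires. We would handle this with the implicit function theorem for the horizontal coordinate map $x\mapsto x+O(|R|\,\text{height})$ on the annulus. Every $\kappa$-dependent remainder carries an extra positive power of $r_\varepsilon$, so choosing $\varepsilon_0(\kappa)$ small absorbs it. The only $O(r_\varepsilon^2)$ pieces that survive with fixed constants are $u$ and the Green's function correction, and these depend only on $\eta_0$ and Lemma \ref{lem:6.2}.
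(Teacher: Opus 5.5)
Your proposal is correct and follows essentially the same route as the argument the paper defers to (the direct expansion in Fakhi--Pacard's Proposition 6.3). You expand the change $\varepsilon\mapsto\varepsilon+e$, the rotation and the two translations using the Green's function asymptotics, and observe that every $\kappa$-dependent remainder carries an extra positive power of $r_\varepsilon$; your rotation bound $\kappa r_\varepsilon^{7/3}$ is even generous, since the true re-parametrization term is of size $|R|\,h\,|\nabla h|\sim\kappa r_\varepsilon^{8/3}$. Beyond the paper's one-line proof, you make explicit that the tilt term $\nabla u(0)\cdot x$, with $|\nabla u(0)|\le r_\varepsilon$, satisfies $|\nabla^k(\cdot)|\lesssim r_\varepsilon^{2-k}$ on the annulus and can be absorbed into $\bar w_{\varepsilon,\mathcal{A}}$ with a $\kappa$-independent constant; you also correctly read the first term of the expansion as $\frac{\varepsilon}{n-2}r^{2-n}$ rather than the misprinted $e\,\frac{r^{2-n}}{n-2}$.
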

\begin{proof}
See, proof of Proposition 6.3 in \cite{Fakhi-Pacard}. Again, the key point in the argument is the sharp estimates on the Green's function $\ga_0$, which satisfy the same estimates as in Lemma 6.3 in \cite{Fakhi-Pacard} on the region $r\gec r_\veps$.
\end{proof}

\subsection{Invertibility of the linearized mean curvature operator around $\Sigma_{\veps,\calA}$}
The goal of this section is to study the linearized mean curvature operator around $\Sigma_{\veps,\calA}$ and its mapping properties. First, note that we can write $\Sigma_{\veps,\calA}$ as a hypersurface
\EQ{\label{eqn:7.1}
\bar{\Omega}_{\mathcal{A}} \backslash B_{r_{\varepsilon}} \ni x \longrightarrow\left(x, u(x)+w_{\varepsilon, \mathcal{A}}(x)\right),
}
where $\Omega_{\calA}$ denotes the projection onto the hyperplane $x_{n+1}=0$ of the image of $\Sigma_{0}$ and the function $w_{\varepsilon, \mathcal{A}}$ is the one defined in Proposition \ref{prop:6.2}. Thus, $\Sigma_{\varepsilon, \mathcal{A}}$ is the singular surface constructed in the previous section, which has been truncated. The linearized mean curvature operator about $\Sigma_{\varepsilon, \mathcal{A}}$ now has the following form
\EQ{\label{eqn:7.1}
\Lambda_{\varepsilon, \mathcal{A}}=\Lambda_{u}+\operatorname{div} \Lambda_{\veps, \calA}^{\prime}, \text{ on } \Omega_{\calA}\setminus B_{r_{\veps}},
}
where $\Lambda_{\varepsilon, \calA}^{\prime}$ is a first order partial differential operator. This follows from \eqref{eqn:6.5} by computing 
\EQ{
H_{u + w_{\veps,\calA}+s\phi} = H_u + \Lam_u w_{\veps,\calA} +  s \Lambda_u \phi - \div(r_\veps Q_u'(w_{\veps,\calA}+s\phi) + Q_u''(w_{\veps,\calA}+s\phi)),
}
for any test function $\phi\in C^\infty_c(\Omega_{\calA}\setminus B_{r_\veps})$ and taking the derivative as $s=0$. It is also clear then that the coefficients of $\Lam'_{\veps,\calA}$ can be bounded by $|\nabla w_{\veps,\calA}|$ and $|\nabla w_{\veps,A}|^2$ which arise from $Q'$ and $Q''$ respectively. Thus using Proposition \ref{prop:6.2}, to bound the coefficients as well as their $k$-derivatives for $k=0,1,2$ up to a constant (that depends on $\kappa>0$)  by $r^{-k}\left(\varepsilon r^{2-n}+r_{\varepsilon} r+\varepsilon^{2} r^{2-2 n}\right)$, provided $\|\mathcal{A}\| \leq \kappa r_{\varepsilon}^{2}$. Regarding the invertibility of this operator, we have the following result:
\begin{prop}\label{prop:7.1}
Assume that (A.1)--(A.5) hold. Fix $v \in(-n, 1-n), \alpha \in(0,1)$. Then, for all $\kappa>0$, there exists $\varepsilon_{0}>0$ and all $\varepsilon \in\left(0, \varepsilon_{0}\right]$, there exists an operator
\EQ{
\Gamma_{\varepsilon, \mathcal{A}}: \mathcal{C}_{\nu-2}^{0, \alpha}\left(\overline{\Omega_{\mathcal{A}}} \backslash B_{r_{\varepsilon}}\right) \longrightarrow \mathcal{C}_{\nu}^{2, \alpha}\left(\overline{\Omega_{\mathcal{A}}} \backslash B_{r_{\varepsilon}}\right),
}
such that, for all $f \in \mathcal{C}_{\nu-2}^{0, \alpha}\left(\overline{\Omega_{\mathcal{A}}} \backslash B_{r_{\varepsilon}}\right)$, the function $w=\Gamma_{\varepsilon, \mathcal{A}}(f)$ is a solution of the problem
\EQ{
\left\{\begin{array}{lll}
\Lambda_{\varepsilon, \mathcal{A}} w=f & \text { in } & \Omega_{\mathcal{A}} \backslash B_{r_{\varepsilon}} \\
\pi_{\mathrm{II}}(w)=0 & \text { on } & \partial B_{r_{\varepsilon}} \\
w=0 & \text { on } & \partial \Omega_{\mathcal{A}} .
\end{array}\right.
}
In addition $\left\|\Gamma_{\varepsilon, \mathcal{A}}(f)\right\|_{2, \alpha, \nu} \leq c\|f\|_{0, \alpha, \nu-2}$, for some constant $c>0$ independent of $\kappa, \alpha, \varepsilon$ and $\mathcal{A}$ such that $\|\mathcal{A}\| \leq \kappa r_{\varepsilon}^{2}$.
\end{prop}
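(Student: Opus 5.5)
The plan is a perturbation argument. We treat $\Lambda_{\varepsilon,\mathcal{A}} = \Lambda_u + \operatorname{div}\Lambda'_{\varepsilon,\mathcal{A}}$ as a small perturbation of $\Lambda_u$, and we build $\Gamma_{\varepsilon,\mathcal{A}}$ from the operator $\Gamma_{u,r}$ supplied by assumption (A.5) via a Neumann series. There is one preliminary issue: $\Gamma_{u,r_\varepsilon}$ lives on $\overline{\Omega}\setminus B_{r_\varepsilon}$, not on $\overline{\Omega_{\mathcal{A}}}\setminus B_{r_\varepsilon}$. We first transfer it by a diffeomorphism $\Phi_{\mathcal{A}}:\Omega\to\Omega_{\mathcal{A}}$ that equals the identity on $B_{r_0/4}$ and is $C^{3}$-close to the identity, with error $O(\|\mathcal{A}\|)=O(\kappa r_\varepsilon^2)$ near $\partial\Omega$. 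Since $\Omega_{\mathcal{A}}$ is the projection of the rigidly moved $\Sigma_0$, its boundary is a small $C^{3,\alpha}$ perturbation of $\partial\Omega$. Conjugating $\Lambda_u$ by $\Phi_{\mathcal{A}}$ changes it only by a divergence-form operator whose coefficients are supported away from the origin and are $O(\kappa r_\varepsilon^2)$. Its effect from $\mathcal{C}^{2,\alpha}_\nu$ to $\mathcal{C}^{0,\alpha}_{\nu-2}$ is therefore small, because the weights play no role away from $B_{r_0/4}$. We absorb this into the perturbation as well. The boundary conditions $w=0$ on $\partial\Omega_{\mathcal{A}}$ and $\pi_{\mathrm{II}}(w)=0$ on $\partial B_{r_\varepsilon}$ are preserved because $\Phi_{\mathcal{A}}$ fixes $B_{r_0/4}$.

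The main step is to show that the perturbation term $E(w) := \operatorname{div}(\Lambda'_{\varepsilon,\mathcal{A}} w)$, plus the conjugation error, satisfies
\[
\|E(w)\|_{0,\alpha,\nu-2}\le \tfrac{1}{2\eta_\nu}\|w\|_{2,\alpha,\nu}
\]
for $\varepsilon\le\varepsilon_0(\kappa)$. We use the bound, stated just before the proposition, that the coefficients of $\Lambda'_{\varepsilon,\mathcal{A}}$ and their first two derivatives are bounded by $c_\kappa r^{-k}\bigl(\varepsilon r^{2-n}+r_\varepsilon r+\varepsilon^2 r^{2-2n}\bigr)$. Since $\Lambda'$ has no zero-order terms, $E(w)$ involves only $\nabla w$ and $\nabla^2 w$ multiplied by the coefficients, or by their derivatives. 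On a dyadic annulus $r\le|x|\le 2r$ with $r\ge r_\varepsilon$, this gives
\[
r^{-(\nu-2)}[E(w)]_{0,\alpha,[2r,r]}\le c_\kappa\bigl(\varepsilon r^{2-n}+r_\varepsilon r+\varepsilon^2 r^{2-2n}\bigr)\, r^{-\nu}[w]_{2,\alpha,[2r,r]}.
\]
The prefactor is maximized at $r=r_\varepsilon$. There, $\varepsilon r_\varepsilon^{2-n}\sim \varepsilon^{1-3(n-2)/(3n-2)}=\varepsilon^{4/(3n-2)}\to0$. Similarly, $\varepsilon^2 r_\varepsilon^{2-2n}\sim\varepsilon^{2-3(2n-2)/(3n-2)}=\varepsilon^{2/(3n-2)}\to0$, and $r_\varepsilon r\le r_\varepsilon r_0\to 0$. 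On the outer region $\overline{\Omega_{\mathcal{A}}}\setminus B_{r_0}$, all coefficients are $O_\kappa(\varepsilon+r_\varepsilon)$. Hence $\|E\|\to0$ as $\varepsilon\to0$, uniformly in $\|\mathcal{A}\|\le\kappa r_\varepsilon^2$.

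Granting this, we define
\[
\Gamma_{\varepsilon,\mathcal{A}}:=\Gamma_{u,r_\varepsilon}\sum_{j\ge0}(-E\,\Gamma_{u,r_\varepsilon})^j,
\]
transported by $\Phi_{\mathcal{A}}$. This series converges because $\|E\,\Gamma_{u,r_\varepsilon}\|\le\tfrac12$, and it gives $\Lambda_{\varepsilon,\mathcal{A}}\circ\Gamma_{\varepsilon,\mathcal{A}}=\mathrm{Id}$. It also yields $\|\Gamma_{\varepsilon,\mathcal{A}}\|\le 2\eta_\nu$, with $\eta_\nu$ independent of $r_\varepsilon$ by (A.5), which is the required constant independent of $\kappa,\varepsilon,\mathcal{A}$. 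The boundary conditions are inherited from the range of $\Gamma_{u,r_\varepsilon}$.

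The step I expect to be the main obstacle is the uniformity of the smallness estimate near $\partial B_{r_\varepsilon}$. Two things must be checked there. First, the tilt assumption (A.2), $|\nabla u(0)|\le r_\varepsilon$, is exactly what produces the $r_\varepsilon r$ term, and it must not spoil the Hölder seminorms at scale $r$. Second, the $\varepsilon^2 r^{2-2n}$ term from $Q''$ must remain small all the way down to $r=r_\varepsilon$. The exponent count above indicates this works, but it is precisely where the choice $r_\varepsilon\sim\varepsilon^{3/(3n-2)}$ matters. A secondary technical point is the domain transfer $\Phi_{\mathcal{A}}$ when the rotation $R$ is of size $\kappa r_\varepsilon$. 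The proof there follows Fakhi--Pacard's Proposition 7.1 essentially verbatim.
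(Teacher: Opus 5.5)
Your proposal is correct and follows the paper's proof. Both transport to $\Omega$ by a diffeomorphism equal to the identity on $B_{r_0/4}$, show $\|(\Lambda_u-\tilde\Lambda_{\varepsilon,\mathcal{A}})w\|_{0,\alpha,\nu-2}\le c_\kappa r_\varepsilon^{2/3}\|w\|_{2,\alpha,\nu}$ (your count $\varepsilon^2 r_\varepsilon^{2-2n}\sim r_\varepsilon^{2/3}$ is exactly where the paper's $r_\varepsilon^{2/3}$ comes from), and conclude by the perturbation/Neumann-series argument around $\Gamma_{u,r_\varepsilon}$ from (A.5).

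One small correction: $\|\mathcal{A}\|$ weights $|R|$ by $r_\varepsilon$, so you only get $|R|\le\kappa r_\varepsilon$. The diffeomorphism is therefore $O(c_\kappa r_\varepsilon)$-close to the identity, as the paper states, not $O(\kappa r_\varepsilon^2)$; this still tends to zero and does not affect the argument.
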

\begin{proof}
Define $\ti{\Lam}_{\veps,\calA}:\mathcal{C}_{\nu}^{2, \alpha}\left(\bar{\Omega} \backslash B_{r_{\varepsilon}}\right)\to \mathcal{C}_{\nu-2}^{0, \alpha}\left(\bar{\Omega} \backslash B_{r_{\varepsilon}}\right)$ by
\EQ{
\ti{\Lambda}_{\varepsilon, \mathcal{A}}\left(w \circ \Theta_{\mathcal{A}}\right) \equiv\left(\Lambda_{\varepsilon, \mathcal{A}} w\right) \circ \Theta_{\mathcal{A}},
}
where $\Theta_{\mathcal{A}}: \bar{\Omega} \rightarrow \overline{\Omega_{\mathcal{A}}}$ is a $\mathcal{C}^{2, \alpha}$ diffeomorphism such that $\Theta_{\mathcal{A}}(x)=x$ in $B_{r_{0} / 4}$ and $\left\|\Theta_{\mathcal{A}}-\mathrm{I}\right\|_{\mathcal{C}^{2, \alpha}} \leq c r_{\varepsilon}$ for some constant $c>0$ depending on $\kappa$ but independent of $\varepsilon$. Then using \eqref{eqn:7.1} we get
\EQ{
\|(\Lambda_{u}-\ti{\Lambda}_{\varepsilon, \mathcal{A}}) w\|_{0, \alpha, \nu-2} \leq c_{\kappa} r_{\varepsilon}^{2 / 3}\|w\|_{2, \alpha, \nu} .
}
It is now easy to see that, provided $\varepsilon$ is chosen small enough, and granted (A.5), the result follows from a simple perturbation argument.
\end{proof}
Given any $\kappa>0$, $\|\calA\|\leq \kappa r_{\veps}^2$ and $h_{\mathrm{II}}=\sum_{j\geq n+1} h_j e_j\in \pi_{\mathrm{II}}(C^{2,\alp}(\bbS^{n-1}))$ we define the function $w_0$ on the region $\Omega_{\calA}\setminus B_{r_\veps}$ as 
\EQ{
w_{0} \equiv \lambda\left(\frac{2 r_{0}-8 r}{r_{0}}\right) \sum_{j \geq n+1}\left(\frac{r}{r_{\varepsilon}}\right)^{\frac{2-n}{2}-\gamma_{j}} h_{j} e_{j}.
}
Note that $\Delta w_0 \equiv 0$ on $B_{r_0/2}\setminus B_{r_\veps}$. Then from Proposition \ref{prop:4.5} we see that 
\EQ{
\left\|w_{0}\right\|_{2, \alpha,-n} \leq c r_{\varepsilon}^{n}\|h\|_{2, \alpha},
}
for some constant $c>0$ which does not depend on $\varepsilon$. Then defining $w \equiv-\Gamma_{\varepsilon, \mathcal{A}}\left(\Lambda_{\varepsilon, \mathcal{A}} w_{0}\right)+w_{0}$ we see that $w$ solves,
\EQ{
\begin{cases}\Lambda_{\varepsilon, \mathcal{A}} w=0 & \text { in } \quad \Omega_{\mathcal{A}} \backslash B_{r_{\varepsilon}} \\ \pi_{\mathrm{II}}(w)=h_{\mathrm{II}}\left(\cdot / r_{\varepsilon}\right) & \text { on } \quad \partial B_{r_{\varepsilon}} \\ w=0 & \text { on } \quad \partial \Omega_{\mathcal{A}} .\end{cases}
}
This yields a mapping between $h_{\mathrm{II}}$ and $w$ defined as follows:
\EQ{\label{defn:pi_eps}
\Pi_{\varepsilon, \mathcal{A}}: h_{\mathrm{II}} \in \pi_{\mathrm{II}}\left(\mathcal{C}^{2, \alpha}\left(S^{n-1}\right)\right) \longrightarrow w \in \mathcal{C}_{\nu}^{2, \alpha}\left(\overline{\Omega_{\mathcal{A}}} \backslash B_{r_{\varepsilon}}\right),
}
with estimates  
\begin{align}\label{eqn:7.2}
    \|\Pi_{\varepsilon, \mathcal{A}}\left(h_{\mathrm{II}}\right)\left\|_{2, \alpha, \nu} \leq c_{\kappa} r_{\veps}^{-\nu}\right\| h_{\mathrm{II}} \|_{2, \alpha}
\end{align}
for any $\nu \in(-n, 1-n)$
We can now state the counterpart of Proposition \ref{prop:4.6}.
\begin{prop}\label{prop:7.2}
Assume that (A.1)--(A.5) hold. Fix $\nu \in(-n, 1-n)$ and $\alpha \in(0,1)$. Then, for all $\kappa>0$ there exist $c_{\kappa}>0$ and $\varepsilon_{0}>0$ such that, for all $\varepsilon \in\left(0, \varepsilon_{0}\right]$, we have
\EQ{
\left\|r_{\varepsilon} \partial_{r} \Pi_{\varepsilon, \mathcal{A}}\left(h_{\mathrm{II}}\right)\left(r_{\varepsilon} \theta\right)+\frac{n-2}{2} h_{\mathrm{II}}+D_{\theta} h_{\mathrm{II}}\right\|_{1, \alpha} \leq c_{\kappa}\left(r_{\varepsilon}^{n+\nu}+r_{\varepsilon}^{2 / 3}\right)\left\|h_{\mathrm{II}}\right\|_{2, \alpha} .
}
\end{prop}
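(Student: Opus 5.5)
The plan is to split $\Pi_{\varepsilon,\mathcal{A}}(h_{\mathrm{II}}) = w_0 + v$ with $v \equiv -\Gamma_{\varepsilon,\mathcal{A}}(\Lambda_{\varepsilon,\mathcal{A}} w_0)$, and treat the two pieces separately.

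\emph{Step 1: the explicit piece $w_0$.} The first part of the estimate comes from $w_0$ alone. On $B_{r_0/2}\setminus B_{r_\varepsilon}$ the cutoff $\lambda$ equals $1$, so
\[
w_0=\sum_{j\ge n+1}(r/r_\varepsilon)^{\frac{2-n}{2}-\gamma_j}h_je_j .
\]
Differentiating directly gives
\[
r_\varepsilon\partial_r w_0(r_\varepsilon\theta)=\sum_{j\ge n+1}\Bigl(\tfrac{2-n}{2}-\gamma_j\Bigr)h_je_j .
\]
Here $\gamma_j=\sqrt{(n-2)^2/4+\lambda_j}$, so $D_\theta$ is the operator $\sum h_je_j\mapsto \sum\gamma_jh_je_j$, the Dirichlet-to-Neumann-type operator that already appears in $\mathcal{S}_0$. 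Therefore
\[
r_\varepsilon\partial_r w_0(r_\varepsilon\cdot)+\tfrac{n-2}{2}h_{\mathrm{II}}+D_\theta h_{\mathrm{II}}=0
\]
identically. The whole error thus comes from the correction $v$.

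\emph{Step 2: bound the source $\Lambda_{\varepsilon,\mathcal{A}}w_0$.} Write $\Lambda_{\varepsilon,\mathcal{A}}w_0=(\Lambda_u-\Delta)w_0+\div\Lambda'_{\varepsilon,\mathcal{A}}w_0$, using $\Delta w_0=0$ where $\lambda\equiv1$.
\begin{itemize}
\item In the cutoff region $r\sim r_0$, we have $|w_0|\lesssim r_\varepsilon^{n}\|h_{\mathrm{II}}\|$ by the decay of the modes $j\ge n+1$ (this is where $\|w_0\|_{2,\alpha,-n}\le c r_\varepsilon^n\|h\|$ enters). Its contribution in the $\mathcal{C}^{0,\alpha}_{\nu-2}$ norm is therefore $O(r_\varepsilon^{n})\|h\|$.
\item For $\Lambda_u-\Delta=\div\Lambda'_u$, Lemma \ref{lem:6.1} gives coefficients in $\mathcal{C}^{1,\alpha}_2$, so they gain $r^2$. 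Combined with $\|w_0\|_{2,\alpha,-n}$, this yields a source in $\mathcal{C}^{0,\alpha}_{-n}\subset\mathcal{C}^{0,\alpha}_{\nu-2}$ of size $c\,r_\varepsilon^{n}\|h\|$.
\item For the $\Lambda'_{\varepsilon,\mathcal{A}}$ term, the coefficients are bounded by $r^{-k}(\varepsilon r^{2-n}+r_\varepsilon r+\varepsilon^2r^{2-2n})$. With $r\ge r_\varepsilon\sim\varepsilon^{3/(3n-2)}$ we have $\varepsilon r^{2-n}\lesssim r_\varepsilon^{2/3}\cdot(r_\varepsilon/r)^{n-2}$, so this factor is $\lesssim r_\varepsilon^{2/3}$. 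It contributes $c_\kappa r_\varepsilon^{2/3}$ times the weighted norm of $w_0$, measured with the weight $\nu$ (i.e.\ $r_\varepsilon^{-\nu}\|h\|$).
\end{itemize}
Altogether, using $\|w_0\|_{2,\alpha,-n}\le c\,r_\varepsilon^{n}\|h\|$ and the inclusion of weights,
\[
\|\Lambda_{\varepsilon,\mathcal{A}}w_0\|_{0,\alpha,\nu-2}\le c_\kappa\bigl(r_\varepsilon^{n}+r_\varepsilon^{2/3}r_\varepsilon^{-\nu}\bigr)\|h_{\mathrm{II}}\|_{2,\alpha}.
\]

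\emph{Step 3: from $v$ to the boundary derivative.} Proposition \ref{prop:7.1} gives $\|v\|_{2,\alpha,\nu}\le c\|\Lambda_{\varepsilon,\mathcal{A}}w_0\|_{0,\alpha,\nu-2}$. The definition of the weighted norm then yields, on the annulus $[r_\varepsilon,2r_\varepsilon]$,
\[
\|r_\varepsilon\partial_r v(r_\varepsilon\cdot)\|_{1,\alpha}\le c\,r_\varepsilon^{\nu}\|v\|_{2,\alpha,\nu}.
\]
Combining with Step 2 gives the bound $c_\kappa(r_\varepsilon^{n+\nu}+r_\varepsilon^{2/3})\|h_{\mathrm{II}}\|_{2,\alpha}$, which is the claim.

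\emph{Main obstacle.} The delicate point is Step 2 near the inner boundary. There the coefficients of $\Lambda'_{\varepsilon,\mathcal{A}}$ are only $O(r_\varepsilon^{2/3})$ rather than small in a decaying weight. A careless estimate produces $r_\varepsilon^{-\nu}$ factors that must be exactly cancelled by the $r_\varepsilon^{\nu}$ in Step 3; this cancellation is what makes the $r_\varepsilon^{2/3}$ term weight-independent.

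A further difficulty is the tilt $|\nabla u(0)|\le r_\varepsilon$ from (A.2). It adds a coefficient of size $r_\varepsilon$ to $\Lambda'_u$ that does not vanish at the origin. I would check that it is absorbed, because $r_\varepsilon\le r_\varepsilon^{2/3}$, and that it adds at most $c\,r_\varepsilon r^{-1}$ growth. That contribution is dominated by the same term.
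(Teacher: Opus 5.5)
Your proposal is correct and follows the paper's route. The paper's proof only points back to the argument of Proposition \ref{prop:4.6}: write $\Pi_{\varepsilon,\mathcal{A}}(h_{\mathrm{II}})=w_0-\Gamma_{\varepsilon,\mathcal{A}}(\Lambda_{\varepsilon,\mathcal{A}}w_0)$, note that the explicit decaying harmonic extension $w_0$ has Cauchy data exactly $-\frac{n-2}{2}h_{\mathrm{II}}-D_\theta h_{\mathrm{II}}$, and control the correction through Proposition \ref{prop:7.1} and the scaling of the weighted norm at $r=r_\varepsilon$; your three steps carry this out.

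One small correction to Step 2: at $r=r_\varepsilon$ the term $\varepsilon r^{2-n}$ is only of size $r_\varepsilon^{4/3}$. The $r_\varepsilon^{2/3}$ actually comes from the quadratic coefficient $\varepsilon^2 r^{2-2n}=r_\varepsilon^{2/3}(r_\varepsilon/r)^{2n-2}$, which is still at most $r_\varepsilon^{2/3}$, so your bound stands.
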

\begin{proof}
See the argument in the proof of Proposition \ref{prop:4.6}. 
\end{proof}

\subsection{Existence of infinite dimensional minimal graphs over $\Sigma_{\veps,\calA}$}
The goal of this section is to prove the existence of an infinite-dimensional family of minimal hypersurfaces that are graphs over $\Sigma_{\veps,\calA}$ provided $\Sigma_0$ is itself minimal. This family of hypersurfaces will be parameterized by its Cauchy data. Throughout this section, we will assume that the assumptions (A.1)-(A.5) hold and that $\Sigma_0$ is a minimal hypersurface in $\R^{n+1}$.
\newline
We keep the notations of the last section and assume from now on that (A.1)-(A.5) hold. We will also assume that $\Sigma_{0}$ is a minimal hypersurface.
\subsubsection{Minimal Surfaces close to $\Sigma_{\veps,\calA}$}
The mean curvature of a surface close to $\Sigma_{\veps,\calA}$ can be expressed as a graph 
\EQ{
\overline{\Omega_{\mathcal{A}}} \backslash B_{r_{\varepsilon}} \ni x \longrightarrow\left(x, u(x)+w_{\varepsilon, \mathcal{A}}(x)+w(x)\right),
}
for some real-valued function $w$ with mean curvature 
\EQ{
H=H_{\varepsilon, \mathcal{A}}+\Lambda_{\varepsilon, \mathcal{A}} w-\operatorname{div} \mathcal{Q}_{\varepsilon, \mathcal{A}}(w),
}
where $H_{\varepsilon, \mathcal{A}}$ is the mean curvature of the hypersurface $\Sigma_{\varepsilon, \mathcal{A}}$ and where $\mathcal{Q}_{\varepsilon, \mathcal{A}}(w)$ represents all the nonlinear terms when one expands using \eqref{eqn:6.5} with $w$ replaced by $w_{\veps,\calA}+w$. Then, given boundary data $h_{\mathrm{II}} \in \pi_{\mathrm{II}}\left(\mathcal{C}^{2, \alpha}\left(\bbS^{n-1}\right)\right)$ our goal is to find a minimal graph $w$ close to $\Sigma_{\veps,\calA}$ or $H\equiv 0$. In other words, we want to solve the following problem
\EQ{\label{eqn:8.1}
\begin{cases}\Lambda_{\varepsilon, \mathcal{A}} w=-H_{\varepsilon, \mathcal{A}}+\operatorname{div} \mathcal{Q}_{\varepsilon, \mathcal{A}}(w) & \text { in } \quad \Omega_{\mathcal{A}} \backslash B_{r_{\varepsilon}} \\ \pi_{\mathrm{II}}\left(u+w_{\veps,\calA}+w\right)=h_{\mathrm{II}}\left(\cdot / r_{\varepsilon}\right) & \text { on } \quad \partial B_{r_{\varepsilon}} \\ w=0 & \text { on } \quad \partial \Omega_{\mathcal{A}} \end{cases}.
}
Given any $h_{\mathrm{II}} \in \pi_{\mathrm{II}}\left(\mathcal{C}^{2, \alpha}\left(\bbS^{n-1}\right)\right)$ with $\left\|h_{\mathrm{II}}\right\|_{2, \alpha} \leq \kappa r_{\varepsilon}^{2}$, we define $\tilde{w}$ on $\partial B_{r_{\veps}}$ such that
\EQ{
\tilde{w} &=\Pi_{\varepsilon, \mathcal{A}}\left(h_{\mathrm{II}}-\pi_{\mathrm{II}}(u+w_{\veps,\calA})(r_{\veps}\cdot) \right)-\Gamma_{\varepsilon, \mathcal{A}}\left(H_{\varepsilon, \mathcal{A}}\right)\\
&= \Pi_{\varepsilon, \mathcal{A}}\left(h_{\mathrm{II}}-\pi_{\mathrm{II}} \bar{w}_{\varepsilon, \mathcal{A}}\left(r_{\varepsilon} \cdot\right)\right)-\Gamma_{\varepsilon, \mathcal{A}}\left(H_{\varepsilon, \mathcal{A}}\right),
}
where $\Pi_{\veps,\calA}$ as in \eqref{defn:pi_eps} and $\bar{w}_{\veps,\calA}$ is defined in Proposition \ref{prop:6.3} and we used $\pi_{\mathrm{II}}\left(u+w_{\varepsilon, \mathcal{A}}\right)=\pi_{\mathrm{II}} \bar{w}_{\varepsilon, \mathcal{A}}$ on $\partial B_{r_{\varepsilon}}$ since the rigid motions generate Jacobi fields giving rise to the lower eigenmodes that are killed by the map $\pi_{\mathrm{II}}.$
\newline
Thus if we set $w=\tilde{w}+v$, then the task of solving \eqref{eqn:8.1} reduces to find $v \in \mathcal{C}_{\nu}^{2, \alpha}\left(\Sigma_{\varepsilon, \mathcal{A}}\right)$ such that
\EQ{
\begin{cases}\Lambda_{\varepsilon, \mathcal{A}} v=\operatorname{div} \mathcal{Q}_{\varepsilon, \mathcal{A}}(\tilde{w}+v) & \text { in } \quad \Omega_{\mathcal{A}} \backslash B_{r_{\varepsilon}} \\ \pi_{\mathrm{II}}(v)=0 & \text { on } \quad \partial B_{r_{\varepsilon}} \\ v=0 & \text { on } \quad \partial \Omega_{\mathcal{A}} \end{cases},
}
which can be done by applying a fixed-point mapping argument to the map 
\EQ{
\mathcal{M}_{\varepsilon, \mathcal{A}}(v)=\Gamma_{\varepsilon, \mathcal{A}}\left(\mathcal{Q}_{\varepsilon, \mathcal{A}}(\tilde{w}+v)\right).
}
The following proposition shows that this can be done for a careful choice of parameters. 
\begin{prop}
Assume that $\nu \in(-n, 1-n)$ (or $\nu \in (-8 / 3,-2)$  when $n=3$) and that $\alpha \in (0,1)$ are fixed. For all $\kappa>0$, there exist $c_{\kappa}>0$ and $\varepsilon_{0}>0$ such that, for all $\varepsilon \in\left(0, \varepsilon_{0}\right]$, if $h_{\mathrm{II}} \in \pi_{\mathrm{II}}\left(\mathcal{C}^{2, \alpha}\left(\bbS^{n-1}\right)\right)$ is fixed with
\EQ{
\left\|h_{\mathrm{II}}\right\|_{2, \alpha} \leq \kappa r_{\varepsilon}^{2}
}
then $\mathcal{M}_{\varepsilon, \mathcal{A}}$ is a contraction mapping on the ball
\EQ{
\calB \equiv\left\{v:\|v\|_{2, \alpha, \nu} \leq c_{\kappa} r_{\varepsilon}^{10 / 3-\nu}\right\}
}
and thus has a unique fixed point in this ball. 
\end{prop}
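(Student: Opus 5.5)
The plan is to run the Banach fixed point argument exactly as in Proposition \ref{prop:5.1}, now with the operator $\Gamma_{\varepsilon,\mathcal{A}}$ of Proposition \ref{prop:7.1} in place of $\mathcal{G}_{s_\varepsilon}$. Since $\Gamma_{\varepsilon,\mathcal{A}}$ is bounded from $\mathcal{C}^{0,\alpha}_{\nu-2}$ to $\mathcal{C}^{2,\alpha}_{\nu}$ uniformly in $\varepsilon$ and $\mathcal{A}$, and $\mathcal{Q}_{\varepsilon,\mathcal{A}}$ enters in divergence form, I will regard $\mathcal{M}_{\varepsilon,\mathcal{A}}$ as $v\mapsto \Gamma_{\varepsilon,\mathcal{A}}(\operatorname{div}\mathcal{Q}_{\varepsilon,\mathcal{A}}(\tilde w+v))$. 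It then suffices to prove two estimates:
\[
\|\mathcal{M}_{\varepsilon,\mathcal{A}}(0)\|_{2,\alpha,\nu}\le \tfrac{c_\kappa}{2} r_\varepsilon^{10/3-\nu},
\qquad
\|\mathcal{M}_{\varepsilon,\mathcal{A}}(v_2)-\mathcal{M}_{\varepsilon,\mathcal{A}}(v_1)\|_{2,\alpha,\nu}\le \tfrac12\|v_2-v_1\|_{2,\alpha,\nu}
\]
for all $v_1,v_2\in\mathcal{B}$.

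\textbf{Step 1: bound $\tilde w$.} The function $\tilde w$ has two pieces.
\begin{itemize}
\item For the boundary piece, Proposition \ref{prop:6.3} gives $\|\bar w_{\varepsilon,\mathcal{A}}(r_\varepsilon\cdot)\|_{2,\alpha}\le c\, r_\varepsilon^2$. Together with $\|h_{\mathrm{II}}\|_{2,\alpha}\le\kappa r_\varepsilon^2$ and \eqref{eqn:7.2}, this yields $\|\Pi_{\varepsilon,\mathcal{A}}(\cdot)\|_{2,\alpha,\nu}\le c_\kappa r_\varepsilon^{2-\nu}$.
\item For the interior piece, Proposition \ref{prop:7.1} gives $\|\Gamma_{\varepsilon,\mathcal{A}}(H_{\varepsilon,\mathcal{A}})\|_{2,\alpha,\nu}\le c\|H_{\varepsilon,\mathcal{A}}\|_{0,\alpha,\nu-2}$. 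Lemma \ref{prop:6.1}, transported to $\Sigma_{\varepsilon,\mathcal{A}}$ since rigid motions preserve mean curvature, gives $|H_{\varepsilon,\mathcal{A}}|\lesssim \varepsilon^2 r^{2-2n}+\varepsilon^3 r^{2-3n}$ on $r\ge r_\varepsilon$, with $H_0=0$. Using $\varepsilon\sim r_\varepsilon^{(3n-2)/3}$, I compute $\sup_{r\ge r_\varepsilon} r^{2-\nu}|H|$. Because $\nu>-n$, the exponent of $r$ is negative, so the supremum is attained at $r=r_\varepsilon$. This gives a bound of order $r_\varepsilon^{2(3n-2)/3+4-2n-\nu}=r_\varepsilon^{8/3-\nu}$, or better.
\end{itemize}
Hence $\|\tilde w\|_{2,\alpha,\nu}\le c_\kappa r_\varepsilon^{2-\nu}$. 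The exponent $10/3$ should then come from the quadratic structure of $\mathcal{Q}$.

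\textbf{Step 2: estimate the nonlinearity.} By Lemma \ref{lem:6.2}, $\mathcal{Q}_{\varepsilon,\mathcal{A}}(w)$ is made of three kinds of terms:
\begin{itemize}
\item cross terms between $\nabla w_{\varepsilon,\mathcal{A}}$ and $\nabla w$;
\item an $r_\varepsilon Q'_u$ quadratic piece;
\item cubic terms $Q''_u$.
\end{itemize}
The linear cross terms should already be absorbed into $\Lambda_{\varepsilon,\mathcal{A}}$, so what remains is genuinely at least quadratic in $\nabla w$, with coefficients controlled by $|\nabla w_{\varepsilon,\mathcal{A}}|\lesssim r_\varepsilon+\varepsilon r^{1-n}$ from Proposition \ref{prop:6.2}. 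Pointwise, for $w\in\mathcal{C}^{2,\alpha}_\nu$ I have $|\nabla w|\le r^{\nu-1}\|w\|$. The weighted quantity $r^{2-\nu}|\operatorname{div}\mathcal{Q}(w)|$ is then bounded by $r^{\nu}\|w\|^2_{2,\alpha,\nu}$ times bounded coefficients. Because $\nu<0$, this is worst at $r=r_\varepsilon$, giving $r_\varepsilon^{\nu}(r_\varepsilon^{2-\nu})^2=r_\varepsilon^{4-\nu}$, which is within $r_\varepsilon^{10/3-\nu}$. The coefficient factors $\varepsilon r_\varepsilon^{1-n}\sim r_\varepsilon^{1/3}$ may cost a power $r_\varepsilon^{-2/3}$, and this is where the $10/3$ arises. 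For the Lipschitz estimate I will use $a^2-b^2=(a-b)(a+b)$ and the cubic analogue. This gives a Lipschitz constant of order $c_\kappa r_\varepsilon^{\nu}\cdot r_\varepsilon^{2-\nu}\cdot r_\varepsilon^{-2/3}\to0$, so it is below $\tfrac12$ for small $\varepsilon$. The size condition needed to control $Q''$ follows because $r^{1-\nu}|\nabla w|$ is small on $\mathcal{B}$.

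\textbf{Main obstacle.} The hard step is the careful bookkeeping of exponents in Step 2. I must verify that the $H_{\varepsilon,\mathcal{A}}$ term and the worst coefficient term both fit into $r_\varepsilon^{10/3-\nu}$ uniformly over the whole range of $\nu$. The case $n=3$ needs extra care: there the Green's function has the constant $a_0$ and logarithmic corrections, which is why the range is restricted to $\nu\in(-8/3,-2)$. I would handle $n=3$ separately, checking that the $\log(1/r)$ losses are absorbed by the gap between $-8/3$ and $-3$. I would also check that the extra tilt $|\nabla u(0)|\le r_\varepsilon$ allowed by (A.2) only contributes $r_\varepsilon\cdot r$ terms, which are harmless near $r_\varepsilon$.
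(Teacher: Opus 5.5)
Your overall route is the paper's. You set up the contraction for $v\mapsto\Gamma_{\varepsilon,\mathcal{A}}(\operatorname{div}\mathcal{Q}_{\varepsilon,\mathcal{A}}(\tilde w+v))$. You bound $\|\tilde w\|_{2,\alpha,\nu}\le c_\kappa r_\varepsilon^{2-\nu}$ using \eqref{eqn:7.2}, Proposition~\ref{prop:6.3} and $\Gamma_{\varepsilon,\mathcal{A}}(H_{\varepsilon,\mathcal{A}})$. The exponent $10/3$ then comes from the quadratic term, whose coefficient is of size $\varepsilon r^{1-n}$ near $r_\varepsilon$. The problem is in Step~1, exactly where the hypothesis $\nu\in(-8/3,-2)$ for $n=3$ should be used. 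You claim that, because $\nu>-n$, the exponent in $r^{2-\nu}\cdot\varepsilon^2 r^{2-2n}=\varepsilon^2 r^{4-2n-\nu}$ is negative, so the supremum sits at $r=r_\varepsilon$ and gives $r_\varepsilon^{8/3-\nu}$. That holds for $n\ge4$, since $4-2n-\nu<4-n\le0$. For $n=3$ the exponent is $-2-\nu$, which is positive because $\nu<-2$.

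For $n=3$ the supremum over $r_\varepsilon\le r\le r_0$ is therefore attained at the outer scale $r\sim r_0$. What Lemma~\ref{prop:6.1} actually yields is $\|H_{\varepsilon,\mathcal{A}}\|_{0,\alpha,\nu-2}\le c\,\varepsilon^2\sim c\,r_\varepsilon^{14/3}$, using $\varepsilon\sim r_\varepsilon^{7/3}$; it does not yield $r_\varepsilon^{8/3-\nu}$. This is $\le c\,r_\varepsilon^{2-\nu}$ precisely when $\nu\ge-8/3$, which is where the restriction comes from; the paper says so right after its estimate of $\Gamma_{\varepsilon,\mathcal{A}}(H_{\varepsilon,\mathcal{A}})$. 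Your plan to absorb $\log(1/r)$ losses from the Green's function targets the wrong mechanism, since $a_0$ and the logarithms are already built into Lemma~\ref{prop:6.1}. As written, your Step~1 never uses $\nu>-8/3$ and would "prove" the bound for all $\nu\in(-3,-2)$. The fix is to split the supremum into inner and outer scales and use $\nu\ge-8/3$ at the outer one.

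Two smaller bookkeeping points. First, the $\varepsilon^3 r^{2-3n}$ term gives exactly $r_\varepsilon^{2-\nu}$ at $r=r_\varepsilon$, not ``$r_\varepsilon^{8/3-\nu}$ or better''. This is harmless, since the $\Pi_{\varepsilon,\mathcal{A}}$ term is already that large. Second, in Step~2 one has $r^{2-\nu}|\operatorname{div}(a(\nabla w)^2)|\lesssim(|a|\,r^{\nu-1}+|\nabla a|\,r^{\nu})\|w\|_{2,\alpha,\nu}^2$. With merely bounded $a$ this gives only $r_\varepsilon^{3-\nu}$, which does not fit in $\mathcal{B}$. What makes it work is that the quadratic coefficient vanishes with the base gradient, $|a|\lesssim r_\varepsilon+r+\varepsilon r^{1-n}$ (the $r_\varepsilon$ prefactor in \eqref{eqn:6.5}). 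That gives $\varepsilon r_\varepsilon^{4-n-\nu}\sim r_\varepsilon^{10/3-\nu}$.
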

\begin{proof}
We first prove that
\EQ{
\left\|\mathcal{M}_{\varepsilon, \mathcal{A}}(0)\right\|_{2, \alpha, \nu} \leq \frac{c_{\kappa}}{2} r_{\varepsilon}^{10 / 3-\nu}
}
for some constant $c_{\kappa}>0$. By definition of $\calM_{\veps,\calA}$ it suffices to estimate $\|\Gamma_{\veps,\calA}(\calQ_{\veps,\calA}(\ti w))\|_{2,\alp,\nu}$ and by Proposition \ref{prop:7.1}, this reduces to estimating
\EQ{
\|\Gamma_{\veps,\calA}(\calQ_{\veps,\calA}(\ti w))\|_{2,\alp,\nu} \leq c \|\calQ_{\veps,\calA}(\ti w)\|_{0,\alp,\nu-2}
}
for some constant $c>0.$ Now using \eqref{eqn:6.5} with $w$ replaced by $w_{\varepsilon, \mathcal{A}}+w$ we can write
\EQ{
\mathcal{Q}_{\varepsilon, \mathcal{A}}(w) \equiv\left(r_{\veps}+\varepsilon r_{\veps} r^{-n}\right) Q_{\varepsilon, \mathcal{A}}^{\prime}(\nabla w)+Q_{\varepsilon, \mathcal{A}}^{\prime \prime}(\nabla w),
}
where $q \rightarrow Q_{\varepsilon, \mathcal{A}}^{\prime}(q)$ is homogeneous of degree 2 and $q \rightarrow Q_{\varepsilon, \mathcal{A}}^{\prime \prime}(q)$ satisfies
\EQ{
Q_{\varepsilon, \mathcal{A}}^{\prime \prime}(0)=0 \quad \nabla_{q} Q_{\varepsilon, \mathcal{A}}^{\prime \prime}(0)=0 \quad \text { and } \quad \nabla_{q q}^{2} Q_{\varepsilon, \mathcal{A}}^{\prime \prime}(0)=0
}
and the power of $r^{-n}$ comes from Proposition \ref{prop:6.3}. Regarding $\ti w$, we can use \eqref{eqn:7.2} to get
\EQ{\label{eqn:8.2}
\left\|\Pi_{\varepsilon, \mathcal{A}}\left(h_{\mathrm{II}}-\pi_{\mathrm{II}}\left(\bar{w}_{\varepsilon, \mathcal{A}}\right)\right)\right\|_{2, \alpha, \nu} \leq c r_{\varepsilon}^{-\nu}\left\|h_{\mathrm{II}}-\pi_{\mathrm{II}}\left(\bar{w}_{\varepsilon, \mathcal{A}}\right)\right\|_{2, \alpha},
}
and
Proposition \ref{prop:6.3} to get
\EQ{\label{eqn:8.3}
\left\|\pi_{\mathrm{II}}\left(\bar{w}_{\varepsilon, \mathcal{A}}\right)\right\|_{2, \alpha} &\leq c r_{\varepsilon}^{2}
}
and Propositions \ref{prop:6.1} and \ref{prop:7.1} to get
\EQ{
\left\|\Gamma_{\varepsilon, \mathcal{A}}\left(H_{\varepsilon, \mathcal{A}}\right)\right\|_{2, \alpha, \nu} \leq c r_{\varepsilon}^{2-\nu}
}
for some constant $c>0$ which does not depend on $\kappa$, nor on $\mathcal{A}$, provided $\varepsilon$ is taken small enough. Note that the restriction on the parameter $\nu$ when $n=3$ comes from the above estimate since the upper bound of $r_\veps^{2-\nu}$ only holds when $\nu\in (-8/3,-2)$ when $n=3$. Thus, combining the previous displays, we get that
\EQ{
\|\ti w\|_{2,\alp,\nu}\leq c_{\kappa} r_\veps^{2-\nu},
}
for a constant $c_{\kappa}>0$ that depends on $\kappa>0.$ Therefore, using that $\veps\lesssim  r_{\veps}^{(3n-2)/3}$ we get
\EQ{
\left\|\operatorname{div}\left(\left(r_{\veps}+\varepsilon r_{\veps} r^{-n}\right) Q_{\varepsilon, \mathcal{A}}^{\prime}(\tilde{w})\right)\right\|_{0, \alpha, \nu-2} \leq c'_{\kappa} \veps r_\veps r^{-n-1} r_{\veps}^{4-\nu} \leq \tilde{c}_{\kappa} r_{\veps}^{(3n-2)/3+1-n-1+4-\nu} \leq \tilde{c}_{\kappa} r_{\veps}^{10/3-\nu}
}
while
\EQ{
\left\|\operatorname{div}\left(Q_{\varepsilon, \mathcal{A}}^{\prime \prime}(\tilde{w})\right)\right\|_{0, \alpha, \nu-2} \leq \tilde{c}_{\kappa} r_{\varepsilon}^{4-\nu},
}
provided $\varepsilon$ is chosen small enough, say $\varepsilon \in\left(0, \varepsilon_{0}\right]$, where the constant $\tilde{c}_{\kappa}>0$ depends on $\kappa$. The existence of $c_{\kappa}$ follows from Proposition \ref{prop:7.1}. for some constant $c$ which is independent of $\kappa$, provided $\varepsilon$ is chosen small enough. This shows that $0\in \calB.$ To finish the argument, we also need to show that if $v_1,v_2\in \calB$ then
\EQ{
\left\|\mathcal{M}_{\varepsilon, \mathcal{A}}\left(v_{2}\right)-\mathcal{M}_{\varepsilon, \mathcal{A}}\left(v_{1}\right)\right\|_{2, \alpha, \nu} \leq \frac{1}{2}\left\|v_{2}-v_{1}\right\|_{2, \alpha, \nu},
}
provided $v_{1}$ and $v_{2}$ belong to $B$. The argument for this follows a similar strategy, and the constant $1/2$ is obtained by choosing $\veps_0>0$ small enough.
\end{proof}
Thus, we have obtained a minimal hypersurface close to $\Sigma_{\veps,\calA}$, with two boundaries, one of which (up to rigid motions) is the boundary of $\Sigma_0$ (called the \textit{outer} boundary), while the other one will be called as the \textit{inner} boundary. 
\newline
Translating this minimal hypersurface along the $x_{n+1}$ axis by an amount $-\varepsilon r_{\varepsilon}^{2-n} /(n-2)$ gives us a new hypersurface denoted by $\Sigma_{\varepsilon}\left(\mathcal{A}, h_{\mathrm{II}}\right)$. We can parametrize this surface as 
\EQ{
\overline{\Omega_{\mathcal{A}}} \backslash B_{r_{\varepsilon}} \ni x \longrightarrow\left(x, V_{\varepsilon, \mathcal{A}, h_{\mathrm{II}}}(x)\right) \in \Sigma_{\varepsilon}\left(\mathcal{A}, h_{\mathrm{II}}\right) .
}
\begin{defn}
We define the second Cauchy data map as
\EQ{
\mathcal{T}_{\varepsilon}\left(\mathcal{A}, h_{\mathrm{II}}\right)(\theta) \equiv\left(V_{\varepsilon, \mathcal{A}, h_{\mathrm{II}}}\left(r_{\varepsilon} \theta\right), r_{\varepsilon} \partial_{r} V_{\varepsilon, \mathcal{A}, h_{\mathrm{II}}}\left(r_{\varepsilon} \theta\right)\right).
}  
To define the domain of the map $\calT_{\veps}$, we first denote $\calF$ as the collection of tuples $(\calA,w)$ where
\EQ{
\calF \ni (\calA,w) \in \mathbb{R}^{n} \times \mathbb{R}^{n} \times \mathbb{R} \times \mathbb{R} \times \pi_{\mathrm{II}}\left(\mathcal{C}^{2, \alpha}\left(\bbS^{n-1}\right)\right)
}
with the natural norm
\EQ{
\|(\mathcal{A}, w)\|_{\mathcal{F}} \equiv\|\mathcal{A}\|+\|w\|_{2, \alpha} .
}
The domain of $\mathcal{T}_{\varepsilon}$ is then simply a subset of $\mathcal{F}$.
\end{defn}
Using Proposition \ref{prop:6.3} and the definition of $w$ as a solution of \eqref{eqn:8.1} we can explicitly write $\calT_{\veps}(\calA,h_{\mathrm{II}})$ as 
\EQ{
\mathcal{T}_{\varepsilon}\left(\mathcal{A}, h_{\mathrm{II}}\right)= & \left(\left(w_{\varepsilon, \mathcal{A}}^{0}+\bar{w}_{\varepsilon, \mathcal{A}}+w\right)\left(r_{\varepsilon} \cdot\right),-\varepsilon r_{\varepsilon}^{2-n}\right. \\
& \left.+r_{\varepsilon} \partial_{r}\left(w_{\varepsilon, \mathcal{A}}^{0}+\bar{w}_{\varepsilon, \mathcal{A}}+w\right)\left(r_{\varepsilon} \cdot\right)\right), 
}
where
\EQ{
w_{\varepsilon, \mathcal{A}}^{0}(x) \equiv \frac{e}{n-2} r^{2-n}+d+R \cdot x+\varepsilon r^{-n} T \cdot x.
}
Then we define 
\EQ{
& \mathcal{T}_{0}\left(\mathcal{A}, h_{\mathrm{II}}\right):=\left(w_{\varepsilon, \mathcal{A}}^{0}\left(r_{\varepsilon} \cdot\right)+h_{\mathrm{II}},-\varepsilon r_{\varepsilon}^{2-n}+r_{\varepsilon} \partial_{r} w_{\varepsilon, \mathcal{A}}^{0}\left(r_{\varepsilon} \cdot\right)-\frac{n-2}{2} h_{\mathrm{II}}-D_{\theta} h_{\mathrm{II}}\right).
}
\begin{prop}\label{prop:8.2}
The mappings $\mathcal{T}_{\varepsilon}$ and $\mathcal{T}_{0}$ are continuous. Furthermore, there exists $c>0$ and, for all $\kappa>0$, there exists $\varepsilon_{0}>0$ such that, for all $\varepsilon \in\left(0, \varepsilon_{0}\right]$, we have the estimate
\EQ{
\left\|\left(\mathcal{T}_{\varepsilon}-\mathcal{T}_{0}\right)\left(\mathcal{A}, h_{\mathrm{II}}\right)\right\|_{\mathcal{C}^{2, \alpha} \times \mathcal{C}^{1, \alpha}} \leq c r_{\varepsilon}^{2}
}
Again, it is important in the last Proposition that the constant $c>0$ does not depend on $\kappa$.
\end{prop}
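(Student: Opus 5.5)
Continuity of $\mathcal{T}_0$ is immediate, since it is affine in $(\mathcal{A},h_{\mathrm{II}})$. Continuity of $\mathcal{T}_\veps$ follows because every ingredient depends continuously on $(\mathcal{A},h_{\mathrm{II}})$: the functions $w_{\veps,\mathcal{A}}$ and $\bar w_{\veps,\mathcal{A}}$ from Propositions \ref{prop:6.2} and \ref{prop:6.3}, the operators $\Gamma_{\veps,\mathcal{A}}$ and $\Pi_{\veps,\mathcal{A}}$ from Proposition \ref{prop:7.1} and \eqref{defn:pi_eps}, and the fixed point of the contraction $\mathcal{M}_{\veps,\mathcal{A}}$. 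A fixed point of a uniform contraction depending continuously on parameters depends continuously on them.

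For the estimate, subtract the explicit formulas. The $w^0_{\veps,\mathcal{A}}$ terms and the $-\veps r_\veps^{2-n}$ term cancel exactly. What remains is
\[
(\mathcal{T}_\veps-\mathcal{T}_0)(\mathcal{A},h_{\mathrm{II}})=\Big(\big(\bar w_{\veps,\mathcal{A}}+w\big)(r_\veps\cdot)-h_{\mathrm{II}},\; r_\veps\partial_r\big(\bar w_{\veps,\mathcal{A}}+w\big)(r_\veps\cdot)+\tfrac{n-2}{2}h_{\mathrm{II}}+D_\theta h_{\mathrm{II}}\Big).
\]
\begin{enumerate}
\item \textbf{The $\bar w_{\veps,\mathcal{A}}$ terms.} By Proposition \ref{prop:6.3}, $|\nabla^k \bar w_{\veps,\mathcal{A}}|\le c r_\veps^{2-k}$ for $k\le 3$ on $r\sim r_\veps$, with $c$ independent of $\kappa$. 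After rescaling to $\bbS^{n-1}$, both $\bar w_{\veps,\mathcal{A}}(r_\veps\cdot)$ and $r_\veps\partial_r\bar w_{\veps,\mathcal{A}}(r_\veps\cdot)$ are $O(r_\veps^2)$ in $\mathcal{C}^{2,\alpha}$ and $\mathcal{C}^{1,\alpha}$ respectively.

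\item \textbf{Splitting $w$.} Write $w=\tilde w+v$, where
\[
\tilde w=\Pi_{\veps,\mathcal{A}}\big(h_{\mathrm{II}}-\pi_{\mathrm{II}}\bar w_{\veps,\mathcal{A}}(r_\veps\cdot)\big)-\Gamma_{\veps,\mathcal{A}}(H_{\veps,\mathcal{A}})
\]
and $v$ is the fixed point of $\mathcal{M}_{\veps,\mathcal{A}}$.
\begin{itemize}
\item[(a)] \emph{The $\Pi_{\veps,\mathcal{A}}$ piece.} On $\partial B_{r_\veps}$ its $\pi_{\mathrm{II}}$-part equals $h_{\mathrm{II}}-\pi_{\mathrm{II}}\bar w_{\veps,\mathcal{A}}$, so its Dirichlet data reproduce $h_{\mathrm{II}}$ up to an $O(r_\veps^2)$ error. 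Its lower modes come only from the $\Gamma_{\veps,\mathcal{A}}$ correction built into $\Pi_{\veps,\mathcal{A}}$, and these are bounded via Proposition \ref{prop:7.1} and $\|w_0\|_{2,\alpha,-n}\le cr_\veps^n\|h\|$. For the Neumann data, Proposition \ref{prop:7.2} gives an error $c_\kappa(r_\veps^{n+\nu}+r_\veps^{2/3})\|h_{\mathrm{II}}-\pi_{\mathrm{II}}\bar w\|_{2,\alpha}\le c_\kappa(r_\veps^{n+\nu}+r_\veps^{2/3})(\kappa+c)r_\veps^2$. Since $n+\nu>0$, for $\veps$ small (depending on $\kappa$) this is $\le r_\veps^2$.
\item[(b)] \emph{The $\Gamma_{\veps,\mathcal{A}}(H_{\veps,\mathcal{A}})$ piece and $v$.} The weighted norm controls traces at radius $r_\veps$. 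For $f\in\mathcal{C}^{2,\alpha}_\nu$, the function $f(r_\veps\cdot)$ and $r_\veps\partial_r f(r_\veps\cdot)$ are bounded by $r_\veps^\nu\|f\|_{2,\alpha,\nu}$. Hence
\[
\|\Gamma_{\veps,\mathcal{A}}(H_{\veps,\mathcal{A}})\|_{2,\alpha,\nu}\le cr_\veps^{2-\nu}
\]
contributes $cr_\veps^2$ with $c$ independent of $\kappa$, and $v\in\mathcal{B}$ contributes $c_\kappa r_\veps^{10/3}\le r_\veps^2$ for $\veps$ small.
\end{itemize}
\end{enumerate}

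The main obstacle is keeping the constant $c$ independent of $\kappa$. Every $\kappa$-dependent contribution must carry an extra positive power of $r_\veps$, namely $r_\veps^{n+\nu}$, $r_\veps^{2/3}$, or $r_\veps^{4/3}$, so that it can be absorbed by shrinking $\veps_0(\kappa)$. The delicate points are:
\begin{itemize}
\item checking that the lower-mode ($\pi_{\mathrm{I}}$) part of $\Pi_{\veps,\mathcal{A}}(h)$ at $\partial B_{r_\veps}$ indeed gains such a power;
\item checking that the tilt $|\nabla u(0)|\le r_\veps$ allowed by (A.2), which enters through $\bar w_{\veps,\mathcal{A}}$ and the $r_\veps Q'_u$ terms, does not produce a $\kappa$-dependent $r_\veps^2$ contribution.
\end{itemize}
I would verify the first by writing $\Pi_{\veps,\mathcal{A}}(h)-w_0=-\Gamma_{\veps,\mathcal{A}}\big((\Lambda_{\veps,\mathcal{A}}-\Delta)w_0\big)$. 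I would verify the second by using the coefficient bound $r^{-k}(\veps r^{2-n}+r_\veps r+\veps^2r^{2-2n})$, which gives a factor of order $r_\veps^{2/3}$ at $r\sim r_\veps$, and tracking it through the scaling.
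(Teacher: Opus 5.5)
Your proposal is correct and takes the same route the paper intends. The paper's proof only defers to the analogous catenoid argument: subtract the explicit expansion from Proposition \ref{prop:6.3}, split $w=\tilde w+v$, and bound each piece using Propositions \ref{prop:7.1}, \ref{prop:7.2} and the contraction ball. You go further than the paper in explaining why $c$ is independent of $\kappa$: only the $\Pi_{\veps,\mathcal{A}}$ piece and $v$ carry $\kappa$, and they gain $r_\veps^{n+\nu}+r_\veps^{2/3}$ and $r_\veps^{4/3}$ respectively. One small point to record: replacing $h_{\mathrm{II}}-\pi_{\mathrm{II}}\bar w_{\veps,\mathcal{A}}$ by $h_{\mathrm{II}}$ in the Neumann term costs $\frac{n-2}{2}\pi_{\mathrm{II}}\bar w_{\veps,\mathcal{A}}+D_\theta\pi_{\mathrm{II}}\bar w_{\veps,\mathcal{A}}=O(r_\veps^2)$, which again has a $\kappa$-independent constant.
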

\begin{proof}
The proof of this Proposition is the same as the proof of Proposition \ref{prop:5.1}.
\end{proof}

\subsection{Gluing construction}
In this subsection, we use the results established earlier to complete the gluing procedure. The construction of our minimal hypersurface will proceed inductively. Thus, suppose that we start with an orientable, embedded, minimal hypersurface $\Sigma^n \subset \R^{n+1}$ with $k$ planar ends denoted by $E_1,\ldots, E_k$ that are parallel to the $\{x_{n+1}=0\}$ plane. Each end $E_i$ can be written as a normal graph over an appropriately scaled half-catenoid as follows:
\EQ{\label{eqn:sigma-graph}
\left[S_{i},+\infty\right) \times \bbS^{n-1} \ni(s, \theta) \longrightarrow a_{i} X_{0}(s, \theta)+w_{i}(s, \theta) \phi^{\frac{2-n}{2}}(s) N_{0}(s, \theta) \in E_{i},
}
where $a_{i} \in(0,+\infty)$ and where $w_{i} \in \mathcal{C}_{\delta}^{2, \alpha}\left(\left[S_{j},+\infty\right) \times S^{n-1}\right)$ for any $\delta \in \left(-\frac{2+n}{2},-\frac{n}{2}\right)$. Similar to Section \ref{sec:4}, we denote the $2(n+1)$ linearly independent Jacobi fields by
\EQ{
\Psi_{i}^{j, \pm} \quad \text { for } \quad j=0, \ldots, n \quad \text { and } \quad i=1, \ldots, k .
}
We can split $\Sigma$ into a compact piece $\Sigma^c$ and $\Sigma\setminus \Sigma^c= \cup_{i=1}^k E_i$, where for each end $\Sigma^c \cap E_i$ is diffeomorphic to $[0,1] \times S^{n-1}$. Given a point $p\in \R^{n+1}$, let $r_0>$ be small enough such that $\Sigma$ can be written as a graph over a ball $B_{4r_0}(p)$ with a heigh function $u_0.$ Define $\Sigma_{r_0}=\Sigma \setminus B_{r_0}(p)$ and let $\Sigma^c_r$ denote the graph of $u_0$ over $B_r$ for all $r\le 4r_0.$ \\
The idea will be construct a family of nearby minimal hypersurfaces $\Sigma_{r_0}(h)$ and $\Sigma^c_{r_0}(h)$ that are parameterized by the boundary value $h.$  Using the results in the previous subsections applied to $\Sigma^c_{r_0}(h)$, we can produce a family of minimal hypersurfaces $\Sigma^c_{r_0}(h,\calA, h_{\mathrm{II}})$. Then the goal will be to find boundary data $h,h_{\mathrm{II}}$ and parameters $\calA$ such that $\Sigma_{r_0}(h)$, $\Sigma^c_{r_0}(h,\calA, h_{\mathrm{II}})$ and the perturbed catenoid $C_{\veps}(h_{\mathrm{II}})$ agree at the boundaries.
\newline
Note that the resulting hypersurface will have $k+1$ planar ends, where the new end comes from gluing the catenoidal piece $C_{\veps}(h_{\mathrm{II}}).$ To guarantee that this end is parallel to the $\{x_{n+1}=0\}$ plane, we will chose the point $p\in \R^{n+1}$ far enough such that assumption (A.2) holds, i.e. upon translating \(p\) to the origin, that $|\nabla u(0)| < r_\veps$.

\subsubsection{Construction of $\Sigma_{r_0}(h)$ and $\Sigma^c_{r_0}(h)$}
In order to construct a family of nearby minimal hypersurfaces $\Sigma_{r_0}(h)$ and $\Sigma^c_{r_0}(h)$ given some boundary data $h$, we will apply the inverse function theorem to suitable operators on appropriate function spaces. While on compact domains this can be done as in the previous sections, on non-compact domains one needs a suitable modification. To this end, we define the following function space:
\begin{defn}\label{defn:9.1}
The function space $\mathcal{E}_{\mu}^{k, \alpha}(\Sigma)$ is defined to be the space of all functions $w \in \mathcal{C}^{k, \alpha}(\Sigma)$ for which the following norm is finite
\EQ{
|w|_{k, \alpha, \delta} \equiv \sum_{i=1}^{k}\left\|w_{|E_i}\right\|_{k, \alpha, \delta}+\left\|w_{|\Sigma^c}\right\|_{k, \alpha, \Sigma^c} .
}
where $\|\cdot \|_{k, \alpha, \delta}$ is the norm defined in Definition \ref{defn:4.1}. Notice that we have identified the function $w$ on $E_{i}$ with a function on $\left[S_{i},+\infty\right) \times S^{n-1}$ via the graph representation of $E_{i}$.    
\end{defn}
Let $\mathcal{L}_\Sigma=\phi^{\frac{2-n}{2}}\mathcal{L}_{\Sigma,0} \phi^{\frac{2-n}{2}}$ denote the conjugate of the linearized mean curvature operator with $\phi$ as in \eqref{eqn:sigma-graph}. Note that while $\phi$ is defined on each end $E_i$, it can be extended into a global smooth function $\phi>0$ on $\Sigma.$ At the core of this section, we will be interested in understanding the invertibility of the operator $\calL_{\Sig}.$ 
For $r_0>0$ small enough, define a modified transverse normal vector field on $\Sigma$ as 
\EQ{
N'(x) := \left\{\begin{array}{cc}
     (0,\ldots, 1) &  \text{in }\Sigma^c_{2r_0}\\
     N_0(x)& \text{ on } \Sigma_{4r_0} 
\end{array}\right.
}
where $N_0$ is the normal vector field of $\Sigma$ and denote $\mathcal{L}'_{\Sigma}$ as the linearized mean curvature operator obtained by perturbing along $N'.$ Then observe that
\EQ{
\calL_{\Sigma}':= \left\{\begin{array}{cc}
    \Lam_{u_0} & \text{ in } \Sigma_{2r_0}^c\\ 
    \calL_{\Sig} & \text{ in }\Sigma_{4r_0} 
\end{array} \right.
}
The following key Lemma will allow us to find $\Sigma_{r_0}(h)$ and $\Sigma^c_{r_0}(h)$.
\begin{lem}\label{lem:invertability-of-L_M}
Assume that $\Sigma$ is non-degenerate, in the sense that, 
\EQ{
\mathcal{L}_{\Sigma}: \mathcal{E}_{\delta}^{2, \alpha}(\Sigma) \longrightarrow \mathcal{E}_{\delta}^{0, \alpha}(\Sigma),
}
is injective for all $\delta\in (-\infty,-\frac{n}{2}).$ Define the deficiency space as 
\EQ{
    \mathcal{K} := \oplus_{i=1, \ldots, k} \operatorname{Span}\left\{\lambda\left(\cdot-S_{i}\right) \Psi_{i}^{j, \pm}: j=0, \ldots, n\right\}.
}
There exists $r_0 \in (0,1)$ small enough such that the following holds. 
\begin{enumerate}
    \item For any fixed $\delta \in\left(-\frac{2+n}{2},-\frac{n}{2}\right)$, the operator
    \EQ{
    \mathcal{L}_{\Sigma}^{'}: \mathcal{E}_{\delta}^{2, \alpha}(\Sigma) \oplus \mathcal{K}_{1}\to \mathcal{E}_{\delta}^{0, \alpha}(\Sigma)
    }
    is an isomorphism for some $k(n+1)$ dimensional subspace $\mathcal{K}_1\subset \mathcal{K}$ such that $\calK=\calK_0\oplus \calK_1$, and $\calK_0$ is a $k(n+1)$ dimensional subspace such that $\operatorname{ker}({\mathcal{L}_{\Sigma}}_{|\calE^{2,\alp}_{-\delta}(\Sigma)})= \calK_0 \oplus \calE^{2,\alp}_{\delta}(\Sigma)$.
    \item For any fixed $\delta \in (-\frac{2+n}{2},-\frac{n}{2})$, the operator 
    \EQ{
    \mathcal{L}_{\Sigma}^{'}:[\mathcal{E}_{\delta}^{2, \alpha}(\overline{\Sigma_{r_{0}}}) \oplus \mathcal{K}_{1}]^{'}\to \mathcal{E}_{\delta}^{0, \alpha}
    }
    is an isomorphism where the domain is defined as 
    \EQ{
    [\mathcal{E}_{\delta}^{2, \alpha}\left(\overline{\Sigma_{r_{0}}}\right) \oplus \mathcal{K}_{1}]^{'}:= \{w \in \mathcal{E}_{\delta}^{2, \alpha}(\overline{\Sigma_{r_{0}}}) \oplus \mathcal{K}_{1}: w=0 \text { on }  \partial \Sigma_{r_{0}}\} .
    }
    \item The operator
    \EQ{
    \mathcal{L}_{\Sigma}^{'}=\Lambda_{u_{0}}:[\mathcal{C}^{2, \alpha}(\overline{B_{r_{0}}})]'\to \mathcal{C}^{0, \alpha}(\overline{B_{r_{0}}})
    } is an isomorphism.
    \item For some fixed $\nu \in(-n, 1-n)$ and any $0<r<r_{0} / 2$, there exists a mapping 
    \EQ{
    \Gamma_{u_{0}, r}:\mathcal{C}_{\nu-2}^{0, \alpha}(\overline{B_{r_{0}}} \backslash B_{r})\to [\mathcal{C}_{\nu}^{2, \alpha}(\overline{B_{r_{0}}} \backslash B_{r})]^{'}
    }
    such that $\Lambda_{u_{0}} \circ \Gamma_{u_{0}, r}=\operatorname{Id}$, with operator norm of $\Gamma_{u_{0}, r}$ bounded independently of $r$ for $r<r_0/2$ small enough.
\end{enumerate}
\end{lem}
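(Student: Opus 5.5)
The plan is to follow the weighted-space Fredholm theory for asymptotically catenoidal ends, in the form used by Fakhi--Pacard and their references \cite{cmc-surface, yamabe-metric}. Each of the four statements is obtained either from the linear analysis on a single end or from a perturbation argument in $r_0$.

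\textbf{Fredholm theory and statement (1).} On each end $E_i$ the conjugated operator $\calL_\Sigma$ is, in the coordinates of \eqref{eqn:sigma-graph}, a decaying perturbation of $\calL$. Hence it is asymptotic to $\Delta_0=\partial_{ss}+\Delta_{S^{n-1}}-(\frac{n-2}{2})^2$. The indicial roots are $\pm\gamma_j$ with $\gamma_j=\sqrt{(\frac{n-2}{2})^2+\lambda_j}$. For $j=0$ one gets $\pm\frac{n-2}{2}$, and for $1\le j\le n$ one gets $\pm\frac n2$. No indicial root lies in $(-\frac{n+2}{2},-\frac n2)$, since the next one is $\frac{n+2}{2}$. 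So $\calL_\Sigma:\calE^{2,\alp}_\delta\to\calE^{0,\alp}_\delta$ is Fredholm for every such $\delta$. Its index is independent of $\delta$ in this interval, and it is related to the index at weight $-\delta$ by duality, using that $\calL_\Sigma$ is formally self-adjoint in the conjugated $L^2$ pairing.
\begin{enumerate}
\item Non-degeneracy gives injectivity at weight $\delta$. Duality then shows that $\calL_\Sigma$ is surjective at weight $-\delta$.
\item The linear decomposition lemma says that any element of $\ker(\calL_\Sigma|_{\calE_{-\delta}})$ equals an element of $\calK$ plus an element of $\calE_\delta$. The relative index theorem says that crossing the $2(n+1)$ indicial roots per end changes the index by $2k(n+1)$.
\item Together these give $\dim\ker(\calL_\Sigma|_{\calE_{-\delta}})=k(n+1)$, which I define to be $\calK_0$ (identified with its asymptotic parts in $\calK$).
\item I choose $\calK_1$ as a complement of $\calK_0$ in $\calK$, of dimension $k(n+1)$.
\item A Green's-formula (boundary-pairing) argument shows that $\calE_\delta\oplus\calK_1\to\calE_\delta^{0,\alp}$ is injective: an element of the kernel would have trivial pairing and hence lie in $\calK_0\cap\calK_1=\{0\}$.
\item The index count $-k(n+1)+k(n+1)=0$ then gives an isomorphism.
\end{enumerate}
Finally, replacing $\calL_\Sigma$ by $\calL'_\Sigma$ changes only a compactly supported region near $p$, where one is the normal and the other the vertical linearization. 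These are conjugate by a multiplication by $N_0\cdot N'$, which is bounded below, so the kernel and index are unchanged.

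\textbf{Statements (2) and (3).} For (3), when $r_0$ is small, $\Lambda_{u_0}$ on $B_{r_0}$ with Dirichlet conditions is a small perturbation of the Laplacian by (A.3). Its first Dirichlet eigenvalue is of order $r_0^{-2}$, so Schauder theory gives the isomorphism. For (2), I argue by contradiction with $r_0\to0$.
\begin{enumerate}
\item Suppose $w_m$ has norm $1$, vanishes on $\partial\Sigma_{r_m}$, and satisfies $\calL'_\Sigma w_m\to 0$.
\item After normalizing and extracting a limit, one obtains a solution on $\Sigma\setminus\{p\}$ in $\calE_\delta\oplus\calK_1$.
\item Since $n\ge3$, a point has zero capacity, so the singularity at $p$ is removable for bounded solutions.
\item Statement (1) then forces the limit to vanish.
\item Blow-up at scale $r_m$ near $p$ produces a bounded harmonic function on $\R^n\setminus B_1$ vanishing on $\partial B_1$ and decaying. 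By the maximum principle it is zero, which contradicts the normalization.
\end{enumerate}
Surjectivity then follows from a parametrix built from (1) and (3) with cutoffs.

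\textbf{Statement (4) and the main obstacle.} This is the hardest part: it needs uniformity in $r$ on annuli $B_{r_0}\setminus B_r$ with the mixed condition $\pi_{\mathrm{II}}(w)=0$ on $\partial B_r$.
\begin{enumerate}
\item First solve for $\Delta$ explicitly by separating variables in eigenmodes. For $j\ge n+1$ the decaying-from-inside mode $r^{2-n-j}$-type behaves well in the weight $\nu\in(-n,1-n)$, since the exponent lies between $1-n$ and $-n$. The low modes $j\le n$ are left free, so no compatibility obstruction appears.
\item This gives $\|w\|_{2,\alp,\nu}\le c\|f\|_{0,\alp,\nu-2}$ with $c$ independent of $r$. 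It is obtained from rescaled Schauder estimates on dyadic annuli together with a weighted maximum principle mode by mode.
\item Then treat $\Lambda_{u_0}=\div(\nabla+\Lambda'_{u_0})$ as a perturbation. By Lemma \ref{lem:6.1}, the coefficients of $\Lambda'_{u_0}$ are $O(r^2)$ in the weighted norms, hence small for $r_0$ small.
\item A Neumann series then yields $\Gamma_{u_0,r}$ with norm bounded independently of $r$.
\end{enumerate}
The delicate point is checking, in step 2, that the weighted estimates for the low modes hold without loss as $r\to0$.
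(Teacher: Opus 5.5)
\textbf{The gap is in your argument for (2).} As you set it up ($\|w_m\|=1$, $w_m=0$ on $\partial\Sigma_{r_m}$, $\calL'_\Sigma w_m\to 0$), a contradiction would give a bound on the inverse that is uniform as $r_0\to 0$. No such bound holds in these norms.

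Near $p$ the $\calE^{2,\alpha}_\delta$ norm is just the unweighted $C^{2,\alpha}$ norm. Fix $f$, and suppose $w=(\calL'_\Sigma)^{-1}f$ on all of $\Sigma$ has $w(p)\neq 0$. The Dirichlet solution on $\Sigma_{r_0}$ is then, near $p$, approximately $w-w(p)\,(r_0/|x-p|)^{n-2}$, and its Hessian on $\partial B_{r_0}(p)$ has size $r_0^{-2}$. Normalizing these solutions produces a sequence satisfying all your hypotheses. Its nontrivial blow-up at scale $r_m$ is $c\,(1-|y|^{2-n})$: bounded, harmonic on $\R^n\setminus B_1$, zero on $\partial B_1$, but \emph{not} decaying.

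So ``decaying'' is exactly the step that fails. Nothing in your sketch connects the value at infinity of the inner limit with the vanishing of the outer limit. Nor does it exclude concentration at intermediate scales $r_m\ll |x-p|\ll 1$, where blow-ups are nonzero constants.

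\textbf{How to repair it.} Statement (2) only asks for an isomorphism at one fixed small $r_0$, so work with exact kernel elements $\calL'_\Sigma w_m=0$, normalized in a weighted sup norm. Your conjugation by $N_0\cdot N'$ lets you treat them as solutions of one fixed operator, even though $N'$ depends on $r_0$. Then:
\begin{enumerate}
\item The outer limit vanishes by your removable-singularity argument together with (1).
\item Escape along an end is excluded by the usual indicial-root blow-up.
\item The missing ingredient is the maximum principle on $B_\rho(p)\setminus B_{r_m}(p)$. Use it for $\Lambda_{u_0}$, which has no zeroth-order term, or for the Jacobi operator after dividing by the positive Jacobi field given by the vertical component of $N_0$. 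It gives $\sup_{B_\rho(p)\setminus B_{r_m}(p)}|w_m|\le C\sup_{\partial B_\rho(p)}|w_m|\to 0$, which forces $c=0$ and rules out all scales at once.
\end{enumerate}
This gives injectivity. Surjectivity then follows because the duality and relative-index count you use in (1) applies verbatim to the Dirichlet problem on $\Sigma_{r_0}$ and again gives index $0$. Your cutoff parametrix from (1) and (3) does not obviously work, since (3) lives on the excised ball. The paper only says that (1)--(3) follow from ``a standard perturbation argument'', so this is exactly where your write-up must supply correct details.

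\textbf{The other items are sound.}

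For (1) your route differs from the paper's. The paper gets surjectivity of $\calE^{2,\alpha}_\delta\oplus\calK\to\calE^{0,\alpha}_\delta$ from duality plus the linear decomposition lemma for the \emph{inhomogeneous} problem. It then removes the kernel by identifying it with its projection $\calK_0\subset\calK$. You instead apply the decomposition only to kernel elements and obtain surjectivity from an index count. Your route uses more Fredholm machinery, but it actually proves $\dim\calK_0=\dim\calK_1=k(n+1)$, which the statement asserts and the paper never derives. Injectivity needs no boundary pairing: a kernel element of $\calE^{2,\alpha}_\delta\oplus\calK_1$ lies in $\ker(\calL_\Sigma|_{\calE_{-\delta}})$, so its $\calK$-part lies in $\calK_0\cap\calK_1=\{0\}$.

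Your observation that $\calL'_\Sigma$ is $\calL_\Sigma$ composed with multiplication by $N_0\cdot N'$ is a cleaner justification than the paper's perturbation argument. That function equals $1$ off a compact set, hence is an automorphism of $\calE^{2,\alpha}_\delta\oplus\calK_1$.

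Item (4) is the paper's argument. The low-mode issue you flag resolves because $\nu<1-n$: integrating the radial ODE for $j\le n$ inward from $\partial B_{r_0}$ gives $|w_j(\rho)|\le C\rho^{\nu}$ uniformly in $r$. For the high modes, the relevant inequality is $2-n-\ell\le\nu$ for degrees $\ell\ge 2$, not that the exponent lies in $(-n,1-n)$.
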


\begin{proof}
Since $\calL_{\Sigma}$ is injective, by duality we deduce that $\calL_{\Sigma}:\calE^{2,\alp}_{-\delta}(\Sigma)\to \calE^{0,\alp}_{-\delta}(\Sigma)$ is surjective. Next, by arguing in the same way as in the proof of of the Linear Decomposition Lemma in \cite{yamabe-metric,cmc-surface} we can first show that given any $f\in \calE^{0,\alp}_{\delta}$, there exists a solution $w\in \calE_{-\delta}^{2,\alp}$ such that $\calL_{\Sigma}w=f$ and more importantly, decomposes as $w=v+\phi$, where $v\in \calE^{2,\alp}_{\delta}$ and $\phi \in \calK$, where $\calK$ is defined in the statement of the above Lemma. Thus, $w$ can be decomposed into a piece that has the same decay as $f$. This implies that,
\EQ{\label{eqn:mapping}
\calL_\Sigma:\calE^{2,\alp}_{\delta}(\Sigma) \oplus \calK \to \calE^{0,\alp}_{\delta}(\Sigma)
}
is surjective with a bounded kernel 
\EQ{
\calB:=\{u\in \calE^{2,\alp}_{\delta}(\Sigma)\oplus \calK:\calL_{\Sigma} u =0\}.
}
To make this into an isomorphism, we remove the kernel in the above mapping \eqref{eqn:mapping}. Now consider the projection map $\Pi:\calB\to \calK$. Note that since $\calL_\Sigma$ is injective on $\calE_{\delta}^{2,\alp}(\Sigma)$, this projection map is also injective since if $u,v\in \calB$ and $\Pi(u)=\Pi(v)$ then $\calL_{\Sigma}(u-v)=0$ and $(u-v)\in \calE^{2,\alp}_{\delta}(\Sigma)$. Since $\calL_\Sigma$ is injective on $\calE^{2,\alp}_\delta(\Sigma)$ we deduce that $u=v.$ Therefore the map $\Pi$ is also injective and consequently we will identify $u\in \calB$ with its projection onto $\calK.$ If we denote the image $\calK_0=\Pi(\calB)$ and the orthogonal space $\calK_1$ such that $\calK=\calK_0\oplus \calK_1$, then the map
\EQ{\
\calL_\Sigma:\calE^{2,\alp}_{\delta}(\Sigma) \oplus \calK_1 \to \calE^{0,\alp}_{\delta}(\Sigma)
}
is an isomorphism since,
\begin{itemize}
    \item if $\calL_\Sigma u= 0$ for $u=v+\phi_1$ where $v\in \calE^{2,\alp}_\delta$ and $\phi_1\in \calK_1$, then $v+\phi_1\in \calB$. However, $\Pi(\phi_1)=0$ and therefore we deduce that $\phi_1\equiv 0$. Then $u=0$ by injectivity of $\calL_\Sigma$ on $\calE^{2,\alp}_\delta(\Sigma).$ This proves that the map is injective.
    \item On the other hand, if $f\neq 0$, $f\in \calE^{0,\alp}_\delta(\Sigma)$, then there exists $w=v+\phi_0+\phi_1\in \calE^{2,\alp}_\delta\oplus \calK_0\oplus \calK_1$ such that $\calL_\Sigma w = f.$ Now, writing $\phi_0 = \Pi(u_0)$ for some $u_0\in \calB$, with $u_0=\phi_0 + \tilde{u}_0$, where $\phi_0\in \calK_0$ and $\tilde{u}_0\in \calE^{2,\alp}_\delta$ we deduce that $\tilde{w}=w-u_0$ satisfies 
    \EQ{
    \calL_\Sigma (\tilde{w}) = f-\calL_\Sigma u_0 = f.
    }
    Thus we have found a solution $\tilde{w}$ given $f$ such that
    \EQ{
    \tilde{w} = w-u_0  = v+\phi_0+\phi_1 - \phi_0 - \tilde{u}_0 = (v-\tilde{u}_0)+ \phi_1 \in \calE^{2,\alp}_{\delta} \oplus \calK_1,
    }
    which shows that the map is surjective.
\end{itemize}
Now the first three items follow from a standard perturbation argument for $r_0$ small enough. On the other hand, the proof of the fourth item follows from an application of Lemma \ref{lem:6.1}
\EQ{
\left\|\left(\Lambda_{u}-\Delta\right) w\right\|_{0, \alpha, \nu-2} \leq c r_{0}^{2}\|w\|_{2, \alpha, \nu}
}
and from the fact that given any $f \in \mathcal{C}_{\nu-2}^{0, \alpha}((\overline{B_{r_{0}}} \backslash B_{r}))$ the solution to the problem 
\EQ{
\left\{\begin{array}{lll}
\Delta w=f & \text { in } & B_{r_{0}} \backslash B_{r} \\
\pi_{\mathrm{II}} w=0 & \text { on } & \partial B_{r} \\
w=0 & \text { on } & \partial B_{r_{0}} .
\end{array}\right.
}
satisfies the estimate 
\EQ{
\left\|w\right\|_{2, \alpha, \nu} \leq c\|f\|_{0, \alpha, \nu-2}
}
for some constant $c>0$ independent of $r$. The proof of the above display is similar to the proof of Proposition \ref{prop:4.3}.
\end{proof}
\subsection{Matching the Cauchy data}
\subsubsection{Defining the Cauchy data for the outer boundary}
Recall Propositions \ref{prop:5.2} and \ref{prop:8.2}, where we defined the Cauchy data and simple Cauchy data mappings \(\calS_\veps, \calT_\veps, \calS_0\), and \(\calT_0\). These maps represented the Cauchy data for the boundary value problems at \(C_\veps(h_\mathrm{II})\) and the inner boundary of \(\Sigma^c_{r_0, \veps}(h_\mathrm{I}, \calA, h_\mathrm{II})\).

In order to glue \(\Sigma_{r_0}(h_\mathrm{I})\), \(\Sigma^c_{r_0, \veps}(h_\mathrm{I}, \calA, h_\mathrm{II})\), and \(C_\veps(h_\mathrm{II})\), we must also match the Cauchy data of \(\Sigma_{r_0}(h_\mathrm{I})\) with the outer boundary of \(\Sigma^c_{r_0, \veps}(h_\mathrm{I}, \calA, h_\mathrm{II})\). By construction, the surfaces \(\Sigma_{r_{0}}(h_\mathrm{I})\) and \(\Sigma^c_{r_{0}, \varepsilon}\left(h_\mathrm{I}, \mathcal{A}, h_{\mathrm{II}}\right)\) are graphs over the $x_{n+1}=0$ hyperplane near their boundary given in terms of the perturbation functions \(w\) we described earlier. Concretely we can say \(\Sigma_{r_0}(h_\mathrm{I})\) is the graph of
\begin{equation}
    x \in B_{2 r_{0}} \backslash B_{r_{0}} \longrightarrow\left(x, u_{0}(x)+w_{h_\mathrm{I}}(x)\right)
\end{equation}
and \(\Sigma^c_{r_{0}, \varepsilon}\left(h_\mathrm{I}, \mathcal{A}, h_{\mathrm{II}}\right)\) is the graph
\begin{equation}
    x \in B_{r_{0}} \backslash B_{r_{0} / 2} \longrightarrow\left(x, u_{0}(x)+\tilde{w}_{h_\mathrm{I}, \mathcal{A}, h_{\mathrm{II}}, \varepsilon}(x)\right).
\end{equation}
By construction, the 0th order boundary data are \(B_{r_0}\) matches for both surfaces (i.e., both are the graph of \(u_0 + h_\mathrm{I}\) over \(B_{r_0}\)), so the Cauchy data map will contain only first order data:
\begin{defn}
The Cauchy data for the outer boundary problem are
    \begin{equation}
        \mathcal{U}_{\varepsilon}\left(h_\mathrm{I}, \mathcal{A}, h_{\mathrm{II}}\right) = r_{0} \partial_{r}\left(w_{h_\mathrm{I}}\left(r_{0} \theta\right)-\tilde{w}_{h_\mathrm{I}, \mathcal{A}, h_{\mathrm{II}}, \varepsilon}\left(r_{0} \theta\right)\right) \in \mathcal{C}^{1, \alpha}\left(\partial B_{r_{0}}\right).
    \end{equation}
\end{defn}

Now, as we did before, we must produce a simplified Cauchy data map. We let \(\calL^*_\Sigma\) be the linearized mean curvature operator of \(\Sigma\) with respect to a locally vertical vector field, and we let \(\Lambda_{u_0}\) be the linearized mean curvature operator of \(N\) with respect to the vertical vector field.
\begin{defn}
    For \(h_\mathrm{I} \in C^{2, \alpha}(\partial B_{r_0})\), the simple Cauchy data map for the outer boundary problem is
    \begin{equation}
        \calU_0(h_\mathrm{I}) = r_0\partial_r\left(w_{h_\mathrm{I}}^0(r_0\theta) - \tilde{w}_{h_\mathrm{I}}^0(r_0\theta)\right) \in C^{1, \alpha}(\partial B_{r_0}),
    \end{equation}
    where
    \begin{equation}
        \begin{cases}\calL^*_\Sigma w_{h_\mathrm{I}}^0 = 0 & \text{in \(\Sigma_{r_0}\)},\\ w_{h_\mathrm{I}}^0 = h_\mathrm{I} & \text{on \(\partial B_{r_0}\),}\\ w_{h_\mathrm{I}}^0 \in \calE_\delta^{2,\alpha}(\Sigma) \oplus \calK_1\end{cases} \quad \text{and} \quad  \begin{cases}\Lambda_{u_0}\tilde{w}_{h_\mathrm{I}}^0 = 0 & \text{in \(B_{r_0}\)},\\ \tilde{w}_{h_\mathrm{I}}^0 = h_\mathrm{I} & \text{on \(\partial B_{r_0}\),}\\\tilde{w}_{h_\mathrm{I}}^0 \in C^{2, \alpha}(\overline{B_{r_0}})\end{cases}
    \end{equation}
\end{defn}
We now prove a proposition about the difference of \(\calU_\veps\) and \(\calU_0\) in the fashion of the previous Cauchy data mappings.
\begin{prop}\label{prop:bounding Uveps - U0}
    For \(h_\mathrm{I} \in C^{2, \alpha}(\partial B_{r_0})\), we have
    \begin{equation}
        \left\|\calU_\veps\left(h_\mathrm{I}, \mathcal{A}, h_{\mathrm{II}}\right)-\calU_0(h_\mathrm{I})\right\|_{\mathcal{C}^{1, \alpha}} \leq c\left(\|h_\mathrm{I}\|_{\mathcal{C}^{2, \alpha}}^{2}+r_{\varepsilon}^{n-2 / 3}\right).
    \end{equation}
    Where \(c\) does not depend on \(\veps\). Furthermore, \(\calU_0\) is an isomorphism from \(C^{2,\alpha}(\partial B_{r_0}) \mapsto C^{1, \alpha}(\partial B_{r_0})\).
\end{prop}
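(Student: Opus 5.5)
We split the statement into two parts: the isomorphism property of \(\calU_0\), and the estimate on \(\calU_\veps-\calU_0\).

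\emph{Isomorphism.} By Lemma \ref{lem:invertability-of-L_M} (items (2) and (3)), the functions \(w^0_{h_\mathrm{I}}\) and \(\tilde w^0_{h_\mathrm{I}}\) exist and are unique. Concretely, extend \(h_\mathrm{I}\) to a function \(H\) supported near \(\partial B_{r_0}\). We then solve \(\calL^*_\Sigma(w-H)=-\calL^*_\Sigma H\) with zero boundary data in \([\calE^{2,\alpha}_\delta(\overline{\Sigma_{r_0}})\oplus\calK_1]'\), and the analogous problem for \(\Lambda_{u_0}\) in \([C^{2,\alpha}(\overline{B_{r_0}})]'\). Both solutions depend linearly and boundedly on \(h_\mathrm{I}\), so \(\calU_0\) is a bounded linear map. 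It is the difference of the two Dirichlet-to-Neumann maps for the operator \(\calL'_\Sigma\) on the two sides of \(\partial B_{r_0}\).

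\emph{Injectivity.} Suppose \(\calU_0(h_\mathrm{I})=0\). Glue \(w^0_{h_\mathrm{I}}\) and \(\tilde w^0_{h_\mathrm{I}}\) into a single function \(W\) on \(\Sigma\). Its values and normal derivatives match across \(\partial B_{r_0}\), so by elliptic regularity \(W\) is a global solution of \(\calL'_\Sigma W=0\) with \(W\in \calE^{2,\alpha}_\delta(\Sigma)\oplus\calK_1\). Lemma \ref{lem:invertability-of-L_M}(1) then gives \(W=0\), hence \(h_\mathrm{I}=0\).

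\emph{Surjectivity.} Given \(g\in C^{1,\alpha}(\partial B_{r_0})\), consider the distribution \(g\,\delta_{\partial B_{r_0}}/r_0\) (a single-layer source). Solve \(\calL'_\Sigma W=g\,\delta_{\partial B_{r_0}}/r_0\). One way is to approximate the source by smooth sources and use the isomorphism of Lemma \ref{lem:invertability-of-L_M}(1) together with transmission-problem Schauder estimates. Another is Fredholm theory: \(\calU_0\) is an elliptic pseudodifferential operator of order \(1\), its index is zero by homotopy to the flat Laplacian case, and injectivity then implies surjectivity. Setting \(h_\mathrm{I}=W|_{\partial B_{r_0}}\) gives \(\calU_0(h_\mathrm{I})=g\). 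The estimate \(\|h_\mathrm{I}\|_{2,\alpha}\le c\|\calU_0 h_\mathrm{I}\|_{1,\alpha}\) follows from the open mapping theorem.

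\emph{The estimate.} Write \(w_{h_\mathrm{I}}=w^0_{h_\mathrm{I}}+v\), where \(v\) solves the nonlinear minimal surface problem on \(\Sigma_{r_0}\) with zero boundary data. Apply the contraction argument used for \(\calM_{\veps,\calA}\), now with the inverse from Lemma \ref{lem:invertability-of-L_M}(2). Since the nonlinearity is quadratic, this gives \(\|v\|_{\calE^{2,\alpha}_\delta}\le c\|h_\mathrm{I}\|_{2,\alpha}^2\). Its normal derivative on \(\partial B_{r_0}\) is therefore \(O(\|h_\mathrm{I}\|^2)\).

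On the inner piece, \(\tilde w_{h_\mathrm{I},\calA,h_\mathrm{II},\veps}\) differs from \(\tilde w^0_{h_\mathrm{I}}\) by three contributions:
\begin{enumerate}
\item the quadratic nonlinearity, which is again \(O(\|h_\mathrm{I}\|^2)\);
\item the Green's function term \(\frac{\veps}{n-2}\gamma_0\) together with the transformation \(w_{\veps,\calA}\);
\item the inner correction \(\tilde w+v\) built from \(h_\mathrm{II}\) and \(\Gamma_{\veps,\calA}(H_{\veps,\calA})\).
\end{enumerate}
Near \(r=r_0\), Proposition \ref{prop:6.2} bounds the second contribution by \(c(r_\veps r_0+\veps r_0^{2-n})\). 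The weighted estimates for the third give \(\|\tilde w+v\|_{2,\alpha,\nu}\le c r_\veps^{2-\nu}\), which is small at \(r=r_0\). Using \(\veps\sim r_\veps^{(3n-2)/3}=r_\veps^{n-2/3}\), every \(\veps\)-dependent term at \(\partial B_{r_0}\) is bounded by \(c\,r_\veps^{n-2/3}\).

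\emph{Expected obstacle.} The main difficulty is the term \(R\cdot x\) from the tilt, which combined with the (A.2) tilt gives size \(r_\veps r_0\). This is \emph{not} \(O(r_\veps^{n-2/3})\). One option is to note that the rigid-motion part of \(\calA\) acts on the whole inner piece and is absorbed: the term \(R\cdot x\) corresponds to a Jacobi field that is matched by the \(\calK_1\) component on the outer side, so it cancels in the difference. If that fails, the fallback is to measure the bound relative to \(\|\calA\|\), which is at most \(\kappa r_\veps^2\); this still suffices for the fixed-point step.
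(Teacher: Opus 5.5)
Your outline follows the paper's: the outer error $r_0\partial_r(w_{h_\mathrm{I}}-w^0_{h_\mathrm{I}})$ is quadratic in $h_\mathrm{I}$ by elliptic estimates, the inner error comes from the $\veps$-dependent modifications, and the isomorphism comes from Lemma~\ref{lem:invertability-of-L_M}(1). Your gluing argument for injectivity correctly unpacks the paper's one-line appeal, and the surjectivity sketch is extra.

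The gap is the inner estimate, which is the substance of the proposition. You bound the contribution of $w_{\veps,\calA}$ on $\partial B_{r_0}$ via Proposition~\ref{prop:6.2} by $c_\kappa(r_\veps r_0+\veps r_0^{2-n})$. You then assert that every $\veps$-dependent term is $O(r_\veps^{n-2/3})$. Your own obstacle paragraph shows this is false: $w_{\veps,\calA}$ contains the Jacobi fields $d+R\cdot x$, and the Neumann trace $r_0R\cdot\theta$ can be as large as $\kappa r_0 r_\veps$. The (A.2) tilt is not the culprit, since it sits in $u_0$, which is common to both sides.

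Neither remedy works.
\begin{enumerate}
\item[(a)] $\calU_0(h_\mathrm{I})$ and $w_{h_\mathrm{I}}$ do not depend on $\calA$, and the $\calK_1$-component of $w_{h_\mathrm{I}}$ is fixed by $h_\mathrm{I}$ alone (Lemma~\ref{lem:invertability-of-L_M}(2)). Nothing on the outer side can cancel $r_0R\cdot\theta$ unless $h_\mathrm{I}$ itself carries that trace. That would force $\|h_\mathrm{I}\|\gtrsim r_0|R|$, far outside the ball of radius $\kappa r_\veps^2$.
\item[(b)] Besides changing the statement, $\|\calA\|$ weights $R$ by $r_\veps$. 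So $r_0|R|$ is only bounded by $r_0\|\calA\|/r_\veps\le\kappa r_0r_\veps$, not by $c\|\calA\|$. Even an error of size $c\kappa r_\veps^2$ would not serve Lemma~\ref{lem:C0Cveps maps a ball into a ball}, which needs $\|\bfC_0-\bfC_\veps\|\le\kappa_0'r_\veps^2$ with $\kappa_0'$ independent of $\kappa$.
\end{enumerate}

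What is missing is to estimate $\tilde w_{h_\mathrm{I},\calA,h_\mathrm{II},\veps}-\tilde w^0_{h_\mathrm{I}}$ as a solution of an equation, not term by term in size. Just before defining $\calU_\veps$, the paper stipulates that both pieces are graphs of $u_0+h_\mathrm{I}$ over $\partial B_{r_0}$. So this difference vanishes on $\partial B_{r_0}$ and solves $\Lambda_{u_0}(\cdot)=$ (nonlinear terms) on $B_{r_0}\setminus B_{r_\veps}$. The Jacobi fields $d+R\cdot x$ therefore cannot reach $\partial B_{r_0}$ as such. What reaches it is the interior forcing together with the decaying modes $r^{2-n}$ and $r^{1-n}e_j$ generated at $\partial B_{r_\veps}$.

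Concretely, a low-mode datum of size at most $\|\calA\|\le\kappa r_\veps^2$ on $\partial B_{r_\veps}$ yields a Neumann trace on $\partial B_{r_0}$ of order:
\begin{enumerate}
\item[(i)] $\kappa r_\veps^2(r_\veps/r_0)^{n-2}$ in mode $0$;
\item[(ii)] $\kappa r_\veps^2(r_\veps/r_0)^{n-1}$ in modes $1,\dots,n$.
\end{enumerate}
Both are at most $r_\veps^{n-2/3}$ once $\veps\le\veps_0(\kappa)$, with a constant independent of $\kappa$. The Green's-function tail contributes $c\veps$, and the inner correction contributes $cr_\veps^{2-\nu}$.

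Those last two are exactly the contributions the paper's proof counts. It never estimates $w_{\veps,\calA}$ pointwise at $r_0$, relying instead on the zeroth-order matching built into the construction. Your worry is legitimate in that the paper leaves this implicit. Still, your write-up has to supply this argument, or an equivalent reason why $\calA$ drops out of the outer Neumann mismatch, rather than the two fallbacks.
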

\begin{proof}
    The fact that \(\calU_0\) is an isomorphism follows from claim (1) in Lemma \ref{lem:invertability-of-L_M}. To prove the bound on the norm, first, we consider
    \begin{equation}
        \|r_0\partial_r(w_{h_\mathrm{I}} - w_{h_\mathrm{I}}^0)\|_{C^{1, \alpha}(\partial B_{r_0})}
    \end{equation}
    This Cauchy data comes from solving perturbation problems on \(\Sigma\). If \(H(w)\) is the mean curvature operator of \(\Sigma\), then
    \begin{equation}
        0 = H(w_{h_\mathrm{I}}) = \calL^*_\Sigma w_{h_\mathrm{I}} + Q(w_{h_\mathrm{I}})
    \end{equation}
    where \(Q\) is a quadratic term. It follows that
    \begin{equation}
        \calL_\Sigma^*(w_{h_\mathrm{I}} - w_{h_\mathrm{I}}^0) = -Q(w_{h_\mathrm{I}}),
    \end{equation}
    and so, from elliptic estimates, we can bound 
    \begin{equation}
        \|r_0\partial_r(w_{h_\mathrm{I}} - w_{h_\mathrm{I}}^0)\|_{C^{1, \alpha}(\partial B_{r_0})} \le c(\calL_\Sigma^*)\|h_\mathrm{I}\|^2_{C^{2, \alpha}(\partial B_{r_0})}.
    \end{equation}
    The constant \(c(\calL_\Sigma^*)\) will depend on \(\veps\), as the locally vertical vector field depends on \(\veps\), but we can still make the constant uniform for all \(\veps \in (0, \veps_0]\) for a sufficiently small \(\veps_0\).

    Now we consider the second half of the Cauchy data given by
    \begin{equation}
        \|r_0\partial_r(\tilde{w}_{h_\mathrm{I}, \mathcal{A}, h_{\mathrm{II}}, \varepsilon} - \tilde{w}_{h_\mathrm{I}}^0)\|_{C^{1, \alpha}(\partial B_{r_0})}.
    \end{equation}
    This data corresponds to the boundary value problem in \(N\). For this, we track how the boundary data is influenced by the inner boundary problem. First, \(\Sigma^c_{r_0}\) gets modified to \(\Sigma^c_{r_0, \veps}\) by adding \(c\veps\gamma_0\) for a constant \(c\). Rigid motions are applied, and \(\Sigma^c_{r_0, \veps}\) is perturbed. The perturbation \(w\) ultimately satisfies a bound \(\|w\|_{2, \alpha, \nu} \le cr_\veps^{2 - \nu}\) for \(\nu \in (-n, 1 - n)\), in particular showing that
    \begin{equation}
        \|w\|_{1, \alpha} \le cr_\veps^{2 - \nu},
    \end{equation}
    so that the Cauchy data of \(\Sigma^c_{r_0, \veps}\) gets perturbed by at most \(cr_\veps^{2 - \nu}\). So the Cauchy data is modified by \(c\veps \le c r_\veps^{n - 2/3}\) and \(cr_\veps^{2 - \nu} \le c r_\veps^{n - 2/3}\), and we end up with 
    \begin{equation}
        \|r_0\partial_r(\tilde{w}_{h_\mathrm{I}, \mathcal{A}, h_{\mathrm{II}}, \varepsilon} - \tilde{w}_{h_\mathrm{I}}^0)\|_{C^{1, \alpha}(\partial B_{r_0})} \le cr_\veps^{n - 2/3}.
    \end{equation}
\end{proof}

\subsubsection{The conglomerate Cauchy data mapping}
Now we put together the various Cauchy data maps.
\begin{defn}
    For \((h_\mathrm{I}, \calA, h_\mathrm{II}) \in C^{2, \alpha}(\partial B_{r_0}) \times \mathbb{R}^{2n + 2} \times \pi_\mathrm{II}(C^{2, \alpha}(\bbS^{n-1}))\), define
    \begin{equation}
        \|(h_\mathrm{I}, \calA, h_\mathrm{II})\| = \|h_\mathrm{I}\|_{C^{2, \alpha}} + \|\calA\| + \|h_\mathrm{II}\|_{C^{2, \alpha}},
    \end{equation}
    and let \(\calB_\kappa^\alpha\) denote the ball of radius \(\kappa r_\veps^2\) in \(C^{2, \alpha}(\partial B_{r_0}) \times \mathbb{R}^{2n + 2} \times \pi_\mathrm{II}(C^{2, \alpha}(\bbS^{n-1}))\).
\end{defn}
\begin{defn}
    We define the conglomerate Cauchy data map
    \begin{equation}
        \bfC_\veps: C^{2, \alpha}(\partial B_{r_0}) \times \mathbb{R}^{2n + 2} \times \pi_\mathrm{II}(C^{2, \alpha}(\bbS^{n-1})) \rightarrow C^{1, \alpha}(\partial B_{r_0}) \times C^{2, \alpha}(\bbS^{n-1}) \times C^{1, \alpha}(\bbS^{n-1})
    \end{equation}
    to be the operator
    \begin{equation}
        \bfC_\veps(h_\mathrm{I}, \calA, h_\mathrm{II}) = (\calU_\veps(h_\mathrm{I}, \calA, h_\mathrm{II}), \calT_\veps(h_\mathrm{I}, \calA, h_\mathrm{II}) - \calS_\veps(h_\mathrm{II})).
    \end{equation}
    We remark that if \(\bfC_\veps(h_\mathrm{I}, \calA, h_\mathrm{II}) = 0\), then the boundary data \(h_\mathrm{I}, \calA, h_\mathrm{II}\) proves that the surface
    \begin{equation}
        \Sigma^* = \Sigma_{r_0}(h_\mathrm{I}) \cup \Sigma^c_{r_0, \veps}(h_\mathrm{I}, \calA, h_\mathrm{II}) \cup C_\veps(h_\mathrm{II})
    \end{equation}
    is a \(C^{1, \alpha}\) minimal surface, and thus a smooth minimal surface (as we desire to show).
\end{defn}
Just as before, we also have a simple conglomerate Cauchy data map:
\begin{defn}
    We define the simple conglomerate Cauchy data map
    \begin{equation}
        \bfC_0: C^{2, \alpha}(\partial B_{r_0}) \times \mathbb{R}^{2n + 2} \times \pi_\mathrm{II}(C^{2, \alpha}(\bbS^{n-1})) \rightarrow C^{1, \alpha}(\partial B_{r_0}) \times C^{2, \alpha}(\bbS^{n-1}) \times C^{1, \alpha}(\bbS^{n-1})
    \end{equation}
    to be the operator
    \begin{equation}
        \bfC_0(h_\mathrm{I}, \calA, h_\mathrm{II}) = (\calU_0(h_\mathrm{I}), \calT_0(\calA, h_\mathrm{II}) - \calS_0(h_\mathrm{II})).
    \end{equation}
\end{defn}
\begin{lem}\label{prop:the simple conglomerate C0 is an isomorphism}
    Letting
    \begin{equation}
        w_\calA^0 (r\theta) = \frac{e}{n-2}r^{2-n} + d + rR\cdot \theta + \veps r^{1-n}T\cdot \theta,
    \end{equation}
    we may write
    \begin{equation}
        \bfC_0(h_\mathrm{I}, \calA, h_\mathrm{II}) = (\calU_0(h_\mathrm{II}), w^0_\calA(r_\veps \cdot), r_\veps \partial_rw_\calA^0(r_\veps\cdot) - 2D_\theta h_\mathrm{II}).
    \end{equation}
    Furthermore, \(\bfC_0\) is an isomorphism from its domain onto its image, i.e., it is an isomorphism
    \begin{equation}
        \bfC_0 : C^{2, \alpha}(\partial B_{r_0}) \times \mathbb{R}^{2n + 2} \times \pi_\mathrm{II}(C^{2, \alpha}(\bbS^{n-1})) \rightarrow C^{1, \alpha}(\partial B_{r_0}) \times \operatorname{span}\{e_0, \ldots, e_n\} \times C^{1, \alpha}(\bbS^{n-1})
    \end{equation}
    where \(e_0, \ldots, e_n\) are the first \(n\) eigenfunctions of the Laplacian \(\Delta_{\bbS^{n-1}}\). The norm of \(\bfC_0^{-1}\) is bounded independently of \(\veps \in (0, 1)\).
\end{lem}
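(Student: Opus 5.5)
The plan is to compute $\bfC_0$ componentwise from the definitions of $\calT_0$ and $\calS_0$, and then show that the resulting map decouples into three independently invertible pieces. First, for the difference $\calT_0(\calA,h_\mathrm{II})-\calS_0(h_\mathrm{II})$ I will subtract the two formulas. In the zeroth-order slot the $h_\mathrm{II}$ terms cancel and $w^0_{\veps,\calA}(r_\veps\cdot)$ remains. In the first-order slot the terms $-\veps r_\veps^{2-n}$ and $-\frac{n-2}{2}h_\mathrm{II}$ cancel, leaving $r_\veps\partial_r w^0_{\veps,\calA}(r_\veps\cdot)-2D_\theta h_\mathrm{II}$. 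Writing $x=r\theta$ turns $w^0_{\veps,\calA}$ into $w^0_\calA(r\theta)=\frac{e}{n-2}r^{2-n}+d+rR\cdot\theta+\veps r^{1-n}T\cdot\theta$. This gives the displayed formula; the first slot is $\calU_0(h_\mathrm{I})$, so I read the $\calU_0(h_\mathrm{II})$ there as a typo.

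Second, I will decompose the target along spherical harmonics. The function $w^0_\calA(r_\veps\cdot)$ lies in $\operatorname{span}\{e_0,\dots,e_n\}$, with $e_0$ constant and $e_1,\dots,e_n$ the coordinate functions $\theta_j$. Its coefficients are
\[
\Big(\tfrac{e}{n-2}r_\veps^{2-n}+d,\; r_\veps R_j+\veps r_\veps^{1-n}T_j\Big).
\]
Its radial derivative $r_\veps\partial_r w^0_\calA$ has coefficients
\[
\Big(-e\,r_\veps^{2-n},\; r_\veps R_j+(1-n)\veps r_\veps^{1-n}T_j\Big).
\]
On each mode this is a $2\times2$ linear system:
\[
(e,d)\mapsto\Big(\tfrac{e}{n-2}r_\veps^{2-n}+d,\;-e\,r_\veps^{2-n}\Big)\quad\text{in the $e_0$ mode},
\]
\[
(R_j,T_j)\mapsto\Big(r_\veps R_j+\veps r_\veps^{1-n}T_j,\;r_\veps R_j+(1-n)\veps r_\veps^{1-n}T_j\Big)\quad\text{in the $e_j$ modes}.
\]
In the rescaled variables $(r_\veps^{2-n}e,\,d)$ and $(r_\veps R_j,\,\veps r_\veps^{1-n}T_j)$ these become fixed $\veps$-independent matrices, $\begin{pmatrix}\frac1{n-2}&1\\-1&0\end{pmatrix}$ and $\begin{pmatrix}1&1\\1&1-n\end{pmatrix}$. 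Their determinants are $1$ and $-n$, so both are invertible. The rescaled variables are exactly the quantities weighted in $\|\calA\|$ in \eqref{defn:calA-norm}, so the inverse on the $\calA$-component is bounded uniformly in $\veps$.

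Third, the $\pi_\mathrm{II}$ part of the first-order slot is $-2D_\theta h_\mathrm{II}$, since $w^0_\calA$ contributes nothing there. The operator $D_\theta$ is the Dirichlet-to-Neumann-type operator from Proposition \ref{prop:4.5}/\ref{prop:7.2}; it acts diagonally on $e_j$, $j\ge n+1$, with eigenvalue $\gamma_j=\sqrt{(\frac{n-2}{2})^2+\lambda_j}$. Hence $2D_\theta:\pi_\mathrm{II}(C^{2,\alpha})\to\pi_\mathrm{II}(C^{1,\alpha})$ is an isomorphism: $\gamma_j\ge\gamma_{n+1}>0$ and $\gamma_j\sim\sqrt{\lambda_j}$, so it behaves like the first-order elliptic pseudodifferential operator $\sqrt{-\Delta_{\bbS^{n-1}}+c}$. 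I take this as standard, and it is independent of $\veps$.

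Finally, $\calU_0$ is an isomorphism $C^{2,\alpha}(\partial B_{r_0})\to C^{1,\alpha}(\partial B_{r_0})$ by Proposition \ref{prop:bounding Uveps - U0}. Because $\bfC_0$ is block triangular — $h_\mathrm{I}$ enters only the first slot, $\calA$ only the low modes, and $h_\mathrm{II}$ only the high modes of the third slot — invertibility of each block gives invertibility of $\bfC_0$, with $\|\bfC_0^{-1}\|$ bounded by the maximum of the three block inverses. This shows $\bfC_0$ maps isomorphically onto $C^{1,\alpha}\times\operatorname{span}\{e_0,\dots,e_n\}\times C^{1,\alpha}$.

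The main obstacle is the $\veps$-uniformity of the $\calA$-block, which hinges on the exact weights in $\|\calA\|$. I will check the two determinants carefully; if a weight mismatch appears, I would adjust the rescaling rather than the argument.
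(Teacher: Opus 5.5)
Your proposal is correct and follows essentially the same route as the paper: read off $\bfC_0$ from the definitions of $\calT_0$ and $\calS_0$, then combine three facts. These are that $\calU_0$ is an isomorphism, that $D_\theta$ (the Dirichlet-to-Neumann operator for $\Delta_0$) is invertible on the high modes, and that $w^0_\calA$ controls $\calA$ uniformly in $\veps$. Your rescaled $2\times 2$ matrices with determinants $1$ and $-n$ are exactly what justifies the paper's terse claim $c\|\calA\|\le\|w^0_\calA\|_{1,\alpha}\le C\|\calA\|$. That claim needs both Cauchy slots, since the Dirichlet value alone does not determine $(e,d)$. You are also right that $\calU_0(h_\mathrm{II})$ in the statement should read $\calU_0(h_\mathrm{I})$.
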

\begin{proof}
    The first claim follows from the definitions of \(\calT_0\) and \(\calS_0\) and the parametrization shown in Proposition \ref{prop:6.3}. The claim that \(\bfC_0\) is an isomorphism follows from Proposition \ref{prop:bounding Uveps - U0}, assumption (A.4), and the fact that the operator \(\Delta_0\) defined while discussing the catenoid is invertible.

    The fact that \(\|\bfC_0^{-1}\|\) is bounded uniformly for \(\veps \in (0, 1)\) (and so has a bound independent of \(\veps \in (0, 1)\)) follows from the expression in terms of \(w_\calA^0\). Indeed \(c\|\calA\| \le \|w_\calA^0\|_{1, \alpha} \le C\|\calA\|\) for constants \(c, C\) independent of \(\veps \in (0, 1)\), simply by the definition of \(\|\calA\|\).
\end{proof}

We remark that this is the place where it matters that the Green's function is asymptotically similar to a catenoid. This eliminated the term \(\frac{\veps}{n-2}r^{2 - n}\) from our expression for \(\bfC_0\) above, allowing us to produce the uniform bound on \(\|\bfC_0^{-1}\|\).

\subsubsection{Finding a zero of \(\bfC_\veps\)}
To find a zero for \(\bfC_\veps\), we find a fixed point for the operator \(\bfC_0^{-1}(\bfC_0 - \bfC_\veps)\).

\begin{lem}\label{lem:C0Cveps maps a ball into a ball}
    There exists a \(\kappa_0 > 0\) so that for all \(\kappa > 0\) there exists \(\veps_0 = \veps_0(\kappa) > 0\) such that
    \begin{equation}
        \bfC_0^{-1}(\bfC_0 - \bfC_\veps)(\calB_\kappa^\alpha) \subset \calB_{\kappa_0}^\alpha
    \end{equation}
    when \(\veps < \veps_0\).
\end{lem}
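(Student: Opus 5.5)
The plan is to bound $\bfC_0-\bfC_\veps$ on $\calB^\alpha_\kappa$ componentwise, using the three Cauchy data estimates already proved. Then I apply the uniform bound on $\|\bfC_0^{-1}\|$ from Lemma \ref{prop:the simple conglomerate C0 is an isomorphism}. The whole point is that every error bound has a constant \emph{independent of $\kappa$}, up to terms that are $o(r_\veps^2)$ once $\veps$ is small depending on $\kappa$.

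Fix $\kappa>0$ and $(h_\mathrm{I},\calA,h_\mathrm{II})\in\calB^\alpha_\kappa$. Then $\|h_\mathrm{I}\|_{2,\alpha}$, $\|\calA\|$ and $\|h_\mathrm{II}\|_{2,\alpha}$ are all $\le \kappa r_\veps^2$, so the hypotheses of Propositions \ref{prop:5.2} and \ref{prop:8.2} hold for $\veps\le\veps_0(\kappa)$. I split the difference as
\begin{equation}
(\bfC_0-\bfC_\veps)(h_\mathrm{I},\calA,h_\mathrm{II})=\Big(\calU_0(h_\mathrm{I})-\calU_\veps(h_\mathrm{I},\calA,h_\mathrm{II}),\ (\calT_0-\calT_\veps)(\calA,h_\mathrm{II})-(\calS_0-\calS_\veps)(h_\mathrm{II})\Big).
\end{equation}
For the second component, Propositions \ref{prop:5.2} and \ref{prop:8.2} give $\|(\calT_0-\calT_\veps)(\calA,h_\mathrm{II})\|+\|(\calS_0-\calS_\veps)(h_\mathrm{II})\|\le 2c\,r_\veps^2$ in $\calC^{2,\alpha}\times\calC^{1,\alpha}$, with $c$ independent of $\kappa$. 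For the first component, Proposition \ref{prop:bounding Uveps - U0} gives
\begin{equation}
\|\calU_\veps-\calU_0\|_{\calC^{1,\alpha}}\le c\big(\kappa^2r_\veps^4+r_\veps^{n-2/3}\big).
\end{equation}
Since $n\ge3$ we have $n-2/3>2$. Hence, after shrinking $\veps_0(\kappa)$, this is at most $c\,r_\veps^2$ with $c$ independent of $\kappa$. Altogether $\|(\bfC_0-\bfC_\veps)(\cdot)\|\le C_1 r_\veps^2$, where $C_1$ is independent of $\kappa$ and $\veps$.

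Next I apply $\bfC_0^{-1}$. By Lemma \ref{prop:the simple conglomerate C0 is an isomorphism}, $\|\bfC_0^{-1}\|\le C_2$ uniformly in $\veps\in(0,1)$. The norm on the domain is exactly $\|h_\mathrm{I}\|+\|\calA\|+\|h_\mathrm{II}\|$, which is the norm defining $\calB^\alpha_\kappa$; this is precisely why the weighted norm of $\calA$ was chosen. So the image lies in $\calB^\alpha_{\kappa_0}$ with $\kappa_0=C_1C_2$.

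The main obstacle is that $\bfC_0^{-1}$ is only defined on the stated image space, in which the zeroth-order component lies in $\operatorname{span}\{e_0,\dots,e_n\}$. I therefore need $(\calT_\veps-\calS_\veps)$ to have the same structure as $\bfC_0$ in that component. My first approach is to note that the $\pi_\mathrm{II}$ parts of the zeroth-order data of $\calT_\veps$ and $\calS_\veps$ both equal $h_\mathrm{II}$ by construction, since both surfaces were built with prescribed $\pi_\mathrm{II}$ boundary value $h_\mathrm{II}$. The difference therefore lies in the low modes, as required. I also need to check that the $-\veps r_\veps^{2-n}$ terms cancel between $\calT$ and $\calS$, which is where the Green's function normalization enters.
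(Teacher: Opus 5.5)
Your proposal is correct and follows essentially the same route as the paper: bound $\bfC_0-\bfC_\veps$ on $\calB^\alpha_\kappa$ componentwise using Propositions \ref{prop:5.2}, \ref{prop:8.2} and \ref{prop:bounding Uveps - U0}, whose constants are independent of $\kappa$, absorb $\kappa^2r_\veps^4+r_\veps^{n-2/3}$ into $r_\veps^2$ by shrinking $\veps_0(\kappa)$, and multiply by the $\veps$-uniform bound on $\|\bfC_0^{-1}\|$. Your extra range check is correct and covers a point the paper skips: both $\calT_\veps$ and $\calS_\veps$ have $\pi_\mathrm{II}$-boundary value $h_\mathrm{II}$, so the zeroth-order component of $(\bfC_0-\bfC_\veps)(h_\mathrm{I},\calA,h_\mathrm{II})$ lies in $\operatorname{span}\{e_0,\dots,e_n\}$ (the $-\veps r_\veps^{2-n}$ cancellation you also flag is not needed for this, since it sits in the unconstrained first-order slot and is already built into Propositions \ref{prop:5.2} and \ref{prop:8.2}).
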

\begin{proof}
    First, \(\|\bfC_0^{-1}\|\) is bounded by a constant independent of \(\veps\) (and of course of \(\kappa\) as well), and so it suffices to bound \(\|\bfC_0 - \bfC_\veps\|\). In this lane, we have
    \begin{equation}
        \begin{split}
            \|\bfC_0 - \bfC_\veps\| & \le \|\calU_\veps - \calU_0\| + \|\calT_\veps - \calT_0\| + \|\calS_\veps - \calS_0\| 
            \\ & \le c_1(\|h_{\mathrm{I}}\|^2 + r_\veps^{n - 2/3}) + c_2r_\veps^2 + c_3r_\veps^2
            \\ & \le cr_\veps^2 + r_\veps^{n - 2/3} + \kappa^2r_\veps^4
            \\ & \le \kappa_0'r_\veps^2
        \end{split}
    \end{equation}
    where we have taken \(\kappa_0' = 2(c + 1)\) and chosen \(\veps_0\) small enough that \(\kappa^2r_\veps^4 < cr_\veps^2\) for \(\veps < \veps_0\). We note that \(c_1, c_2, c_3\) do not depend on \(\kappa\) whatsoever, as we saw when deriving the Cauchy data estimates in the relevant propositions. Finally, we clearly can take \(\kappa_0 = \|\bfC_0^{-1}\|\kappa_0'\).
\end{proof}
\begin{prop}\label{prop:fixed-point}
    For \(\kappa > \kappa_0\) and \(\veps < \veps(\kappa)\) (as produced in the last Lemma), the mapping
    \begin{equation}
        \bfC_0^{-1}(\bfC_0 - \bfC_\veps): \calB_{\kappa}^\alpha \rightarrow \calB_{\kappa_0}^\alpha
    \end{equation}
    has a fixed point.
\end{prop}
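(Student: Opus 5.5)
The plan is to apply the Schauder fixed-point theorem to $F_\veps := \bfC_0^{-1}(\bfC_0 - \bfC_\veps)$ on the closed convex set $\calB_{\kappa}^\alpha$. Since $\kappa > \kappa_0$, we have $\calB_{\kappa_0}^\alpha \subset \calB_\kappa^\alpha$, so Lemma \ref{lem:C0Cveps maps a ball into a ball} gives $F_\veps(\calB_\kappa^\alpha) \subset \calB_{\kappa_0}^\alpha \subset \calB_\kappa^\alpha$ for $\veps < \veps_0(\kappa)$. A fixed point $(h_\mathrm{I}, \calA, h_\mathrm{II})$ satisfies $\bfC_0(\cdot) = (\bfC_0 - \bfC_\veps)(\cdot)$, i.e.\ $\bfC_\veps(h_\mathrm{I},\calA,h_\mathrm{II}) = 0$. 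Continuity of $F_\veps$ follows from the continuity of $\calS_\veps,\calS_0$ (Proposition \ref{prop:5.2}), of $\calT_\veps,\calT_0$ (Proposition \ref{prop:8.2}), and of $\calU_\veps,\calU_0$. The latter holds because the fixed points of the contraction maps $\calN_\veps$, $\calM_{\veps,\calA}$ depend continuously on their data, together with the boundedness of $\bfC_0^{-1}$ from Lemma \ref{prop:the simple conglomerate C0 is an isomorphism}. One bookkeeping point: the range of $\bfC_0 - \bfC_\veps$ must lie in the image of $\bfC_0$. I would handle this by noting that the zeroth-order components of $\calT_\veps - \calS_\veps$ and $\calT_0-\calS_0$ have the same $\pi_\mathrm{II}$-part, namely $h_\mathrm{II}$, by construction. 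The difference therefore has zeroth-order part in $\operatorname{span}\{e_0,\dots,e_n\}$.

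The main obstacle is compactness. $\calB_\kappa^\alpha$ is a ball in an infinite-dimensional Hölder space, so it is not compact, and $F_\veps$ is not a compact operator. Following the outline in Step 3(c), I would regularize. For $j\in\bbN$ let $P_j$ be a smoothing operator, for instance truncation of the spherical-harmonic expansion to modes $\le j$ composed with mollification on $\partial B_{r_0}$. I would require $P_j$ to be uniformly bounded on $C^{2,\alpha}$ with norm $\le 1 + o(1)$, or else enlarge $\kappa_0$ slightly to absorb the constant, and $P_j x \to x$ in $C^{2,\alpha'}$ for $\alpha'<\alpha$. Then $P_j\circ F_\veps$ maps $\calB_\kappa^\alpha$ into a compact subset of the finite-dimensional (or $C^{2,\alpha}$-precompact) set $P_j(\calB_{\kappa_0}^\alpha)\subset \calB_\kappa^\alpha$. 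Applying Brouwer or Schauder gives fixed points $x_j = P_j F_\veps(x_j)$.

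To pass to the limit, I would use the uniform bound $\|x_j\|\le \kappa r_\veps^2$. By Arzelà--Ascoli a subsequence converges in the weaker $C^{2,\alpha'}$ topology to some $x\in\calB_\kappa^\alpha$; the ball is closed under this convergence by lower semicontinuity of Hölder norms. I then need $F_\veps$ to be continuous from $C^{2,\alpha'}$-convergence on bounded $C^{2,\alpha}$ sets. This should follow by rerunning the fixed-point constructions above with $\alpha'$ in place of $\alpha$, since all estimates were uniform in $\alpha\in(0,1)$. Combined with $P_j\to \mathrm{Id}$ strongly, this gives $x = F_\veps(x)$. Verifying this weak-topology continuity, together with the uniform bound on $P_j$, is the step I expect to require the most care.
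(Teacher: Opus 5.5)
Your proposal is correct and is essentially the paper's argument: the paper defers to Proposition 9.1 of \cite{Fakhi-Pacard}, which applies Schauder's theorem to the smoothed maps $\bfD^q\bfC_0^{-1}(\bfC_0-\bfC_\veps)$ on $\calB_\kappa^\alpha$ and then lets $q\to\infty$ along a subsequence converging in a weaker H\"older topology, exactly as you outline. For the uniform bound you flagged, drop the sharp spherical-harmonic truncation, which is not uniformly bounded on H\"older spaces. Instead take $\bfD^q$ to be averaging against a smooth approximate identity on $SO(n)$: it has norm at most $1$ on $C^{2,\alpha}$, preserves $\pi_{\mathrm{II}}$, and is compact, so no enlargement of $\kappa_0$ is needed.
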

\begin{proof}
    The proof of this proposition is the same as the proof of Proposition 9.1 in \cite{Fakhi-Pacard}, using Schauder's fixed point theorem on a family of smoothed operators \(\bfD^q\bfC_0^{-1}(\bfC_0 - \bfC_\veps)\), \(q \in \bbN\).
\end{proof}

Finally, we show that this new surface $\Sigma_\veps:=\Sigma_{r_0}(h)\cup \Sigma^c_r(h,h_{\mathrm{II}},\calA_{\mathrm{II}})\cup C_{\veps}(h_{\mathrm{II}})$ is non-degenerate for all $\veps>0$ small enough. 

\begin{lem}\label{lem: nondegeneracy}
For all $\veps>0$ small enough, the hypersurface $\Sigma_\veps$ is non-degenerate in the sense of Lemma \ref{lem:invertability-of-L_M} assuming that $\Sigma$ is non-degenerate.
\end{lem}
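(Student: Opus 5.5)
We argue by contradiction, in the spirit of the non-degeneracy argument of Fakhi--Pacard. Suppose there is a sequence $\veps_j\to 0$ and nonzero Jacobi fields $w_j\in\calE^{2,\alp}_{\delta}(\Sigma_{\veps_j})$ with $\calL_{\Sigma_{\veps_j}}w_j=0$, for some $\delta\in(-\infty,-\frac n2)$. By the standard decay theory on the planar ends we may take $\delta\in(-\frac{n+2}{2},-\frac n2)$ and normalize
\begin{equation}
\sup_{\Sigma_{\veps_j}} \rho_j^{-\mu}|w_j| = 1 ,
\end{equation}
where $\rho_j$ is a weight function on $\Sigma_{\veps_j}$. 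We take $\rho_j$ equal to $\phi^{-\delta}$-type weights on the old ends and on the new catenoidal end. In the neck region $B_{r_0}\setminus B_{r_{\veps_j}}$ we take $\rho_j$ comparable to $|x|$, with an exponent $\mu\in(2-n,0)$ chosen so that the harmonic functions $r^{2-n}$ and $1$ (the modes $e_0$) and $r^{1-n}\theta,\ r\theta$ (the modes $e_1,\dots,e_n$) are both excluded by the weight at the respective ends of the annulus. Since $\Sigma_{\veps_j}$ has bounded geometry away from the neck and is uniformly close to the scaled catenoid near it, Schauder estimates for $\calL_{\Sigma_{\veps_j}}$ give uniform $\calC^{2,\alp}$ bounds for $w_j$ in the weighted norm.

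Let $x_j$ be a point where the supremum is nearly attained; we split into three cases according to its location. (i) $x_j$ stays in a compact subset of $\Sigma\setminus\{p\}$ or escapes along an old end. Here the surfaces $\Sigma_{\veps_j}$ converge smoothly to $\Sigma$ away from $p$, by Propositions \ref{prop:6.2}, \ref{prop:8.2} and \ref{prop:bounding Uveps - U0}, since all perturbations are $O(r_{\veps}^2)$. Then $w_j$ converges to a Jacobi field $w_\infty$ on $\Sigma\setminus\{p\}$ with $|w_\infty|\le c|x-p|^{\mu}$ near $p$ and $w_\infty\in\calE^{2,\alp}_\delta$ on the ends. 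Because $\mu>2-n$, the singularity at $p$ is removable, so $w_\infty$ is a Jacobi field of $\Sigma$ in $\calE^{2,\alp}_\delta(\Sigma)$. It is nonzero, which contradicts the non-degeneracy of $\Sigma$ assumed in Lemma \ref{lem:invertability-of-L_M}. (ii) $x_j$ lies on the catenoidal piece, at distance $\lesssim r_{\veps_j}$ from the neck. We rescale by $\veps_j^{-1/(n-1)}$, so that the surfaces converge to the full half-catenoid $C$. The rescaled $w_j$ converge to a bounded Jacobi field of $C$ lying in $\calC^{2,\alp}_\delta$ on the end. Its growth at the neck is controlled by the weight $\mu$, which rules out the Jacobi fields $\Psi^{j,\pm}$ that are generated by dilation, translations and rotations (Proposition \ref{prop:4.3}, Proposition \ref{prop:4.6}). (iii) $x_j$ lies in the annulus with $r_{\veps_j}\ll |x_j|\ll r_0$. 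We rescale by $|x_j|$. The limit is a harmonic function on $\R^n\setminus\{0\}$, because $\Lambda_u\to\Delta$ by Lemma \ref{lem:6.1} and the Green's function correction is negligible by Proposition \ref{prop:6.3}, and the limit is bounded by $|x|^{\mu}$. Expanding in spherical harmonics, each mode behaves like $r^{\gamma_j^\pm}$, and none of these exponents equals $\mu$; hence the limit vanishes, again a contradiction.

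The main obstacle is case (ii), together with the choice of the weight $\mu$. The catenoid $C$ carries $2(n+1)$ Jacobi fields, and these must be excluded either by the decay conditions or by the matching performed through the parameters $\calA$. Our modification tilts the catenoid axis relative to $T_p\Sigma$ by an amount $O(r_\veps)$ (assumption (A.2)). I would therefore verify that this tilt only contributes $O(r_\veps)$ perturbations to the coefficients in the neck. If so, the limiting blow-up problems are exactly those of Fakhi--Pacard, and the exponent window $\nu\in(-n,1-n)$ from (A.5) (or $(-8/3,-2)$ when $n=3$) supplies an admissible $\mu=\nu+n-2$-type weight that interpolates correctly between the two ends of the annulus.
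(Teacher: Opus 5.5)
Your strategy is the paper's. You argue by contradiction with nonzero decaying Jacobi fields on $\Sigma_{\veps_j}$, normalize them against a piecewise weight, and run a blow-up analysis that rules out each possible limit: a Jacobi field of $\Sigma$ with a removable singularity at $p$, a Jacobi field of the catenoid, or a flat model solution. Your annulus exponent $\mu\in(2-n,0)$ differs from the paper's weight ($g_i\simeq r^{-\delta}$, continuing the $e^{\delta s}$ weight of the catenoid), but it is equally admissible. It gives removability at $p$, and $(2-n,0)$ contains no indicial root $j$ or $2-n-j$.

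The genuine gap is that your cases do not cover normalization points that run off to infinity. In case (i) you include $x_j$ escaping along an old end and then assert that the local limit on $\Sigma\setminus\{p\}$ is nonzero. That step fails: the point where $\rho_j^{-\mu}|w_j|\approx 1$ leaves every compact set, so $w_\infty$ may vanish. Case (ii) has the same problem. It allows distance $\lesssim r_{\veps_j}$ from the neck, which is rescaled distance up to $\phi(s_{\veps_j})\to\infty$, so the blow-up limit on $C$ need not be nonzero. Points running out along the new upper end of $C_{\veps_j}$ fall under none of your cases. The paper treats exactly these regimes separately, in its Case 2 and the $|s_i|\to\infty$ subcase of Case 3. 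It translates along the cylinder, $\bar f_i(s,\theta)=e^{-\delta s_i}f_i(s+s_i,\theta)$, and obtains a nonzero solution of $\Delta_0 f_\infty=0$ on $\R\times S^{n-1}$ with $|f_\infty|\le Ce^{\delta s}$, which is impossible since $\delta\ne\pm\gamma_j$.

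You can repair this with your own case (iii) tool:
\begin{enumerate}
\item Extend (iii) to the range $\veps_j^{1/(n-1)}\ll|x_j-p|\ll r_0$ on the lower sheet.
\item Along the new end, rescale by $|x_j-p|$; along an old end, rescale by $|x_j|$.
\item The limit is harmonic on $\R^n\setminus\{0\}$ and bounded by $|y|^{\delta-\frac{n-2}{2}}$. The exponent lies in $(-n,1-n)$, which is not an indicial root, so the limit vanishes.
\end{enumerate}

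Two smaller points.
\begin{itemize}
\item Your closing suggestion $\mu=\nu+n-2$ contradicts your own requirement $\mu\in(2-n,0)$ when $n=3$. It gives $\mu\in(-2,-1)$, or $(-5/3,-1)$ with the $n=3$ window, which is below $2-n=-1$. Removability at $p$ is then lost, so case (i) no longer excludes a limit with a Green's-function singularity at $p$. Choose $\mu$ directly in $(2-n,0)$.
\item In case (ii), the geometric Jacobi fields (modes $e_0,\dots,e_n$) are already killed by the $\calC_\delta$ decay at the upper end, since $\delta<-\frac n2$; the parameters $\calA$ play no role. What must actually be excluded is a Jacobi field of $C$ in modes $j\ge n+1$ that decays at both ends. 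This follows from the uniqueness in Proposition \ref{prop:4.3} and the $S$-independent bound in Proposition \ref{prop:4.6}, letting $S\to-\infty$.
\end{itemize}
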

\begin{proof}
We argue by contradiction. Thus suppose that there exists a sequence $\veps_i\to 0$ such that the operator $\calL_{\Sigma{{\veps_i}}}$ is not injective on $\calE^{2,\alp}_\delta(\Sigma_{\veps_i})$ for some $\delta\in (-\frac{n+2}{2},-\frac{n}{2}).$ Consequently, there exists a non-zero function $f_i\in \calE^{2,\alp}_\delta(\Sigma_{\veps_i})$ such that 
\EQ{
\calL_{\Sigma{{\veps_i}}} f_i = 0.
}
Since by construction we have that
\EQ{
\Sigma_{\veps_i} &\equiv \Sigma_{r_0}(h_i)\cup \Sigma^c_{r_0,\veps_i}(h_i,h_{\mathrm{II},k},\calA_{\mathrm{II},k})\cup C_{\veps_i}(h_{\mathrm{II},k})\\
&= \Sigma_{k}^{c}\cup \bigcup_{j=1}^k E_{k,j} \cup \Sigma^c_{r_0,\veps_i}(h_i,h_{\mathrm{II},k},\calA_{\mathrm{II},k})\cup C_{\veps_i}(h_{\mathrm{II},k})
}
where we expressed $\Sigma_{r_0}(h_i)$ as union of a compact piece $\Sigma_{k}^{c}$ and $k$ planar ends $\{E_{i, j}\}_{j=1}^{k}$. Define on $\Sigma_{\veps_i}$ some weight function $g_i>0$, as follows:
\EQ{
g_i &\simeq 1 \text{ on } \Sigma_{i}^{c},\quad g_i \simeq e^{\delta s} \text{ on } \bigcup_{j=1}^k E_{k,j}\quad g_i \simeq r^{-\delta} \text{ in } \Sigma^c_{r_{0}, \varepsilon_{i}}\left(h_{\varepsilon_{i}}, \mathcal{A}_{i}, h_{\mathrm{II}, i}\right)\\
g_i &\simeq r_{\varepsilon_{i}}^{-\delta} e^{\delta (s-s_{\varepsilon_{i}})} \text{ in } C_{\varepsilon_{i}}(h_{\mathrm{II}, i})
}
where $f \simeq g$ means that $C^{-1} \leq f / g \leq C$ for some fixed constant $C>0$. Define, a weight $g_\infty$ on $\Sigma$ such that 
\EQ{
g_\infty>0, \quad g_\infty=e^{\delta s} \text{ on } \bigcup_{j=1}^k E_{k,j}, \quad g_i \to g_\infty \text{ in }C^{2,\alp}_{\text{loc}}(\Sigma \setminus \{0\}),\text{ and }  \sup _{\Sigma_{\veps_i}} g_i^{-1} f_{i}=1.
}
Since the indicial roots of $\mathcal{L}_{\Sigma_{\veps_i}}$ at each end are given by $\pm \gamma_{j}$, any bounded solution of $\mathcal{L}_{\Sigma_{\veps_i}} w=0$ which belongs to the space $\mathcal{E}_{\delta}^{2, \alpha}\left(\Sigma_{\veps_i}\right)$ decays like $e^{-\frac{n+2}{2} s}$ at each end. This implies that the above supremum is achieved at some point $p_{i} \in \Sigma_{\veps_i}$. Either (up to a subsequence) $p_i\to p_\infty\in \Sigma \setminus\{0\}$, $|p_i|\to 0$, $|p_i|\to \infty$ such that $p_i \in E_j$ for some fixed $j=1,\ldots,k$ or $|p_i|\in C_{\veps_i}(h_{\mathrm{II},i})$. We analyze each of these cases separately. 
\paragraph{Case 1}
If up to a subsequence $p_{i}\to p_{\infty}$ for some $p_{\infty} \in \Sigma \backslash\{0\}$ then there exists a subsequence $f_{i}\to f_\infty$ in $C^{2,\alp}_{\text{loc}}(\Sigma \backslash\{0\})$, with $f_\infty \leq C g_\infty$ and consequently solves:
$$
\mathcal{L}_{\Sigma} f_{\infty}=0 .
$$
Thus $f_\infty$ inherits the decay at the origin from $g_\infty$, making the singularity at the origin removable, implying that $f_\infty$ is a global solution, which contradicts the fact that $\Sigma$ is non-degenerate. 
\paragraph{Case 2} If $|p_i|\to \infty$, then $p_i \in E_{i,j}$ for some fixed $j=1,\ldots,k$. Then if we define the rescaled function $f_i$ on the end with $s_i\to \infty$ such that
\EQ{
\bar{f}_{i}(s, \theta) \equiv e^{-\delta s_{i}} f_{i}\left(s+s_{i}, \theta\right),
}
we deduce that $\bar{f}_i\to f_\infty$ in $C^{2,\alp}_{\text{loc}}(\R\times \bbS^{b-1})$ such that
\EQ{
\Delta_{0} f_\infty=0,
}
such that $f_\infty \leq C e^{\delta s}$ for some constant $C>0$ which contradicts the order $-\frac{n+2}{2}$ order decay since $\delta \in\left(-\frac{n+2}{2},-\frac{n}{2}\right)$.
\paragraph{Case 3} Finally consider the case when $|p_i|\in C_{\veps_i}(h_{\mathrm{II},i})$ or $|p_i|\to 0$. Here we consider two possible subcases. The sequence $p_{i}$ remains in the annular region $\Sigma^c_{r_{0}, \varepsilon_{i}}\left(h_{i}, \mathcal{A}_{i}, h_{\mathrm{II}, i}\right)$  or the sequence $p_{i}$ belongs to the truncated $n$-catenoid $C_{\varepsilon_{i}}\left(h_{\mathrm{II}, i}\right)$. In either case, the point $p_i$ can be identified with parameters $(s_i,\theta_i)$ along the catenoid $C_{\veps_i}$ since $\Sigma^c_{r_{0}, \varepsilon_{i}}\left(h_{i}, \mathcal{A}_{i}, h_{\mathrm{II}, i}\right)$ is a normal graph over $C_{\veps_i}$, the only difference is that if $s_i<s_{\veps_i}$ then $p_i$ lives in the annular region while $s_i>s_{\veps_i}$ implies that $p_i$ lives in the truncated catenoid $C_{\veps_i}(h_{\mathrm{II}, i}).$ Now recall that $g_i \simeq r_{\varepsilon_{i}}^{-\delta} e^{\delta (s-s_{\varepsilon_{i}})} \text{ in } C_{\varepsilon_{i}}(h_{\mathrm{II}, i})$ and therefore if we rescale the function $f_i$ as
\EQ{
\bar{f}_{i}(s, \theta) \equiv r_{\varepsilon_{i}}^{\delta} e^{\delta\left(s_{\varepsilon_{i}}-s_{i}\right)} f_{i}\left(s+s_{i}, \theta\right).
}
then this sequence converges uniformly on compact sets of $\mathbb{R} \times S^{n-1},$ to a non-trivial function $f_\infty$ such that either
\EQ{
\Delta_{0} f_\infty=0.
}
when $\left|s_{i}\right|$ tends to $\infty$, or 
\EQ{
\mathcal{L} f_\infty=0 \quad 
}
when $s_{i}\to s_*\in \R$. Furthermore, $f_\infty\leq C e^{\delta s}$ which is not possible for the range of $\delta \in (-\frac{n+2}{2},-\frac{n}{2}).$

Since we have obtained a contradiction in all the possible cases, we conclude that $\Sigma_{\veps}$ is non-degenerate. 
\end{proof}



Combining all the previous results in this section we deduce,
\begin{thm}\label{thm:general statement of gluing}
Let \(\Sigma \subset \mathbb{R}^{n+1}\), \(n \ge 3\), be a nondegenerate minimal surface with finite total curvature and a planar end \(E\) which is graphical at infinity. Then there exists a sufficiently small \(\veps_0 > 0\) such that for any \(\veps < \veps_0\), we can glue a half-catenoid \(C_\veps\) to \(\Sigma\) at a point \(x \in E\) to produce a nondegenerate \(\Sigma'\) such that the asymptotic plane of \(C_\veps\) is parallel to the asymptotic plane of \(E\).
\end{thm}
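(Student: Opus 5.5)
The plan is to assemble the pieces already built in this section, and the work is mostly choosing the gluing point correctly. Rotate so that the planar end \(E\) is asymptotic to \(\{x_{n+1}=0\}\). Because \(E\) is graphical at infinity and lies in the weighted space of \eqref{eqn:sigma-graph}, it is the graph of a function \(u_E\) over the complement of a large ball. This function has the expansion \(u_E = a + b|x|^{2-n} + O(|x|^{1-n})\), with matching derivative decay: \(|\nabla u_E| \le c|x|^{1-n}\) and \(|\nabla^2 u_E|\le c|x|^{-n}\).

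\textbf{Step 1: choosing the point.} Fix \(r_0\) as in Lemma \ref{lem:invertability-of-L_M}. For given \(\veps\), pick a point \(x\in E\) with \(|x|\) so large that \(c|x|^{1-n} < r_\veps \sim \veps^{3/(3n-2)}\). After translating \(x\) to the origin, the graphing function \(u\) of \(\Sigma\) over \(B_{2r_0}\) then satisfies \(u(0)=0\) and \(|\nabla u(0)|\le r_\veps\), which is assumption (A.2). Assumptions (A.1) and (A.3) hold with constants uniform in \(x\), since \(\|u\|_{C^{3,\alpha}}\) decays along the end.

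\textbf{Step 2: checking (A.4) and (A.5).} These are items (3) and (4) of Lemma \ref{lem:invertability-of-L_M}, which are available because \(\Sigma\) is nondegenerate. I expect the main obstacle here: the constants \(\eta_0,\eta_\nu\) must be uniform as the gluing point moves out along \(E\). Near infinity the surface is \(C^{2,\alpha}\)-close to a flat disk, so the perturbation argument for \(\Lambda_u\) versus \(\Delta\) in the proof of item (4) gives a bound independent of \(x\). For the global statement (item (2)), I would rerun the contradiction argument from Lemma \ref{lem: nondegeneracy}. That argument uses a sequence of points \(x_i\to\infty\) and normalized solutions with blow-ups. The limits are either a Jacobi field on \(\Sigma\), which is excluded by nondegeneracy, or a harmonic function on the limit plane with forbidden decay, which is excluded by the Liouville-type argument of Case 2 there.

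\textbf{Step 3: gluing.} With (A.1)--(A.5) in hand, the earlier results apply in order. Proposition \ref{prop:5.2} controls \(\calS_\veps\), Proposition \ref{prop:8.2} controls \(\calT_\veps\), and Proposition \ref{prop:bounding Uveps - U0} controls \(\calU_\veps\). Lemma \ref{prop:the simple conglomerate C0 is an isomorphism} and Lemma \ref{lem:C0Cveps maps a ball into a ball} then set up Proposition \ref{prop:fixed-point}, which gives a zero of \(\bfC_\veps\) for all \(\veps<\veps_0\). Here \(\veps_0\) is independent of \(x\) because every constant involved depends only on \(r_0,\eta_0,\eta_\nu\). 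The resulting \(\Sigma'\) is a smooth minimal hypersurface. By the remark after Proposition \ref{prop:5.2}, its new end, coming from \(C_\veps(h_{\mathrm{II}})\), is asymptotic to the asymptotic plane of the vertical catenoid \(C_\veps\), which is parallel to \(\{x_{n+1}=0\}\). The rotation parameter \(R\) in \(\calA\) is applied only to the compact annular piece, so it does not affect the direction of this end. Finally, nondegeneracy of \(\Sigma'\) is exactly Lemma \ref{lem: nondegeneracy}.
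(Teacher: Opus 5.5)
Your proposal follows the paper's proof. You choose the gluing point far enough out on the planar end that \(|\nabla u|<r_\veps\) there, which is assumption (A.2), and run the Cauchy-data fixed-point construction. You observe that the rigid motions in \(\calA\) act only on the annular piece, so the new catenoidal end stays parallel to \(E\), and you get nondegeneracy from Lemma \ref{lem: nondegeneracy}.

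Your Step 2 asks whether \(\eta_0,\eta_\nu\) and the exterior inverse stay uniform as the gluing point runs to infinity; the paper only asserts this. If you carry out that blow-up, the normalized limit near the removed ball, with \(y\) centered at the gluing point, can be the bounded harmonic function \(c\,(1-(r_0/|y|)^{n-2})\), and this has no forbidden decay. So you must link that plateau to the macroscopic limit on the plane; the decay/Liouville argument alone does not exclude it.
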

\begin{proof}
We must choose the gluing location and the scale \(\veps\). We choose \(\veps\) sufficiently small so that \(\Sigma'\) is nondegenerate (see Lemmas \ref{lem: nondegeneracy} and \ref{lem:invertability-of-L_M}).
    
From our assumptions, we have that \(E\) is a graph of a function \(u\) over a sufficiently large annulus. We choose a point \(x\) such that \(|\nabla u(x)| < r_\veps\), where \(r_\veps\) is defined as in \ref{defn:r_veps and s_veps}. It is possible to do this, because the end \(E\) is planar. It follows that we can perform the gluing construction at \(x\) to produce a surface \(\Sigma'\). The end of the catenoid \(C_\veps\) we glue on will be parallel to the asymptotic plane of the end \(E\) by the construction process.
\end{proof}

\subsection{Proof of Theorems \ref{thm:nonproper} and \ref{thm:k-ends}}
In order to prove Theorems \ref{thm:nonproper} and \ref{thm:k-ends}, we will construct a sequence of surfaces \(\Sigma_k\) which are complete, embedded, and have \(k\) planar ends. We will construct \(\Sigma_k\) so that their planar ends converge to a limit plane, which will give us the improperness of the limit surface \(\Sigma_\infty\).

\begin{lem}\label{lem: limit stuff}
    There exists a sequence of complete, embedded minimal surfaces \(\Sigma_k\) with the following properties:
    \begin{enumerate}
        \item \(\Sigma_k\) has \(k\) parallel planar ends.
        \item There exists an open slab \(\Pi = \{0 < x_{n + 1} < C_\text{max}\}\) such that \(\Sigma_k \subset \Pi\) for all \(k\). Furthermore, if \(E_k\) is the ``top'' end of \(\Sigma_k\) and \(P_k\) is the asymptotic plane of \(E_k\), then \(E_k \rightarrow P_k\) uniformly at infinity, and \(P_k \rightarrow \{x_{n+1} = C_\text{max}\}\) as \(k \rightarrow \infty\).
        \item There exists a sequence of rectangular neighborhoods \(N_j\) such that
        \begin{equation}
            \sup_{x \in \Sigma_k \setminus \bigcup_{j = 1}^\infty N_j}|A|(x) < 1
        \end{equation}
        when \(j \ge k\), and an increasing sequence of constants \(c_j \nearrow \infty\) such that
        \begin{equation}
            \sup_{x \in \Sigma_k \cap N_j} |A|(x) < c_j
        \end{equation}
        when \(j \le k\).
    \end{enumerate}
    Finally, the neighborhoods \(N_j\) can be chosen so that
    \begin{equation}
            \overline{N_j} \subset \Pi.
        \end{equation}
    I.e., \(\overline{N_j}\) is a positive distance from \(\partial \Pi\).
\end{lem}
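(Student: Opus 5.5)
We build the sequence \(\Sigma_k\) inductively with Theorem \ref{thm:general statement of gluing}, choosing the gluing parameter \(\veps_k\) and gluing point \(x_k\) at step \(k\) so that three things are summable: the heights of the new ends, the \(C^2\) size of the perturbation away from the new neck, and the neighborhoods carrying the large curvature. For the base case take \(\Sigma_1\) to be a horizontal hyperplane \(\{x_{n+1}=a_1\}\). It is nondegenerate in the sense of Lemma \ref{lem:invertability-of-L_M}, since bounded Jacobi fields decaying at infinity vanish. Alternatively, start from a scaled catenoid and treat its upper end as the top end. Given a nondegenerate \(\Sigma_k\) with \(k\) parallel planar ends and top end \(E_k\) asymptotic to \(P_k=\{x_{n+1}=p_k\}\), we apply Theorem \ref{thm:general statement of gluing} at a point \(x_k\in E_k\) far out on the end. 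There \(|\nabla u|<r_{\veps_k}\), and, since the end is uniformly asymptotic to \(P_k\), also \(|u-p_k|\) is tiny. The output \(\Sigma_{k+1}\) is nondegenerate and has a new top end \(E_{k+1}\), namely the glued half catenoid. Its asymptotic plane is parallel to \(P_k\) and sits at height \(p_{k+1}=p_k+O(\veps_k r_{\veps_k}^{2-n})+O(\kappa r_{\veps_k}^2)\), read off from the expansion in Proposition \ref{prop:6.3} and the translation by \(-\veps r_\veps^{2-n}/(n-2)\). Because \(\veps r_\veps^{2-n}\sim r_\veps^{n-2/3+2-n}\to0\), we can pick \(\veps_k\) so that \(p_{k+1}-p_k\le 2^{-k}\). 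Then \(p_k\nearrow C_{\max}:=\lim p_k<\infty\), and the new end is attached upward, so \(p_{k+1}>p_k\).

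Embeddedness and containment in the slab \(\Pi\) are proved alongside the induction. The hypersurface \(\Sigma_{k+1}\) coincides with a \(C^{2,\alpha}\)-small normal perturbation of \(\Sigma_k\) outside \(B_{r_0}(x_k)\). Inside \(B_{r_0}(x_k)\) it is a graph over the hyperplane, lying above the top sheet of \(\Sigma_k\), together with a vertical catenoidal neck rising into the region \(\{x_{n+1}>\text{top sheet}\}\). That region is free of \(\Sigma_k\) because \(E_k\) is the top end. The perturbation sizes \(\|h_{\rm I}\|,\|\calA\|\le\kappa r_{\veps_k}^2\) can be made smaller than the separation of \(\Sigma_k\) from itself on the compact set where sheets are closest, and smaller than the distance of \(\Sigma_k\) to \(\{x_{n+1}=0\}\). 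This gives embeddedness of \(\Sigma_{k+1}\) and \(\Sigma_{k+1}\subset\{0<x_{n+1}<p_{k+1}\}\subset\Pi\). Summability in \(k\) prevents drift of the lower sheets toward \(\partial\Pi\).

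For property (3), let \(N_j\) be a thin vertical box (a ball of radius \(2r_{\veps_j}\) in the hyperplane times an interval) around the \(j\)-th neck. Choosing \(|x_j|\to\infty\) makes the \(N_j\) pairwise disjoint and separated, and choosing them centered between heights \(p_j\) and \(p_{j+1}\) gives \(\overline{N_j}\subset\Pi\). Inside \(N_j\) the curvature of the neck is of order \(\veps_j^{-1/(n-1)}=:c_j/2\). This bound persists for all later \(\Sigma_k\), \(k\ge j\), because later perturbations near \(N_j\) are \(C^2\)-small, of size \(\sum_{m>j}\kappa r_{\veps_m}^2\), and the surfaces are controlled in the weighted norms. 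Outside \(\bigcup_j N_j\), the surface is a small perturbation of planes, graphs of Green's-function-type data away from the pole, and the flat parts of catenoid ends. For \(|A|<1\) there, we choose \(r_0\) and \(\veps_j\) so that the contribution \(\veps_j r^{-n}\) is \(<2^{-j}\) on \(\{r\ge 2r_{\veps_j}\}\), after possibly enlarging \(N_j\) by a fixed factor.

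The main obstacle is uniformity. Each gluing must not destroy the estimates established at earlier stages: the embeddedness margin, the curvature bounds in \(N_j\), and \(\overline{N_j}\subset\Pi\). Meanwhile the constants in Theorem \ref{thm:general statement of gluing}, such as \(\veps_0\) and \(\|\bfC_0^{-1}\|\), depend on \(\Sigma_k\) through its nondegeneracy. We handle this by fixing \(\Sigma_k\) first and then choosing \(\veps_k\) small relative to every quantity generated so far. The key point to verify is that the perturbation of \(\Sigma_k\) is bounded in \(C^2\) by \(C(\Sigma_k)\kappa r_{\veps_k}^2\) globally, via the weighted estimates of Lemma \ref{lem:invertability-of-L_M} and Proposition \ref{prop:bounding Uveps - U0}. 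Then \(\veps_k\) can be taken to make this \(\le 2^{-k}\min_{j<k}(\text{margins}_j)\), and summing over \(k\) gives all three properties uniformly.
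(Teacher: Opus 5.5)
The inductive scheme is the paper's. You glue via Theorem~\ref{thm:general statement of gluing} far out on the top end, make the height gains summable, and put a box around each new neck. The genuine difference is how you get embeddedness, and along the way two quantitative claims are wrong and one cited bound is not actually supplied.

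\textbf{Embeddedness: a different route.} The paper argues by contradiction with the harmonic height function on a compact component \(U_c\) of \(p(C_\veps)\) lying under \(p(\Sigma_k)\). You instead use vertical separation (the new catenoid lies in the empty region above the top end) plus a tubular-neighborhood argument for the perturbed \(\Sigma_k\).
\begin{itemize}
\item Your route sidesteps the delicate step in the paper, namely that the minimum over \(\overline{U_c}\) cannot lie on \(\partial U_c\).
\item It also covers self-embeddedness of the perturbed \(\Sigma_k\), which the paper only asserts.
\item The price is a quantitative height comparison and a global \(C^0\) bound on the perturbation; both are addressed below.
\end{itemize}
Starting from a hyperplane matches property (1) literally, whereas the paper's catenoid has two ends. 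However, a flat end is not of the form \eqref{eqn:sigma-graph} with \(w_i\in\calC^{2,\alp}_\delta\), so your catenoid alternative is the safe start.

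\textbf{First error: the height gain.} Proposition~\ref{prop:6.3} and the shift by \(-\veps r_\veps^{2-n}/(n-2)\) only describe the graph near \(|x|\approx r_\veps\).
\begin{itemize}
\item The new asymptotic plane sits at about \(u(x_k)+2\psi(+\infty)\veps_k^{1/(n-1)}\), not at \(p_k+O(\veps_k r_{\veps_k}^{2-n})\). This is the full height \(\veps^{1/(n-1)}(\psi(+\infty)-\psi(s_\veps))\) of the truncated catenoid. Summability is unaffected.
\item \(E_{k+1}\) is not inside \(B_{r_0}(x_k)\): it is a graph over the whole hyperplane minus the neck. So ``outside \(B_{r_0}(x_k)\), \(\Sigma_{k+1}\) is a small perturbation of \(\Sigma_k\)'' is false as written.
\item What you must show is that the entire upper half of \(C_\veps(h_{\mathrm{II}})\) lies above \(p(\Sigma_k)\). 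This holds because that half lies above \(u(x_k)+\psi(+\infty)\veps_k^{1/(n-1)}(1-o(1))\).
\item That margin dominates \(p_k-u(x_k)\) (for \(x_k\) far out), \(\veps r_\veps^{2-n}\sim\veps^{4/(3n-2)}\), and \(\kappa r_\veps^2\sim\kappa\veps^{6/(3n-2)}\), since \(\frac{1}{n-1}<\frac{4}{3n-2}<\frac{6}{3n-2}\) for \(n\ge 3\).
\end{itemize}

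\textbf{Second error: the boxes \(N_j\) are too small.} Near the \(j\)-th neck, at horizontal distance \(r\) well above the neck radius, both the Green's-function graph and the scaled catenoid have \(|A|\approx c\,\veps_j r^{-n}\). At \(r=2r_{\veps_j}\) this is about \(r_{\veps_j}^{-2/3}\to\infty\). So no choice of small \(\veps_j\), and no fixed enlargement, gives \(|A|<1\) outside boxes of radius \(2r_{\veps_j}\). Use a horizontal radius equal to a large multiple of \(\veps_j^{1/n}\). This is still tiny and \(\gg r_{\veps_j}\), and it does not affect disjointness or \(\overline{N_j}\subset\Pi\).

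\textbf{The global bound you flag as the key point is not supplied by your citations.} It does not follow from Lemma~\ref{lem:invertability-of-L_M} and Proposition~\ref{prop:bounding Uveps - U0}.
\begin{itemize}
\item The exterior solution lies in \(\calE^{2,\alp}_\delta\oplus\calK_1\), and \(\calK_1\) is spanned by cut-off Jacobi fields \(\Psi_i^{j,\pm}\).
\item Nothing there excludes the non-decaying ones, such as vertical shifts and tilts of individual ends.
\item Even a tiny tilt of an old end destroys parallelism, slab containment and embeddedness far out.
\end{itemize}
The paper simply asserts that the old asymptotic planes are unchanged, so this gap is shared with it, but the results you cite do not close it.
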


The idea is that \(\{\Sigma_k\}\) will be a sequence of minimal surfaces we get by ``stacking'' catenoids. i.e., \(\Sigma_{k + 1}\) is \(\Sigma_k\) with a half catenoid glued onto it somewhere. The neighborhood \(N_j\) contains the \(j\)th neck of this construction. The curvature bounds represent how the curvature of \(\Sigma_k\) is small away from the necks and large near the necks. However, it is still locally bounded near the necks.

\begin{proof}
    Let \(\Sigma_1\) be the catenoid \(C_1\), centered at the origin, oriented with the vertical axis, and scaled such that \(C_1 \cap \{x_{n+1} = 0\}\) is a sphere of radius 1.

    Clearly, \(\Sigma_1\) satisfies properties (1), (2), and (3). Now, suppose we have constructed the surfaces \(\Sigma_k\) and the neighborhoods \(N_k\). We will now glue a half catenoid \(C_\veps\) to \(\Sigma_k\) to produce \(\Sigma_{k+1}\). We must choose the gluing location and the scale \(\veps\). We choose \(\veps < \min\{2^{-k}, \veps_0^k\}\), where \(\veps_0^k\) is the \(\veps_0\) from Theorem \ref{thm:general statement of gluing}. To choose the gluing location, we choose a point far enough away that the perturbation in \(N_j\) for \(j \le k\) does not change the curvature bounds (i.e., far enough away that the necks do not get perturbed much), and also far enough away that it is possible to glue on a parallel end (as in Theorem \ref{thm:general statement of gluing}). By the construction, the new end \(E_{k+1}\) is a (almost) normal perturbation of \(C_\veps\) by a function \(w\) with a bound \(\|w\|_{2, \alpha, \delta} < \infty\). This bound gives us the uniform convergence to a plane \(P_{k + 1}\) in property (2). To ensure that \(\Sigma_{k + 1}\) is embedded, we choose the gluing location far enough away that \(E_{k + 1}\) is almost flat when it is near \(N_j\) for \(j \le k\). We know from the gluing construction that the perturbations \(p(\Sigma_k) = \Sigma_{r_0}(h_\mathrm{I}) \cup \Sigma_{r_0}^c(h_\mathrm{I}, \calA, h_\mathrm{II})\) and \(p(C_\veps) = C_\veps(h_\mathrm{II})\) are each embedded (as each is a graph over \(\Sigma_k\) (minus a disk) and \(C_\veps\), respectively). So \(\Sigma_{k+1}\) will fail to be embedded if points in \(p(C_\veps)\) intersect points in \(p(\Sigma_k)\) away from the gluing boundary.

    Suppose that this does happen at some point \(x \in p(C_\veps)\). The intersection must be transverse by the maximum principle. Thus, there is a subset \(U\) of \(p(C_\veps)\) underneath \(p(\Sigma_k)\). The connected component \(U_c\) of this subset containing \(x\) does not contain the gluing boundary. Furthermore, since \(p(C_\veps)\) converges uniformly to its asymptotic plane, \(U_c\) is bounded, hence \(\overline{U_c}\) is compact. Thus the height function \(x_{n+1}\) will have a local minimum in \(\overline{U_c}\).
    
    Because every intersection of \(p(C_\veps)\) and \(p(\Sigma_k)\) away from the gluing boundary is transverse, the minimum of the height function cannot occur on \(\partial U_c\), as there will always be a direction to decrease \(x_{n + 1}\) in the interior of \(U_c\). Thus, \(x_{n + 1}\) has a minimum in \(U_c\), which is a contradiction because \(x_{n + 1}\) restricted to \(\Sigma_{k+1}\) is a harmonic function on \(\Sigma_{k + 1}\).
    
    We choose \(N_{k + 1}\) to be a rectangle containing the neck of the catenoid \(C_\veps\) we just glued on. To show property (2) holds, let \(\veps_k\) be the \(\veps\) we chose when gluing a \(C_{\veps}\) to \(\Sigma_k\). The limit planes will be contained within a slab \(\{-1 < x_{n+1} < \sum \veps_k < \infty\}\), since \(\veps_k < 2^{-k}\). Thus, there is a smallest slab containing all \(\Sigma_k\). Once we identify this slab, we can redefine \(N_k\) to fit into this slab as specified in the theorem statement.
\end{proof}

\begin{proof}[Proof of Theorem \ref{thm:k-ends}]
We proceed by induction. When $k=2$, the usual catenoid $C_1$ yields an example of a non-degenerate complete, embedded, minimal hypersurface with two planar ends since $n\geq 3$. Suppose we have constructed a non-degenerate complete minimal embedded hypersurface $\Sigma_k^n\subset \R^{n+1}$ with $k$ planar ends. Then by Theorem \ref{thm:general statement of gluing}, we can construct a non-degenerate complete minimal hypersurface $\Sigma_{k+1}^n\subset \R^{n+1}$ with $k+1$ planar ends by gluing a rescaled half-catenoid far along the $k$th planar end. To show that $\Sigma_{k+1}^n$ is embedded we can argue as in the proof of Lemma \ref{lem: limit stuff}. 
\end{proof}

\begin{proof}[Proof of Theorem \ref{thm:nonproper}]
    We show \(\Sigma_\infty\) exists by finding a convergent subsequence of the \(\Sigma_k\) that converges locally smoothly. Let \(\{Q_i\}\) be a countable set of rectangles which form an open cover of \(\Pi - \bigcup_{j = 1}^\infty N_k\) such that \(\overline{Q_i} \subset \Pi\) for all \(i\). Then \(\{R_i\} = \{Q_1, N_1, Q_2, N_2, Q_3, N_3, \ldots\}\) is an open cover of \(\Pi\) such that \(\overline{R_i} \subset \Pi\) for each \(i\). On \(R_i\), we have that 
    \begin{equation}
        \sup_{x \in \Sigma_k \cap R_i} |A| \le \min\{1, c_{i}\}.
    \end{equation}
    Thus, \(\Sigma_k \cap R_i\) is a sequence of minimal surfaces with bounded curvature, and thus there is a subsequence which converges smoothly on \(R_i\). It now follows from an Arzela-Ascoli style diagonalization argument that there is a subsequence of \(\Sigma_k\) which converges locally smoothly on all of \(\Pi\) to a surface \(\Sigma_\infty \subset \Pi\). We know \(\Sigma_\infty \subset \Pi\) because \(\overline{R_i} \subset \Pi\).

    Since each \(\Sigma_k\) is complete and embedded, \(\Sigma_\infty\) is also complete and embedded. We now show that \(\Sigma_\infty\) is improper. Each perturbation to produce \(\Sigma_{k + 1}\) from \(\Sigma_k\) left the asymptotic planes of \(\Sigma_{k + 1}\) unchanged. That is, if \(P_1, \ldots, P_k\) are the asymptotic planes of \(\Sigma_k\), then these are also asymptotic planes of \(\Sigma_{k + 1}\). Thus, \(\Sigma_\infty\) has infinitely asymptotic planes \(P_1, \ldots, P_n ,\ldots\). These planes converge to \(\partial^+ \Pi\). Thus, \(\overline{\Sigma_\infty} = \Sigma_\infty \cup \partial^+ \Pi\), and \(\Sigma_\infty\) is improper.
\end{proof}

\bibliographystyle{alpha}
\bibliography{references}
\bigskip
\centerline{\scshape Shrey Aryan}
\smallskip
{\footnotesize
 \centerline{Department of Mathematics, Massachusetts Institute of Technology}
\centerline{77 Massachusetts Avenue
Cambridge, MA 02139-4307, USA}
\centerline{\email{shrey183@mit.edu}}
} 
\medskip
\centerline{\scshape Alexander D. McWeeney}
\smallskip
{\footnotesize
 \centerline{Department of Mathematics, Massachusetts Institute of Technology}
\centerline{77 Massachusetts Avenue
Cambridge, MA 02139-4307, USA}
\centerline{\email{alexmcw4@mit.edu}}
}

\end{document}